\newcommand{\N}{\mathbb{N}}
\newcommand{\C}{\mathbb{C}}
\newcommand{\inv}{^{-1}}
\newcommand{\ol}{\overline}
\newcommand{\GL}{\mathrm{GL}}
\newcommand{\G}{\mathrm{G}}
\newcommand{\SO}{\mathrm{SO}}
\newcommand{\comment}[1]{}
\newtheorem{thm}{Theorem}[section]
\newtheorem{cor}[thm]{Corollary}
\newtheorem{lemma}[thm]{Lemma}
\newtheorem{prop}[thm]{Proposition}
\newtheorem {conj}[thm]{Conjecture}
\newtheorem {ques/conj}[thm]{Question/Conjecture}
\numberwithin{equation}{section}
\begin{document}

\title[Local Converse Theorem for split $\SO_{2l}$]{On the local converse Theorem for split $\SO_{2l}$}

\author{Alexander Hazeltine}
\address{Department of Mathematics\\
Purdue University\\
West Lafayette, IN, 47907, USA}
\email{ahazelti@purdue.edu}

\author{Baiying Liu}
\address{Department of Mathematics\\
Purdue University\\
West Lafayette, IN, 47907, USA}
\email{liu2053@purdue.edu}

\subjclass[2020]{Primary 11F70, 22E50; Secondary 11F85}

\date{\today}

\dedicatory{}

\keywords{Admissible Representations,
Jacquet's Conjecture, Local Converse Theorem, Split Even Special Orthogonal Groups}

\thanks{The research of the second named author is partially supported by the NSF Grants DMS-1702218, DMS-1848058, and by start-up funds from the Department of Mathematics at Purdue University.}

\begin{abstract}
    In this paper, we prove the local converse theorem for split even special orthogonal groups over a non-Archimedean local field of characteristic zero. This is the only case left on local converse theorems of split classical groups and the difficulty
is the existence of the outer automorphism. We apply new ideas of considering the summation of partial Bessel functions and overcome this
difficulty. As a direct application, we obtain a weak rigidity theorem for irreducible generic cuspidal representations of split even special orthogonal groups.  
\end{abstract}

\maketitle

%\tableofcontents

\section{Introduction}

Let $\G$ be a connected reductive group.
Let $F$ be a non-Archimedean local field of characteristic zero and let $G=\G(F)$.
In the representation theory of $G$, converse theorems seek to uniquely identify a representation from its invariants. Among others, local gamma factors are important arithmetic invariants which play very important roles in the theory of Langlands program. 
More precisely, let $\pi$ be an irreducible generic representation of $G$. The family of local twisted gamma factors $\gamma(s, \pi \times \tau, \psi)$, for $\tau$ any irreducible generic representation of $\GL_n(F)$, $\psi$ an additive character of $F$ and $s\in\mathbb{C}$, can be defined using Rankin--Selberg convolution (e.g., \cite{JPSS83,Sou93,GRS98, K10, K12, K132, K13, ST15}) or the Langlands--Shahidi method (\cite{S84, Sha90}). The local converse problem is to determine which family of local twisted gamma factors will uniquely determine $\pi$. The following is the famous Jacquet's conjecture on the local converse problem.

\begin{conj}[Jacquet's conjecture on the local converse problem]\label{lcp}
Let $\pi,\pi'$ be irreducible generic representations
of $\GL_l(F)$. Suppose that they have the same central character.
If
\[
\gamma(s, \pi \times \tau, \psi) = \gamma(s, \pi' \times \tau, \psi),
\]
as functions of the complex variable $s$, for all irreducible
generic representations $\tau$ of $\GL_n(F)$ with $1 \leq n \leq 
[\frac{l}{2}]$, then $\pi \cong \pi'$.
\end{conj}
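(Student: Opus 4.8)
The plan is to follow the approach built on partial Bessel functions and Howe vectors. The goal is to deduce from the gamma factor hypothesis that the partial Bessel functions of $\pi$ and $\pi'$ agree on the Bessel-relevant Bruhat cells; from this one recovers the equality of the Whittaker functions $W_\pi$ and $W_{\pi'}$ on the mirabolic subgroup $P_l$, and a standard reconstruction argument (as in the classical $\GL_l$ converse theorem) then yields $\pi \cong \pi'$. Throughout one fixes compatible Whittaker models $\mathcal{W}(\pi,\psi)$ and $\mathcal{W}(\pi',\psi)$.

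First I would translate the hypothesis into vanishing identities. For each irreducible generic $\tau$ of $\GL_n(F)$ with $1 \le n \le [l/2]$, the local Rankin--Selberg theory of Jacquet--Piatetski-Shapiro--Shalika attaches to triples $(W_\pi, W_\tau, \Phi)$ zeta integrals whose functional equation has proportionality constant $\gamma(s, \pi \times \tau, \psi)$, and likewise for $\pi'$. Since these constants coincide for all such $\tau$, subtracting the two functional equations and unfolding over $P_l$ produces, for every $n \le [l/2]$, every $W_\tau \in \mathcal{W}(\tau,\psi)$, and every Schwartz function $\Phi$, an identity of the schematic form
\[
\int \big( W_\pi - W_{\pi'} \big)\, W_\tau\, \Phi = 0
\]
(with the integration over $N_n \backslash \GL_n(F)$ and a suitable embedding of $\GL_n(F)$ into $P_l$ understood). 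The point is that, because we only have twists up to $[l/2]$ rather than up to $l-1$, these test data control only roughly half of the coordinates of $W_\pi$ at once, so the remaining information has to be extracted indirectly.

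Next I would bring in the theory of partial Bessel functions and Howe vectors in the form developed by Cogdell--Shahidi--Tsai: for generic $\pi$, the restriction of $W_\pi$ to torus elements lying in the Bessel-relevant cells is governed by partial Bessel functions $\mathcal{B}_{\pi,w}$, indexed by the Bessel-relevant Weyl group elements $w$ and carrying the partial order induced by the Bruhat order, and one can produce Howe--Whittaker test vectors for $\tau$ supported on arbitrarily small neighborhoods of prescribed points. The heart of the argument is then a descending induction on $w$: assuming $\mathcal{B}_{\pi,w'} = \mathcal{B}_{\pi',w'}$ for all relevant $w' > w$, one selects an auxiliary generic $\tau$ on some $\GL_n(F)$ with $n \le [l/2]$ and a family of Howe--Whittaker vectors $W_\tau$ adapted to $w$; feeding these into the displayed identity, the contributions of the already matched cells $w' > w$ cancel, those of the cells $w' < w$ are forced to be negligible by the germ and support properties of partial Bessel functions, and what survives is the pairing of $\mathcal{B}_{\pi,w} - \mathcal{B}_{\pi',w}$ against an essentially unrestricted function, forcing $\mathcal{B}_{\pi,w} = \mathcal{B}_{\pi',w}$.

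The step I expect to be the main obstacle is precisely this inductive step: for each Bessel-relevant $w$ one must exhibit a twisting datum $\tau$ with $n \le [l/2]$ together with test vectors that isolate the $w$-cell while annihilating all lower cells. This is exactly where the sharp bound $[l/2]$ --- as opposed to the soft bound $l-1$ of the classical converse theorem --- is genuinely delicate, and it requires a careful description of which Weyl elements are Bessel-relevant for $\GL_l$, of the stability of the leading germ of the Bessel function, and of how partial Bessel functions attached to different cells interact inside the unfolded Rankin--Selberg integral. Once the induction is complete, its base case involves only the common central character of $\pi$ and $\pi'$; hence $\mathcal{B}_\pi = \mathcal{B}_{\pi'}$ on the whole Bessel-relevant set, so $W_\pi = W_{\pi'}$ on $P_l$, and the standard reconstruction step completes the proof that $\pi \cong \pi'$.
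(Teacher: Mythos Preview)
The paper does not actually prove this statement. Conjecture~\ref{lcp} is stated in the introduction only to orient the reader, and immediately afterwards the paper records that it has been established by Chai \cite{Cha19} and, independently, by Jacquet and Liu \cite{JL18}; no argument for it is given anywhere in the paper. So there is no ``paper's own proof'' to compare against, and your write-up cannot be judged as matching or diverging from one.

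As for the proposal itself: what you have written is not a proof but a plan, and you are candid about this when you say the inductive step is ``the main obstacle'' and ``genuinely delicate''. That step is in fact the entire content of the theorem. The reduction from gamma-factor equality to vanishing of Rankin--Selberg integrals is routine, and the passage from full Bessel-function equality to $\pi\cong\pi'$ is standard; the whole difficulty of Jacquet's conjecture lies precisely in showing that twists only up to $\GL_{[l/2]}$ suffice to isolate every Bessel-relevant cell. You have not supplied the mechanism that makes this work (in Chai's proof it is an asymptotic analysis of Bessel functions combined with a delicate inductive exhaustion of cells; in Jacquet--Liu it is a different analytic argument based on twisted Whittaker functions), and the sentence ``one selects an auxiliary generic $\tau$ on some $\GL_n(F)$ with $n\le[l/2]$ and a family of Howe--Whittaker vectors adapted to $w$'' hides exactly the point where the bound $[l/2]$ bites: for many cells $w$ no single $\GL_n$-twist with $n\le[l/2]$ isolates $w$, and one has to combine information from several twists in a nontrivial way. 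Until that is made precise, the proposal is an outline of a strategy, not a proof.
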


Conjecture \ref{lcp} has been proved by Chai (\cite{Cha19}), and by Jacquet and the second-named author (\cite{JL18}), independently, using different analytic methods. Hence, we have the local converse theorem for $\GL_l$. The local converse theorems for other classical groups also have been proved in recent years, mainly by Jiang-Soudry (\cite{JS03}, $\SO_{2l+1}$), by Zhang (\cite{Z18}, $\mathrm{Sp}_{2l}$, $\mathrm{U}_{2l}$, \cite{Z19}, $\mathrm{U}_{2l+1}$), and by Morimoto (\cite{Mor18}, $\mathrm{U}_{2l}$). For more references on local converse theorems, we refer to the introduction of \cite{LZ18}. 

The case left for the local converse theorems of split classical groups is $\SO_{2l}$. The difficulty is the existence of the outer automorphism. In this paper and in \cite{HL22}, we develop new ideas and overcome this difficulty, for split $\SO_{2l}$ over nonarchimedean local fields of characteristic zero and over finite fields of odd characteristic, respectively. 
More precisely, in this paper, we prove the following theorem. 

\begin{thm}[The Local Converse Theorem for split $\SO_{2l}$, generic case]\label{converse thm intro generic}
Let $\pi$ and $\pi^\prime$ be irreducible $\psi$-generic representations of split $\SO_{2l}(F)$ with the same central character $\omega$. If $$\gamma(s, \pi\times\tau,\psi)=\gamma(s, \pi^\prime\times\tau,\psi),$$ as functions of the complex variable $s$,
for all irreducible generic representations $\tau$ of $\GL_n(F)$ with $n\leq l,$
then $\pi\cong\pi'$ or $\pi\cong c\cdot\pi',$
 where $c$ is the outer automorphism. 
\end{thm}

We prove Theorem \ref{converse thm intro generic} via a reduction, applying similar ideas in \cite[\S 3.2]{JS03} (see Theorem \ref{thm reduction from generic to supercuspidal}), to the case of generic supercuspidal representations as follows. 

\begin{thm}[The Local Converse Theorem for split $\SO_{2l}$, supercuspidal case]\label{converse thm intro}
Let $\pi$ and $\pi^\prime$ be irreducible $\psi$-generic supercuspidal representations of split $\SO_{2l}(F)$ with the same central character $\omega$. If $$\gamma(s, \pi\times\tau,\psi)=\gamma(s, \pi^\prime\times\tau,\psi),$$ 
as functions of the complex variable $s$, for all irreducible generic representations $\tau$ of $\GL_n(F)$ with $n\leq l,$
then $\pi\cong\pi'$ or $\pi\cong c\cdot\pi',$
 where $c$ is the outer automorphism. 
\end{thm}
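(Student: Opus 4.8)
The plan is to adapt the Jiang--Soudry strategy for $\SO_{2l+1}$ to the even split case, using the Howe--Piatetski-Shapiro--Bessel function machinery but replacing the single Bessel function by a symmetrized object that is insensitive to the outer automorphism $c$. Concretely, I would proceed as follows. First, set up the Rankin--Selberg local zeta integrals for $\SO_{2l}(F) \times \GL_n(F)$ with $n \le l$, recall the corresponding local functional equations defining $\gamma(s, \pi \times \tau, \psi)$, and show that equality of all these gamma factors forces equality of certain Mellin transforms of partial Bessel functions attached to $\pi$ and $\pi'$. The key point is that one can choose test data so that the zeta integral localizes, via an asymptotic expansion, on the germ expansion of the partial Bessel function near the relevant Weyl group elements; this is the standard ``uniqueness of Bessel functions / Bessel function determines the representation'' philosophy. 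I would then invoke (or reprove in this setting) the statement that a generic supercuspidal $\pi$ is determined among generic supercuspidals by its Bessel function $B_{\pi,\psi}$ on a suitable neighborhood, reducing the theorem to matching Bessel functions.

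Next comes the heart of the argument, which is where the outer automorphism intervenes. Because $c$ fixes the standard Borel and the Whittaker datum only up to the $c$-action on characters, the gamma factors cannot distinguish $\pi$ from $c\cdot\pi$: one has $\gamma(s, c\cdot\pi \times \tau, \psi) = \gamma(s, \pi \times \tau, \psi)$ for all $\tau$ (this identity, or its analogue, should be recorded as a lemma, using that $\tau$ on $\GL_n$ is unaffected and the Rankin--Selberg integral is $c$-equivariant after a change of variables). Therefore from the hypothesis one can only conclude that the ``$c$-symmetrized'' Bessel data agree. The new idea, as advertised in the abstract, is to work with the sum $B_{\pi,\psi} + B_{c\cdot\pi,\psi}$ of partial Bessel functions (summed over the two representations, or equivalently over the $c$-orbit of Weyl elements): I would show (i) this sum is still computable from the zeta integrals, so the gamma factor hypothesis gives $B_{\pi,\psi}+B_{c\cdot\pi,\psi} = B_{\pi',\psi}+B_{c\cdot\pi',\psi}$ on the relevant set; and (ii) a separation/linear-independence argument—using that $\pi \not\cong \pi'$ and $\pi\not\cong c\cdot\pi'$ would make the four Bessel germs ``independent enough''—forces $\{\pi\} = \{\pi', c\cdot\pi'\}$ as sets, i.e. $\pi \cong \pi'$ or $\pi \cong c\cdot\pi'$.

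For the inductive bookkeeping I would follow the Jiang--Soudry layering: stratify the support of the partial Bessel function by Weyl group elements $w$ of increasing ``length'' (or by Bruhat cells), use the cocycle/recursion relations for partial Bessel functions to peel off the top stratum, and at each stage feed in the equality of gamma factors for $\GL_n$ with the appropriate $n \le l$ to kill the corresponding germ coefficient. The analysis near the longest relevant Weyl element is where one extracts the Bessel function on a small neighborhood of the identity, and this is precisely where the $c$-symmetrization must be carried through carefully so that all intermediate quantities are genuinely $c$-symmetric and the recursion closes. I expect the main obstacle to be exactly this step: ensuring that the partial-Bessel-function recursions and their asymptotics respect the $c$-symmetrization uniformly across all strata—in the odd case there is essentially a unique relevant Weyl element orbit, but in the even case the outer automorphism permutes data nontrivially, so one must verify that no stratum is ``lost'' and that the final separation argument genuinely yields the two-element conclusion rather than something weaker. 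The remaining ingredients—stability of $\gamma$-factors under highly ramified twists, exhaustion of Bessel germs, and the reduction from generic to supercuspidal via Theorem \ref{thm reduction from generic to supercuspidal}—I would treat as essentially routine given the cited analogues in \cite{JS03} and \cite{JL18}.
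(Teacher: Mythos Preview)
Your proposal captures the paper's central new idea—passing to the $c$-symmetrized object $B_m+B_m^c$ and concluding via uniqueness of Whittaker models that $\{\pi,c\cdot\pi\}=\{\pi',c\cdot\pi'\}$—and the overall architecture (Howe vectors, partial Bessel functions, Bruhat-cell stratification, inductive use of $\GL_n$ twists) matches. So the approach is essentially the same.

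That said, a few points where your sketch diverges from what the paper actually does are worth flagging. First, the paper does \emph{not} symmetrize from the outset: for $k\le l-2$ the unsymmetrized difference $B_m(g,f)-B_m(g,f')$ is handled exactly as in Zhang's symplectic case, because $c$ fixes the relevant Weyl elements $t_k(a)t_k'\tilde w_k$ (this is Corollary~\ref{conj gamma1}). Symmetrization enters only at the $\GL_l$ step. Second, the obstacle you anticipate (``recursions respecting $c$-symmetrization across all strata'') is not quite the issue; the actual obstruction is concrete and combinatorial: $\Omega_{\tilde w_l}\cap\Omega_{\tilde w_l^c}=\Omega_{\tilde w_{l-1}}\neq\emptyset$ (Lemma~\ref{obstruction}), so after $\mathcal C(l-2)$ the two remaining terms $B_m(g,f_{\tilde w_l})$ and $B_m(g,f_{\tilde w_l^c})$ have overlapping support, and $\mathcal C(l-1)$ only kills their \emph{sum} on part of that overlap. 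The $\GL_l$ step then requires a delicate change-of-variables argument (splitting an involution-invariant torus integral into halves, \S7.3) to produce the symmetrized identity. Third, the paper does not use stability of $\gamma$-factors under highly ramified twists; the device that converts ``zeta integral vanishes for all $\tau$'' into ``Bessel function vanishes'' is Proposition~\ref{JS Prop} (the Jacquet--Shalika/Chai lemma on Mellin-type integrals), which you should cite in place of stability. Finally, the reduction to supercuspidals (Theorem~\ref{thm reduction from generic to supercuspidal}) is not part of this theorem's proof—Theorem~\ref{converse thm intro} \emph{is} the supercuspidal case—so that paragraph is misplaced.
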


Theorems \ref{converse thm intro generic} and  \ref{converse thm intro} imply that local twisted gamma factors will not distinguish irreducible generic supercuspidal representations $\pi$ and $c \cdot \pi$ of $\SO_{2l}(F)$ (see also Corollaries \ref{conj gamma1}, \ref{conj gamma2} and \ref{conj gamma3}), which is a unique phenomenon for even special orthogonal groups among all the classical groups. This is consistent with the work of Arthur on the local Langlands correspondence and the local Langlands functoriality, and the work of Jiang and Soudry on local descent for $\SO_{2l}$ (see \cite{Art13} and \cite{JS12}). The analogue of Theorem \ref{converse thm intro} for quasi-split non-split $\SO_{2l}$ has more subtleties and has been proved by the first named author for both finite and local fields (\cite{Haz22a, Haz22b}). 

Now, we briefly introduce our new idea on proving Theorem \ref{converse thm intro}. 
As in many other proven cases of classical groups, we make use of the Howe vectors and the partial Bessel functions of $\pi$ which are particular Whittaker functions in the Whittaker model of $\pi$. 
More precisely, let $\mathcal{M}(\pi)$ be the set of matrix coefficients of $\pi$. 
Let $C_c^\infty(\SO_{2l},\omega)$ be the space of smooth compactly supported functions $f$ satisfying $f(zg)=\omega(z)f(g)$ for any $z\in Z$ and $g\in\SO_{2l}(F).$ 
Since $\pi$ is supercuspidal, 
$\mathcal{M}(\pi) \subset C_c^\infty(\SO_{2l},\omega)$.
$\psi$ induces a generic character on the upper triangular unipotent subgroup $U_{\SO_{2l}}$ (see \eqref{generic character}), still denoted by $\psi$.
Let $C^\infty(\SO_{2l},\omega,\psi)$ be the space of functions $W$ satisfying $W(zg)=\omega(z)W(g)$ and $W(ug)=\psi(u)W(g)$ for any $z\in Z,$ $u\in U_{\SO_{2l}}$, and $g\in\SO_{2l}(F),$ and furthermore that there exists an open compact subgroup $K$ of $\SO_{2l}(F)$ for which $W$ is invariant under right translation. Since $\pi$ is generic, we have a nonzero map
$$f \in C_c^\infty(\SO_{2l},\omega) \mapsto W^f \in C^\infty(\SO_{2l},\omega,\psi),$$ given by 
$$
W^f(g)=\int_{U_{\SO_{2l}}} \psi\inv(u)f(ug)du.
$$
Choose 
$f\in C_c^\infty(\SO_{2l},\omega)$ such that $W^f(I_{2l})=1.$
Then for any large positive integer $m$, one can associate a Howe vector to $W^f$, denoted by 
$B_m(g;f)$, $g \in \SO_{2l}(F)$, called a partial Bessel function (see \S \ref{Howe vectors} for more details).

Fix matrix coefficients $f\in\mathcal{M}(\pi)$ and $f'\in\mathcal{M}(\pi')$ such that $W^f(I_{2l})=W^{f'}(I_{2l})=1.$ The goal is to study the relation of $B_m(g,f)$ and $B_m(g,f')$ under the assumptions on the equality of local twisted gamma factors. First, we study the support of the partial Bessel functions and partition it based on Bruhat cells corresponding to $\mathrm{B}_n(\SO_{2l})$ for $n=1,\dots,l$ and $\mathrm{B}^c_l(\SO_{2l})$ (see Proposition \ref{Besselpart}), where 
$\mathrm{B}^c_l(\SO_{2l})$ is the $c$ conjugate of the set $\mathrm{B}_l(\SO_{2l})$. 
Then, we show that (in Corollary \ref{C(0)}) there exist functions $f_i \in C_c^\infty(\SO_{2l},\omega)$, $1 \leq i \leq l$, such that 
$$
B_m(g,f)-B_m(g,f')=\sum_{n=1}^{l} B_m(g,f_n),
$$
where $B_m(g,f_n)$ is supported on the Bruhat cells corresponding to $\mathrm{B}_n(\SO_{2l})$ for $n=1,\dots,l-2$, $B_m(g,f_{l-1})$ and $B_m(g,f_l)$ are supported on the Bruhat cells corresponding to
$\mathrm{B}_{l-1}(\SO_{2l}) \cup \mathrm{B}_{l}(\SO_{2l}) \cup \mathrm{B}^c_{l}(\SO_{2l})$, and the set of Bruhat cells corresponding to  $\mathrm{B}_{l-1}(\SO_{2l})$ is exactly the intersection of the supports of $B_m(g,f_{l-1})$ and $B_m(g,f_l)$. 

We show that (in Theorem \ref{l-2 theorem}), for $1 \leq k \leq l-2$, the local twisted gamma factors up to $k$ imply that 
$$
B_m(g,f)-B_m(g,f')=\sum_{n=k+1}^{l} B_m(g,f_i).
$$
Then, the local twisted gamma factors up to $l-1$ only imply that the sum of the partial Bessel functions 
$B_m(g,f_{l-1})+B_m(g,f_l)$ vanishes on part of the Bruhat cells corresponding to $\mathrm{B}_{l-1}(\SO_{2l})$ (see Theorem \ref{l-1 thm}). 
Comparing to the other cases considered in \cite{Z18, Z19}, instead of obtaining the vanishing of $B_m(g,f_{l-1})$ and $B_m(g,f_l)$ individually, or the vanishing of the summation, our new idea is to show that the local twisted gamma factors up to $l$ imply that 
$$
B_m(g,f_{l-1})+B_m(g,f_l)+B_m^c(g,f_{l-1})+B_m^c(g,f_l)=0,
$$
that is, 
$$B_m(g,f)-B_m(g,f')+B_m^c(g,f)-B_m^c(g,f')=0.$$
In other words, instead of considering the relation of $B_m(g,f)$ and $B_m(g,f')$, we consider the relation of $B_m(g,f)+B^c_m(g,f)$ and $B_m(g,f')+B^c_m(g,f')$. 
We record this as the following theorem. 

\begin{thm}[Theorem \ref{l thm}]\label{l thm intro}
Let $\pi$ and $\pi^\prime$ be irreducible supercuspidal $\psi$-generic representations of split $\SO_{2l}(F)$ with the same central character. Fix matrix coefficients $f\in\mathcal{M}(\pi)$ and $f'\in\mathcal{M}(\pi')$ such that $W^f(I_{2l})=W^{f'}(I_{2l})=1.$ If $$\gamma(s, \pi\times\tau,\psi)=\gamma(s, \pi^\prime\times\tau,\psi),$$ as functions of the complex variable $s$,
for all irreducible generic representations $\tau$ of $\GL_n$ with $1 \leq n\leq l,$
then we have that
$$B_m(g,f)-B_m(g,f')+B_m^c(g,f)-B_m^c(g,f')=0,$$
for any $g\in\SO_{2l}( F)$.
\end{thm}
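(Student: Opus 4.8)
The plan is to push the chain of implications from the earlier results (Corollary \ref{C(0)}, Theorem \ref{l-2 theorem}, Theorem \ref{l-1 thm}) one step further, using the local twisted gamma factors for $\GL_l$. By Theorem \ref{l-2 theorem} applied with $k=l-2$, the assumption on gamma factors up to $l-2$ already gives
\[
B_m(g,f)-B_m(g,f')=B_m(g,f_{l-1})+B_m(g,f_l),
\]
and by Theorem \ref{l-1 thm} the gamma factors up to $l-1$ force the right-hand side to vanish on part of the Bruhat cells attached to $\mathrm{B}_{l-1}(\SO_{2l})$. The point of the present theorem is that invoking gamma factors for $\GL_l$ is exactly what is needed to control the remaining cells. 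The strategy is: (i) plug the partial Bessel functions $B_m(\cdot,f_{l-1})$ and $B_m(\cdot,f_l)$ into the local Rankin--Selberg (or Langlands--Shahidi) integral $\Psi(s,W,W_\tau)$ defining $\gamma(s,\pi\times\tau,\psi)$ with $\tau$ an irreducible generic representation of $\GL_l(F)$; (ii) use the equality of gamma factors together with the local functional equation to produce an identity between the integrals associated to $f$ and to $f'$; and (iii) unfold this identity, using the support partition from Proposition \ref{Besselpart}, to deduce the stated vanishing.

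The key technical input is that the $\GL_l$-integral ``sees'' the cells $\mathrm{B}_l(\SO_{2l})$ and $\mathrm{B}_l^c(\SO_{2l})$ but is insensitive to the lower cells $\mathrm{B}_n(\SO_{2l})$ for $n<l-1$ (which have already been killed) — this is the standard feature of these Rankin--Selberg integrals that the open Bruhat cell dominates the asymptotics as $s$ varies. One would argue that, after multiplying by a suitable factor and taking an appropriate limit/residue in $s$ (as in the treatment of the analogous step in \cite{Z18, Z19}), the integral over $\GL_l$-data isolates the contribution of the top cells. The subtlety special to $\SO_{2l}$, and the reason one works with the symmetrized object $B_m+B_m^c$, is that the cell $\mathrm{B}_l(\SO_{2l})$ and its outer-conjugate $\mathrm{B}_l^c(\SO_{2l})$ are exchanged (or jointly stabilized) by the relevant Weyl-group / normalizer element appearing in the functional equation, so the integral naturally pairs with $W^f + W^{c\cdot f}$ rather than with $W^f$ alone. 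I would make this precise by computing how $c$ acts on the defining data of the $\GL_l$ Rankin--Selberg integral and checking that the Whittaker functional is preserved up to the expected twist, so that $\gamma(s,\pi\times\tau,\psi)=\gamma(s,c\cdot\pi\times\tau,\psi)$ and hence the gamma-factor hypothesis really constrains the symmetrized partial Bessel function.

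Concretely, the steps in order would be: first, set up the $\GL_l$-integral $\Psi(s, B_m(\cdot,f_{l-1})+B_m(\cdot,f_l), W_\tau)$ and its analogue for $f'$, and record the local functional equation relating $\Psi(s,\cdot,\cdot)$ to $\Psi(1-s,\cdot,\widetilde{W}_\tau)$ through $\gamma(s,\pi\times\tau,\psi)$. Second, use Corollary \ref{C(0)} and Theorem \ref{l-2 theorem} to replace $B_m(\cdot,f)-B_m(\cdot,f')$ by $\sum_{n=l-1}^{l} B_m(\cdot,f_n)$ inside these integrals, so that the gamma-factor equality becomes the statement that $\Psi(s,\sum_{n=l-1}^l B_m(\cdot,f_n), W_\tau)$ is a ``negligible'' element — i.e. it pairs trivially after the functional equation for all generic $\tau$ of $\GL_l$. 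Third, by varying $\tau$ over all irreducible generic representations of $\GL_l(F)$ and using density/uniqueness of Whittaker models for $\GL_l$, conclude that the part of $\sum_{n=l-1}^l B_m(\cdot,f_n)$ supported on the top cells $\mathrm{B}_l(\SO_{2l})$ and $\mathrm{B}_l^c(\SO_{2l})$, symmetrized under $c$, must vanish; combined with Theorem \ref{l-1 thm}, which handled the $\mathrm{B}_{l-1}$ part, this yields $B_m(g,f_{l-1})+B_m(g,f_l)+B_m^c(g,f_{l-1})+B_m^c(g,f_l)=0$. Fourth, translate back via $B_m(g,f)-B_m(g,f')=B_m(g,f_{l-1})+B_m(g,f_l)$ (and its $c$-conjugate) to obtain the displayed identity.

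The main obstacle I expect is Step 3: showing that the $\GL_l$ Rankin--Selberg integrals, as $\tau$ ranges over all generic representations, separate precisely the symmetrized top-cell contribution and nothing finer. This requires a careful asymptotic analysis of the partial Bessel functions near the cells $\mathrm{B}_l(\SO_{2l})$ and $\mathrm{B}_l^c(\SO_{2l})$ — in particular understanding their behavior on the torus directions transverse to the unipotent radical — and it is here that the outer automorphism genuinely complicates matters, because the two top cells are not separated by the $\GL_l$-integral individually but only their $c$-average is. Handling this is exactly the novelty flagged in the introduction (considering $B_m(g,f)+B_m^c(g,f)$ instead of $B_m(g,f)$), and I would expect the proof to hinge on a lemma computing the effect of $c$ on the open-cell Bessel integral and on the Whittaker datum, reducing the claim to the already-established $\GL_l$ local converse theorem-type uniqueness statement.
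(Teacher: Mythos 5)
Your overall strategy---plugging the already reduced difference of partial Bessel functions into the $\GL_l$ zeta integrals, invoking the functional equation together with the equality of gamma factors, and then applying the Jacquet--Shalika/Chai-type uniqueness statement (Proposition \ref{JS Prop}) to the resulting $\GL_l$ Whittaker-type integral---is the route the paper takes, and your observation that $c$ exchanges $\mathrm{B}_l(\SO_{2l})$ and $\mathrm{B}_l^c(\SO_{2l})$, so that only the symmetrized object $B_m+B_m^c$ can be controlled, matches the paper's treatment of the two open pieces (implemented there by an explicit conjugation by $c\tilde{t}^{-1}$ and a torus change of variables merging the integral over $\Omega_{\tilde{w}_l^c}\backslash\Omega_{\tilde{w}_l}$ into the one over $\Omega_{\tilde{w}_l}\backslash\Omega_{\tilde{w}_l^c}$). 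However, there is a genuine gap exactly where you defer the work. First, your assertion that Theorem \ref{l-1 thm} ``handled the $\mathrm{B}_{l-1}$ part'' is not correct: that theorem only gives vanishing on elements $t_{l-1}(a)t_{l-1}'\tilde{w}_{l-1}$, i.e.\ on the subset of the intersection $\Omega_{\tilde{w}_l}\cap\Omega_{\tilde{w}_l^c}$ reached by the embedding of $\SO_{2l-1}$; the remainder of those Bruhat cells must also be extracted from the $\GL_l$ twists, and by \cite[Proposition 8.2]{HL22} the intertwined section only detects the intersection cells when the torus coordinate $t_l$ avoids two exceptional values, so the $\GL_l$ computation and the earlier partial results have to be combined at the end.

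Second, and more seriously, on the intersection cells $c$ fixes the Weyl elements ($cwc=w$ for $w\in\mathrm{B}_{l-1}(\SO_{2l})$), so the $c$-conjugation trick that produces $B_m+B_m^c$ on the two open pieces folds this part of the integral onto itself; carried out naively, the reduction to an integral over $U_{\GL_l}\backslash\GL_l$ becomes two-to-one in the torus variable, and Proposition \ref{JS Prop} cannot be applied. The paper's resolution, absent from your outline, is to partition the torus $T_l$ into two halves $A_l$ and $B_l$ interchanged by the involution $t_l\mapsto t_l^{-1}/4$ (for $l$ even; $t_l\mapsto t_l^{-1}$ for $l$ odd), whose fixed points are precisely the excluded values, conjugate only the $B_l$-half, and recombine so that $(B_m+B_m^c)$ appears integrated over $A_l$ with a one-to-one parametrization; the $W_1(\GL_l)$- and $W_2(\GL_l)$-cells are then glued into a single function on a set $X_l\subseteq\GL_l$ to which Proposition \ref{JS Prop} applies. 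Note also that no limit or residue in $s$ is taken: the isolation of the relevant cells comes from the explicitly constructed sections of \S\ref{sections}, whose intertwined image is supported in $Q_l w_l V_l$, not from an asymptotic analysis of the integrals. Without the $A_l$/$B_l$ folding device and the correct accounting of the $\mathrm{B}_{l-1}$ cells, your Step 3 does not go through as stated.
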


By the uniqueness of Whittaker models, we obtain that $\pi\cong\pi'$ or $\pi\cong c\cdot\pi'$. Hence, Theorem \ref{converse thm intro} is proved. The proof of Theorem \ref{converse thm intro} is outlined in \S \ref{outline of the proof} and the full details are provided in \S \ref{proof of the local converse theorem}.

As a direct application of Theorem \ref{converse thm intro generic}, following the same argument as in \cite[Theorem 5.3]{JS03}, we obtain the following weak rigidity theorem for irreducible generic cuspidal automorphic representations of split $\SO_{2l}(\mathbb{A})$ (for the quasi-split analogue, see \cite{Haz22b}), where $\mathbb{A}$ is the ring of adeles for a number field. The proof is omitted.

\begin{thm}\label{rigidity}
Let $\Pi =\otimes_v \Pi_v$ and $\Pi'=\otimes_v \Pi'_v$ be two irreducible generic cuspidal automorphic representations of the split group $\SO_{2l}(\mathbb{A})$. If $\Pi_v \cong \Pi'_v$ for almost all places $v$, then $\Pi_v \cong \Pi'_v$ or $\Pi_v \cong c\cdot\Pi'_v$ for every place $v.$ 
\end{thm}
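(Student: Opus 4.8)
The plan is to reduce Theorem~\ref{rigidity} to an equality of local twisted gamma factors at each ``bad'' place and then to invoke the local converse theorem, Theorem~\ref{converse thm intro generic}. First note that, since $\Pi_v\cong\Pi'_v$ for almost all $v$, the central characters $\omega_\Pi$ and $\omega_{\Pi'}$ agree at almost all places and hence, being Hecke characters, they are equal; in particular the local central characters of $\Pi_v$ and $\Pi'_v$ coincide at \emph{every} place $v$. Let $S$ be the finite set of places $v$ at which $\Pi_v\not\cong\Pi'_v$, enlarged so as to contain all archimedean places and all places where $\Pi$, $\Pi'$ or $\psi$ ramifies. For $v\notin S$ there is nothing to prove, so it suffices to treat $v_0\in S$; and for $v_0$ non-archimedean, Theorem~\ref{converse thm intro generic} will give $\Pi_{v_0}\cong\Pi'_{v_0}$ or $\Pi_{v_0}\cong c\cdot\Pi'_{v_0}$ as soon as we show that $\gamma(s,\Pi_{v_0}\times\tau,\psi_{v_0})=\gamma(s,\Pi'_{v_0}\times\tau,\psi_{v_0})$ for every irreducible generic representation $\tau$ of $\GL_n(F_{v_0})$ with $n\le l$.

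Second, I would bring in the global Rankin--Selberg (or Langlands--Shahidi) theory for $\SO_{2l}\times\GL_n$ from the references recalled in the introduction. For any irreducible generic cuspidal automorphic representation $\sigma=\otimes_v\sigma_v$ of $\GL_n(\mathbb{A})$ with $n\le l$, the global functional equation for the completed $L$-function, together with the factorizations of the $L$-factor and of the $\epsilon$-factor into local ones, yields $\prod_v\gamma(s,\Pi_v\times\sigma_v,\psi_v)=1$, and likewise with $\Pi'$ in place of $\Pi$; both products are finite. Cancelling the local factors at the places $v\notin S$, where $\Pi_v\cong\Pi'_v$, reduces this to the finite identity $\prod_{v\in S}\gamma(s,\Pi_v\times\sigma_v,\psi_v)=\prod_{v\in S}\gamma(s,\Pi'_v\times\sigma_v,\psi_v)$.

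Third --- and this is the step I expect to be the main obstacle --- I would isolate a single place $v_0\in S$. Given a non-archimedean $v_0\in S$ and an irreducible generic representation $\tau$ of $\GL_n(F_{v_0})$ with $n\le l$, one chooses an irreducible generic cuspidal automorphic representation $\sigma$ of $\GL_n(\mathbb{A})$ with $\sigma_{v_0}\cong\tau$ and with $\sigma_v$, for each $v\in S\setminus\{v_0\}$, an irreducible principal series fully induced from sufficiently highly ramified characters; the existence of such a globalization with prescribed components at finitely many places is standard. By the multiplicativity of gamma factors in the $\GL_n$-variable, each $\gamma(s,\Pi_v\times\sigma_v,\psi_v)$ with $v\in S\setminus\{v_0\}$ becomes a product of $\GL_1$-twisted gamma factors attached to highly ramified characters, and by the stability of local gamma factors under highly ramified twists --- combined with the equality of the central characters of $\Pi_v$ and $\Pi'_v$ --- these coincide with the corresponding factors for $\Pi'_v$, so they cancel from the displayed identity, leaving $\gamma(s,\Pi_{v_0}\times\tau,\psi_{v_0})=\gamma(s,\Pi'_{v_0}\times\tau,\psi_{v_0})$; as $\tau$ was arbitrary, Theorem~\ref{converse thm intro generic} applies at $v_0$. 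The archimedean places of $S$ are handled in the same spirit, using the archimedean Rankin--Selberg theory together with the archimedean local Langlands correspondence, which on generic representations is injective up to the action of the outer automorphism. The delicate points are precisely the simultaneous control required in the globalization --- producing $\sigma$ cuspidal with the prescribed component at $v_0$ and prescribed highly ramified principal series components at the remaining places of $S$ --- together with the bookkeeping in the multiplicativity and stability arguments; alternatively, one may bypass the globalization by passing to the functorial lifts of $\Pi$ and $\Pi'$ to $\GL_{2l}(\mathbb{A})$, which agree by strong multiplicity one for $\GL$, and then invoking local-global compatibility of the lift together with Theorem~\ref{converse thm intro generic} place by place.
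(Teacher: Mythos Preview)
Your proposal is correct and follows essentially the same route the paper has in mind: the paper omits the proof and refers to \cite[Theorem~5.3]{JS03}, whose argument is precisely the global functional equation/stability scheme you outline (globalize $\tau$ with highly ramified principal series at the other bad places, use multiplicativity and stability to cancel those factors, then apply the local converse theorem at $v_0$). Your remark that the archimedean places require the archimedean local Langlands correspondence rather than Theorem~\ref{converse thm intro generic} is accurate and is likewise how the \cite{JS03}-type argument proceeds.
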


Theorem \ref{converse thm intro generic} and hence Theorem \ref{rigidity} have obtained independently by Haan-Kim-Kwon (\cite{HKK23}), using different method of theta correspondence, for both even and special even orthogonal groups. The advantage of our method is that it reflects the intrinsic properties of the partial Bessel functions of special even orthogonal groups. We remark that the $\mathrm{O}_{2l}$-analogue of Theorems \ref{converse thm intro generic} and \ref{rigidity} easily follow from those of $\SO_{2l}$. We also remark that Theorem \ref{converse thm intro generic} can also be obtained by applying the theory of local Langlands functoriality and the work of Arthur (\cite{Art13}).
Finally, we note that our method is independent of the work of Arthur (\cite{Art13}).

Following is the structure of this paper. In \S \ref{gps and reps}, we introduce the groups and representations considered in this paper. In \S \ref{sec: reduction}, we reduce the proof of Theorem \ref{converse thm intro generic} to that of Theorem \ref{converse thm intro}. 
In \S \ref{zeta integrals and gamma factor}, we recall the zeta integrals and gamma factors. In \S \ref{Partial Bessel functions}, we recall the theory of Howe vectors and partial Bessel functions and outline the proof of Theorem \ref{converse thm intro}. In
\S \ref{sections}, we construct a section which is used in calculating the zeta integrals. In \S \ref{proof of the local converse theorem}, we study the $\GL_n$ twists, $1 \leq n \leq l$, and show the relation between the $\GL_n$ twists and the support of the partial Bessel functions (Theorems \ref{l-2 theorem}, \ref{l-1 thm}, \ref{l thm}). Then, we prove Theorem \ref{l thm intro} and Theorem \ref{converse thm intro}. 

\subsection*{Acknowledgements}

The authors would like to thank Professor Dihua Jiang and Professor Freydoon Shahidi for their interests and constant support.
The authors also would like to thank Professor Qing Zhang and Professor Hiraku Atobe for helpful communications and comments.

\section{The groups and representations}\label{gps and reps}

Let $n, l \in \N$ and $F$ be a non-Archimedean local field of characteristic $0$. We fix a nontrivial unramified additive character $\psi$ of $F$ and let $\mathfrak{o}$  be the integers of $F$ with maximal ideal $\mathfrak{p}$ and uniformizer $\varpi.$

Let $\GL_n$ to be the group of matrices with entries in $F$ and non-zero determinant. Let $I_n$ be the identity element and we fix

$$J_n=\left(\begin{matrix}
0 & 0 & \cdots & 0 & 1 \\
0 & 0 & \cdots & 1 & 0 \\
\vdots & \vdots & \iddots & \vdots & \vdots \\
0 & 1 & \cdots & 0 & 0 \\
1 & 0 & \cdots & 0 & 0 \\
\end{matrix}\right).$$ We set $\SO_{n}=\{g\in\GL_{n} \, | \, \mathrm{det}(g)=1, {}^tgJ_{n} g = J_{n}\}$ to be the split special orthogonal groups.
Let $U_{G}$ be the subgroup of unipotent upper triangular matrices in $G=\GL_n,\SO_{n}$. Fix $B_{G}=T_{G}U_{G}$ to be the Borel subgroup of $G=\GL_n, \SO_n$ with split torus $T_G.$ 

Set $$
c=\mathrm{diag}(I_{l-1},
\left(\begin{matrix}
 0 & 1 \\
 1 & 0  
\end{matrix}\right), I_{l-1}).$$ Note that $c\notin \SO_{2l};$ however, $c \SO_{2l} c\inv=c\SO_{2l}c= \SO_{2l}$. Given a representation $\pi$ of $\SO_{2l}$ we define another representation $c\cdot\pi$ of $\SO_{2l}$ by $c\cdot\pi(g)=\pi(cgc).$

Following \cite{K13, K15} we fix certain embeddings of special orthogonal groups. These are needed to define the zeta integrals later. If $n<l$ we embed $\SO_{2n+1}$ into $\SO_{2l}$ via
$$
\left(
\begin{matrix}
A & B & C \\
D & E & K \\
L & P & Q 
\end{matrix} 
\right)
\mapsto
\mathrm{diag}(I_{l-n-1},
M^{-1}\left(\begin{matrix}
A & & B & C \\
 & 1 & & \\
D & & E & K \\
L & & P & Q 
\end{matrix}\right)
M, I_{l-n-1}),
$$
where $A$ and $Q$ are $n\times n$ matrices and 
$$
M=
\mathrm{diag}(I_n,\left(\begin{matrix}
    2 & -1 \\
    1 & \frac{1}{2}
    \end{matrix}\right), I_n).
    $$
This embeds $\SO_{2n+1}$ into the standard Levi subgroup of $\SO_{2l}$ which is isomorphic to $\GL_{l-n-1}\times\SO_{2n+2}$.

If $l=n$, we embed $\SO_{2l}$ into $\SO_{2l+1}$ via
$$
\left(\begin{matrix}
A & B \\
C & D
\end{matrix}\right)
\mapsto
M^{-1}
\left(\begin{matrix}
A & & B \\
 & 1 & \\
C & & D  
\end{matrix}\right)
M,
$$
where $A, B, C,$ and $D$ are $n\times n$ matrices and
$$
M=
\mathrm{diag}(I_{l-1},\left(\begin{matrix}
    \frac{1}{4} & \frac{1}{2} & \frac{-1}{2} \\
    \frac{1}{2} & 0 & 1 \\
    \frac{-1}{2} & 1 & 1
    \end{matrix}\right), I_{l-1}).
$$
Note that this embedding takes the torus $T_{\SO_{2l}}$ to a torus in $\SO_{2l+1}$, but not the standard torus. Indeed, the embedding sends  $t=\mathrm{diag}(t_1,\dots, t_l, t_l\inv,\dots,t_1\inv)$ to
$$
\mathrm{diag}(s,\left(\begin{matrix}
\frac{1}{2}+\frac{1}{4}(t_l+t_l\inv) & \frac{1}{2}(t_l-t_l\inv) & 2(\frac{1}{2}-\frac{1}{4}(t_l+t_l\inv)) \\
\frac{1}{4}(t_l-t_l\inv) & \frac{1}{2}(t_l+t_l\inv) & \frac{-1}{2}(t_l-t_l\inv) \\
\frac{1}{2}(\frac{1}{2}-\frac{1}{4}(t_l+t_l\inv)) & \frac{-1}{4}(t_l-t_l\inv) & \frac{1}{2}+\frac{1}{4}(t_l+t_l\inv)
\end{matrix}\right), s^*),
$$
where $s=\mathrm{diag}(t_1, t_2, \dots, t_{l-1})$.

We define what it means for a representation to be generic. Recall that $U_{\GL_n}$ and $U_{\SO_{2l}}$ are the subgroups of unipotent upper triangular matrices in $\GL_n$ and $\SO_{2l}$ respectively and that we fixed an additive nontrivial character $\psi$ of $F$. We abuse notation and define a generic character, which we  also call $\psi$, on $U_{\GL_n}$ and $U_{\SO_{2l}}$. For $u=(u_{i,j})_{i,j=1}^n\in U_{\GL_n}$, we set $\psi(u)=\psi\left(\sum_{i=1}^{l-1} u_{i,i+1}\right).$ For $u=(u_{i,j})_{i,j=1}^l\in U_{\SO_{2l}}$ we set 
\begin{equation}\label{generic character}
    \psi(u)= 
   \psi\left(\sum_{i=1}^{l-2} u_{i,i+1}+\frac{1}{4}u_{l-1,l}-\frac{1}{2} u_{l-1,l+1}  \right).
\end{equation}
Let $\psi_n=1$ for $n\leq l-2,$ $\psi_{l-1}=\frac{1}{4}$, and $\psi_l=\frac{-1}{2}$ denote the coefficients of this character. 

We say an irreducible representation $\pi$ of $\SO_{2l}$ is $\psi$-generic if $$\mathrm{Hom}_{U_{\SO_{2l}}}(\pi,\psi)\neq 0.$$
Similarly, we say an irreducible representation $\tau$ of $\GL_{n}$ is $\psi$-generic if $$\mathrm{Hom}_{U_{\GL_{n}}}(\tau,\psi)\neq 0.$$
A nonzero intertwining operator in these spaces is called a Whittaker functional and it is well known that Whittaker functionals are unique up to scalars (by the uniqueness of Whittaker models). Fix $\Gamma\in\mathrm{Hom}_{U_{\SO_{2l}}}(\pi,\psi)$ to be a nonzero Whittaker functional. For $v\in\pi$, let $W_v(g)=\Gamma(\pi(g)v)$ for any $g\in\SO_{2l}$ and set $\mathcal{W}(\pi,\psi)=\{W_v \, | \, v\in\pi\}.$ $\mathcal{W}(\pi,\psi)$ is called the $\psi$-Whittaker model of $\pi.$ By Frobenius reciprocity, $\mathrm{Hom}_{U_{\SO_{2l}}}(\pi,\psi)\cong\mathrm{Hom}_{SO_{2l}}(\pi,\mathrm{Ind}_{U_{\SO_{2l}}}^{\SO_{2l}}(\psi)).$ Thus, $\pi$ can be realized as a subrepresentation of $\mathrm{Ind}_{U_{\SO_{2l}}}^{\SO_{2l}}(\psi)$ via the map $\pi \rightarrow \mathcal{W}(\pi,\psi)$ given by $v\mapsto W_v.$ Moreover, by the uniqueness of Whittaker models, this subrepresentation occurs with multiplicity one inside $\mathrm{Ind}_{U_{\SO_{2l}}}^{\SO_{2l}}(\psi)$. We also note that the analogous results hold for $\psi$-generic representations $\tau$ of $\GL_n.$

Let  $Q_n=L_n V_n$ be the standard Siegel parabolic subgroup of $\SO_{2n+1}$ with Levi subgroup $L_n\cong \GL_n.$ For $a\in\GL_n$ we let $l_n(a)=\mathrm{diag}(a,1,a^*)\in L_n$ where $a^*=J_n{}^ta^{-1}J_n.$ Let $\tau$ be an irreducible $\psi^{-1}$-generic representation of $\GL_{n}$, $s\in\C$, and set $I(\tau, s)=\mathrm{Ind}_{Q_n}^{\SO_{2n+1}}(\tau |\mathrm{det}|^{s-\frac{1}{2}}).$ An element $\xi_s\in I(\tau,s)$ is a function $\xi_s:\SO_{2n+1}\rightarrow\tau$ satisfying $$
\xi_s(l_n(a)ug)=|\mathrm{det}(a)|^{s-\frac{1}{2}}\tau(a)(\xi_s(g)), \forall a\in\GL_n, u\in V_n, g\in\SO_{2n+1},
$$ and is right translation invariant by some compact open subgroup.
Fix a nonzero homomorphism $\Lambda_\tau \in \mathrm{Hom}_{U_{\GL_{n}}}(\tau,\psi^{-1})$. For $\xi_s\in I (\tau,s)$, we let $f_{\xi_s} : \SO_{2n+1}\times\GL_n \rightarrow \C$ be the function given by $$
f_{\xi_s}(g,a)=\Lambda_\tau(\tau(a)\xi_s(g)).
$$
Let $I(\tau,s,\psi^{-1})$ be the space of functions generated by $f_{\xi_s}, \xi_s\in I(\tau,s).$ Note that for $f_s\in I(\tau,s,\psi^{-1}),$ we have 
$$
f_s(g,ua)=\psi^{-1}(u)f_s(g,a), \forall g\in\SO_{2n+1}, u\in U_{\GL_n}, a\in\GL_n.
$$
We  also let $\tau^*$ be the contragradient representation of $\GL_n$ defined by $\tau^*(a)=\tau(a^*).$

\section{Zeta integrals and the gamma factor}\label{zeta integrals and gamma factor}

In this section, we recall the definition of the zeta integrals in \cite{K13}. In the case $n=l$, we already have the notation needed; however, we need to define another unipotent subgroup and character when $n<l.$ 

Suppose that $n<l.$ Let $P_{l-n-1}=M_{l-n-1}N_{l-n-1}$ be the standard parabolic subgroup of $\SO_{2l}$ with Levi subgroup, $M_{l-n-1}$, isomorphic to $\GL_{l-n-1}\times\SO_{2n+2}.$ We embed $U_{\GL_{l-n-1}}$ inside $\GL_{l-n-1}$ which is realized in $M_{l-n-1}.$ Then we define the unipotent subgroup $N^{l-n}=U_{\GL_{l-n-1}}N_{l-n-1}.$ That is,

$$
N^{l-n}=\left\{\left(\begin{matrix}
u_1 & v_1 & v_2 \\
 & I_{\SO_{2n+2}} & v_1^\prime \\
 & & u_1^*
\end{matrix}\right) \in \SO_{2l} \, | \, u_1\in U_{\GL_{l-n-1}} \right\}.
$$
For $v=(v_{i,j})\in N^{l-n}$ we define a character, which we also call $\psi,$ of $N^{l-n}$ by

$$
\psi(v)= 
   \psi\left(\sum_{i=1}^{l-n-1} v_{i,i+1}+\frac{1}{4}v_{l-n-1,l}-\frac{1}{2} v_{l-n-1,l+1}   \right) .
$$
Note that this character is trivial when $n=l-1.$ Let $H=\SO_{2n+1}N^{l-n}$ where $\SO_{2n+1}$ is realized via the embedding into $\SO_{2n+2}$ inside $M_{l-n-1}$ and extend $\psi$ trivially across $\SO_{2n+1}$ so that $\psi$ is a character of $H$.

Let $\pi$ be an irreducible $\psi$-generic supercuspidal representation of $\SO_{2l}$, $\tau$ be a $\psi\inv$-generic representation of $\GL_n,$  $W\in\mathcal{W}(\pi,\psi),$ and $f_s\in I(\tau,s,\psi^{-1})$. We define the zeta integrals, $\Psi(W,f_s).$ Note that \cite{K13} defines integrals for any $n$ and $l$; however, we  only need the case of $n\leq l$ for the converse theorem and so we do not consider the case of $n>l$.

First, suppose that $n=l.$ Then we define
$$
\Psi(W,f_s)=\int_{ U_{\SO_{2l}}\backslash\SO_{2l}} W(g)f_s(w_{l,l}g, I_{l})dg,
$$
where 
$$
w_{l,l}=\left(\begin{matrix}
 \frac{1}{2}\cdot I_{l} & &  \\
  & 1 & \\
  & & 2\cdot I_l
\end{matrix}
\right)\in\SO_{2l+1}.$$
For any $g\in\SO_{2l},$ the integral satisfies \begin{equation}\label{zetan=l}
    \Psi(g\cdot W,g\cdot f_s)=\Psi(W,f_s).
\end{equation}

Next, suppose that $n<l.$ Then we define

$$
\Psi(W,f_s)=\int_{ U_{\SO_{2n+1}}\backslash\SO_{2n+1}} \left(\int_{r\in R^{l,n}} W(r w^{l,n} g (w^{l,n})\inv)dr\right)   f_s(g, I_{l})dg,
$$
where
$$
w^{l,n}=\left(\begin{matrix}
 & I_n & & & \\
 I_{l-n-1} & & & & \\
 & & I_2 & & \\
 & & & & I_{l-n-1} \\
 & & & I_n &
\end{matrix}
\right)\in\SO_{2l},$$
and
$$
R^{l,n}=\left\{\left(\begin{matrix}
I_n & & & & \\
x & I_{l-n-1} & & & \\
& & I_2 & & \\
& & & I_{l-n-1} & \\
& & & x^\prime & I_n
\end{matrix}\right)\in\SO_{2l}\right\}.
$$
The integral satisfies the property \begin{equation}\label{zetan<l}
    \Psi((gn)\cdot W,g\cdot f_s)=\psi\inv(n)\Psi(W,f_s),
\end{equation} for any $g\in\SO_{2n+1}$ and $n\in N^{l-n}.$ Note that in the case $n<l$, our integral differs from \cite{K13} slightly. The difference is a right translation of the Whittaker functional by $(w^{l,n})\inv.$

We introduce the standard intertwining operator $M(\tau,s,\psi^{-1})$. Note that we do not use the normalized version of the intertwining operator. We define $M(\tau,s,\psi^{-1}): I(\tau,s,\psi^{-1})\rightarrow I(\tau^*,1-s,\psi^{-1})$ via
$$
M(\tau,s,\psi^{-1})f_s(h,a)=\int_{V_n} f_s(w_n u h, d_n a^*)du,
$$
where $w_n=\left(\begin{matrix}
 & & I_n \\
 & (-1)^n & \\
 I_n & &
\end{matrix}\right)$, $d_n=\mathrm{diag}(-1,1,-1,\dots,(-1)^n)\in\GL_n$, and $V_n$ is the unipotent radical of the parabolic subgroup $Q_n=L_n V_n$ in $\SO_{2n+1}$ where $L_n\cong \GL_n$, and $a^*=J_n {}^t a^{-1} J_n.$

Outside of a discrete subset of $s$, the set of bilinear forms satisfying equations (\ref{zetan=l}) or (\ref{zetan<l}) is at most $1$ dimensional. This follows from the results of \cite{AGRS} and \cite{MW12} (see \cite{GGP12a}). When $n=l$, this was already known in \cite{GPSR}.

Therefore, we can define a proportionality factor, $\gamma(s, \pi\times\tau, \psi)$, called the $\gamma$-factor, such that
$$
\gamma(s,\pi\times\tau, \psi)\Psi(W,f_s)=\Psi(W,M(\tau,s,\psi^{-1})f_s).
$$
We refer to these integrals and $\gamma$-factors as the twists by $\GL_n.$

Note that our $\gamma$-factor differs from that of \cite{K13} slightly. Specifically, it differs by a factor depending only on $\tau$ and $\psi$. If we let $\gamma'(s,\pi\times\tau, \psi)$ be the gamma factor of \cite{K13}, then 
$$
\gamma(\tau, Sym^2,\psi,2s-1)\gamma(s,\pi\times\tau, \psi)=c(n,\tau,\psi,\gamma)\gamma'(s,\pi\times\tau, \psi),
$$
where for $n<l$ we have $c(n,\tau,\psi,\gamma)=\omega_\tau(\gamma)^2 |\gamma|^{2n(s-\frac{1}{2})}$ and $c(l,\tau,\psi,\gamma)=1.$ Here 
$\gamma(\tau, Sym^2,\psi,2s-1)$ is Shahidi's local coefficient, $ Sym^2$ is the symmetric square representation, and $\omega_\tau$ is the central character of $\tau.$ Thus, if the local converse theorem holds for $\gamma(s,\pi\times\tau, \psi)$, then it also  holds for $\gamma'(s,\pi\times\tau, \psi).$

\section{Reduction to the generic supercuspidal case}\label{sec: reduction}

In this section, we show that the local converse theorem for irreducible generic representations of $\SO_{2l}(F)$ follows from that of irreducible generic supercuspidal representations of $\SO_{2l}(F)$ (see Theorem \ref{thm reduction from generic to supercuspidal} below), applying similar ideas in \cite[\S 3.2]{JS03}. The argument relies on knowing the real poles and zeroes of certain local twisted gamma factors. For $\GL_n$, this is known due to \cite[Propositon 3.1]{JS03}. For $\SO_{2l}$, we establish a similar result (see Lemma \ref{lem poles of gamma SO_2l} below) using the functorial lift of \cite{CKPSS04}.

\begin{thm}[{\cite[Proposition 7.2 and Theorem 7.3]{CKPSS04}}]\label{thm functorial lift}
Suppose that $\pi^0$ is an irreducible unitary supercuspidal representation of $\SO_{2l}(F).$ Then, $\pi^0$ has a functorial lift to $\GL_{2l}(F)$ which is a generic representation of $\GL_{2l}(F)$ of the form
$$
\Pi^0=\mathrm{Ind}_{P}^{\GL_{2n}(F)} (\Pi^0_1\otimes\cdots\otimes\Pi_d^0)
$$
where for each $j=1,\dots,d,$ $\Pi^0_j$ is an irreducible self-dual representation of $\GL_{l_j}(F)$ where $\sum_{j=1}^d l_j=2l$ and $P$ is the appropriate standard parabolic subgroup. Moreover, $\Pi_i^0\not\cong\Pi_j^0$ for any $i\neq j.$ In particular, for any irreducible generic representation $\tau$ of $\GL_n(F)$ with $n\leq l,$ \begin{equation}\label{eqn gamma lift}
    \gamma(s, \pi^0\times\tau,\psi)=\gamma(s, \Pi^0\times\tau,\psi).
\end{equation}
\end{thm}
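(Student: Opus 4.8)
The plan is to prove the statement in two parts: first the existence and shape of the functorial lift $\Pi^0$, and then the matching of gamma factors in \eqref{eqn gamma lift}. For the first part, I would simply invoke \cite[Proposition 7.2 and Theorem 7.3]{CKPSS04}, which provide the global-to-local descent construction of the functorial lift for generic (quasi-)split classical groups; the local lift of an irreducible unitary supercuspidal $\pi^0$ is obtained as a local component of a globalization, and the structure theorem there guarantees that $\Pi^0$ is an isobaric sum $\mathrm{Ind}_P^{\GL_{2l}(F)}(\Pi_1^0 \otimes \cdots \otimes \Pi_d^0)$ with each $\Pi_j^0$ an irreducible self-dual representation of some $\GL_{l_j}(F)$, $\sum_j l_j = 2l$, and pairwise non-isomorphic summands. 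The self-duality and distinctness come from the fact that the lift has image landing in the appropriate (orthogonal-type) dual group and that supercuspidal $\pi^0$ lifts to a lift with multiplicity-free isobaric decomposition; this is exactly the content of those cited results, so nothing new needs to be done here beyond quoting them correctly and noting that the functorial lift is characterized by an identity of $L$- and $\varepsilon$-factors against all $\GL_n$-twists.

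For the second part — the gamma factor identity \eqref{eqn gamma lift} — the key point is that the functorial lift is, by construction in \cite{CKPSS04}, compatible with Rankin--Selberg twists: for every irreducible generic $\tau$ of $\GL_n(F)$ one has $\gamma(s,\pi^0\times\tau,\psi) = \gamma(s,\Pi^0\times\tau,\psi)$ where the right-hand side is the standard $\GL_{2l}\times\GL_n$ Rankin--Selberg gamma factor of \cite{JPSS83}. This compatibility is precisely the defining property of the candidate lift produced by the converse theorem machinery of \cite{CKPSS04} (it is built so that twisted $L$- and $\varepsilon$-factors, hence gamma factors, agree with those of $\pi^0$ for all $n$ up to the relevant bound, which for the converse theorem for $\GL_{2l}$ is $n \le 2l-1$, and in particular for all $n \le l$). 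One subtlety worth addressing explicitly is the normalization of the gamma factor: in \S\ref{zeta integrals and gamma factor} the gamma factor $\gamma(s,\pi\times\tau,\psi)$ is defined via the Rankin--Selberg / Langlands--Shahidi integral for $\SO_{2l}$ and differs from Kaplan's $\gamma'$ by an explicit factor depending only on $\tau$ and $\psi$ and a symmetric-square gamma factor; I would check that this normalization is exactly the one for which the functorial-lift identity is stated in \cite{CKPSS04, Sha90}, so that no extra correction factor intervenes. Since on both sides the same auxiliary factors appear, they cancel, and the identity holds as stated.

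The main obstacle I anticipate is bookkeeping around normalizations rather than any conceptual difficulty: one must ensure that the gamma factor appearing in \eqref{eqn gamma lift} on the $\SO_{2l}$ side is literally the Langlands--Shahidi gamma factor (or its Rankin--Selberg avatar) for which Shahidi's stability and the Cogdell--Kim--Piatetski-Shapiro--Shahidi lifting theorem are formulated, and that on the $\GL_{2l}$ side it is the \cite{JPSS83} gamma factor, with the additive character $\psi$ (here unramified) matched on both sides. Once the normalizations are aligned, \eqref{eqn gamma lift} is immediate from the defining property of the lift. I would therefore present the proof essentially as: (i) apply \cite[Prop.~7.2, Thm.~7.3]{CKPSS04} to get $\Pi^0$ with the asserted isobaric structure and self-duality/distinctness of summands; (ii) recall that the lift is characterized by equality of twisted gamma factors against all $\GL_n(F)$, $n \le 2l-1$; (iii) restrict to $n \le l$ and reconcile normalizations to conclude \eqref{eqn gamma lift}. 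No long computation is needed.
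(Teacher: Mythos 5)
Your proposal is correct and matches the paper's treatment: the paper offers no independent argument for this statement, citing \cite[Proposition 7.2 and Theorem 7.3]{CKPSS04} for both the isobaric structure of the lift (self-dual, pairwise non-isomorphic summands) and the equality of twisted gamma factors, exactly as you do. Your extra remarks on reconciling the gamma-factor normalizations are reasonable diligence but not part of the paper's (essentially citation-only) proof.
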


The following lemma describes all the possible real poles and zeroes of the $\gamma$-factor for an irreducible unitary supercuspidal representation of $\SO_{2l}(F).$

\begin{lemma}\label{lem poles of gamma SO_2l}
Suppose that $\pi^0$ is an irreducible unitary supercuspidal representation of $\SO_{2l}(F).$ Then, for any irreducible generic representation $\tau$ of $\GL_n(F)$ with $n\leq l,$ $\gamma(s, \pi^0\times\tau,\psi)$ only has a possible real pole (resp. real zero) at $s=1$ (resp. $s=0$). Moreover, the real pole and zero are simple and only occurs when $\tau$ is a supercuspidal representation occurring in the supercuspidal support of the functorial lift of $\pi^0$ (see Theorem \ref{thm functorial lift}). In particular, if there is a pole or zero, then $\tau$ is self-dual.
\end{lemma}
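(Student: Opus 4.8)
The plan is to reduce everything to the analogous statement for $\GL_{2l}(F)$ via the functorial lift of Theorem~\ref{thm functorial lift} and equation~\eqref{eqn gamma lift}, and then analyze the poles and zeroes of the Rankin--Selberg gamma factor $\gamma(s,\Pi^0\times\tau,\psi)$ for $\GL_{2l}\times\GL_n$ using \cite[Proposition~3.1]{JS03} (the known location of real poles and zeroes of $\GL\times\GL$ gamma factors for generic representations). First I would write $\Pi^0=\mathrm{Ind}_P^{\GL_{2l}(F)}(\Pi^0_1\otimes\cdots\otimes\Pi^0_d)$ as in Theorem~\ref{thm functorial lift}, so that by multiplicativity of gamma factors
\[
\gamma(s,\pi^0\times\tau,\psi)=\gamma(s,\Pi^0\times\tau,\psi)=\prod_{j=1}^d\gamma(s,\Pi^0_j\times\tau,\psi).
\]
Since each $\Pi^0_j$ is an irreducible \emph{tempered} (in fact unitary, and by the structure of the lift, a full induced representation from unitary supercuspidals) self-dual representation of $\GL_{l_j}(F)$, I would reduce further to the case where each $\Pi^0_j$ is itself an induced representation of unitary supercuspidals of smaller $\GL$'s, and likewise decompose $\tau$ into its supercuspidal support, so that the whole gamma factor becomes a product of gamma factors $\gamma(s,\rho\times\sigma,\psi)$ where $\rho$ runs over the supercuspidal support of $\Pi^0$ (unitary supercuspidals, by the temperedness of the lift of a supercuspidal) and $\sigma$ runs over the supercuspidal support of $\tau$ (unitary supercuspidals twisted by $|\det|^{t}$ for various real $t$ determined by the Langlands data of $\tau$).

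Next I would invoke the classical fact (essentially \cite[Proposition~3.1]{JS03}, or Jacquet--Piatetski-Shapiro--Shalika) that for unitary supercuspidal representations $\rho$ of $\GL_a(F)$ and $\sigma$ of $\GL_b(F)$, the gamma factor $\gamma(s,\rho\times\sigma,\psi)$ has a real pole only at $s=1$ and only when $a=b$ and $\sigma\cong\rho^\vee$, in which case it is simple; a real zero occurs only at $s=0$ under the same condition, again simple; when one twists $\sigma$ by $|\det|^t$ the pole/zero moves to $s=1-t$ resp.\ $s=-t$. Because the $\tau$ here is a \emph{generic} representation, which is a full induced representation from an essentially tempered representation with \emph{no} exponents in the open interval giving extra real poles between $0$ and $1$ — more precisely, the exponents $t$ appearing in the Langlands data of a generic representation of $\GL_n$ satisfy $|\mathrm{Re}(t)|<\tfrac12$ — the only way to land a real pole of a factor $\gamma(s,\rho\times\sigma|\det|^t,\psi)$ at a real point $s_0$ with $s_0$ forced (by the $n\le l$ bound and the self-duality of the $\Pi^0_j$) to be $s_0=1$ is for $t=0$, i.e.\ for $\sigma$ itself to be unitary supercuspidal and self-dual, and to occur in the supercuspidal support of $\Pi^0$; symmetrically for the zero at $s=0$. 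Combining the factors, a real pole of the product at $s=1$ must be simple (two factors cannot simultaneously contribute, since the $\Pi^0_j$ are pairwise inequivalent and each $\Pi^0_j$ is itself a multiplicity-free induced representation, so at most one factor $\gamma(s,\rho\times\sigma,\psi)$ in the full product has $\sigma\cong\rho^\vee$), and likewise for the zero at $s=0$; and the existence of such a pole or zero forces $\tau$ to contain a self-dual unitary supercuspidal $\sigma$ in its support that lies in the supercuspidal support of $\Pi^0$.

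I would then need the bound $n\le l$ to rule out real poles/zeroes at points other than $s=1$ and $s=0$: a priori a generic $\tau$ could have an exponent $t$ with $0<|t|<\tfrac12$, which would push a pole of some sub-factor to $s=1-t\ne 1$; but such a sub-factor would pair a supercuspidal $\sigma_0|\det|^{t}$ from the support of $\tau$ with its dual $\sigma_0^\vee|\det|^{-t}$ from the support of $\Pi^0$, and since $\Pi^0$ is self-dual its support is stable under $\rho\mapsto\rho^\vee$, forcing also $\sigma_0^\vee|\det|^{t}$ in the support of $\Pi^0$ and hence a second, distinct sub-factor with a pole at $s=1+t$; neither $1-t$ nor $1+t$ is a real point where the \emph{product} can actually have a pole unless $t=0$, because the matching supercuspidal segments in $\tau$ must respect the Langlands ordering while those in $\Pi^0$ are unitary — the upshot is a combinatorial bookkeeping that the only self-consistent configuration is $t=0$. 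The main obstacle I anticipate is exactly this bookkeeping step: carefully tracking, across the product over $j$ and across the supercuspidal supports, that contributions of shifted supercuspidals cannot conspire to produce a real pole or zero off of $\{0,1\}$, and that at $\{0,1\}$ they cannot stack up to give a higher-order pole or zero — this is where the self-duality and multiplicity-freeness in Theorem~\ref{thm functorial lift}, together with the standard description of real poles of $\GL\times\GL$ gamma factors from \cite{JS03}, have to be used in full. Once this is in place, the statement ``the real pole (resp.\ zero) is at $s=1$ (resp.\ $s=0$), simple, and occurs only for self-dual supercuspidal $\tau$ in the supercuspidal support of the lift'' follows directly.
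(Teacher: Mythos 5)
Your first half coincides with the paper's argument: transfer via Theorem \ref{thm functorial lift} and \eqref{eqn gamma lift}, then multiplicativity \cite{JPSS83} to write $\gamma(s,\pi^0\times\tau,\psi)=\prod_j\gamma(s,\Pi^0_j\times\tau,\psi)$, with simplicity coming from the mutual inequivalence of the $\Pi^0_j$. But where the paper finishes in one stroke — applying \cite[Proposition 3.1]{JS03} directly to each factor $\gamma(s,\Pi^0_j\times\tau,\psi)$, which is a statement about a (unitary, self-dual, supercuspidal) $\Pi^0_j$ paired against a \emph{generic} $\tau$, so no decomposition of $\tau$ is ever needed — you instead break $\tau$ (and the $\Pi^0_j$, which by \cite{CKPSS04} are already supercuspidal, so that reduction is vacuous) into supercuspidal supports and try to control the resulting shifts by a bookkeeping argument. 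That replacement step is where your proposal has a genuine gap, and you flag it yourself as the anticipated obstacle.

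Concretely, two things go wrong. First, the premise that the Langlands exponents of an irreducible generic representation of $\GL_n(F)$ satisfy $|\mathrm{Re}(t)|<\tfrac12$ is false: that bound is a feature of the \emph{unitary} generic dual (complementary series), whereas the lemma allows an arbitrary irreducible generic $\tau$ (e.g.\ any irreducible principal series with exponents in general position is generic), so your mechanism for confining real poles/zeroes to $\{0,1\}$ has no footing in the stated generality. Second, even granting some exponent bound, the sketched exclusion of poles at $s=1\pm t$ does not work as described: if a shifted pair $\sigma_0|\mathrm{det}|^{t},\sigma_0^{\vee}|\mathrm{det}|^{-t}$ in the support of $\tau$ meets a self-dual $\Pi^0_j$, the two sub-factors contribute poles at the \emph{distinct} points $1-t$ and $1+t$, and nothing in the product cancels a pole at one point against a pole at another; asserting that "neither is a real point where the product can actually have a pole" is exactly the claim that needs proof, and no mechanism is supplied. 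The paper sidesteps all of this because \cite[Proposition 3.1]{JS03} already packages the location and simplicity of real poles/zeroes of $\gamma(s,\Pi^0_j\times\tau,\psi)$ for generic $\tau$; to repair your route you would either have to quote that proposition in the same form (at which point your extra decomposition is superfluous) or restrict $\tau$ (e.g.\ to unitary supercuspidal $\tau$, which is how the lemma is actually used in the reduction argument) and then carry out the exponent analysis honestly.
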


\begin{proof} Recall from Theorem \ref{thm functorial lift} that $\pi^0$ has a functorial lift to $\GL_{2n}(F).$ We continue with the notation of that theorem.
By the multiplicativity of $\gamma$-factors of general linear groups (\cite[Theorem 3.1]{JPSS83}),
\begin{equation}\label{eqn GL_n multiplicativity}
    \gamma(s, \Pi^0\times\tau,\psi)=\prod_{j=1}^d\gamma(s, \Pi^0_j\times\tau,\psi).
\end{equation}
From \cite[Proposition 3.1]{JS03}, for each $j=1,\dots,d,$ the only possible real pole (resp. real zero) of $\gamma(s, \Pi^0_j\times\tau,\psi)$ is at $s=1$ (resp. $s=0$) and $\tau=\Pi^0_j$ (note that $\Pi^0_j$ is self-dual). Thus we see from \eqref{eqn gamma lift} and \eqref{eqn GL_n multiplicativity} that $\gamma(s, \pi^0\times\tau,\psi)$ only has a possible real pole (resp. real zero) at $s=1$ (resp. $s=0$) and if and $\tau=\Pi^0_j$. Moreover, $\Pi^0_i\not\cong \Pi^0_j$ for any $i\neq j$ and so the poles and zeroes are simple. This proves the lemma.
\end{proof}

By Jacquet's subquotient theorem (\cite{Jac75}), there exists a standard parabolic subgroup $Q$ of $\SO_{2l}$ with Levi part isomorphic to
$$
\GL_{l_1}\times\cdots\times\GL_{l_m}\times\SO_{2l_0},
$$
where $l=\sum_{i=0}^m l_i$
such that $\pi$ is a subquotient of the induced representation 
$$
\tau^1\nu^{z^1}\times\cdots\times\tau^m\nu^{z^m}\rtimes\pi^0,
$$
where, for $i=1,\dots,m$ $\tau^i$ is an irreducible unitary supercuspidal representation of $\GL_{l_i}(F)$, $z^1\geq\dots\geq z^m\geq 0$ (we assume here that the $z^i$'s are all real without loss of generality) and $\pi^0$ is an irreducible generic supercuspidal representation of $\SO_{2l_0}(F).$ Note that the superscripts are indices here, not exponents. This data is called the supercuspidal support of $\pi.$ One could also arrive at a similar result using the classifications of \cite{JL14}. We show that the the local converse theorem for irreducible generic representations of $\SO_{2l}(F)$ follows from the local converse theorem for irreducible generic supercuspidal representations of $\SO_{2l}(F)$ by using the supercuspidal support along with the multiplicativity of $\gamma$-factors.

\begin{thm}\label{thm reduction from generic to supercuspidal}
Assume that Theorem \ref{converse thm intro} holds. Let $\pi_1$ and $\pi_2$ be irreducible $\psi$-generic representations of split $\SO_{2l}(F)$ with the same central character $\omega$. If $$\gamma(s, \pi_1\times\tau,\psi)=\gamma(s, \pi_2\times\tau,\psi),$$ 
for all irreducible generic representations $\tau$ of $\GL_n(F)$ with $n\leq l,$
then $\pi_1\cong\pi_2$ or $\pi_1\cong c\cdot\pi_2.$
\end{thm}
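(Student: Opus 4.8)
The plan is to reduce the generic case to the supercuspidal case via an induction on the supercuspidal support of $\pi_1$, using the multiplicativity of $\gamma$-factors together with the pole/zero information in Lemma \ref{lem poles of gamma SO_2l}. Write $\pi_1$ as a subquotient of $\tau^1\nu^{z^1}\times\cdots\times\tau^m\nu^{z^m}\rtimes\pi^0_1$ and similarly $\pi_2$ as a subquotient of $\rho^1\nu^{w^1}\times\cdots\times\rho^{m'}\nu^{w^{m'}}\rtimes\pi^0_2$, with the $z^i,w^j$ real, weakly decreasing, and nonnegative, and $\pi^0_1,\pi^0_2$ irreducible generic supercuspidal on $\SO_{2l_0}(F)$, $\SO_{2l'_0}(F)$ respectively. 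By multiplicativity of $\gamma$-factors for $\SO_{2l}\times\GL_n$ twists (and the corresponding property on the $\GL$ side), for every irreducible generic $\tau$ of $\GL_n(F)$ with $n\le l$ one has
$$
\gamma(s,\pi_1\times\tau,\psi)=\gamma(s,\pi^0_1\times\tau,\psi)\prod_{i=1}^m\gamma(s+z^i,\tau^i\times\tau,\psi)\gamma(s-z^i,(\tau^i)^\vee\times\tau,\psi),
$$
and likewise for $\pi_2$. Comparing, the hypothesis forces an equality of two such products of $\GL\times\GL$ and $\SO\times\GL$ gamma factors.

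Next I would extract the data $(\tau^i,z^i)$ from the poles. Running $\tau$ over all supercuspidal representations of $\GL_n(F)$, $n\le l$, and using Lemma \ref{lem poles of gamma SO_2l} (which says $\gamma(s,\pi^0\times\tau,\psi)$ has only a simple real pole at $s=1$, occurring exactly when $\tau$ is self-dual and appears in the supercuspidal support of the lift of $\pi^0$) together with the analogous $\GL$ statement from \cite[Proposition 3.1]{JS03}, the set of real poles and zeroes of $\gamma(s,\pi_1\times\tau,\psi)$ in $s$, as $\tau$ varies, determines the multiset $\{(\tau^i,z^i)\}$ (the half-integer shifts $z^i$ are exactly the locations of the poles away from $s=1$, and which $\tau$ produce them pins down the $\tau^i$, noting $|z^i|< $ some explicit bound forces them into the relevant range so the $\GL_n$ twists with $n\le l$ see them). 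Hence the two inducing data agree up to reordering: $m=m'$, and after permutation $\tau^i\cong\rho^i$, $z^i=w^i$; cancelling the common $\GL\times\GL$ factors from the displayed identity leaves
$$
\gamma(s,\pi^0_1\times\tau,\psi)=\gamma(s,\pi^0_2\times\tau,\psi)
$$
for all irreducible generic $\tau$ of $\GL_n(F)$ with $n\le l_0$ (and in particular $l_0=l'_0$, since the rank of the supercuspidal part is recovered from the data, or from central characters).

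Now Theorem \ref{converse thm intro} applies to $\pi^0_1,\pi^0_2$ — I should first check they have the same central character, which follows by comparing central characters of $\pi_1,\pi_2$ after stripping off the $\GL$ factors — so $\pi^0_1\cong\pi^0_2$ or $\pi^0_1\cong c\cdot\pi^0_2$. If $\pi^0_1\cong\pi^0_2$, then $\pi_1$ and $\pi_2$ are subquotients of the same induced representation $\tau^1\nu^{z^1}\times\cdots\times\tau^m\nu^{z^m}\rtimes\pi^0_1$; since this is a standard module with a unique irreducible generic (Langlands) subquotient, and both $\pi_1,\pi_2$ are generic, $\pi_1\cong\pi_2$. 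In the case $\pi^0_1\cong c\cdot\pi^0_2$, conjugating the whole inducing datum by $c$ and using that $c$ normalizes the relevant Levi shows $\pi_1\cong c\cdot\pi_2$ by the same uniqueness-of-generic-constituent argument applied to $c\cdot\pi_2$.

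I expect the main obstacle to be the bookkeeping in the pole-extraction step: one must ensure that every shift $z^i$ and every self-dual $\tau^i$ in the support is actually detected by twisting only with $\GL_n$, $n\le l_0$ (the range allowed by the hypothesis), which requires an a priori bound such as $l_i\le l$ and control of where the poles of the $\GL\times\GL$ gamma factors sit relative to $s=1$; and one must handle the degenerate possibilities (some $z^i=0$, or $\tau^i$ coinciding for different $i$, or $\tau^i$ appearing also in the lift of $\pi^0_j$) so that the cancellation of common factors is legitimate and the residual identity for $\pi^0_1$ versus $\pi^0_2$ genuinely holds for the full range $n\le l_0$. The uniqueness of the generic subquotient of a standard module, and the compatibility of the outer automorphism $c$ with parabolic induction, are standard and should be cited rather than reproved.
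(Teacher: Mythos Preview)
Your strategy---multiplicativity of $\gamma$-factors, pole analysis to match supercuspidal supports, the supercuspidal converse theorem, then uniqueness of the generic constituent---is exactly the paper's. The difference is the order in which you match the two pieces of the supercuspidal support, and this is where your sketch has a gap you flag but do not close.

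You propose to read off the multiset $\{(\tau^i,z^i)\}$ directly from the real poles of $\gamma(s,\pi_1\times\tau,\psi)$ and then cancel to obtain the identity for $\pi_1^0,\pi_2^0$. The obstruction is precisely the ``degenerate possibilities'' you list: when $z^i=0$ and $\tau^i$ is self-dual, the pole of $\gamma(s,\tau^i\times\tau^i,\psi)\gamma(s,(\tau^i)^\vee\times\tau^i,\psi)$ sits at $s=1$, the same location where $\gamma(s,\pi_j^0\times\tau^i,\psi)$ may contribute a simple pole by Lemma~\ref{lem poles of gamma SO_2l}. From the total product alone you cannot tell whether a given higher-order pole at $s=1$ comes from one $\GL$ pair together with $\pi^0$, or from two $\GL$ pairs, etc. Your proposal acknowledges but does not resolve this.

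The paper reverses the order: it first proves $\gamma(s,\pi_1^0\times\tau,\psi)=\gamma(s,\pi_2^0\times\tau,\psi)$ by a case analysis that peels off the $\GL$ factors in layers---first those with $z^i>0$ (their poles sit at $s=1+z^i>1$, hence are unambiguous), then those with $z^i=0$ and $\tau^i$ not self-dual, and finally those with $z^i=0$ and $\tau^i$ self-dual, where the key point is that the $\GL$ contribution is a \emph{square} $\gamma(s,\tau^i\times\tau,\psi)^2$, hence gives an even-order pole at $s=1$, while the $\pi^0$ contribution is at most simple. Only after the $\pi^0$ identity is in hand does the paper invoke \cite[Corollary 2.10]{JNS15} to conclude $r_1=r_2$, $\tau_1^i\cong\tau_2^i$, $z_1^i=z_2^i$; this citation is the clean substitute for your direct pole-reading and handles all the bookkeeping you were worried about. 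The concluding uniqueness-of-generic-constituent step is as you describe.
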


\begin{proof}
Suppose that for $j=1,2,$ $\pi_j$ is a subquotient of
$$
\tau^1_j\nu^{z^1_j}\times\cdots\times\tau^{r_j}_j\nu^{z^{r_j}_j}\rtimes\pi_j^0,
$$
where $\tau^1_j,\dots,\tau^{r_j}_j$ are irreducible unitary supercuspidal representations of general linear groups, $z^1_j\geq\cdots\geq z^{r_j}_j\geq 0$, and $\pi_j^{0}$ is a generic irreducible supercuspidal representation of an even special orthogonal group. Note that the superscripts here are indices, not exponents.
By multiplicativity of the $\gamma$-factor (\cite[Theorem 1]{K13}), we have for $j=1,2,$
\begin{equation}\label{eqn gamma decomp}
    \gamma(s,\pi_j\times\tau, \psi)=\gamma(s,\pi_j^{0}\times\tau, \psi)\prod_{i=1}^{r_j} \gamma(s+z^i_j,\tau^i_j\times\tau, \psi)\prod_{i=1}^{r_j} \gamma(s-z^i_j,(\tau^i_j)^*\times\tau, \psi).
\end{equation}
We begin by showing that $\gamma(s,\pi_1^{0}\times\tau, \psi)=\gamma(s,\pi_2^{0}\times\tau, \psi)$.

First, suppose that $r_1=0.$  That is, $\gamma(s,\pi_1\times\tau, \psi)=\gamma(s,\pi_1^{0}\times\tau, \psi).$ Assume that $r_2>0$ for contradiction. By Lemma \ref{lem poles of gamma SO_2l}, for $j=1,2,$ $\gamma(s,\pi_j^{0}\times\tau, \psi)$ has no zeroes for $\mathrm{Re}(s)>0.$ By \cite[Proposition 3.1]{JS03},
$$
\prod_{i=1}^{r_2} \gamma(s+z^i_2,\tau^i_2\times\tau, \psi)\prod_{i=1}^{r_2} \gamma(s-z^i_2,(\tau^i_2)^*\times\tau, \psi)
$$
has a pole at $1+z^1_2$ when $\tau=\tau_2^1$ and hence so does  $\gamma(s,\pi_2\times\tau, \psi).$ 
Thus, $\gamma(s,\pi_1^{0}\times\tau, \psi)$ has a pole at $1+z_2^1$ when $\tau=\tau_2^1$. By Lemma \ref{lem poles of gamma SO_2l}, the only possible real pole of $\gamma(s,\pi_1^{0}\times\tau, \psi)$ is at $s=1.$ Thus, we must have $z_2^1=z_2^2=\cdots=z_2^{r_2}=0$ and
$$
\gamma(s,\pi_1^{0}\times\tau, \psi)=\gamma(s,\pi_2^{0}\times\tau, \psi)\prod_{i=1}^{r_2} \gamma(s,\tau^i_2\times\tau, \psi)\prod_{i=1}^{r_2} \gamma(s,(\tau^i_2)^*\times\tau, \psi).
$$
By Lemma \ref{lem poles of gamma SO_2l}, if $\gamma(s,\pi_1^{0}\times\tau, \psi)$ has a pole, then it is simple and $\tau$ must be self-dual. Since we know
$\gamma(s,\pi_1^{0}\times\tau, \psi)$ has a pole when $\tau=\tau_2^1,$ we have $\tau_2^1$ is also self-dual. Thus, the pole at $s=1$ of $\gamma(s,\pi_2\times\tau_2^1, \psi)$ is of order at least 2, while the pole at $s=1$ of $\gamma(s,\pi_1\times\tau_2^1, \psi)$ is simple. However, by assumption $\gamma(s,\pi_1\times\tau_2^1, \psi)=\gamma(s,\pi_2\times\tau_2^1, \psi)$ and hence we have a contradiction. Therefore $r_1=r_2=0$ and $\gamma(s,\pi_1^{0}\times\tau, \psi)=\gamma(s,\pi_2^{0}\times\tau, \psi)$ as claimed.

Suppose now that $r_1>0.$ Without loss of generality, we also assume that $r_2\geq r_1.$ We have
\begin{align}\label{eqn r_1>0 gammas}
    &\gamma(s,\pi_1^{0}\times\tau, \psi)\prod_{i=1}^{r_1} \gamma(s+z^i_1,\tau^i_1\times\tau, \psi)\prod_{i=1}^{r_1} \gamma(s-z^i_1,(\tau^i_1)^*\times\tau, \psi) \\
    =&\gamma(s,\pi_2^{0}\times\tau, \psi)\prod_{i=1}^{r_2} \gamma(s+z^i_2,\tau^i_2\times\tau, \psi)\prod_{i=1}^{r_2} \gamma(s-z^i_2,(\tau^i_2)^*\times\tau, \psi). \nonumber
\end{align}
We will prove that $\gamma(s,\pi_1^{0}\times\tau, \psi)=\gamma(s,\pi_2^{0}\times\tau, \psi).$ 

 By Lemma \ref{lem poles of gamma SO_2l}, for $j=1,2,$ $\gamma(s,\pi_j^{0}\times\tau, \psi)$ has no zeroes for $\mathrm{Re}(s)>0$ and only a possible pole at $s=1.$ By \cite[Proposition 3.1]{JS03}, for $j=1,2,$
$$
\prod_{i=1}^{r_j} \gamma(s+z^i_j,\tau^i_j\times\tau, \psi)\prod_{i=1}^{r_j} \gamma(s-z^i_j,(\tau^i_j)^*\times\tau, \psi)
$$
has a pole at $1+z^1_j$ when $\tau=\tau_2^j.$ Thus, if $z^1_j>0,$ we must have $z^1_1=z^1_2$ and $\tau_2^1=\tau_2^2.$ Note that if $z^1_j>0$ for some $j$, then we must have $z^1_j>0$ for both $j=1,2$ to ensure that both $\gamma$-factors have the pole at $1+z^1_j.$ 

Continuing in this manner, we find that if $z_j^i>0$ for some $j$ and $i$, then $z_1^i=z_2^i$ and $\tau_1^i=\tau_2^i.$ Hence we may cancel out any $\gamma$-factors occurring in \eqref{eqn r_1>0 gammas} with $z_j^i>0.$ Hence we may assume that every $z_j^i=0.$ We have
\begin{align}\label{eqn r_1>0 gammas and z_j^i=0}
    &\gamma(s,\pi_1^{0}\times\tau, \psi)\prod_{i=1}^{r_1} \gamma(s,\tau^i_1\times\tau, \psi)\prod_{i=1}^{r_1} \gamma(s,(\tau^i_1)^*\times\tau, \psi) \\
    =&\gamma(s,\pi_2^{0}\times\tau, \psi)\prod_{i=1}^{r_2} \gamma(s,\tau^i_2\times\tau, \psi)\prod_{i=1}^{r_2} \gamma(s,(\tau^i_2)^*\times\tau, \psi). \nonumber
\end{align}
By Lemma \ref{lem poles of gamma SO_2l}, if $\gamma(s,\pi_j^{0}\times\tau, \psi)$ has a pole, then $\tau$ must be self-dual. Suppose that for some $i\in\{1,\dots,r_1\}$ $\tau_1^i$ is not self-dual. By \cite[Proposition 3.1]{JS03}, $\gamma(s,\tau^i_1\times\tau^i_1, \psi)$ contributes a pole at $s=1$ to $\gamma(s,\pi_1\times\tau^i_1, \psi).$ From \eqref{eqn r_1>0 gammas and z_j^i=0}, it follows that we must have $\tau_1^i=\tau_2^k$ or $\tau_1^i=(\tau_2^k)^*$ for some $k\in\{1,\dots,r_2\}.$ In either case, we may continue in this manner and cancel out any $\gamma$-factors occurring in \eqref{eqn r_1>0 gammas and z_j^i=0} for which $\tau_j^i$ is not self-dual.

Therefore, we may assume that every $\tau_j^i$ occurring in \eqref{eqn r_1>0 gammas and z_j^i=0} is self-dual. We have
\begin{align}\label{eqn r_1>0 gammas and self-dual}
    \gamma(s,\pi_1^{0}\times\tau, \psi)\prod_{i=1}^{r_1} \gamma(s,\tau^i_1\times\tau, \psi)^2 
    =\gamma(s,\pi_2^{0}\times\tau, \psi)\prod_{i=1}^{r_2} \gamma(s,\tau^i_2\times\tau, \psi)^2.
\end{align}
By \cite[Proposition 3.1]{JS03}, $\gamma(s,\tau^1_1\times\tau^1_1, \psi)^2$ contributes a pole of order 2 at $s=1$ to \eqref{eqn r_1>0 gammas and self-dual}. Since every pole occurring in $\gamma(s,\pi_2^{0}\times\tau, \psi)$ is simple by Lemma \ref{lem poles of gamma SO_2l}, we must have $\tau_1^1=\tau_2^k$ for some $k\in\{1,\dots,r_2\}.$ Thus we may cancel out $\gamma(s,\tau^1_1\times\tau^1_1, \psi)^2$ with $\gamma(s,\tau^k_2\times\tau^k_2, \psi)^2$ in \eqref{eqn r_1>0 gammas and self-dual}. Since $r_2\geq r_1,$ by continuing in this manner we may assume that $r_1=0$ in \eqref{eqn r_1>0 gammas and self-dual}. That is, \begin{align*}
    \gamma(s,\pi_1^{0}\times\tau, \psi)
    =\gamma(s,\pi_2^{0}\times\tau, \psi)\prod_{i=1}^{r_2} \gamma(s,\tau^i_2\times\tau, \psi)^2.
\end{align*}
This is the previous case though. Hence, $\gamma(s,\pi_1^{0}\times\tau, \psi)
    =\gamma(s,\pi_2^{0}\times\tau, \psi).$
    
Therefore, in any case, we have shown that $\gamma(s,\pi_1^{0}\times\tau, \psi)
    =\gamma(s,\pi_2^{0}\times\tau, \psi).$ 
    Thus, from \eqref{eqn gamma decomp}, we have
    \begin{align*}
        &\prod_{i=1}^{r_1} \gamma(s+z^i_1,\tau^i_1\times\tau, \psi)\prod_{i=1}^{r_1} \gamma(s-z^i_1,(\tau^i_1)^*\times\tau, \psi) \\
        =&\prod_{i=1}^{r_2} \gamma(s+z^i_2,\tau^i_2\times\tau, \psi)\prod_{i=1}^{r_2} \gamma(s-z^i_2,(\tau^i_2)^*\times\tau, \psi).
    \end{align*}
By \cite[Corollary 2.10]{JNS15}, it follows that $r_1=r_2,$ $z_1^i=z_2^i,$ and $\tau_1^i=\tau_2^i$ for any $i=1,\dots,r_1$ (possibly after a permutation of the indices). Furthermore, by Theorem \ref{converse thm intro}, we have $\pi_1^0=\pi_2^0$ or $\pi_1^0=c\cdot\pi_2^0.$ Since $\pi'$ and $c\cdot\pi'$ are the unique generic constituents of
$$
\tau^1_2\nu^{z^1_2}\times\cdots\times\tau^m_2\nu^{z^m_2}\rtimes\pi^0_2,
$$
and
$$
\tau^1_2\nu^{z^1_2}\times\cdots\times\tau^m_2\nu^{z^m_2}\rtimes c\cdot\pi^0_2,
$$
respectively,
it follows that $\pi\cong\pi'$ or $\pi\cong c\cdot\pi'$ and hence proves the theorem.
\end{proof}

\section{Partial Bessel functions}\label{Partial Bessel functions}
In this section, we review Howe vectors and partial Bessel functions. Their analogues played a crucial role in the proof of the local converse theorems considered in \cite{Z18, Z19}. Their properties again play an important role in the manipulations of the zeta integrals in this paper. The primary reference for much of the material on Howe vectors and partial Bessel functions can be found in \cite{B95}.

\subsection{Howe vectors}\label{Howe vectors}
Let $\omega$ be a character of the center, $Z$, of $\SO_{2l}(F)$ and $C_c^\infty(\SO_{2l},\omega)$ be the space of smooth compactly supported functions $f$ satisfying $f(zg)=\omega(z)f(g)$ for any $z\in Z$ and $g\in\SO_{2l}(F).$ Recall that we fixed $\psi$ to be a nontrivial unramified additive character of $F$ and used it to define a character, also denoted by $\psi$, on $U_{\SO_{2l}}.$ Let $C^\infty(\SO_{2l},\omega,\psi)$ be the space of functions $W$ satisfying $W(zg)=\omega(z)W(g)$ and $W(ug)=\psi(u)W(g)$ for any $z\in Z,$ $u\in U_{\SO_{2l}}$, and $g\in\SO_{2l}(F),$ and furthermore that there exists an open compact subgroup $K$ of $\SO_{2l}(F)$ for which $W$ is invariant under right translation.

Let $m>0$ be an integer and define congruence subgroups 
$$
K_m=(I_{2l}+\mathrm{Mat}_{2l\times 2l}(\mathfrak{p}^m))\cap \SO_{2l}(F).
$$
We define a character $\tau_m$ on $K_m$ by setting 
$$
\tau_m(k)=
   \psi\left(\varpi^{-2m} \left(\sum_{i=1}^{l-2} k_{i,i+1}+\frac{1}{4}k_{l-1,l}-\frac{1}{2} k_{l-1,l+1}  \right)\right),
$$
where $k=(k_{i,j})_{i,j=1}^{2l}\in K_m.$
Set
$$
e_m=\mathrm{diag}(\varpi^{-2m(l-1)},\dots,\varpi^{-2m},1,1,\varpi^{2m},\dots,\varpi^{2m(l-1)}).
$$
Let $H_m=e_m K_m e_m\inv$ and define a character $\psi_m$ on $H_m$ via $\psi_m(h)=\tau_m(e_m\inv h e_m)$ for $h\in H_m.$ Let $U_m=U_{\SO_{2l}}\cap H_m.$ Note that $U_{\SO_{2l}}=\cup_{m\geq 1} U_m$ and the character $\psi$ on $U_{\SO_{2l}}$ agrees with $\psi_m$ upon restriction to $U_m.$

Let $\alpha$ be any root of $\SO_{2l}$ and fix an isomorphism $\mathbf{x}_\alpha: F \rightarrow U_\alpha$ where $U_\alpha$ is the root space of $\alpha.$ Let $U_{\alpha,m}=H_m\cap U_\alpha.$

\begin{lemma}[{\cite[p. 18]{B95}}]\label{height}
Let $\alpha$ be a positive root of $\SO_{2l}.$ Then
$$
U_{\alpha,m}=\{\mathbf{x}_\alpha(x) \, | \, x\in\mathfrak{p}^{-(2\mathrm{ht}(\alpha)-1)m}\},
$$
and
$$
U_{-\alpha,m}=\{\mathbf{x}_{-\alpha}(x) \, | \, x\in\mathfrak{p}^{(2\mathrm{ht}(\alpha)+1)m}\},
$$
where $\mathrm{ht}(\alpha)$ denotes the height of $\alpha.$
\end{lemma}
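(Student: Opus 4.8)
The plan is to compute, for each positive root $\alpha$, the largest power of $\mathfrak{p}$ occurring in the intersection $H_m\cap U_\alpha$ (and similarly $H_m\cap U_{-\alpha}$) by transporting the question back to the standard congruence subgroup $K_m$ via conjugation by $e_m$. Since $H_m=e_mK_me_m^{-1}$, an element $\mathbf{x}_\alpha(x)$ lies in $H_m$ if and only if $e_m^{-1}\mathbf{x}_\alpha(x)e_m\in K_m$. Now $e_m=\mathrm{diag}(\varpi^{-2m(l-1)},\dots,\varpi^{-2m},1,1,\varpi^{2m},\dots,\varpi^{2m(l-1)})$ acts on the root space $U_\alpha$ by the scalar $\alpha(e_m)$, so $e_m^{-1}\mathbf{x}_\alpha(x)e_m=\mathbf{x}_\alpha(\alpha(e_m)^{-1}x)$. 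The condition $\mathbf{x}_\alpha(\alpha(e_m)^{-1}x)\in K_m$ is exactly $\alpha(e_m)^{-1}x\in\mathfrak{p}^{m}$, i.e. $x\in\mathfrak{p}^{m}\cdot\alpha(e_m)$. Thus everything reduces to evaluating the character $\alpha$ on $e_m$.

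First I would set up coordinates: writing $\alpha=\sum_i c_i\alpha_i$ as a sum of simple roots (with multiplicities $c_i$, so $\sum c_i=\mathrm{ht}(\alpha)$), it suffices to know $\alpha_i(e_m)$ for each simple root $\alpha_i$ of $\SO_{2l}$ of type $D_l$. With the torus parametrized as $t=\mathrm{diag}(t_1,\dots,t_l,t_l^{-1},\dots,t_1^{-1})$ one has $\alpha_i=e_i-e_{i+1}$ for $1\le i\le l-1$ and $\alpha_l=e_{l-1}+e_l$; reading off the exponents in $e_m$ gives $\alpha_i(e_m)=\varpi^{-2m}$ for each simple root. Hence $\alpha(e_m)=\varpi^{-2m\,\mathrm{ht}(\alpha)}$, and the membership condition becomes $x\in\mathfrak{p}^{m}\cdot\varpi^{-2m\,\mathrm{ht}(\alpha)}=\mathfrak{p}^{m-2m\,\mathrm{ht}(\alpha)}=\mathfrak{p}^{-(2\mathrm{ht}(\alpha)-1)m}$, which is the claimed description of $U_{\alpha,m}$. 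For the negative root $-\alpha$ one has $(-\alpha)(e_m)=\varpi^{2m\,\mathrm{ht}(\alpha)}$, so the condition is $x\in\mathfrak{p}^{m+2m\,\mathrm{ht}(\alpha)}=\mathfrak{p}^{(2\mathrm{ht}(\alpha)+1)m}$, giving $U_{-\alpha,m}$.

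The one point requiring a little care — and the main potential obstacle — is the verification that $e_m^{-1}\mathbf{x}_\alpha(x)e_m=\mathbf{x}_\alpha(\alpha(e_m)^{-1}x)$ genuinely holds with the isomorphisms $\mathbf{x}_\alpha$ normalized so that the matrix entries are linear in the $F$-coordinate, together with checking that $e_m$ does indeed evaluate to $\varpi^{-2m}$ on every simple root in the chosen pinning (one must be consistent about which embedding of $\SO_{2l}$ and which ordering of coordinates is in force, and in particular that the anti-diagonal form $J_{2l}$ makes the standard upper-triangular unipotent the correct positive system). Once the pinning is fixed, the computation is the elementary one above; alternatively, one may simply cite \cite[p. 18]{B95}, where this is carried out, and note that the present normalization of $e_m$ and $\psi$ matches. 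I would present the short conjugation computation explicitly since it also pins down the normalization used throughout the rest of the paper.
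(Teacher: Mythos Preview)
Your argument is correct: the key observation is that $e_m$ is chosen precisely so that every simple root $\alpha_i$ satisfies $\alpha_i(e_m)=\varpi^{-2m}$, whence $\alpha(e_m)=\varpi^{-2m\,\mathrm{ht}(\alpha)}$ for every positive root, and the conjugation identity $t\,\mathbf{x}_\alpha(x)\,t^{-1}=\mathbf{x}_\alpha(\alpha(t)x)$ then reduces membership in $H_m=e_mK_me_m^{-1}$ to the stated $\mathfrak{p}$-adic conditions. The paper itself does not supply a proof of this lemma but simply cites \cite[p.~18]{B95}; your write-up is exactly the elementary computation that underlies that citation, so there is nothing to compare beyond noting that you have made explicit what the paper leaves as a reference.
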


Let $W\in C^\infty(\SO_{2l},\omega,\psi)$ with $W(I_{2l})=1.$ We define 
$$
W_m(g)=\frac{1}{\mathrm{Vol}(U_m)}\int_{U_m}\psi_m\inv(u) W(gu)du.
$$
Let $C=C(W)$ be the maximal positive integer such that $W$ is invariant under right translation by $K_C.$ If $m\geq C,$ then $W_m$ is called a Howe vector.

\begin{lemma}[{\cite[Lemma 3.2]{B95}}]\label{partialBesselprop}
We have
\begin{enumerate}
    \item $W_m(I_{2l})=1,$
    \item if $m\geq C,$ then  $W_m(gh)=\psi_m(h)W_m(g)$ for any $h\in H_m$ and $g\in\SO_{2l}(F).$
\end{enumerate}
\end{lemma}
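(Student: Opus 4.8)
This statement is \cite[Lemma 3.2]{B95}; I would recall the argument as follows.

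\textbf{Part (1).} Since $W(I_{2l})=1$ and $W(ug)=\psi(u)W(g)$ for $u\in U_{\SO_{2l}}$, we have $W(u)=\psi(u)$ for every $u\in U_m\subseteq U_{\SO_{2l}}$. As $\psi_m$ restricts to $\psi$ on $U_m$, the integrand $\psi_m\inv(u)W(u)$ is identically $1$ on $U_m$, so $W_m(I_{2l})=\frac{1}{\mathrm{Vol}(U_m)}\int_{U_m}du=1$. Note this uses nothing about $m$.

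\textbf{Part (2): structural input.} First, $\tau_m$ is a character of $K_m$: for $k,k'\in K_m$ all off-diagonal entries lie in $\mathfrak{p}^m$, so the cross terms occurring in $(kk')_{i,i+1}$, $(kk')_{l-1,l}$, $(kk')_{l-1,l+1}$ lie in $\mathfrak{p}^{2m}$, hence after multiplication by $\varpi^{-2m}$ are absorbed by the unramified character $\psi$. Therefore $\psi_m(h)=\tau_m(e_m\inv h e_m)$ is a character of $H_m=e_m K_m e_m\inv$ restricting to $\psi$ on $U_m=U_{\SO_{2l}}\cap H_m$. Next, starting from the Iwahori factorization $K_m=(U_{\SO_{2l}}\cap K_m)(T\cap K_m)(U^-_{\SO_{2l}}\cap K_m)$ with unique factorization (here $T$ is the diagonal torus) and conjugating by $e_m$, Lemma \ref{height} yields $H_m=U_m\cdot P$ with unique factorization, where $P=(T\cap K_m)\cdot e_m(U^-_{\SO_{2l}}\cap K_m)e_m\inv$ is a compact subgroup (the second factor being normalized by $T\cap K_m$). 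By the height estimate of Lemma \ref{height}, $e_m(U^-_{\SO_{2l}}\cap K_m)e_m\inv\subseteq K_{3m}$, so since $m\geq C$ we get $P\subseteq K_m\subseteq K_C$; in particular $W$ is right-invariant under $P$. Finally $\psi_m|_P=1$: conjugating $P$ by $e_m\inv$ lands in $(T\cap K_m)(U^-_{\SO_{2l}}\cap K_m)$, whose elements have vanishing entries in the positions $(i,i+1)$, $(l-1,l)$, $(l-1,l+1)$ read off by $\tau_m$.

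\textbf{Part (2): the computation.} Fix $h\in H_m$ and $g\in\SO_{2l}(F)$. For $u\in U_m$ use $H_m=U_m\cdot P$ to write $hu=\Phi_h(u)c_h(u)$ with $\Phi_h(u)\in U_m$ and $c_h(u)\in P$. Since $c_h(u)\in P\subseteq K_C$ and $W$ is right $K_C$-invariant, $W(ghu)=W\big(g\Phi_h(u)c_h(u)\big)=W\big(g\Phi_h(u)\big)$, so
$$W_m(gh)=\frac{1}{\mathrm{Vol}(U_m)}\int_{U_m}\psi_m\inv(u)\,W\big(g\Phi_h(u)\big)\,du.$$
Under the identification $U_m\xrightarrow{\sim}H_m/P$, $u\mapsto uP$, the map $\Phi_h$ is precisely the left translation action of $h$ on $H_m/P$; as $H_m$ and $P$ are compact, hence unimodular, $H_m/P$ carries an $H_m$-invariant measure, unique up to scalar, which by its left $U_m$-invariance corresponds to Haar measure on $U_m$. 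Thus $\Phi_h$ is a measure-preserving bijection of $U_m$ (its inverse is $\Phi_{h\inv}$). Applying the character $\psi_m$ of $H_m$ to $hu=\Phi_h(u)c_h(u)$ and using $\psi_m|_P=1$ gives $\psi_m\inv(u)=\psi_m(h)\,\psi_m\inv(\Phi_h(u))$. Substituting $v=\Phi_h(u)$,
$$W_m(gh)=\frac{\psi_m(h)}{\mathrm{Vol}(U_m)}\int_{U_m}\psi_m\inv(v)\,W(gv)\,dv=\psi_m(h)\,W_m(g),$$
as claimed. The only non-formal ingredient is the structural input of the previous paragraph --- tracking, via the Iwahori factorization and Lemma \ref{height}, that the torus-times-lower-unipotent complement $P$ of $U_m$ in $H_m$ is contained in $K_C$ once $m\geq C$, and that $\psi_m$ is trivial on $P$; this is exactly where the dilation $e_m$ and the constant $C=C(W)$ enter. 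All of this is contained in \cite{B95}, with parallel treatments in \cite{Z18, Z19}, so in the paper one may simply cite it.
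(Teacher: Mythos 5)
The paper gives no proof of this lemma, simply citing \cite[Lemma 3.2]{B95}, and your argument is a correct reconstruction of the standard one from that source: part (1) is immediate since $\psi_m$ agrees with $\psi$ on $U_m$, and part (2) follows from the Iwahori factorization $H_m=U_m\cdot\bigl((T\cap K_m)\,e_m(U^-_{\SO_{2l}}\cap K_m)e_m\inv\bigr)$, the height bound of Lemma \ref{height} placing the complement inside $K_{3m}\subseteq K_m\subseteq K_C$ (so $W$ is right-invariant and $\psi_m$ trivial on it), and the measure-preserving change of variables on $U_m\cong H_m/P$. Nothing further is needed.
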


Hence, for $m\geq C,$ we have
\begin{equation}\label{Besseleqn}
    W_m(ugh)=\psi(u)\psi_m(h)W_m(g),
\end{equation}
for any $u\in U_{\SO_{2l}}$, $h\in H_m$, and $g\in\SO_{2l}(F).$ For $m\geq C$, $W_m$ is called a {\it partial Bessel function}.

Let $f\in C_c^\infty(\SO_{2l},\omega).$ Define $W^f\in C^\infty(\SO_{2l},\omega,\psi)$ by
$$
W^f(g)=\int_{U_{\SO_{2l}}} \psi\inv(u)f(ug)du,
$$
for $g\in\SO_{2l}(F).$ Since $f$ is locally constant and compactly supported, there exists a positive integer $C=C(f)$ such that $W^f$ is invariant under right translation by $K_C.$ Choose $f\in C_c^\infty(\SO_{2l},\omega)$ such that $W^f(I_{2l})=1.$ Then, for $m>C$, we  consider the corresponding partial Bessel function given by
$$
B_m(g;f):=(W^f)_m(g)=\frac{1}{\mathrm{Vol}(U_m)}\int_{ U_{\SO_{2l}}\times U_m}\psi\inv(u)\psi_m\inv(u') f(ugu')dudu'.
$$

\subsection{Conjugate of partial Bessel Function}
Let $\psi_c$ be the character on $U_{\SO_{2l}}$ defined by $\psi_c(u)=\psi(cuc).$ Recall that we fixed a nonzero $\Gamma\in\mathrm{Hom}_{U_{\SO_{2l}}}(\pi,\psi).$ Then $\Gamma\in\mathrm{Hom}_{U_{\SO_{2l}}}(c\cdot\pi,\psi_c)$ is also nonzero. Notice that $c\cdot\pi$ is also $\psi$-generic. In this section, we relate the partial Bessel functions of $\pi$ and $c\cdot\pi$.

Let $$\tilde{t}=\mathrm{diag}(I_{l-1},\frac{-1}{2},-2,I_{l-1})\in T_{\SO_{2l}}.$$
Then, $\psi_c(\tilde{t}\inv u\tilde{t})=\psi(u)$ for any $u\in U_{\SO_{2l}}.$ We also have $c K_m c = K_m,$ $c U_m c = U_m,$ and $\psi_m(c\tilde{t}\inv u \tilde{t}c)=\psi_m(u).$ Given $W\in C^\infty(\SO_{2l},\omega,\psi)$, define $W^c\in C^\infty(\SO_{2l},\omega,\psi)$ by
$W^c(g)=W(c\tilde{t}\inv g \tilde{t}c).$ If $W(I_{2l})=1,$ we also define $W_m^c=(W^c)_m.$ For $f\in C_c^\infty(\SO_{2l},\omega)$, we set $W^{f,c}=(W^f)^c$ and if $W^f(I_{2l})=1,$ then we set $B_m^c(g;f)= (W^{f,c})_m.$

\begin{lemma}
Suppose $W\in C^\infty(\SO_{2l},\omega,\psi)$ and $W(I_{2l})=1.$ Then,
\begin{enumerate}
    \item $W^c(I_{2l})=1,$
    \item $W^c_m=(W_m)^c,$
    \item $C(W)=C(W^c),$
    \item for $m\geq C,$ we have
$$
W_m^c(ugh)=\psi(u)\psi_m(h)W_m^c(g),
$$
for any $u\in U_{\SO_{2l}}$, $h\in H_m$, and $g\in\SO_{2l}(F).$
\end{enumerate}
\end{lemma}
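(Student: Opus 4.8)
The plan is to observe that the map $W\mapsto W^c$ is nothing but conjugation of the argument by the single fixed element $\gamma:=c\tilde t\inv\in\GL_{2l}(F)$. Indeed, since $c^2=I_{2l}$ and $c\inv=c$, one has $\gamma\inv=\tilde t c$, so that $\gamma g\gamma\inv=c\tilde t\inv g\tilde t c$ and therefore $W^c(g)=W(\gamma g\gamma\inv)$ for every $g\in\SO_{2l}(F)$. Because $c$ normalizes $\SO_{2l}$ and $\tilde t\in\SO_{2l}$, the element $\gamma$ normalizes $\SO_{2l}(F)$; and from the identities recorded just before the statement — $cK_mc=K_m$, $cU_mc=U_m$, $\psi_m(c\tilde t\inv u\tilde t c)=\psi_m(u)$, together with the fact that $\tilde t$ and $e_m$ are diagonal and hence commute (so $\tilde t$ normalizes $H_m=e_mK_me_m\inv$ and thus $U_m=U_{\SO_{2l}}\cap H_m$) — the element $\gamma$ also normalizes $K_m$ and $U_m$, preserves their Haar measures (being a topological group automorphism of a compact group), and satisfies $\psi_m(\gamma u\gamma\inv)=\psi_m(u)$ for $u\in U_m$. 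Once this reformulation and these stability properties are in hand, all four assertions follow formally.

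Concretely, for (1) I would simply compute $W^c(I_{2l})=W(\gamma\gamma\inv)=W(I_{2l})=1$. For (3), $W^c$ is invariant under right translation by $K_m$ iff $W((\gamma g\gamma\inv)(\gamma k\gamma\inv))=W(\gamma g\gamma\inv)$ for all $g\in\SO_{2l}(F)$ and $k\in K_m$; since $g\mapsto\gamma g\gamma\inv$ is a bijection of $\SO_{2l}(F)$ and $\gamma K_m\gamma\inv=K_m$, this is equivalent to $W$ being invariant under right translation by $K_m$. Hence $W$ and $W^c$ are invariant under exactly the same congruence subgroups, so $C(W)=C(W^c)$. For (2), I would expand the definition of $(W^c)_m(g)$, use $\gamma(gu)\gamma\inv=(\gamma g\gamma\inv)(\gamma u\gamma\inv)$, and substitute $u'=\gamma u\gamma\inv$; this is a measure-preserving bijection of $U_m$ onto itself with $\psi_m\inv(u')=\psi_m\inv(u)$, so the expression collapses to $W_m(\gamma g\gamma\inv)=(W_m)^c(g)$, which is the claim $W^c_m=(W_m)^c$. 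Finally (4) is immediate from (1) and (3): these show that $W^c\in C^\infty(\SO_{2l},\omega,\psi)$, that $W^c(I_{2l})=1$, and that $C(W^c)=C(W)=C$, so for $m\geq C$ one applies Lemma~\ref{partialBesselprop} and \eqref{Besseleqn} verbatim to $W^c$ in place of $W$.

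I do not expect a genuine obstacle here: the content of the lemma is bookkeeping. The one point deserving attention is the verification that $\gamma=c\tilde t\inv$ really does normalize $K_m$ and $U_m$ and fixes $\psi_m$ — equivalently, since the $c$-part is already handled, that conjugation by the diagonal element $\tilde t=\mathrm{diag}(I_{l-1},-\frac{1}{2},-2,I_{l-1})$ preserves these subgroups, the Haar measure, and the character — but this is exactly what the three displayed identities preceding the lemma supply, so the argument proceeds as outlined.
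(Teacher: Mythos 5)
Your argument is correct and is essentially the paper's own (very terse) proof written out in full: everything reduces to the fact that conjugation by $\gamma=c\tilde t\inv$ preserves $K_m$, $U_m$ (together with its Haar measure) and the character $\psi_m$, after which (1)--(4) follow formally, with (2) obtained by the same change of variables $u\mapsto\gamma u\gamma\inv$ in the $U_m$-integral and (4) by applying Lemma \ref{partialBesselprop} to $W^c$. The only soft spot is your parenthetical justification that $\tilde t$ normalizes $H_m$ merely because it commutes with $e_m$ --- that only reduces the claim to $\tilde t$ normalizing $K_m$, which further requires the ratios of the entries $-\tfrac12,-2$ of $\tilde t$ to be units --- but the identity you actually need, $c\tilde t\inv U_m\tilde t c=U_m$, is exactly the unproved fact the paper itself invokes, so your route coincides with the paper's.
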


\begin{proof}
$(1)$ is immediate from the definition. By definition and since $c\tilde{t}\inv U_m\tilde{t} c = U_m$, we obtain $(2).$  $(3)$ follows from the fact $c K_m c = K_m.$ Finally, $(4)$ is readily checked from the previous results and definitions. \end{proof}

Thus, we can see that given a partial Bessel function, $W_m$ or $B_m,$ we may construct another partial Bessel function via the map $W_m\mapsto W_m^c$ or $B_m\mapsto B_m^c.$ In fact, this map sends partial Bessel functions in the Whittaker model of $\pi$ to partial Bessel functions in the Whittaker model of $c\cdot\pi.$ Consequently, by the uniqueness of Whittaker models, this map is the identity map on the Whittaker model of $\pi$ if and only if $\pi\cong c\cdot\pi.$

\subsection{Bessel Support}\label{Besselsupp}

Let $W(\SO_{2l})$ be the Weyl group of $\SO_{2l}$ and let $\Delta(\SO_{2l})$ be the set of simple roots. We say that a Weyl element $w\in W(\SO_{2l})$ supports partial Bessel functions if for any $\alpha\in\Delta(\SO_{2l})$, $w\alpha$ is either negative or simple. Let B($\SO_{2l}$) denote the set of Weyl elements which support partial Bessel functions. B($\SO_{2l}$) is called the Bessel support. Then the partial Bessel functions which we are interested in vanish on the Bruhat cells corresponding to Weyl elements outside of the Bessel support as in the following lemma. 

\begin{lemma}
Let $f\in C_c^\infty(\SO_{2l},\omega)$ and $w\notin \mathrm{B}(\SO_{2l}).$ Then there exists an integer $m_w$ (depending on $f$ and $w$) such that for any $b_1,b_2\in B_{\SO_{2l}},$ $B_m(b_1wb_2,f)=0$ for any $m\geq m_w.$ In particular, if we take $m$ to be an integer larger than $\max\{m_w , | , w\not\in \mathrm{B}(\SO_{2l})\},$ then $B_m(g,f)$ vanishes on the Bruhat cell corresponding to any Weyl element outside of the Bessel support.
\end{lemma}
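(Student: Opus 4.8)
The plan is to imitate the standard argument for vanishing of partial Bessel functions on Bruhat cells outside the Bessel support, as in \cite[Lemma 5.13, 5.14]{Z18} or the original treatment in \cite[Lemma 4.2, 5.2]{B95}. Fix $f\in C_c^\infty(\SO_{2l},\omega)$ with $W^f(I_{2l})=1$ and a Weyl element $w\notin \mathrm{B}(\SO_{2l})$. By definition of the Bessel support there is a simple root $\alpha\in\Delta(\SO_{2l})$ such that $w\alpha$ is a positive root which is not simple. First I would reduce to showing vanishing at a single representative: using the equivariance \eqref{Besseleqn}, namely $B_m(ugh,f)=\psi(u)\psi_m(h)B_m(g,f)$ for $u\in U_{\SO_{2l}}$, $h\in H_m$, together with the Iwasabi/Bruhat decomposition, it suffices to prove that $B_m(tw,f)=0$ for all $t$ in the torus $T_{\SO_{2l}}$, since a general element of $B_{\SO_{2l}}wB_{\SO_{2l}}$ can be written as $u\,tw\,u'$ with $u\in U_{\SO_{2l}}$ and $u'$ absorbed partially into $H_m$ after further conjugation, or handled by the standard root-subgroup manipulation below.

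The key step is the classical trick exploiting the root subgroup $U_\alpha$. Choose the one-parameter subgroup $\mathbf{x}_\alpha:F\to U_\alpha$. Since $\alpha$ is simple, the generic character $\psi$ restricted to $U_\alpha$ is a nontrivial additive character $x\mapsto \psi(c_\alpha x)$ for some nonzero constant $c_\alpha$ (coming from the coefficients $\psi_i$ of \eqref{generic character}). On the other hand, $w\mathbf{x}_\alpha(x)w\inv=\mathbf{x}_{w\alpha}(\pm x)$ lies in the unipotent radical $U_{w\alpha}$ with $w\alpha$ positive and \emph{non-simple}, so $\psi$ is trivial on $U_{w\alpha}$. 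Now for $x\in\mathfrak{p}^{-(2\mathrm{ht}(\alpha)-1)m}=\mathfrak{p}^{-m}$ (as $\alpha$ simple has height one), Lemma \ref{height} gives $\mathbf{x}_\alpha(x)\in U_{\alpha,m}\subseteq H_m$, with $\psi_m(\mathbf{x}_\alpha(x))=\psi(\varpi^{-2m}c_\alpha x)$, while $w\mathbf{x}_\alpha(x)w\inv\in U_{w\alpha}\subseteq U_{\SO_{2l}}$ on which $\psi$ is trivial. Applying \eqref{Besseleqn} on the right with $h=\mathbf{x}_\alpha(x)$ and on the left with $u=w\mathbf{x}_\alpha(x)w\inv$, we get for any $t\in T_{\SO_{2l}}$
\[
B_m(tw,f)=B_m\big((w\mathbf{x}_\alpha(x)w\inv)\,tw\,\mathbf{x}_\alpha(x'),f\big)=\psi_m(\mathbf{x}_\alpha(x'))\,B_m(tw,f),
\]
where $x'=x'(x,t)$ is the parameter obtained after moving $w\mathbf{x}_\alpha(x)w\inv$ past $tw$; choosing $x$ so that the resulting $x'$ makes $\psi_m(\mathbf{x}_\alpha(x'))\neq 1$ forces $B_m(tw,f)=0$, provided $m$ is large enough that $\mathbf{x}_\alpha(x)\in H_m$ for the relevant range of $x$. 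Taking $m_w$ to be this threshold and then $m>\max\{m_w\mid w\notin\mathrm{B}(\SO_{2l})\}$ finitely many conditions proves the lemma.

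The main obstacle is the bookkeeping in the previous display: when one conjugates $w\mathbf{x}_\alpha(x)w\inv$ across $tw$, the parameter transforms by a torus eigenvalue $\alpha(t)$ and one must check that the combined element still lies in $H_m$ and that $\psi_m$ evaluates nontrivially. Handling this cleanly requires either (i) first using the $T_{\SO_{2l}}$-equivariance to reduce to $t$ in a fixed compact set modulo the part of the torus where $B_m$ automatically vanishes for support reasons, so that $\alpha(t)$ is bounded, or (ii) choosing $x$ in a coset depending on $t$; both are standard but need care in the split $\SO_{2l}$ root datum, particularly near the two "fork" simple roots $\alpha_{l-1},\alpha_l$ where the coefficients $\psi_{l-1}=\tfrac14$, $\psi_l=-\tfrac12$ enter. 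Aside from this, the argument is a direct transcription of \cite[\S 4--5]{B95} and I would cite it accordingly, spelling out only the point where the non-simplicity of $w\alpha$ is used.
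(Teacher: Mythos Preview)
Your approach is essentially the same as the paper's: exploit a simple root $\alpha$ with $w\alpha$ positive but not simple, and compare the two ways of evaluating $B_m$ after inserting $\mathbf{x}_\alpha(s)$. Two remarks are worth making.

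First, the ``obstacle'' you flag is self-inflicted. The paper runs the manipulation in the opposite direction: it inserts $\mathbf{x}_\alpha(s)\in U_m$ on the \emph{right} (with $s$ chosen independently of $t$), giving $B_m(tw\mathbf{x}_\alpha(s),f)=\psi(\mathbf{x}_\alpha(s))B_m(tw,f)$, and then moves it \emph{leftward} through $tw$ to obtain $\mathbf{x}_{w\alpha}(w\alpha(t)s)\in U_{\SO_{2l}}$. Since $w\alpha>0$, this element is always in $U_{\SO_{2l}}$ regardless of $t$, and since $w\alpha$ is not simple, $\psi$ kills it. No compactness reduction on $t$ and no $t$-dependent choice of $x$ is needed; you simply pick $s\in\mathfrak{p}^{-m}$ with $\psi(\mathbf{x}_\alpha(s))\neq 1$. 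Your direction (starting on the left and pushing right) forces the parameter landing in $H_m$ to carry the factor $w\alpha(t)^{-1}$, which is the source of your bookkeeping concern.

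Second, a small slip: $\psi_m$ agrees with $\psi$ on $U_m$, so $\psi_m(\mathbf{x}_\alpha(x))=\psi(c_\alpha x)$, not $\psi(\varpi^{-2m}c_\alpha x)$. This does not affect the argument, since the point is only that $\psi_m$ is nontrivial on $U_{\alpha,m}$. Also, your reduction to $B_m(tw,f)=0$ is a bit vague; the paper makes it precise via compact support of $f$: one first shows that $B_m(bwu,f)\neq 0$ forces $u\in U_m$ (by taking $m$ large enough that the $U$-support of $f$ on $B_{\SO_{2l}}wU_{\SO_{2l}}$ is contained in $U_m$), after which right $H_m$-equivariance and left $U_{\SO_{2l}}$-equivariance reduce to the torus.
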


\begin{proof}
Note that $B_{\SO_{2l}}wB_{\SO_{2l}}=B_{\SO_{2l}}wU_{\SO_{2l}}.$ Since $f$ is compactly supported, there exist compact subsets $B'\subset B_{\SO_{2l}}$ and $U'\subseteq U_{\SO_{2l}}$ such that if  $f(bwu)\neq 0$ then $b\in B'$ and $u\in U'.$   We may choose $m$ large enough such that $U'\subseteq U_m.$ Let $u',u\in U_{\SO_{2l}}$ and $u''\in U_m.$ If $f(u'bwuu'')\neq 0$, then $uu''\in U_m$ and hence $u\in U_m.$ It follows that if $B_m(bwu,f)\neq 0$, then $b\in B_{\SO_{2l}}$ and $u\in U_m.$ From Lemma \ref{partialBesselprop}, after possibly increasing $m$, it is enough to show that $B_m(tw,f)=0$ for any $t\in T_{\SO_{2l}}.$

Let $\alpha$ be a simple root for which $w\alpha$ is positive, but not simple. Recall that we have an isomorphism $\mathbf{x}_\alpha: F \rightarrow U_\alpha$ where $U_\alpha$ is the root space of $\alpha.$ Let $s\in F$ such that $\mathbf{x}_\alpha(s)\in U'.$ Since $U'\subseteq U_m$, from Lemma \ref{partialBesselprop},
$
B_m(tw\mathbf{x}_\alpha(s),f)=B_m(tw,f)\psi(\mathbf{x}_\alpha(s)).
$
Also,
$$
B_m(tw\mathbf{x}_\alpha(s),f)=B_m(t\mathbf{x}_{w\alpha}(s)w,f)=B_m(\mathbf{x}_{w\alpha}(s)tw,f)=B_m(\mathbf{x}_{w\alpha}(w\alpha(t)s)tw,f).
$$
Since $w\alpha$ is positive, but not simple, $\psi(\mathbf{x}_{w\alpha}(w\alpha(t)s))=1.$ From Lemma \ref{partialBesselprop}, we thus obtain that $B_m(tw,f)\psi(\mathbf{x}_\alpha(s))=B_m(tw,f).$ By choosing $s$ such that $\psi(\mathbf{x}_\alpha(s))\neq 0,$ we obtain that $B_m(tw,f)=0$. This completes the proof of the lemma.
\end{proof}

Let $\theta_w=\{\alpha\in\Delta(\SO_{2l}) \, | \, w \alpha\in\Phi^+(\SO_{2l})\}.$ The assignment $w\mapsto \theta_w$ gives a bijection from $\mathrm{B}(\SO_{2l})$ to the power set of $\Delta(\SO_{2l})$ which we denote by $\mathcal{P}(\Delta(\SO_{2l}))$. The Weyl elements for which $\theta_w$ is a singleton play a special role in a partition of the Bessel support. We define them next.

Let $n<l$ and recall the definition of $w_n\in\SO_{2n+1}$ used in defining the intertwining operators. Let 
$$
\hat{w}_n=\left\{\begin{array}{cc}
  \left(\begin{matrix}
  I_{l-n-1} & & & & & \\
   & & & & I_n & \\
   & & & 1 &  & & \\
   & & 1 &  & & & \\
   & I_n & & & & & \\
   & & & & & & I_{l-n-1} \\
  \end{matrix}\right)  & \mathrm{if} \, n \, \mathrm{is} \,  \mathrm{odd,} \\
   \left(\begin{matrix}
  I_{l-n-1} & & & & & \\
   & & & & I_n & \\
   & & 1 &  &  & & \\
   & &  & 1 & & & \\
   & I_n & & & & & \\
   & & & & & & I_{l-n-1} \\
  \end{matrix}\right)  & \mathrm{if} \, n \, \mathrm{is} \,  \mathrm{even.} \\
\end{array}\right. 
$$
The image of $w_n$ in $\SO_{2l}$ is $t_n^\prime \hat{w}_n$ where $t_n^\prime=\tilde{t}$ if $n$ is odd and $t_n^\prime=I_{2l}$ if $n$ is even. Note that if $n$ is odd, then $c w_n c=\tilde{t}\inv\hat{w}_n$ and if $n$ is even then $c w_n  c  = \hat{w}_n$ (here we are realizing $cw_n c\in \SO_{2l}$ via the embedding). Recall the definition of $w^{l,n}$ in the zeta integrals. Let 
$$
\tilde{w}_n=w^{l,n}\hat{w}_n(w^{l,n})\inv=\left(\begin{matrix}
& & & & I_n \\
& I_{l-n-1} & & & \\
& & J_2^n & & \\
& & & I_{l-n-1} & \\
I_n & & & & 
\end{matrix}\right).
$$
Let $\alpha_i$ be the simple roots of $\SO_{2l}$ where $\alpha_i (t)=\frac{t_i}{t_{i+1}}$ for $i\leq l-1$ and $\alpha_l(t)=t_{l-1}t_l$ where $t=\mathrm{diag}(t_1,\dots,t_l,t_l\inv,\dots,t_1).$ We also let $\Phi(\SO_{2l})$, respectively $\Phi^+(\SO_{2l})$, be the set of roots, respectively positive roots, of $\SO_{2l}.$ 
Then, for $n\leq l-2$, we have
\begin{align*}
    \tilde{w}_n\alpha_i & = \alpha_{n-i} \, \, \mathrm{for} \, \, 1\leq i \leq n-1,\\
    \tilde{w}_n\alpha_n (t) & = t_1\inv t_{n+1}\inv,\\
    \tilde{w}_n\alpha_i & =\alpha_i \, \, \mathrm{for} \, \, n+1\leq i \leq l-2,\\
    \tilde{w}_n \alpha_{l-1} & =\left\{\begin{array}{cc}
\alpha_l & \mathrm{if} \, n \, \mathrm{is} \,  \mathrm{odd,} \\
   \alpha_{l-1}  & \mathrm{if} \, n \, \mathrm{is} \,  \mathrm{even,} \\
\end{array}\right.\\
\tilde{w}_n \alpha_{l} & =\left\{\begin{array}{cc}
\alpha_{l-1} & \mathrm{if} \, n \, \mathrm{is} \,  \mathrm{odd,} \\
   \alpha_{l}  & \mathrm{if} \, n \, \mathrm{is} \,  \mathrm{even.} \\
\end{array}\right.
\end{align*}
Note that $\tilde{w}_{l-1}$ sends $\alpha_{l-1}$ and $\alpha_l$ to negative roots and the rest to simple roots. More precisely,
\begin{align*}
    \tilde{w}_{l-1}\alpha_i &= \alpha_{l-1-i} \, \, \mathrm{for} \, \, 1\leq i \leq l-2,\\
    \tilde{w}_{l-1} \alpha_{l-1}(t)&=\left\{\begin{array}{cc}
  t_1\inv t_l\inv & \mathrm{if} \, l-1 \, \mathrm{is} \,  \mathrm{odd,} \\
   t_1\inv t_l  & \mathrm{if} \, l-1 \, \mathrm{is} \,  \mathrm{even},
   \end{array}\right.\\
   \tilde{w}_{l-1} \alpha_{l}(t)&=\left\{\begin{array}{cc}
 t_1\inv t_l & \mathrm{if} \, l-1 \, \mathrm{is} \,  \mathrm{odd,} \\
    t_1\inv t_l\inv  & \mathrm{if} \, l-1 \, \mathrm{is} \,  \mathrm{even.} 
\end{array}\right.
\end{align*}
In summary, for $n\leq l-2,$ $\theta_{\tilde{w}_n}=\Delta(\SO_{2l})\backslash \{\alpha_n\}$ and $\theta_{\tilde{w}_{l-1}}=\Delta(\SO_{2l})\backslash \{\alpha_{l-1}, \alpha_l \}.$

Next, we define the Weyl elements which correspond to the sets $\Delta(\SO_{2l})\backslash \{\alpha_{l-1}\}$ and $\Delta(\SO_{2l})\backslash \{ \alpha_l \}$.
If $l$ is even, let
$$
\tilde{w^\prime_l}=\left(\begin{matrix}
 & I_l \\
 I_l & \\
\end{matrix}\right),
$$
while if $l$ is odd we set 
$$
\tilde{w^\prime_l}=\left(\begin{matrix}
 & & I_{l-1} & \\
 1 & & & \\
 & & & 1 \\
& I_{l-1} & & \\
\end{matrix}\right).
$$
We let $w_{long}$ be the long Weyl element of $\SO_{2l}.$ Then, 

$$
w_{long} =\left\{\begin{array}{cc}
  \left(\begin{matrix}
   &  & J_{l-1} \\
   & I_2 & \\
   J_{l-1} & & 
  \end{matrix}\right)  & \mathrm{if} \, l \, \mathrm{is} \,  \mathrm{odd,} \\
   J_{2l}  & \mathrm{if} \, l \, \mathrm{is} \,  \mathrm{even.} \\
\end{array}\right. 
$$
We have that $$w_{long}\tilde{w^\prime_l}
=\left(\begin{matrix}
J_l & \\
& J_l
\end{matrix}\right),
$$
which is the long Weyl element of the standard Levi subgroup that is isomorphic to $\GL_l.$ This suggests that $\theta_{\tilde{w^\prime_l}}$ should be $\Delta(\SO_{2l})\backslash \{\alpha_{l-1}\}$ or $\Delta(\SO_{2l})\backslash \{ \alpha_l \}.$ We verify this explicitly below. Its conjugation by $c$ gives the other set since $c$ acts on the simple roots by swapping $\alpha_{l-1}$ and $\alpha_l$ and fixing the rest. Set $\tilde{w^\prime_l}^c=c\tilde{w^\prime_l}c.$

We have $\tilde{w^\prime_l} t  \tilde{w^\prime_l}\inv=\mathrm{diag}(t_l\inv,\dots,t_2\inv,t_1^{(-1)^{l+1}},t_1^{(-1)^{l}},t_2,\dots,t_l).$ Hence,
\begin{align*}
    \tilde{w^\prime_l} \alpha_i & =\alpha_{l-i}\, \, \mathrm{for} \, \, 1\leq i \leq l-2,\\
    \tilde{w^\prime_l} \alpha_{l-1}(t) & =\left\{\begin{array}{cc}
t_2\inv t_1\inv & \mathrm{if} \, l \, \mathrm{is} \,  \mathrm{odd,} \\
   \alpha_{1}(t)  & \mathrm{if} \, l \, \mathrm{is} \,  \mathrm{even,} \\
\end{array}\right.\\
\tilde{w^\prime_l} \alpha_{l}(t) & =\left\{\begin{array}{cc}
\alpha_1 (t) & \mathrm{if} \, l \, \mathrm{is} \,  \mathrm{odd,} \\
   t_2\inv t_1\inv  & \mathrm{if} \, l \, \mathrm{is} \,  \mathrm{even.} \\
\end{array}\right.
\end{align*}
Thus, $\theta_{\tilde{w^\prime_l}}=\Delta(\SO_{2l})\backslash\{\alpha_{l-1}\}$ if $l$ is odd and $\theta_{\tilde{w^\prime_l}}=\Delta(\SO_{2l})\backslash\{\alpha_{l}\}$ is even. Similarly, $\theta_{\tilde{w^\prime_l}^c}=\Delta(\SO_{2l})\backslash\{\alpha_{l}\}$ if $l$ is odd and $\theta_{\tilde{w^\prime_l}^c}=\Delta(\SO_{2l})\backslash\{\alpha_{l-1}\}$ is even. 
Hence, we set 
$$
\tilde{w}_l =\left\{\begin{array}{cc}
 \tilde{w^\prime}^c_l  & \mathrm{if} \, l \, \mathrm{is} \,  \mathrm{odd,} \\
   \tilde{w^\prime_l}  & \mathrm{if} \, l \, \mathrm{is} \,  \mathrm{even.} \\
\end{array}\right. 
$$
 and $\tilde{w}_l^c=c\tilde{w}_l c$. Then, $\theta_{\tilde{w}_l}=\Delta(\SO_{2l})\backslash\{\alpha_{l}\}$ and $\theta_{\tilde{w}^c_l}=\Delta(\SO_{2l})\backslash\{\alpha_{l-1}\}.$
 The Weyl elements $\tilde{w}_n$ for $n\leq l$ and $\tilde{w}_l^c$ are used later to define a partition of the Bessel support.

\subsection{Bruhat Order}\label{Bruhat order}
Much of the discussion in this section follows \cite{Z18, Z19}. For $w\in W(\SO_{2l}),$ let $C(w)=B_{\SO_{2l}}wB_{\SO_{2l}}$. We define the Bruhat order on $W(\SO_{2l})$ by $w \leq w'$ if and only if $C(w)\subseteq \ol{C(w')}.$ Then
$$
\ol{C(w')}=\bigsqcup_{
w\leq w'} C(w).
$$
We also let $$\Omega_w = \bigsqcup_{ w'\geq w} C(w').$$ Consequently, $C(w)$ is closed in $\Omega_w.$ Recall that we defined $\psi_n$ to be the coefficients of the generic character $\psi$ in \S\ref{gps and reps}, for $n \leq l$. We let 
$$T_w=\{t\in T_{\SO_{2l}} \, | \, \psi_{n'} \alpha_{n'}(t)=\psi_{n} \, \mathrm{for \, any} \, \alpha_n\in\theta_w \, \mathrm{such \, that}\, w\alpha_{n}=\alpha_{n'} \}.$$

The following Proposition is \cite[Proposition 4.3]{Z18}. There it was proved for symplectic groups, but the proof is general enough to apply directly to our case.

\begin{prop}\label{Omega_w}
\begin{enumerate}
\item[]
    \item If $w,w'\in W(\SO_{2l})$ with $w'>w$ then $\Omega_{w'}$ is an open subset of $\Omega_w.$ In particular, for any $w\in W(\SO_{2l})$, $\Omega_w$ is open in $\Omega_{I_{2l}}=\SO_{2l}.$
    \item Let $P$ be a standard parabolic subgroup of $\SO_{2l}$ and $w\in W(\SO_{2l}).$ Then $PwP\cap\Omega_w$ is closed in $\Omega_w.$
\end{enumerate}
\end{prop}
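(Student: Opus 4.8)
The plan is to follow the argument of \cite[Proposition 4.3]{Z18} verbatim, since that proof uses only the general combinatorics of the Bruhat decomposition and the Bruhat order and never invokes anything specific to symplectic groups. For part (1), the key input is the standard fact that $\overline{C(w')} = \bigsqcup_{w'' \leq w'} C(w'')$, which has already been recorded in \S\ref{Bruhat order}. From this one computes, for $w' > w$,
$$
\Omega_w \setminus \Omega_{w'} = \bigsqcup_{\substack{w'' \geq w \\ w'' \not\geq w'}} C(w'').
$$
I would show this set is closed in $\Omega_w$ by exhibiting it as a finite union of the closed subsets $\overline{C(w'')} \cap \Omega_w$ for those $w''$ with $w'' \geq w$ but $w'' \not\geq w'$: indeed each $C(w'')$ with $w'' \geq w$ lies in $\Omega_w$, and $\overline{C(w'')} \cap \Omega_w = \bigsqcup_{w \leq w''' \leq w''} C(w''')$, which is contained in $\Omega_w \setminus \Omega_{w'}$ precisely when $w'' \not\geq w'$ (one needs that $w''' \leq w''$ and $w''' \geq w'$ would force $w'' \geq w'$, which is immediate from transitivity of $\leq$). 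Hence $\Omega_{w'}$ is open in $\Omega_w$. Taking $w = I_{2l}$, for which $\Omega_{I_{2l}} = \SO_{2l}$ since every Weyl element satisfies $w' \geq I_{2l}$, gives the second assertion of (1).

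For part (2), let $P$ be a standard parabolic with Weyl group $W_P \subseteq W(\SO_{2l})$, so that $PwP = \bigsqcup_{u_1, u_2 \in W_P} C(u_1 w u_2)$. The set $PwP \cap \Omega_w$ is the disjoint union of those cells $C(w'')$ with $w'' \geq w$ and $w'' = u_1 w u_2$ for some $u_1, u_2 \in W_P$. I would argue this is closed in $\Omega_w$ by the same mechanism as in (1): $PwP \cap \Omega_w = \bigcup_{w''} \big( \overline{C(w'')} \cap \Omega_w \big)$, the union taken over the finitely many $w'' \geq w$ lying in $W_P w W_P$, using that the Bruhat closure of a cell inside $PwP$ stays inside $PwP$ (because $\overline{PwP} = \bigsqcup_{w'' \in W_P w W_P} C(w'')$ is a classical fact about parabolic double cosets) and intersecting with the open set $\Omega_w$ restricts attention to $w'' \geq w$. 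Each $\overline{C(w'')} \cap \Omega_w$ is closed in $\Omega_w$, and a finite union of closed sets is closed.

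The main obstacle, such as it is, is purely bookkeeping: one must be careful that the combinatorial identities $\overline{C(w')} = \bigsqcup_{w'' \leq w'} C(w'')$ and $\overline{PwP} = \bigsqcup_{w'' \in W_P w W_P} C(w'')$ hold over a non-Archimedean local field for the non-connected subtlety caused by the outer automorphism $c$ — but since we are working inside the connected group $\SO_{2l}$ and its genuine Weyl group $W(\SO_{2l})$, these are the standard Bruhat-theoretic facts and $c$ plays no role here. Given those, the proof is a short formal manipulation, and I would simply cite \cite[Proposition 4.3]{Z18} and remark that its proof applies without change, as the authors do.
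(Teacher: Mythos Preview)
Your approach is exactly what the paper does (it simply cites \cite[Proposition 4.3]{Z18} and notes the proof carries over), and your argument for part (1) is correct. However, your justification of part (2) contains a genuine error: the claim that $\overline{PwP} = \bigsqcup_{w'' \in W_P w W_P} C(w'')$ asserts that $PwP$ is closed, which is false already for $P = B_{\SO_{2l}}$ and any $w \neq I_{2l}$ (since $C(I_{2l}) \subseteq \overline{C(w)}$ but $C(I_{2l}) \not\subseteq B_{\SO_{2l}} w B_{\SO_{2l}}$). Equivalently, ``the Bruhat closure of a cell inside $PwP$ stays inside $PwP$'' is simply wrong.

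The correct input is that $W_P w W_P$ is a Bruhat interval $[w_{\min}, w_{\max}]$ (this is \cite[Proposition 2]{BKPST}, which the paper invokes repeatedly in \S\ref{Bruhat order}). Since $w \in W_P w W_P$, one has $w_{\min} \leq w$, and therefore
\[
PwP \cap \Omega_w \;=\; \bigsqcup_{\substack{w'' \in [w_{\min}, w_{\max}]\\ w'' \geq w}} C(w'') \;=\; \bigsqcup_{w \leq w'' \leq w_{\max}} C(w'') \;=\; \overline{C(w_{\max})} \cap \Omega_w,
\]
which is manifestly closed in $\Omega_w$. Your union-of-closures description can then be salvaged, but for the right reason: the containment $\overline{C(w'')} \cap \Omega_w \subseteq PwP$ holds not because $\overline{C(w'')} \subseteq PwP$ (it does not), but because intersecting with $\Omega_w$ forces every contributing $w'''$ to satisfy $w''' \geq w \geq w_{\min}$, whence $w''' \in [w_{\min}, w_{\max}] = W_P w W_P$.
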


Suppose that $w\in \mathrm{B}(\SO_{2l}).$ Then, $w_l w$ is the longest Weyl element of some standard Levi subgroup which we denote by $M_w.$ 

\begin{lemma}[{\cite[Proposition 2.1]{CPSSh}}]\label{CPSSh}
Let $w,w'\in \mathrm{B}(\SO_{2l}).$ Then $w'\leq w$ if and only if $M_{w}\subseteq M_{w'}$ if and only if $\theta_{w}\subseteq\theta_{w'}.$
\end{lemma}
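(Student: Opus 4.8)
\textbf{Proof proposal for Lemma \ref{CPSSh}.}

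The plan is to prove the chain of equivalences by going around the cycle $\theta_{w}\subseteq\theta_{w'} \iff M_w \subseteq M_{w'} \iff w' \le w$, using the dictionary between subsets of $\Delta(\SO_{2l})$ and standard Levi subgroups. First I would recall the basic correspondence: for $w \in \mathrm{B}(\SO_{2l})$, the element $w_l w$ (here $w_l$ denotes the long Weyl element, written $w_{long}$ earlier) is the longest element of the parabolic subgroup whose simple roots are precisely $\theta_w = \{\alpha \in \Delta(\SO_{2l}) : w\alpha \in \Phi^+(\SO_{2l})\}$; indeed $w$ sends each $\alpha \in \theta_w$ to a positive (hence simple, by the Bessel-support condition) root and each $\alpha \notin \theta_w$ to a negative root, so $w_l w$ sends $\theta_w$ to positive roots and $\Delta \setminus \theta_w$ to negative roots, which characterizes the long element of the standard Levi $M_w$ attached to $\theta_w$. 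This gives immediately that $\theta_w \subseteq \theta_{w'} \iff M_w \subseteq M_{w'}$, since the standard Levi subgroups of $\SO_{2l}$ are in inclusion-preserving bijection with the subsets of $\Delta(\SO_{2l})$.

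The substantive equivalence is $\theta_{w}\subseteq\theta_{w'} \iff w' \le w$ in the Bruhat order. For the direction $w' \le w \implies \theta_w \subseteq \theta_{w'}$, I would argue contrapositively: if $\alpha \in \theta_w \setminus \theta_{w'}$, then $w\alpha$ is simple while $w'\alpha$ is negative; using that $w \mapsto \theta_w$ is a bijection $\mathrm{B}(\SO_{2l}) \to \mathcal P(\Delta(\SO_{2l}))$, one shows the lengths satisfy $\ell(w) > \ell(w')$ is \emph{not} the obstruction — rather, one produces a simple reflection $s$ with $\ell(ws) < \ell(w)$ but which fails the subword/lifting criterion for $w'$, so $C(w) \not\subseteq \overline{C(w')}$. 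More efficiently, I would use the description of the closure relations among Bessel-supporting Weyl elements: for $w, w' \in \mathrm{B}(\SO_{2l})$, both are minimal-length coset representatives for $W / W_{M_w}$ and $W/W_{M_{w'}}$ respectively (being of the form $w_l \cdot (\text{long element of a Levi})$), and for such representatives the Bruhat order restricts to reverse inclusion of the parabolic subgroups — this is exactly the content of \cite[Proposition 2.1]{CPSSh}, and I would cite the geometry of the flag variety: $\overline{C(w')} \supseteq C(w)$ iff the Schubert variety for $w'$ contains the cell for $w$, which for these special elements unwinds to $M_{w'} \supseteq M_w$.

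For the converse $\theta_w \subseteq \theta_{w'} \implies w' \le w$, I would use the explicit factorization $w = w_l w_{M_w}$ where $w_{M_w}$ is the long element of $M_w$: since $M_w \subseteq M_{w'}$ we have $w_{M_w} \le w_{M_{w'}}$ in $W(M_{w'})$ and hence in $W(\SO_{2l})$, and multiplication by the fixed element $w_l$ on the left is order-reversing (as $w_l$ is the longest element and $\ell(w_l u) = \ell(w_l) - \ell(u)$), giving $w_l w_{M_w} \ge w_l w_{M_{w'}}$, i.e.\ $w \ge w'$. The main obstacle I anticipate is making the order-reversal under left multiplication by $w_l$ precise together with the subtlety that $w_l$ for $\SO_{2l}$ with $l$ odd is not $-1$ on the root lattice (it induces the outer automorphism swapping $\alpha_{l-1}, \alpha_l$), so one must track that $w_l \theta = \theta$ up to this diagram automorphism and check the parity cases — but since $\mathrm{B}(\SO_{2l}) \to \mathcal P(\Delta(\SO_{2l}))$ is a genuine bijection regardless of parity, the combinatorics still goes through, and in any case the statement is quoted verbatim from \cite{CPSSh}, so it suffices to note that the cited proof (written there for the relevant classical groups) applies without change.
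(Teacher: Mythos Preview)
The paper gives no proof of this lemma at all --- it is stated as a citation of \cite[Proposition 2.1]{CPSSh} and used as a black box. So there is nothing in the paper to compare your argument against.

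That said, your sketch is essentially the standard argument behind the cited result, and the forward direction $\theta_w\subseteq\theta_{w'}\Rightarrow w'\le w$ is cleanly handled: writing $w=w_{long}w_{M_w}$, using that $w_{M_w}\le w_{M_{w'}}$ when $M_w\subseteq M_{w'}$, and then that left multiplication by $w_{long}$ is an anti-automorphism of Bruhat order (since $\ell(w_{long}u)=\ell(w_{long})-\ell(u)$) is exactly right. Your worry about the parity of $l$ is unfounded: the order-reversal property of $w\mapsto w_{long}w$ holds for any Coxeter group regardless of whether $w_{long}$ acts as $-1$ on the root lattice.

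Your reverse direction $w'\le w\Rightarrow\theta_w\subseteq\theta_{w'}$ is where the write-up gets muddled. The first attempt (producing a simple reflection failing the lifting criterion) is not completed, and the appeal to ``geometry of the flag variety'' is not an argument. The clean way is to run your own order-reversal trick backwards: $w'\le w$ gives $w_{M_{w'}}=w_{long}w'\ge w_{long}w=w_{M_w}$; then if some $\alpha\in\theta_w\setminus\theta_{w'}$ existed, the simple reflection $s_\alpha$ would satisfy $s_\alpha\le w_{M_w}\le w_{M_{w'}}$, but every reduced expression for $w_{M_{w'}}$ uses only simple reflections from $\theta_{w'}$, so the subword criterion forces $\alpha\in\theta_{w'}$, a contradiction. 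If you tighten that paragraph you have a complete proof of the cited proposition.
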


The previous lemma allows us to establish some local analogues of the results in \cite{HL22}. We recall the setup. For $a\in\GL_n,$ we define $t_n(a)=\mathrm{diag}(a,I_{2(l-n)},a^*).$ For $n\leq l-2$, let $\mathrm{B}_n(\SO_{2l})$ be the set of $w\in\mathrm{B}(\SO_{2l})$ such that there exists $w^\prime\in W(\GL_n)$ such that $w=t_n(w^\prime)\tilde{w}_n.$ We let $\mathrm{B}_l(\SO_{2l})$ be the set of $w\in\mathrm{B}(\SO_{2l})$ such that there exists $w^\prime\in W(\GL_l)$ such that $w=t_l(w^\prime)\tilde{w}_l$ and $cwc\neq w,$ and let $\mathrm{B}^c_l(\SO_{2l})$ be the set of $cwc$ where $w\in\mathrm{B}_l(\SO_{2l}).$ Finally, we let $\mathrm{B}_{l-1}(\SO_{2l})$ be the set of $w\in\mathrm{B}(\SO_{2l})$ such that there exists $w^\prime\in W(\GL_l)$ such that $w=t_l(w^\prime)\tilde{w}_l$ and $cwc= w.$ By convention, we also define $\mathrm{B}_0(\SO_{2l})=\{I_{2l}\}.$

\begin{prop}[{\cite[Proposition 4.8]{HL22}}]\label{Besselpart} 
The sets $\mathrm{B}_n(\SO_{2l})$ for $n=0,\dots,l$ and $\mathrm{B}^c_l(\SO_{2l})$ form a partition of $\mathrm{B}(\SO_{2l}).$
\end{prop}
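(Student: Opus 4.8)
The plan is to show that the sets $\mathrm{B}_n(\SO_{2l})$ for $n=0,\dots,l$ together with $\mathrm{B}^c_l(\SO_{2l})$ are pairwise disjoint and exhaust $\mathrm{B}(\SO_{2l})$. Recall that $w\mapsto\theta_w$ is a bijection from $\mathrm{B}(\SO_{2l})$ onto the power set $\mathcal{P}(\Delta(\SO_{2l}))$, and that by Lemma \ref{CPSSh} the Bruhat order on $\mathrm{B}(\SO_{2l})$ corresponds to reverse inclusion of the $\theta_w$. So the entire problem can be transported to the combinatorics of subsets of $\Delta(\SO_{2l})=\{\alpha_1,\dots,\alpha_l\}$. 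From the explicit computations in \S\ref{Besselsupp} we already know $\theta_{\tilde w_n}=\Delta\setminus\{\alpha_n\}$ for $n\le l-2$, $\theta_{\tilde w_{l-1}}=\Delta\setminus\{\alpha_{l-1},\alpha_l\}$ (the element corresponding to $\mathrm{B}_{l-1}$-parametrizing data sits below the two corank-one elements), $\theta_{\tilde w_l}=\Delta\setminus\{\alpha_l\}$, and $\theta_{\tilde w_l^c}=\Delta\setminus\{\alpha_{l-1}\}$.

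First I would identify, for each stratum, exactly which Weyl elements it contains in terms of their images under $\theta$. The key structural observation is that $w\in\mathrm{B}(\SO_{2l})$ with $w\le\tilde w_n$ (equivalently $\theta_w\subseteq\theta_{\tilde w_n}=\Delta\setminus\{\alpha_n\}$, i.e. $\alpha_n\notin\theta_w$) means $w_lw$ is the long element of a Levi contained in the Levi $M_{\tilde w_n}\cong\GL_n\times\SO_{2(l-n)}$; the condition $w=t_n(w')\tilde w_n$ for some $w'\in W(\GL_n)$ says precisely that the $\SO_{2(l-n)}$-part of $w_lw$ is trivial, i.e. $\theta_w\supseteq\{\alpha_{n+1},\dots,\alpha_l\}$. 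So $\mathrm{B}_n(\SO_{2l})$ for $n\le l-2$ is exactly $\{w : \{\alpha_{n+1},\dots,\alpha_l\}\subseteq\theta_w,\ \alpha_n\notin\theta_w\}$, and a cardinality count then shows $|\mathrm{B}_n(\SO_{2l})|=2^{n-1}$ for $1\le n\le l-2$ (the $n-1$ free choices being $\alpha_1,\dots,\alpha_{n-1}$), with $|\mathrm{B}_0|=1$. Next I would do the analogous analysis for $n=l$: $w\le\tilde w_l$ means $\alpha_l\notin\theta_w$ and $w=t_l(w')\tilde w_l$, so these are the $w$ with $\theta_w\subseteq\Delta\setminus\{\alpha_l\}$ coming from $W(\GL_l)$, similarly for $\tilde w_l^c$ we get $\theta_w\subseteq\Delta\setminus\{\alpha_{l-1}\}$. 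Among the $W(\GL_l)$-parametrized elements $t_l(w')\tilde w_l$, the condition $cwc=w$ versus $cwc\ne w$ splits them: those fixed by $c$ are declared $\mathrm{B}_{l-1}(\SO_{2l})$, and one checks $cwc=w$ forces $\alpha_{l-1},\alpha_l$ to be treated symmetrically in $\theta_w$, hence $\alpha_{l-1},\alpha_l$ are either both in or both out of $\theta_w$; combined with $\alpha_l\notin\theta_w$ this gives $\alpha_{l-1},\alpha_l\notin\theta_w$, i.e. $\theta_w\subseteq\Delta\setminus\{\alpha_{l-1},\alpha_l\}$, matching $w\le\tilde w_{l-1}$.

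With these identifications, disjointness is a check on the $\theta_w$: for $1\le m<n\le l-2$, any $w\in\mathrm{B}_n$ has $\alpha_{m}\in\theta_w$ (since $m\ge n+1$ fails, but $m\le n-1$ means... — more precisely $\{\alpha_{n+1},\dots\}\subseteq\theta_w$ and $\alpha_n\notin\theta_w$ pins down membership of $\alpha_n$ differently for different $n$), so $\mathrm{B}_m\cap\mathrm{B}_n=\emptyset$; the strata $\mathrm{B}_{l-1},\mathrm{B}_l,\mathrm{B}_l^c$ are separated because $\mathrm{B}_l$ requires $\alpha_{l-1}\in\theta_w$, $\mathrm{B}_l^c$ requires $\alpha_l\in\theta_w$, and $\mathrm{B}_{l-1}$ requires both absent — and these are mutually exclusive; finally $\mathrm{B}_n$ for $n\le l-2$ is disjoint from the three top strata since it requires $\alpha_{l-1},\alpha_l\in\theta_w$ together with $\alpha_n\notin\theta_w$ for some $n\le l-2$, whereas $\mathrm{B}_l,\mathrm{B}_l^c$ force all of $\alpha_1,\dots,\alpha_{l-2}$ into $\theta_w$, and $\mathrm{B}_{l-1}$ forces $\alpha_{l-1},\alpha_l\notin\theta_w$. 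For exhaustion I would count: $\sum_{n=0}^{l-2}|\mathrm{B}_n|=1+\sum_{n=1}^{l-2}2^{n-1}=2^{l-2}$, and $|\mathrm{B}_{l-1}|+|\mathrm{B}_l|+|\mathrm{B}_l^c|$ should make up the remaining $2^{l-1}$ (here $|\mathrm{B}_{l-1}|=2^{l-2}$, and $|\mathrm{B}_l|=|\mathrm{B}_l^c|=2^{l-3}$ — the $c$-fixed ones being removed — so the total is $2^{l-2}+2^{l-3}+2^{l-3}=2^{l-1}$), giving the full $2^l=|\mathcal{P}(\Delta)|=|\mathrm{B}(\SO_{2l})|$. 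Alternatively, and more robustly, one shows directly that every subset $\theta\subseteq\Delta$ arises from exactly one stratum by casing on whether $\alpha_{l-1},\alpha_l$ are in $\theta$, and then, within each case, reading off the unique $n$ from the position of the largest index absent from $\theta$ among $\alpha_1,\dots,\alpha_{l-2}$.

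I expect the main obstacle to be bookkeeping the low-rank / boundary strata $\mathrm{B}_{l-1}$, $\mathrm{B}_l$, $\mathrm{B}_l^c$ correctly: disentangling the action of $c$ on $W(\GL_l)$-parametrized Weyl elements, verifying that $cwc=w$ is equivalent to the symmetric condition on $\{\alpha_{l-1},\alpha_l\}\subseteq\theta_w$ (using that $c$ swaps $\alpha_{l-1}\leftrightarrow\alpha_l$ and fixes the rest), and confirming the cardinalities $|\mathrm{B}_l|=|\mathrm{B}_l^c|=2^{l-3}$ against the count, which requires knowing precisely how many $w'\in W(\GL_l)$ give a $c$-fixed $t_l(w')\tilde w_l$. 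The rest — Lemma \ref{CPSSh} plus the explicit $\theta_{\tilde w_n}$ from \S\ref{Besselsupp} — makes the $n\le l-2$ part essentially mechanical.
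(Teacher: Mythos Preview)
The paper does not prove this proposition itself; it simply cites \cite[Proposition 4.8]{HL22}. Your strategy --- transporting everything through the bijection $w\mapsto\theta_w$ to the combinatorics of subsets of $\Delta(\SO_{2l})$, then casing on which of $\alpha_{l-1},\alpha_l$ lie in $\theta_w$ --- is the natural one and is essentially what the cited reference does.

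That said, your explicit count contains two errors. First, you assert $|\mathrm{B}_l|=|\mathrm{B}_l^c|=2^{l-3}$ and that the three top strata sum to $2^{l-1}$. In fact $\{t_l(w')\tilde w_l:w'\in W(\GL_l)\}\cap\mathrm{B}(\SO_{2l})=\{w:\alpha_l\notin\theta_w\}$ has $2^{l-1}$ elements, of which the $c$-fixed ones (those with $\alpha_{l-1}$ also absent) number $2^{l-2}$; hence $|\mathrm{B}_l|=2^{l-1}-2^{l-2}=2^{l-2}$, and the three top strata together give $3\cdot 2^{l-2}$, which added to $\sum_{n=0}^{l-2}|\mathrm{B}_n|=2^{l-2}$ gives the required $2^l$. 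Your own ``alternative, more robust'' approach --- casing directly on the four possibilities for $(\alpha_{l-1}\in\theta_w,\,\alpha_l\in\theta_w)$ --- is correct and would have caught this. Second, in your disjointness argument you claim ``$\mathrm{B}_l,\mathrm{B}_l^c$ force all of $\alpha_1,\dots,\alpha_{l-2}$ into $\theta_w$''. This is false: $\mathrm{B}_l$ only forces $\alpha_{l-1}\in\theta_w$ and $\alpha_l\notin\theta_w$, with no constraint on $\alpha_1,\dots,\alpha_{l-2}$. Disjointness of $\mathrm{B}_n$ (for $n\le l-2$) from $\mathrm{B}_l$ instead follows because $\mathrm{B}_n$ forces $\alpha_l\in\theta_w$ (since $l\ge n+1$), contradicting $\alpha_l\notin\theta_w$.

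One point you correctly flag as needing care but do not resolve: verifying that \emph{every} $w\in\mathrm{B}(\SO_{2l})$ with $\alpha_l\notin\theta_w$ can be written as $t_l(w')\tilde w_l$. This holds because $M_w\subseteq M_l$ forces $w_{long}w\in W(M_l)=t_l(W(\GL_l))$, and since $\tilde w_l$ normalizes $t_l(W(\GL_l))$ (a direct matrix check), one gets $w=\tilde w_l\cdot t_l(v)=t_l(v')\tilde w_l$. The analogous normalization statement for $\tilde w_n$ with $n\le l-2$ is what makes your characterization of $\mathrm{B}_n$ go through.
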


Suppose $w,w'\in \mathrm{B}(\SO_{2l})$ with  $w'<w.$ Following \cite{J}, we define the {\it Bessel distance} between $w$ and $w'$ as
$$
d_B(w,w')=\mathrm{max}\{m \, | \, \exists \, w_i\in \mathrm{B}(\SO_{2l}) \, \mathrm{with} \, w=w_m > w_{m-1} > \cdots > w_0=w' \}.
$$ 
From the previous lemma, if $d_B(w,I_{2l})=1,$ then there exists $\alpha\in\Delta(\SO_{2l})$ such that $\theta_w=\Delta(\SO_{2l})\backslash\{\alpha\}.$ Since there are $l$ simple roots, we see that $w$ must be one of $\tilde{w}_n$ for $n\leq l-2$, or $\tilde{w}_l$, or $\tilde{w}_l^c.$ 

For $w_1,w_2\in W(\SO_{2l})$ with $w_1\leq w_2$ we define the closed Bruhat interval as
$$
[w_1,w_2]=\{w\in W(\SO_{2l}) \, | \, w_1\leq w \leq w_2 \}. 
$$
Let $P_{n, 1^{l-n}}$ be the standard parabolic subgroup of $\SO_{2l}$ with Levi subgroup $L_{n,1^{l-n}}\cong \GL_n\times\GL_1^{l-n}.$

\begin{lemma}\label{P_{k, 1^{l-k}}}
For $n\leq l-2,$ the set 
$$
\{w\in W(\SO_{2l}) \, | \, C(w) \subseteq P_{n, 1^{l-n}} \tilde{w}_n P_{n, 1^{l-n}}\}
$$
is equal to the Bruhat interval $[\tilde{w}_n, w_l^{L_{n, 1^{l-n}}}\tilde{w}_n]$ where $w_l^{L_{n, 1^{l-n}}}$ is the long Weyl element in $L_{n, 1^{l-n}}.$
\end{lemma}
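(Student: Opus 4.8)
The plan is to show directly that the two sets coincide by a double inclusion, exploiting the dictionary between Bruhat cells contained in a parabolic double coset and Weyl elements, together with Lemma~\ref{CPSSh}. First I would recall the standard fact that for a standard parabolic subgroup $P = P_{n,1^{l-n}}$ with Weyl group $W_{P} = W(L_{n,1^{l-n}})$, one has $C(w) \subseteq P\tilde{w}_n P$ if and only if $w$ lies in the double coset $W_{P}\,\tilde{w}_n\,W_{P}$ inside $W(\SO_{2l})$; and $P\tilde{w}_n P$ is a union of $B_{\SO_{2l}}$-double cosets, so it is a closed, Bruhat-order-saturated-within-itself object in the appropriate sense. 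The key structural input is that $\tilde{w}_n \in \mathrm{B}(\SO_{2l})$ with $\theta_{\tilde{w}_n} = \Delta(\SO_{2l}) \setminus \{\alpha_n\}$, and $\alpha_n$ together with the simple roots of $L_{n,1^{l-n}}$ generate everything, so the relevant double coset $W_P \tilde{w}_n W_P$ has a maximal element, namely $w_l^{L_{n,1^{l-n}}} \tilde{w}_n$ (the longest element of $L_{n,1^{l-n}}$ times $\tilde{w}_n$), and a minimal element. I would identify the minimal element with $\tilde{w}_n$ itself: since $\theta_{\tilde{w}_n}$ contains all simple roots of $L_{n,1^{l-n}}$, $\tilde{w}_n$ sends every simple root of $L_{n,1^{l-n}}$ to a positive root, so $\tilde{w}_n$ is the minimal-length representative of its right $W_P$-coset, and a symmetric argument on the left (again using $\theta$) shows it is minimal in the full double coset.

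Next I would argue the saturation: the set $\{w : C(w) \subseteq P\tilde{w}_n P\}$ is exactly $W_P \tilde{w}_n W_P$, and a double coset $W_P x W_P$, when it has a unique minimal element $x_{\min}$ and a unique maximal element $x_{\max}$, is a Bruhat interval $[x_{\min}, x_{\max}]$ precisely when the double coset is itself Bruhat-convex. Here this holds because $P\tilde{w}_n P \cap \Omega_{\tilde{w}_n}$ is closed in $\Omega_{\tilde{w}_n}$ by Proposition~\ref{Omega_w}(2), and $C(\tilde{w}_n)$ is closed in $\Omega_{\tilde{w}_n}$; combining these with the fact that $\tilde{w}_n$ is the smallest element of the double coset, any $w$ with $\tilde{w}_n \leq w \leq w_l^{L_{n,1^{l-n}}}\tilde{w}_n$ satisfies $C(w) \subseteq \ol{C(w_l^{L_{n,1^{l-n}}}\tilde{w}_n)} \subseteq \ol{P\tilde{w}_n P} = P\tilde{w}_n P$ (the last equality since the double coset is already closed), giving $[\tilde{w}_n, w_l^{L_{n,1^{l-n}}}\tilde{w}_n] \subseteq \{w : C(w)\subseteq P\tilde{w}_n P\}$. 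For the reverse inclusion, if $C(w) \subseteq P\tilde{w}_n P$ then $w \in W_P \tilde{w}_n W_P$, so $w = u \tilde{w}_n v$ with $u, v \in W_P$; writing $v$ in reduced form and using the subword/lifting property for the parabolic $P$ relative to the fixed minimal representative $\tilde{w}_n$, one gets $\tilde{w}_n \leq w$, and likewise $w \leq w_l^{L_{n,1^{l-n}}} \tilde{w}_n$ since $w_l^{L_{n,1^{l-n}}}$ is the longest element of $W_P$ and multiplication by the longest element of a parabolic is order-reversing on the coset, landing $w$ below the product.

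The main obstacle I anticipate is not any single deep fact but making the parabolic-double-coset combinatorics precise in the $\SO_{2l}$ Weyl group, where the outer structure (the $D_l$ versus $B_l$ subtleties encoded in the $\alpha_{l-1}, \alpha_l$ pair) can make "the longest element of $L_{n,1^{l-n}}$" and the order-reversing properties slightly delicate; but since $n \leq l-2$, the Levi $L_{n,1^{l-n}} \cong \GL_n \times \GL_1^{l-n}$ is a product of general linear factors with no even-orthogonal factor, so $W_P$ is just a product of symmetric groups $S_n \times S_1^{l-n}$ and all the classical facts (minimal coset representatives, Bruhat convexity of $W_P x W_P$ when it is closed, order-reversal by $w_0^{W_P}$) apply cleanly. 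Concretely I would: (i) verify $\tilde{w}_n$ is the minimal element of $W_P\tilde{w}_n W_P$ using the explicit action on simple roots recorded just before Proposition~\ref{Besselpart}; (ii) verify $w_l^{L_{n,1^{l-n}}}\tilde{w}_n$ is the maximal element, either directly or via Lemma~\ref{CPSSh} by checking that its associated $\theta$-set is minimal among elements of $\mathrm{B}(\SO_{2l})$ lying in the coset; (iii) invoke Proposition~\ref{Omega_w} to get closedness of $P\tilde{w}_n P$ and conclude Bruhat-convexity; (iv) assemble the two inclusions. Steps (i)--(ii) are the real content; (iii)--(iv) are formal.
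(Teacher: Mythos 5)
Your proposal is essentially sound and reaches the result, but it organizes the argument differently from the paper. The paper's proof is almost entirely a matter of citation: by \cite[Proposition 2]{BKPST} the set $\{w \mid C(w)\subseteq P_{n,1^{l-n}}\tilde{w}_nP_{n,1^{l-n}}\}$ is automatically a Bruhat interval $[w_{\mathrm{min}},w_{\mathrm{max}}]$, by \cite[Corollary 5.20]{BT} it is the double coset $W(\Delta_{L_{n,1^{l-n}}})\tilde{w}_nW(\Delta_{L_{n,1^{l-n}}})$, the minimum is identified as $\tilde{w}_n$ from $\tilde{w}_n=\tilde{w}_n^{-1}$ and $\tilde{w}_n(\Delta_{L_{n,1^{l-n}}})>0$, and the maximum as $w_l^{L_{n,1^{l-n}}}\tilde{w}_n$ from the unique factorization $w=\tilde{w}_nw'$ with additive lengths (\cite[Corollary 3]{BKPST}). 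You use the same double-coset dictionary and the same simple-root computation for the minimum, but you replace the citation for the interval (convexity) property by a topological argument: every $w$ in $[\tilde{w}_n,w_l^{L_{n,1^{l-n}}}\tilde{w}_n]$ has $C(w)\subseteq \ol{C(w_l^{L_{n,1^{l-n}}}\tilde{w}_n)}\cap\Omega_{\tilde{w}_n}$, which sits inside the relatively closed set $P_{n,1^{l-n}}\tilde{w}_nP_{n,1^{l-n}}\cap\Omega_{\tilde{w}_n}$ of Proposition \ref{Omega_w}(2). That is a legitimate and more self-contained route for that inclusion; what the paper's citation buys is brevity and the avoidance of exactly the slip you make when you write $\ol{P\tilde{w}_nP}=P\tilde{w}_nP$ — the double coset is \emph{not} closed in $\SO_{2l}$ (its global closure contains cells below $\tilde{w}_n$, e.g. $B_{\SO_{2l}}$ itself); your argument only works in the relative topology of $\Omega_{\tilde{w}_n}$, using that $w\geq\tilde{w}_n$ forces $C(w)\subseteq\Omega_{\tilde{w}_n}$, which you do have the ingredients for but should state correctly.

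Two smaller points. Your justification of the maximum (``multiplication by the longest element of a parabolic is order-reversing on the coset'') is not the right statement; the clean argument, as in the paper, is that $\tilde{w}_n$ normalizes $W(\Delta_{L_{n,1^{l-n}}})$ (it permutes $\alpha_1,\dots,\alpha_{n-1}$), lengths are additive in $w=\tilde{w}_nw'$, and monotonicity of $w'\mapsto\tilde{w}_nw'$ then gives $w\leq\tilde{w}_nw_l^{L_{n,1^{l-n}}}=w_l^{L_{n,1^{l-n}}}\tilde{w}_n$. Also, your suggested alternative of verifying maximality via Lemma \ref{CPSSh} cannot work as stated, since that lemma only compares elements of $\mathrm{B}(\SO_{2l})$, while the double coset contains Weyl elements outside the Bessel support; the ``direct'' verification is the one to use.
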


\begin{proof}
This proof is similar to \cite[Lemma 4.5]{Z18}; however, we provide it here for completeness.
By \cite[Proposition 2]{BKPST}, the set $$D=\{w\in W(\SO_{2l}) \, | \, C(w) \subseteq P_{n, 1^{l-n}} \tilde{w}_n P_{n, 1^{l-n}}\}
$$ is a Bruhat interval $[w_{\mathrm{min}},w_{\mathrm{max}}].$ The simple roots of $L_{n, 1^{l-n}}$ are precisely $\Delta_{L_{n, 1^{l-n}}}=\{\alpha_1,\dots,\alpha_{n-1}\}.$ Let $W(\Delta_{L_{n, 1^{l-n}}})$ be the Weyl group for $L_{n, 1^{l-n}}.$ Then, by \cite[Corollary 5.20]{BT}, $D$ is the double coset $W(\Delta_{L_{n, 1^{l-n}}})\tilde{w}_n W(\Delta_{L_{n, 1^{l-n}}}).$ We have $\tilde{w}_n=\tilde{w}_n\inv$ and $\tilde{w}_n(\Delta_{L_{n, 1^{l-n}}})>0.$ By \cite[Proposition 1.1.3]{Cas} or \cite[Proposition 2(2)]{BKPST}, $w_{\mathrm{min}}=\tilde{w}_n.$ 

By \cite[Corollary 3]{BKPST}, every $w\in W(\Delta_{L_{n, 1^{l-n}}})\tilde{w}_n W(\Delta_{L_{n, 1^{l-n}}})$ can be written uniquely as $w=\tilde{w}_n w'$ where $w'\in W(\Delta_{L_{n, 1^{l-n}}}).$ Furthermore, $l(w)=l(\tilde{w}_n)+l(w')$ where $l$ denotes the length of the Weyl element. Hence, the maximal such $w$ is $w_{\mathrm{max}}=\tilde{w}_n w_l^{L_{n, 1^{l-n}}}= w_l^{L_{n, 1^{l-n}}} \tilde{w}_n.$
\end{proof}

Note that for $n\leq l-2,$ $[\tilde{w}_n, w_l^{L_{n, 1^{l-n}}}\tilde{w}_n]\cap\mathrm{B}(\SO_{2l})=\mathrm{B}_n(\SO_{2l}).$ Hoping for a similar result when $n=l-1$ or $n=l$ turns out to be troublesome.

Let $P_{l-1, 1}$ be the standard parabolic subgroup with Levi subgroup $L_{l-1,1}\cong \GL_{l-1}\times\GL_1.$
\begin{lemma}
The set
$$
\{w\in W(\SO_{2l}) \, | \, C(w) \subseteq P_{l-1, 1} \tilde{w}_{l-1} P_{l-1, 1}\}
$$
is equal to the Bruhat interval $[\tilde{w}_{l-1}, w_l^{L_{l-1, 1}}\tilde{w}_{l-1}]$ where $w_l^{L_{l-1, 1}}$ is the long Weyl element in $L_{l-1, 1}.$
\end{lemma}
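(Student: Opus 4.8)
The plan is to mimic the proof of the preceding lemma (the $n \leq l-2$ case, Lemma \ref{P_{k, 1^{l-k}}}) almost verbatim, since the parabolic $P_{l-1,1}$ and the Weyl element $\tilde{w}_{l-1}$ play entirely analogous roles. First I would invoke \cite[Proposition 2]{BKPST} to conclude that the set
$$
D = \{w \in W(\SO_{2l}) \, | \, C(w) \subseteq P_{l-1,1}\tilde{w}_{l-1}P_{l-1,1}\}
$$
is a Bruhat interval $[w_{\min}, w_{\max}]$. Next I would identify the simple roots of the Levi $L_{l-1,1} \cong \GL_{l-1} \times \GL_1$: these are precisely $\Delta_{L_{l-1,1}} = \{\alpha_1, \dots, \alpha_{l-2}\}$ (the $\GL_1$ factor contributes no simple roots, and the last special-orthogonal roots $\alpha_{l-1}, \alpha_l$ are excluded). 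Then, by \cite[Corollary 5.20]{BT}, $D$ equals the double coset $W(\Delta_{L_{l-1,1}})\,\tilde{w}_{l-1}\,W(\Delta_{L_{l-1,1}})$.

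To pin down $w_{\min}$, I would check that $\tilde{w}_{l-1} = \tilde{w}_{l-1}^{-1}$ and that $\tilde{w}_{l-1}(\Delta_{L_{l-1,1}}) > 0$; the latter is exactly the computation $\tilde{w}_{l-1}\alpha_i = \alpha_{l-1-i}$ for $1 \leq i \leq l-2$ recorded in \S\ref{Besselsupp}, which shows $\tilde{w}_{l-1}$ permutes $\{\alpha_1, \dots, \alpha_{l-2}\}$ and in particular sends each to a positive (indeed simple) root. By \cite[Proposition 1.1.3]{Cas} (or \cite[Proposition 2(2)]{BKPST}), this forces $w_{\min} = \tilde{w}_{l-1}$. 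For $w_{\max}$, I would use \cite[Corollary 3]{BKPST}: every element of the double coset is uniquely $w = \tilde{w}_{l-1}w'$ with $w' \in W(\Delta_{L_{l-1,1}})$ and $l(w) = l(\tilde{w}_{l-1}) + l(w')$, so the maximal element is $w_{\max} = \tilde{w}_{l-1}w_l^{L_{l-1,1}}$. Finally, since $\tilde{w}_{l-1}$ normalizes $W(\Delta_{L_{l-1,1}})$ (it permutes the generating simple reflections), one has $\tilde{w}_{l-1}w_l^{L_{l-1,1}} = w_l^{L_{l-1,1}}\tilde{w}_{l-1}$, giving the stated endpoint.

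The main thing to verify carefully — and the only place the $n = l-1$ case genuinely differs from the $n \leq l-2$ case — is the claim that $\Delta_{L_{l-1,1}} = \{\alpha_1, \dots, \alpha_{l-2}\}$ and that $\tilde{w}_{l-1}$ stabilizes this set of simple roots (equivalently, that $\tilde{w}_{l-1}$ conjugates $W(\Delta_{L_{l-1,1}})$ to itself), since $\tilde{w}_{l-1}$ was defined via $w^{l,l-1}\hat{w}_{l-1}(w^{l,l-1})^{-1}$ and a priori its action on simple roots is less transparent than that of $\tilde{w}_n$ for small $n$. However, this is precisely the content of the root computations already carried out in \S\ref{Besselsupp} (where $\tilde{w}_{l-1}\alpha_i = \alpha_{l-1-i}$ for $1 \leq i \leq l-2$ and $\tilde{w}_{l-1}$ sends $\alpha_{l-1}, \alpha_l$ to negative roots), so no new calculation is needed; I would simply cite those formulas. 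With those in hand the argument is a direct transcription of the proof of Lemma \ref{P_{k, 1^{l-k}}}, and I do not anticipate any further obstacle.
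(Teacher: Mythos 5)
Your proposal is correct and follows essentially the same route as the paper: invoke \cite[Proposition 2]{BKPST} for the interval structure, \cite[Corollary 5.20]{BT} to identify the set with the double coset $W(\Delta_{L_{l-1,1}})\tilde{w}_{l-1}W(\Delta_{L_{l-1,1}})$, the positivity of $\tilde{w}_{l-1}(\Delta_{L_{l-1,1}})$ together with \cite[Proposition 1.1.3]{Cas} (or \cite[Proposition 2(2)]{BKPST}) for the minimal element, and \cite[Corollary 3]{BKPST} for the maximal one. Your extra observation that $\tilde{w}_{l-1}$ permutes $\{\alpha_1,\dots,\alpha_{l-2}\}$, which justifies $\tilde{w}_{l-1}w_l^{L_{l-1,1}}=w_l^{L_{l-1,1}}\tilde{w}_{l-1}$, is a correct and slightly more explicit justification of a step the paper only asserts.
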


\begin{proof}
This proof is similar to that of Lemma \ref{P_{k, 1^{l-k}}} above.
By \cite[Proposition 2]{BKPST}, the set $D=\{w\in W(\SO_{2l}) \, | \, C(w) \subseteq P_{l-1, 1} \tilde{w}_{l-1} P_{l-1, 1}\}
$ is a Bruhat interval $[w_{\mathrm{min}},w_{\mathrm{max}}].$ The simple roots of $L_{l-1, 1}$ are $\Delta_{L_{l-1, 1}}=\{\alpha_1,\dots,\alpha_{l-2}\}.$ Let $W(\Delta_{L_{l-1, 1}})$ be the Weyl group for $L_{l-1, 1}.$ By \cite[Corollary 5.20]{BT}, $D$ is the double coset given by $W(\Delta_{L_{l-1, 1}})\tilde{w}_{l-1} W(\Delta_{L_{l-1, 1}}).$ We have $\tilde{w}_{l-1}=\tilde{w}_{l-1}\inv$ and $\tilde{w}_{l-1}(\Delta_{L_{l-1, 1}})>0.$ Thus, from \cite[Proposition 1.1.3]{Cas} or \cite[Proposition 2(2)]{BKPST}, $w_{\mathrm{min}}=\tilde{w}_{l-1}.$ 

By \cite[Corollary 3]{BKPST}, every $w\in W(\Delta_{L_{l-1, 1}})\tilde{w}_{l-1} W(\Delta_{L_{l-1, 1}})$ can be written uniquely as $w=\tilde{w}_{l-1} w'$ where $w'\in W(\Delta_{L_{l-1, 1}}).$ Furthermore, $l(w)=l(\tilde{w}_{l-1})+l(w')$ where $l$ denotes the length of the Weyl element. Hence, the maximal such $w$ is $w_{\mathrm{max}}=\tilde{w}_l-1 w_l^{L_{l-1, 1}}= w_l^{L_{l-1, 1}} \tilde{w}_{l-1}.$
\end{proof}

Similar to the cases of $n \leq l-2$ above, $[\tilde{w}_{l-1}, w_l^{L_{l-1, 1}}\tilde{w}_{l-1}]\cap\mathrm{B}(\SO_{2l})=\mathrm{B}_{l-1}(\SO_{2l}).$ We might wish for a similar result in the $n=l$ case; however, we shall see below that this is not the case.

\begin{lemma}
The sets
$$
\{w\in W(\SO_{2l}) \, | \, C(w) \subseteq P_{l} \tilde{w}_{l} P_{l}\}
$$
and
$$
\{w\in W(\SO_{2l}) \, | \, C(w) \subseteq P_l^c \tilde{w}_{l}^c P_l^c\}
$$
are equal to the Bruhat intervals $[\tilde{w}_{l}, w_l^{L_{l}}\tilde{w}_{l}]$ and $[\tilde{w}_{l}^c, w_l^{L_{l}^c}\tilde{w}_{l}^c]$, respectively, where $w_l^{L_{l}}$ is the long Weyl element in $L_{l}$ and $w_l^{L_{l}^c}$ is the long Weyl element in $L_{l}^c$.
\end{lemma}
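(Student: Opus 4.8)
The plan is to mirror, essentially verbatim, the argument used in the preceding two lemmas (for $P_{n,1^{l-n}}$ and $P_{l-1,1}$), which themselves follow \cite[Lemma 4.5]{Z18}. The only genuine difference is that there are now \emph{two} parabolics and \emph{two} Weyl elements to handle, $\tilde w_l$ with $P_l$ and $\tilde w_l^c$ with $P_l^c$; but since conjugation by $c$ carries the first situation to the second (it swaps $\alpha_{l-1}$ and $\alpha_l$, fixes the other simple roots, and hence carries $L_l$ to $L_l^c$, $P_l$ to $P_l^c$, $\tilde w_l$ to $\tilde w_l^c$), it suffices to treat the case of $P_l$, $\tilde w_l$ and then apply $c$-conjugation to obtain the statement for $P_l^c$, $\tilde w_l^c$.

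First I would invoke \cite[Proposition 2]{BKPST} to conclude that $D=\{w\in W(\SO_{2l}) \mid C(w)\subseteq P_l\tilde w_l P_l\}$ is a Bruhat interval $[w_{\min},w_{\max}]$. Next I would identify the simple roots of the Levi $L_l\cong\GL_l$: these are $\Delta_{L_l}=\{\alpha_1,\dots,\alpha_{l-2},\alpha_l\}$ (when $l$ is even, where $\tilde w_l=\tilde w'_l$) and the $c$-image set $\{\alpha_1,\dots,\alpha_{l-2},\alpha_{l-1}\}$ (when $l$ is odd, where $\tilde w_l=\tilde w'^c_l$) — in either case precisely $\Delta(\SO_{2l})\setminus\{\theta_{\tilde w_l}^{\,c}\text{-complement}\}$; concretely, $\theta_{\tilde w_l}=\Delta(\SO_{2l})\setminus\{\alpha_l\}$ was already computed in \S\ref{Besselsupp}, so the Levi's simple roots are the complement of the ``missing'' root. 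Then by \cite[Corollary 5.20]{BT}, $D$ equals the double coset $W(\Delta_{L_l})\,\tilde w_l\,W(\Delta_{L_l})$. One checks $\tilde w_l=\tilde w_l\inv$ and $\tilde w_l(\Delta_{L_l})>0$ directly from the root computations in \S\ref{Besselsupp}, so by \cite[Proposition 1.1.3]{Cas} (or \cite[Proposition 2(2)]{BKPST}) $w_{\min}=\tilde w_l$. Finally, by \cite[Corollary 3]{BKPST} every $w$ in this double coset is uniquely $w=\tilde w_l w'$ with $w'\in W(\Delta_{L_l})$ and $l(w)=l(\tilde w_l)+l(w')$, so $w_{\max}=\tilde w_l\, w_l^{L_l}=w_l^{L_l}\,\tilde w_l$. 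This gives $D=[\tilde w_l, w_l^{L_l}\tilde w_l]$.

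For the second set, I would simply conjugate by $c$: since $c P_l c=P_l^c$, $c\tilde w_l c=\tilde w_l^c$, $c L_l c=L_l^c$, and conjugation by $c$ is a length-preserving, Bruhat-order-preserving bijection of $W(\SO_{2l})$, the displayed set $\{w \mid C(w)\subseteq P_l^c\tilde w_l^c P_l^c\}$ equals $c\,D\,c=[\tilde w_l^c, w_l^{L_l^c}\tilde w_l^c]$.

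The main obstacle — really the only thing requiring care — is verifying the hypotheses $\tilde w_l=\tilde w_l\inv$ and $\tilde w_l(\Delta_{L_l})>0$, and correctly pinning down $\Delta_{L_l}$, because of the parity bifurcation in the definition of $\tilde w_l$ (equal to $\tilde w'_l$ when $l$ is even and to $\tilde w'^c_l$ when $l$ is odd) and the corresponding bookkeeping of which of $\alpha_{l-1},\alpha_l$ is omitted. Both checks are routine given the explicit action of $\tilde w'_l$ on the torus and on simple roots recorded in \S\ref{Besselsupp} (and, for the odd case, the fact that $c$ swaps $\alpha_{l-1}\leftrightarrow\alpha_l$), so no new ideas beyond those in the previous two lemmas are needed; the parity cases can be handled uniformly by phrasing everything in terms of $\theta_{\tilde w_l}=\Delta(\SO_{2l})\setminus\{\alpha_l\}$.
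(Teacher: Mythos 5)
Your skeleton is the same as the paper's: this lemma is proved exactly like the two preceding ones (the interval property from \cite[Proposition 2]{BKPST}, the double--coset description from \cite[Corollary 5.20]{BT}, minimality of $\tilde{w}_l$ via \cite[Proposition 1.1.3]{Cas} or \cite[Proposition 2(2)]{BKPST}, and the maximal element from \cite[Corollary 3]{BKPST}), and deducing the $P_l^c$ statement by conjugating everything by $c$ is a perfectly acceptable, even slightly cleaner, way to avoid repeating the argument. However, the two items you declare ``routine'' are precisely where your write-up is wrong or incomplete. First, the simple roots of the Levi: $P_l$ is the non-conjugated Siegel parabolic, so $\Delta_{L_l}=\{\alpha_1,\dots,\alpha_{l-1}\}$ for \emph{every} $l$, and $\Delta_{L_l^c}=\{\alpha_1,\dots,\alpha_{l-2},\alpha_l\}$; there is no parity split, and this is exactly what the paper records. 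Your displayed assignment gives $\{\alpha_1,\dots,\alpha_{l-2},\alpha_l\}$ to $L_l$ when $l$ is even, which contradicts the principle you state in the same sentence (the complement of the missing root $\alpha_l$ is $\{\alpha_1,\dots,\alpha_{l-1}\}$) and would in fact wreck the argument: by the computations in \S\ref{Besselsupp}, $\tilde{w}_l\alpha_l$ is a negative root, so with your set the hypothesis $\tilde{w}_l(\Delta_{L_l})>0$ fails and $\tilde{w}_l$ could not be identified as the minimal element of the double coset; moreover you would be analyzing $P_l^c\tilde{w}_lP_l^c$ rather than $P_l\tilde{w}_lP_l$. The correct pairing is $\tilde{w}_l\leftrightarrow P_l$ with $\Delta_{L_l}=\theta_{\tilde{w}_l}=\{\alpha_1,\dots,\alpha_{l-1}\}$, and $\tilde{w}_l^c\leftrightarrow P_l^c$.

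Second, the check ``$\tilde{w}_l=\tilde{w}_l\inv$'' is not routine: it is \emph{false} when $l$ is odd. In that case $\tilde{w}_l=c\tilde{w'_l}c$, and the matrix $\tilde{w'_l}$ moves the coordinates $1,\,l,\,l+1,\,2l$ in a $4$-cycle ($e_1\mapsto e_l\mapsto e_{2l}\mapsto e_{l+1}\mapsto e_1$), so $\tilde{w}_l^2$ represents the nontrivial Weyl element acting on the torus by $t_1\mapsto t_1\inv$, $t_l\mapsto t_l\inv$; thus $\tilde{w}_l$ has order $4$, not $2$, in $W(\SO_{2l})$. Consequently the shortcut used in the previous two lemmas (involution plus one-sided positivity forces $w_{\mathrm{min}}=\tilde{w}_l$) is unavailable for odd $l$: to conclude minimality in the double coset you must verify two-sided reducedness, i.e.\ that both $\tilde{w}_l(\Delta_{L_l})$ and $\tilde{w}_l\inv(\Delta_{L_l})$ consist of positive roots, and for a non-involution these are genuinely different statements, so you also have to be careful whether the root images recorded in \S\ref{Besselsupp} are those of $\tilde{w}_l$ or of its inverse. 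A workable fix is to carry out the argument for $l$ even (where $\tilde{w}_l$ is the honest involution $\bigl(\begin{smallmatrix}&I_l\\ I_l&\end{smallmatrix}\bigr)$ and everything goes through verbatim) and obtain the odd case by the $c$-twist you already propose; but as written, the step you flag as the only delicate point is exactly where your proof has a gap.
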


\begin{proof}
The proof is the same as the other cases. Note that the simple roots of $L_l$ are $\{\alpha_1,\dots,\alpha_{l-1}\}$ and the simple roots of $L_l^c$ are $\{\alpha_1,\dots,\alpha_{l-2},\alpha_{l}\}.$
\end{proof}

The following result describes the main obstruction in generalizing Zhang's work \cite{Z18, Z19} directly to the case of split even special orthogonal groups. 

\begin{lemma}\label{obstruction}
$[\tilde{w}_{l}, w_l^{L_{l}}\tilde{w}_{l}]\cap [\tilde{w}_{l}^c, w_l^{L_{l}^c}\tilde{w}_{l}^c]\cap \mathrm{B}(\SO_{2l})=[\tilde{w}_{l-1}, w_l^{L_{l-1, 1}}\tilde{w}_{l-1}]\cap \mathrm{B}(\SO_{2l}).$
\end{lemma}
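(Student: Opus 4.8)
The plan is to reduce this to a statement purely in the Weyl group, using the combinatorics of $\theta_w$ and Lemma \ref{CPSSh}. Recall that by the preceding lemmas, $[\tilde{w}_l, w_l^{L_l}\tilde{w}_l] = \{w \mid C(w)\subseteq P_l\tilde{w}_l P_l\}$, and similarly for the $c$-conjugate interval and the $\tilde{w}_{l-1}$ interval. Intersecting with $\mathrm{B}(\SO_{2l})$, I would use Lemma \ref{CPSSh}: for $w\in\mathrm{B}(\SO_{2l})$, we have $w\in[\tilde{w}_l, w_l^{L_l}\tilde{w}_l]$ if and only if $\tilde{w}_l\le w\le w_l^{L_l}\tilde{w}_l$, which (since $w$ supports Bessel functions) translates via Lemma \ref{CPSSh} into a condition on $\theta_w$. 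Concretely, $\tilde{w}_l\le w$ forces $\theta_w\subseteq\theta_{\tilde{w}_l}=\Delta(\SO_{2l})\setminus\{\alpha_l\}$, i.e. $\alpha_l\notin\theta_w$; and $w\le w_l^{L_l}\tilde{w}_l$ forces $\theta_{w_l^{L_l}\tilde{w}_l}\subseteq\theta_w$. So the first step is to identify $\theta_{w_l^{L_l}\tilde{w}_l}$, $\theta_{w_l^{L_l^c}\tilde{w}_l^c}$, and $\theta_{w_l^{L_{l-1,1}}\tilde{w}_{l-1}}$ explicitly. Since $w_l^{L}w$ is the longest element of a Levi and $\theta$ is a bijection onto $\mathcal{P}(\Delta(\SO_{2l}))$, I expect $\theta_{w_l^{L_l}\tilde{w}_l}=\emptyset$ (as $w_l^{L_l}\tilde{w}_l$ is the long element of the full group, or close to it), and more precisely I would compute $\theta$ for each of these three explicit Weyl elements directly from the action on simple roots recorded in \S\ref{Besselsupp}.

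Granting those computations, the intersection $[\tilde{w}_l,w_l^{L_l}\tilde{w}_l]\cap[\tilde{w}_l^c,w_l^{L_l^c}\tilde{w}_l^c]\cap\mathrm{B}(\SO_{2l})$ becomes the set of $w\in\mathrm{B}(\SO_{2l})$ with $\theta_w\subseteq\Delta(\SO_{2l})\setminus\{\alpha_l\}$ \emph{and} $\theta_w\subseteq\Delta(\SO_{2l})\setminus\{\alpha_{l-1}\}$ (the upper bounds from the two long Levi elements being automatically satisfied, or contributing the same lower constraint), hence $\theta_w\subseteq\Delta(\SO_{2l})\setminus\{\alpha_{l-1},\alpha_l\}=\theta_{\tilde{w}_{l-1}}$, together with whatever lower bound survives. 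This is exactly the condition $\tilde{w}_{l-1}\le w\le w_l^{L_{l-1,1}}\tilde{w}_{l-1}$ translated back through Lemma \ref{CPSSh}, i.e. membership in $[\tilde{w}_{l-1}, w_l^{L_{l-1,1}}\tilde{w}_{l-1}]\cap\mathrm{B}(\SO_{2l})$. I would organize this as a double inclusion: for ``$\subseteq$'', take $w$ in the left side, deduce $\alpha_{l-1},\alpha_l\notin\theta_w$, hence $\theta_w\subseteq\theta_{\tilde{w}_{l-1}}$ so $w\ge\tilde{w}_{l-1}$; and the upper bound $w\le w_l^{L_{l-1,1}}\tilde{w}_{l-1}$ follows because $\theta_{w_l^{L_{l-1,1}}\tilde{w}_{l-1}}$ is minimal (empty or the common lower constraint) and is contained in both $\theta_{w_l^{L_l}\tilde{w}_l}$ and $\theta_{w_l^{L_l^c}\tilde{w}_l^c}$, which are $\subseteq\theta_w$ by hypothesis. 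For ``$\supseteq$'', take $w\in[\tilde{w}_{l-1},w_l^{L_{l-1,1}}\tilde{w}_{l-1}]\cap\mathrm{B}(\SO_{2l})$; then $\theta_w\subseteq\theta_{\tilde{w}_{l-1}}$ gives $\alpha_l\notin\theta_w$ so $w\ge\tilde{w}_l$, and $\alpha_{l-1}\notin\theta_w$ so $w\ge\tilde{w}_l^c$; and the two upper bounds follow from $\theta_{w_l^{L_{l-1,1}}\tilde{w}_{l-1}}\subseteq\theta_w$ plus the containments among the various $\theta$ of the long Levi elements.

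The main obstacle will be the bookkeeping around the \emph{upper} bounds — i.e. verifying the precise inclusion relations among $\theta_{w_l^{L_l}\tilde{w}_l}$, $\theta_{w_l^{L_l^c}\tilde{w}_l^c}$, and $\theta_{w_l^{L_{l-1,1}}\tilde{w}_{l-1}}$, and confirming that the last is the meet of the first two in $\mathcal{P}(\Delta(\SO_{2l}))$. The parity cases ($l$ even versus odd) entering $\tilde{w}_l$ and $w_{long}$ mean the explicit matrices differ, so I would either handle the two parities in parallel or, better, note that $w\mapsto\theta_w$ and the Bruhat order are intrinsic, so once the three relevant $\theta$-sets are pinned down the parity drops out. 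An alternative, cleaner route avoiding matrix computation: argue entirely inside $W(\SO_{2l})$ via Lemma \ref{CPSSh}, observing that $M_{\tilde{w}_l}$, $M_{\tilde{w}_l^c}$ correspond to the two $\GL_l$-type Levis $L_l$, $L_l^c$, whose intersection is exactly $L_{l-1,1}$ (the Levi with simple roots $\{\alpha_1,\dots,\alpha_{l-2}\}$), which corresponds to $M_{\tilde{w}_{l-1}}$; then $w'\le w$ for both $w'=\tilde{w}_l,\tilde{w}_l^c$ is equivalent (by Lemma \ref{CPSSh}) to $M_w\subseteq L_l\cap L_l^c = L_{l-1,1} = M_{\tilde{w}_{l-1}}$, i.e. to $\tilde{w}_{l-1}\le w$, and dually for the upper bounds using $M_{w_l^{L}w}$. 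This intersection-of-Levis observation is the conceptual heart, and I would lead with it.
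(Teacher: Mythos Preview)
Your approach is correct and, once the dust settles, cleaner than what the paper does. The paper's proof goes through the explicit partition of $\mathrm{B}(\SO_{2l})$ established in Proposition~\ref{Besselpart}: it observes that $[\tilde{w}_l, w_l^{L_l}\tilde{w}_l]\cap\mathrm{B}(\SO_{2l})$ splits, according to whether $cwc=w$ or not, into $\mathrm{B}_{l-1}(\SO_{2l})\cup\mathrm{B}_l(\SO_{2l})$, and likewise the $c$-conjugate interval gives $\mathrm{B}_{l-1}(\SO_{2l})\cup\mathrm{B}_l^c(\SO_{2l})$; since $\mathrm{B}_l(\SO_{2l})$ and $\mathrm{B}_l^c(\SO_{2l})$ are disjoint pieces of the partition, the intersection is $\mathrm{B}_{l-1}(\SO_{2l})=[\tilde{w}_{l-1}, w_l^{L_{l-1,1}}\tilde{w}_{l-1}]\cap\mathrm{B}(\SO_{2l})$. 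This works but imports combinatorics from \cite{HL22}.

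Your route via Lemma~\ref{CPSSh} and the order-reversing bijection $w\mapsto\theta_w$ is self-contained and conceptually sharper. The bookkeeping you worry about around the upper bounds in fact evaporates: since $M_{\tilde{w}_l}=L_l$, $M_{\tilde{w}_l^c}=L_l^c$, and $M_{\tilde{w}_{l-1}}=L_{l-1,1}$ (each having simple roots $\theta_{\tilde w}$), the definition of $M_w$ gives $w_l^{L_l}=w_{long}\tilde{w}_l$, hence $w_l^{L_l}\tilde{w}_l=w_{long}$, and identically for the other two upper endpoints (using that $\tilde{w}_l$, $\tilde{w}_l^c$, $\tilde{w}_{l-1}$ are involutions). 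So all three intervals have the same top $w_{long}$, the upper-bound conditions are vacuous, and the whole statement collapses to the set-theoretic triviality
\[
\bigl(\Delta\setminus\{\alpha_l\}\bigr)\cap\bigl(\Delta\setminus\{\alpha_{l-1}\}\bigr)=\Delta\setminus\{\alpha_{l-1},\alpha_l\}
\]
under $w\mapsto\theta_w$. Your ``intersection of Levis'' reformulation at the end is exactly this observation in disguise, and is the right thing to lead with. What your approach buys is independence from the $\mathrm{B}_n$-partition and from \cite{HL22}; what the paper's approach buys is consistency with the narrative structure it has already set up around that partition.
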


\begin{proof}
We have
$$[\tilde{w}_{l}, w_l^{L_{l}}\tilde{w}_{l}]\cap\{w\in W(\SO_{2l}) \, | \, cwc\neq w\} \cap \mathrm{B}(\SO_{2l})= \mathrm{B}_l(\SO_{2l})$$ and 
$$[\tilde{w}_{l}^c, w_l^{L_{l}^c}\tilde{w}_{l}^c]\cap\{w\in W(\SO_{2l}) \, | \, cwc\neq w\} \cap \mathrm{B}(\SO_{2l})= \mathrm{B}_l^c(\SO_{2l}).$$ The claim then follows from the discussion right before \cite[Proposition 4.5]{HL22}.
\end{proof}

This intersection being nonempty has several consequences in the proof of the converse theorem (see discussion after Theorem \ref{converse thm intro}). The next theorem gives the analogues of \cite[Lemmas 5.13 and 5.14]{CST}. Similar to the cases considered in \cite{Z18, Z19}, their proof is general enough to apply to our setting.

\begin{thm}[Cogdell-Shahidi-Tsai]\label{CST}
\begin{enumerate}
\item[]
    \item Let $w\in \mathrm{B}(\SO_{2l})$ and $f\in C_c^\infty(\Omega_w,\omega).$ Suppose $B_m(tw,f)=0$ for any $t\in T_w$. Then, there exists $f'\in C_c^\infty(\Omega_w\backslash C(w),\omega)$, depending only on $f$, such that for $m$ large enough and also only depending on $f$, $B_m(g,f)=B_m(g,f')$ for any $g\in\SO_{2l}(F).$
    \item Let $w\in \mathrm{B}(\SO_{2l})$ and $\Omega_{w,0},\Omega_{w,1}\subseteq\Omega_{w}$ be two open subsets which are $U_{\SO_{2l}}\times U_{\SO_{2l}}$ and $T_{\SO_{2l}}$-invariant such that $\Omega_{w,0}\subseteq\Omega_{w,1}$ and $\Omega_{w,1}\backslash\Omega_{w,0}$ is a union of Bruhat cells $C(w')$ where $w'\notin \mathrm{B}(\SO_{2l}).$ Then, for any $f_1\in C_c^\infty(\Omega_{w,1},\omega),$ there exists $f'\in C_c^\infty(\Omega_{w,0},\omega)$ such that for $m$ sufficiently large and depending only on $f$, $B_m(g,f_1)=B_m(g,f')$ for any $g\in\SO_{2l}(F).$
\end{enumerate}
\end{thm}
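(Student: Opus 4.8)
The plan is to derive both parts from a single ``support--pushing'' construction, which is the $p$--adic avatar of the arguments in \cite[\S 5]{CST} (cf. \cite{Z18,Z19}), and which I describe first. Suppose $\Omega\subseteq\SO_{2l}(F)$ is open (hence a union $\bigsqcup_{w'\in S}C(w')$ of Bruhat cells with $S$ up--closed in the Bruhat order, and automatically invariant under $U_{\SO_{2l}}\times U_{\SO_{2l}}$, $T_{\SO_{2l}}$ and $Z$), let $w'$ be a $\le$--minimal element of $S$, so that $C(w')$ is closed in $\Omega$ by Proposition \ref{Omega_w}, and let $h\in C_c^\infty(\Omega,\omega)$ satisfy $B_m(g,h)=0$ for all $g\in C(w')$ and all large $m$. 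I claim one can produce $h'\in C_c^\infty(\Omega\setminus C(w'),\omega)$, depending only on $h$, with $B_m(\cdot,h)=B_m(\cdot,h')$ for $m$ large. To build it, fix a smooth, compactly supported, $\omega$--equivariant function $g_0$ on $\SO_{2l}(F)$ with $g_0|_{C(w')}=h|_{C(w')}$ whose support lies in an arbitrarily small $U_{\SO_{2l}}\times U_{\SO_{2l}}$--, $T_{\SO_{2l}}$-- and $Z$--invariant neighbourhood $\mathcal N$ of $\supp(h)\cap C(w')$ inside $\Omega$ --- small enough that $\mathcal N$ misses every torus representative $tw''$, $w''\in S\setminus\{w'\}$, that could meet $\supp(g_0)$ (finitely many are relevant by compactness, and each has positive distance to $\ol{C(w')}$ since $w''\not\le w'$). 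Then $h':=h-g_0$ vanishes on $C(w')$, hence lies in $C_c^\infty(\Omega\setminus C(w'),\omega)$, and it remains to see that $B_m(\cdot,g_0)\equiv 0$ for $m$ large. Since $\supp(g_0)\subseteq\mathcal N\subseteq\Omega$ the function $B_m(\cdot,g_0)$ is supported in $\Omega$; on $C(w')$ one has $B_m(tw',g_0)=B_m(tw',h)=0$, because any $u\,tw'\,u'$ with $u\in U_{\SO_{2l}}$, $u'\in U_m$ lies in $C(w')$ where $g_0$ and $h$ agree; and on each $C(w'')$, $w''\ne w'$, one has $B_m(tw'',g_0)=0$ because $tw''\notin\mathcal N\supseteq\supp(g_0)$. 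In both cases the vanishing at the torus representatives propagates to the whole cell for $m$ large, using the equivariance \eqref{Besseleqn} and the compactness of $\supp(g_0)$; this step is carried out exactly as in \cite[\S 5]{CST} and \cite{B95}, and is insensitive to the outer automorphism.

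For part (1) the ambient set is $\Omega_w$, whose unique $\le$--minimal element is $w$ and in which $C(w)$ is closed, so it suffices to upgrade the hypothesis ``$B_m(tw,f)=0$ for $t\in T_w$'' to ``$B_m(g,f)=0$ for all $g\in C(w)$'' and then invoke the construction above. For $t\notin T_w$ the vanishing is automatic, by the root--shifting argument of \S\ref{Besselsupp}: for a simple root $\alpha_n\in\theta_w$, so that $w\alpha_n=\alpha_{n'}$ is again simple, the two ways of evaluating $B_m(tw\,\mathbf{x}_{\alpha_n}(s),f)$ --- as $\psi(\psi_n s)\,B_m(tw,f)$ via the right $H_m$--equivariance, and as $B_m\bigl(\mathbf{x}_{\alpha_{n'}}(\alpha_{n'}(t)s)\,tw,f\bigr)=\psi(\psi_{n'}\alpha_{n'}(t)s)\,B_m(tw,f)$ via the left $U_{\SO_{2l}}$--equivariance --- force $B_m(tw,f)=0$ as soon as $\psi_{n'}\alpha_{n'}(t)\ne\psi_n$, i.e. as soon as $t\notin T_w$. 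Together with the hypothesis this gives $B_m(tw,f)=0$ for every $t\in T_{\SO_{2l}}$, and then, for $m$ large, \eqref{Besseleqn} and the compactness of $\supp f$ (which confine $B_m(\cdot,f)|_{C(w)}$ to finitely many $H_m$--cosets) yield $B_m(g,f)=0$ for all $g\in C(w)$. Applying the support--pushing construction with $h=f$, $w'=w$, $\Omega=\Omega_w$ produces the desired $f'\in C_c^\infty(\Omega_w\setminus C(w),\omega)$.

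For part (2) write $\Omega_{w,1}=\bigsqcup_{w'\in S_1}C(w')$ and $\Omega_{w,0}=\bigsqcup_{w'\in S_0}C(w')$ with $S_0\subseteq S_1$ up--closed; then $S_1\setminus S_0$ is down--closed inside $S_1$ and, by hypothesis, consists entirely of $w'\notin\mathrm{B}(\SO_{2l})$. Order $S_1\setminus S_0=\{v_1,\dots,v_k\}$ so that $v_a<v_b\Rightarrow a<b$, put $\Omega^{(0)}=\Omega_{w,1}$, $f^{(0)}=f_1$, and $\Omega^{(i)}=\Omega^{(i-1)}\setminus C(v_i)$. A routine check shows that $S_1\setminus\{v_1,\dots,v_i\}$ remains up--closed, so $\Omega^{(i)}$ is open, and that $v_i$ is $\le$--minimal in $S_1\setminus\{v_1,\dots,v_{i-1}\}$, so $C(v_i)$ is closed in $\Omega^{(i-1)}$. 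Since $v_i\notin\mathrm{B}(\SO_{2l})$, the vanishing lemma of \S\ref{Besselsupp} gives $B_m(g,f^{(i-1)})=0$ for all $g\in C(v_i)$ once $m$ is large, so the support--pushing construction (with $h=f^{(i-1)}$, $w'=v_i$, $\Omega=\Omega^{(i-1)}$) yields $f^{(i)}\in C_c^\infty(\Omega^{(i)},\omega)$ with $B_m(\cdot,f^{(i)})=B_m(\cdot,f^{(i-1)})$ for $m$ large. After the $k$ steps, $f':=f^{(k)}\in C_c^\infty(\Omega_{w,0},\omega)$ works, the threshold on $m$ being the maximum of the finitely many intermediate ones.

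The Bruhat--order combinatorics and the topological facts (closedness of the relevant cells) are routine once Proposition \ref{Omega_w} is available; the genuine difficulty --- and the only place where the Howe--vector machinery is essential --- is the propagation statement inside the support--pushing construction, namely that for $m$ large the vanishing of a partial Bessel function at the torus representatives of a Bruhat cell forces its vanishing on the entire cell. This rests on the fact that the compact support of the underlying function confines the partial Bessel function, restricted to a fixed cell, to finitely many $H_m$--cosets on which it is a $\psi_m$--multiple of a value at a torus representative, which is exactly the content of the corresponding steps in \cite[\S 5]{CST} and \cite{B95} and transfers to split $\SO_{2l}$ without change.
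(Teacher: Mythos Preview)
The paper does not give its own proof of this theorem; it simply records it as the $\SO_{2l}$--analogue of \cite[Lemmas 5.13 and 5.14]{CST} and remarks, following \cite{Z18,Z19}, that the argument there is general enough to transfer unchanged. Your proposal goes further by actually sketching the CST argument, and the overall architecture---the root--shifting computation to pass from $T_w$ to all of $T_{\SO_{2l}}$ in part (1), the inductive peeling of non--Bessel cells in part (2), and the ``support--pushing'' construction underlying both---is the correct one and matches \cite{CST}.

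There is, however, a genuine inconsistency in your description of the auxiliary neighbourhood $\mathcal N$. You ask that $\mathcal N$ be $U_{\SO_{2l}}\times U_{\SO_{2l}}$-- and $T_{\SO_{2l}}$--invariant and at the same time small enough to miss the torus representatives $tw''$ for $w''\in S\setminus\{w'\}$. But any $B_{\SO_{2l}}\times B_{\SO_{2l}}$--invariant open subset of $\SO_{2l}(F)$ is a union of Bruhat cells indexed by an up--closed subset of $W(\SO_{2l})$; hence any such neighbourhood of a point of $C(w')$ must contain all of $\Omega_{w'}=\bigsqcup_{w''\ge w'}C(w'')$ and therefore cannot avoid any $tw''$ with $w''>w'$. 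If instead you drop the invariance, then $tw''\notin\mathcal N$ no longer yields $B_m(tw'',g_0)=0$, since the latter is an integral of $g_0$ over $U_{\SO_{2l}}\times U_m$--translates of $tw''$. The actual construction in \cite{CST} does not impose invariance on the neighbourhood; the vanishing of $B_m(\cdot,g_0)$ on the higher cells is obtained by a more careful analysis, in Bruhat coordinates on each $C(w'')$, of how $\supp(g_0)\cap C(w'')$ behaves as the ambient neighbourhood shrinks toward $C(w')$. Since your closing paragraph already defers the hard propagation step to \cite[\S 5]{CST} and \cite{B95}, this does not undermine your plan, but the description of $\mathcal N$ and the one--line justification of $B_m(tw'',g_0)=0$ should be amended.
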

The following corollary is an analogue of \cite[Proposition 5.3]{CST}.

\begin{cor}\label{C(0)}
Let $f,f'\in C_c^\infty(\SO_{2l},\omega)$ such that $W^f(I_{2l})=W^{f'}(I_{2l})=1.$ Then, for any $1\leq n \leq l-2$ there exists $f_{\tilde{w}_n}\in C_c^\infty(\Omega_{\tilde{w}_n},\omega)$ and $f_{\tilde{w}_l}\in C_c^\infty(\Omega_{\tilde{w}_l},\omega)$ and $f_{\tilde{w}_l^c}\in C_c^\infty(\Omega_{\tilde{w}_l^c},\omega)$, depending both on $f$ and $f'$,
such that for $m$ large enough 
$$
B_m(g,f)-B_m(g,f')=\sum_{n=1}^{l-2} B_m(g,f_{\tilde{w}_n}) + B_m(g,f_{\tilde{w}_l}) + B_m(g,f_{\tilde{w}_l^c}).
$$
\end{cor}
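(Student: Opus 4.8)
The plan is to prove this by a descending induction on Weyl elements, peeling off Bruhat cells one ``top codimension'' layer at a time, using Theorem \ref{CST} as the engine. First I would set $h = W^f - W^{f'} \in C^\infty(\SO_{2l},\omega,\psi)$ and note that since $W^f(I_{2l}) = W^{f'}(I_{2l}) = 1$, the difference vanishes at the identity; realizing $h$ as $W^{f_0}$ for some $f_0 = f - f' \in C_c^\infty(\SO_{2l},\omega)$ and taking $m$ large, the associated partial Bessel function $B_m(g,f_0) = B_m(g,f) - B_m(g,f')$ vanishes on the open cell $C(I_{2l})$ (i.e.\ at $T_{\SO_{2l}}$, since $T_{I_{2l}} = T_{\SO_{2l}}$ and $B_m(t,f_0) = \psi$-twisted value of $h$ at $t$, which is forced to vanish for large $m$ because $h(I_{2l}) = 0$ and the Howe-vector localization collapses the open cell to the identity). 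Applying Theorem \ref{CST}(1) with $w = I_{2l}$ then produces $f_1 \in C_c^\infty(\SO_{2l} \setminus C(I_{2l}), \omega) = C_c^\infty(\Omega_{w'}',\omega)$ supported away from the big cell, with $B_m(g,f_0) = B_m(g,f_1)$ for $m$ large.

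Next I would run the inductive step. Order the elements of $\mathrm{B}(\SO_{2l}) \setminus \{I_{2l}\}$ by nonincreasing Bruhat order (equivalently, by the bijection $w \mapsto \theta_w$, starting from the codimension-one elements $\tilde w_n$, $1 \le n \le l-2$, and $\tilde w_l, \tilde w_l^c$, which are exactly the $w$ with $d_B(w,I_{2l})=1$ by the discussion after Lemma \ref{CPSSh}). At each stage I have a function supported on some $\Omega_w$ (an open, $U_{\SO_{2l}} \times U_{\SO_{2l}}$- and $T_{\SO_{2l}}$-invariant set by Proposition \ref{Omega_w}), and I want to split off the closed cell $C(w)$. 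The key point is that the nonzero Bessel function $B_m$ obtained from matrix coefficients of the \emph{same} representation-theoretic data cannot genuinely ``live'' on cells $C(w)$ with $w \in \mathrm{B}(\SO_{2l})$ other than those of the form $\tilde w_n, \tilde w_l, \tilde w_l^c$ — more precisely, one uses Theorem \ref{CST}(2) to discard, at each open set $\Omega_w$, the cells $C(w')$ with $w' \notin \mathrm{B}(\SO_{2l})$, reducing the support to a union of Bessel-support cells; and then for a Bessel element $w$ that is \emph{not} one of $\tilde w_n$ ($n \le l-2$), $\tilde w_l$, $\tilde w_l^c$, one shows $B_m(tw,\cdot) = 0$ on $T_w$ — because such a $w$ lies strictly above one of the codimension-one elements (Lemma \ref{CPSSh}), hence in the closure of the corresponding $\Omega_{\tilde w_\bullet}$, and the relevant restriction has already been absorbed into the $f_{\tilde w_\bullet}$ term via Theorem \ref{CST}(1). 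Iterating and collecting the ``leftover'' pieces produced by each application of Theorem \ref{CST}(1) gives functions $f_{\tilde w_n} \in C_c^\infty(\Omega_{\tilde w_n},\omega)$ for $1 \le n \le l-2$, together with $f_{\tilde w_l} \in C_c^\infty(\Omega_{\tilde w_l},\omega)$ and $f_{\tilde w_l^c} \in C_c^\infty(\Omega_{\tilde w_l^c},\omega)$, such that for $m$ large
$$
B_m(g,f) - B_m(g,f') = \sum_{n=1}^{l-2} B_m(g,f_{\tilde w_n}) + B_m(g,f_{\tilde w_l}) + B_m(g,f_{\tilde w_l^c}).
$$

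I expect the main obstacle to be bookkeeping the support decomposition correctly in the presence of the outer automorphism, i.e.\ the fact (Lemma \ref{obstruction}) that $[\tilde w_l, w_l^{L_l}\tilde w_l] \cap [\tilde w_l^c, w_l^{L_l^c}\tilde w_l^c] \cap \mathrm{B}(\SO_{2l}) = [\tilde w_{l-1}, w_l^{L_{l-1,1}}\tilde w_{l-1}] \cap \mathrm{B}(\SO_{2l}) = \mathrm{B}_{l-1}(\SO_{2l})$ is nonempty. This means the cells in $\mathrm{B}_{l-1}(\SO_{2l})$ are simultaneously in the closures of $\Omega_{\tilde w_l}$ and $\Omega_{\tilde w_l^c}$, so when I peel off the $\alpha_{l-1}$- and $\alpha_l$-deficient families I must be careful to attribute the overlap consistently (one natural choice: route $\mathrm{B}_{l-1}(\SO_{2l})$-cells into $f_{\tilde w_l}$, leaving $f_{\tilde w_l^c}$ supported on $\Omega_{\tilde w_l^c}$ with the shared cells still available since $\mathrm{B}_{l-1}(\SO_{2l}) \subset \Omega_{\tilde w_l^c}$ is a face of its closure as well). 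The remaining details — verifying openness/closedness hypotheses of Theorem \ref{CST} at each stage, and checking that the $T_w$-vanishing hypothesis propagates down the induction — are routine given Proposition \ref{Omega_w}, Lemma \ref{CPSSh}, and the structure of the Bessel support established in \S\ref{Besselsupp}--\S\ref{Bruhat order}; this is exactly the pattern of \cite[Proposition 5.3]{CST} and its analogues in \cite{Z18,Z19}, adapted to the partition of Proposition \ref{Besselpart}.
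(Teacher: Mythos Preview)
Your overall approach is correct and matches the paper, which simply cites \cite[Corollary 4.7]{Z18} (itself modeled on \cite[Proposition 5.3]{CST}) and omits details. However, your middle section overcomplicates matters and contains a misleading claim. The argument is not a long descending induction: after Step~1 (noting $T_{I_{2l}} = Z(\SO_{2l})$ and $B_m(z,f-f') = \omega(z)(1-1) = 0$, then applying Theorem~\ref{CST}(1) at $w = I_{2l}$), you apply Theorem~\ref{CST}(2) \emph{once} with $\Omega_{w,1} = \SO_{2l}\setminus C(I_{2l})$ and $\Omega_{w,0} = \bigcup_{\tilde w : d_B(\tilde w, I_{2l})=1} \Omega_{\tilde w}$, observing that the difference $\Omega_{w,1}\setminus\Omega_{w,0}$ consists only of non-Bessel cells (any $w\in\mathrm{B}(\SO_{2l})\setminus\{I_{2l}\}$ has $\theta_w\subsetneq\Delta(\SO_{2l})$, hence lies above some distance-1 Bessel element by Lemma~\ref{CPSSh}). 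Then a single partition of unity on the open cover $\{\Omega_{\tilde w_n}\}_{n\le l-2}\cup\{\Omega_{\tilde w_l},\Omega_{\tilde w_l^c}\}$ finishes.

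In particular, you never need to ``show $B_m(tw,\cdot)=0$ on $T_w$'' for Bessel elements $w$ with $d_B(w,I_{2l})>1$; there is no such vanishing, nor is it required. Those cells simply sit inside some $\Omega_{\tilde w_\bullet}$ already (not merely in its closure, as you wrote). The overlap $\Omega_{\tilde w_l}\cap\Omega_{\tilde w_l^c}=\Omega_{\tilde w_{l-1}}$ is harmless here: a partition of unity does not require the cover to be disjoint, so no special routing of $\mathrm{B}_{l-1}(\SO_{2l})$ is needed for this corollary.
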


\begin{proof}
The proof is similar to \cite[Corollary 4.7]{Z18}. We omit it here. 
\end{proof}

\subsection{Outline of the proof of Theorem \ref{converse thm intro}}\label{outline of the proof}

In this section, we outline the strategy of the proof of Theorem \ref{converse thm intro}. Let $\mathcal{C}(0)$ denote the condition that $\pi$ and $\pi'$ have the same central character $\omega$. We always assume $\mathcal{C}(0).$ For $n\geq 1,$ define the condition $\mathcal{C}(n)$ by $\gamma(s,\pi\times\tau,\psi)=\gamma(s,\pi'\times\tau,\psi)$, as functions of the complex variable $s$, for all irreducible generic representations $\tau$ of $\GL_k$ with $1\leq k \leq n$ along with $\mathcal{C}(0).$ Note that $\mathcal{C}(n)$ implies $\mathcal{C}(n-1).$ Since $\pi$ is supercuspidal, its matrix coefficients, which we denote by $\mathcal{M}(\pi)$, are compactly supported. That is, $\mathcal{M}(\pi)\subseteq C_c^\infty(\SO_{2l},\omega).$ Since $\pi$ is generic, the linear functional $\mathcal{M}(\pi)\rightarrow C^\infty(\SO_{2l},\omega,\psi)$ given by $f\mapsto W^f$ is nonzero.

Fix $f\in\mathcal{M}(\pi)$ and $f'\in\mathcal{M}(\pi')$ such that $W^f(I_{2l})=W^{f'}(I_{2l})=1.$ Since $\pi$ and $\pi'$ have satisfy $\mathcal{C}(0),$ by Corollary \ref{C(0)}, there exists $f_{\tilde{w}_n}\in C_c^\infty(\Omega_{\tilde{w}_n},\omega)$ for $1\leq n \leq l-2$ and $f_{\tilde{w}_l}\in C_c^\infty(\Omega_{\tilde{w}_l},\omega)$ and $f_{\tilde{w}_l^c}\in C_c^\infty(\Omega_{\tilde{w}_l^c},\omega)$ such that for $m$ large enough 
\begin{equation}\label{Bessel Difference}
B_m(g,f)-B_m(g,f')=\sum_{n=1}^{l-2} B_m(g,f_{\tilde{w}_n}) + B_m(g,f_{\tilde{w}_l}) + B_m(g,f_{\tilde{w}_l^c}).
\end{equation}

As in \cite{Z18,Z19}, we shall show that the condition $\mathcal{C}(k)$ implies
$$
B_m(g,f)-B_m(g,f')=\sum_{n=k+1}^{l-2} B_m(g,f_{\tilde{w}_n}) + B_m(g,f_{\tilde{w}_l}) + B_m(g,f_{\tilde{w}_l^c}).
$$
for $k\leq l-2.$ Thus $\mathcal{C}(l-2)$ gives
$$
B_m(g,f)-B_m(g,f')= B_m(g,f_{\tilde{w}_l}) + B_m(g,f_{\tilde{w}_l^c}).
$$
However, unlike the symplectic and unitary cases,  $\Omega_{\tilde{w}_l}\cap\Omega_{\tilde{w}_l^c}\neq\emptyset.$ Indeed, from Lemma \ref{obstruction}, we  see that 
$$\Omega_{\tilde{w}_l}\cap\Omega_{\tilde{w}_l^c}=\bigsqcup_{w'\geq \tilde{w}_l   , \, cw'c=w'} C(w')=\Omega_{\tilde{w}_{l-1}}.$$ In particular, the Bruhat cell of the long Weyl element lies in this intersection. We  find that $\mathcal{C}(l-1)$ determines $$B_m( t_{l-1}(a)t_{l-1}' \tilde{w}_{l-1} ,f_{\tilde{w}_l}) + B_m( t_{l-1}(a)t_{l-1}' \tilde{w}_{l-1} ,f_{\tilde{w}_l^c})=0,
$$ for $a\in\GL_{l-1}$. Hence, we know this sum on a subset of the intersection (this is due to the embedding $\SO_{2l-1}$ into $\SO_{2l}$ not capturing the full Bruhat cell). Consequently, we must determine the remainder of the intersection, along with $\Omega_{\tilde{w}_l}\backslash\Omega_{\tilde{w}_{l-1}}$ and $\Omega_{\tilde{w}_l^c}\backslash\Omega_{\tilde{w}_{l-1}}$ using $\mathcal{C}(l).$ A key step in establishing these results is to use Proposition \ref{JS Prop} in \S \ref{proof of the local converse theorem}. To do this for $\mathcal{C}(l)$, we have to use conjugation by $c\tilde{t}\inv$ to relate these sets. This results in an integrand involving
\begin{align*}
&\,B_m(g,f)-B_m(g,f')+B_m^c(g,f)-B_m^c(g,f') \\
= &\,B_m(g,f_{\tilde{w}_l}) + B_m(g,f_{\tilde{w}_l^c})+B_m^c(g,f_{\tilde{w}_l}) + B_m^c(g,f_{\tilde{w}_l^c}).
\end{align*}
Consequently, we deduce that
$$
B_m(g,f)-B_m(g,f')+B_m^c(g,f)-B_m^c(g,f')=0,$$ hence, by the uniqueness of Whittaker models, we obtain that $\pi\cong\pi'$ or $\pi\cong c\cdot\pi'$. This completes the outline of the proof of Theorem \ref{converse thm intro}.

\section{Sections of induced representations}\label{sections}
Let $n\leq l$ and $(\tau, V_\tau)$ be an irreducible $\psi\inv$-generic representation of $\GL_n$. In this section, we construct a section in $I(\tau,s,\psi\inv)$ which is used in calculating the zeta integrals. Let $\ol{V}_n$ be the unipotent radical of the opposite parabolic to $Q_n$. That is,
$$
\ol{V}_n=\left\{\left(\begin{matrix}
I_n & & \\
* & 1 & \\
* & * & I_n
\end{matrix}\right)\in\SO_{2n+1}\right\}.
$$
Let $i$ be a positive integer and recall the definition of $H_i\subseteq\SO_{2l}$ in  \S\ref{Howe vectors}. For $n<l,$ set $\ol{V}_{n,i}=w^{l,n}\ol{V}_n(w^{l,n})\inv\cap H_i$ where we realize $\ol{V}_n$ through the embedding of $\SO_{2n+1}$ into $\SO_{2l}.$ 

For a positive integer $m$, let $K_m^{\GL_n}$ be the congruence subgroups of $\GL_n.$ That is, $K_m^{\GL_n}=I_{n}+\mathrm{Mat}_{n\times n}(\mathfrak{p}^m).$ Following \cite[\S 3.2]{B95}, we define an open compact subgroup of $\SO_{2l+1}$ by $$H_m^{\SO_{2l+1}}=e'_m(K_m^{\GL_{2l+1}}\cap\SO_{2l+1})(e'_m)\inv$$ where $e'_m=\mathrm{diag}(\varpi^{-2ml},\dots,\varpi^{2ml})\in\SO_{2l+1}.$ If $n=l,$ we set $\ol{V}_{l,i}=\ol{V}_l\cap H_i^{\SO_{2l+1}}.$

Let $D$ be some open, compact subgroup of $V_n$. For $x\in D$, we let 
$$
S(x,i)=\{\ol{y}\in\ol{V}_n \, | \, \ol{y}x\in Q_n \ol{V}_{n,i}\}.
$$ 
The following two lemmas follow from an analogous argument as \cite[Lemma 5.1]{Z18}. One can also find similar statements in \cite[\S 4]{B95}.

\begin{lemma}\label{5.1}
For any positive integer $m$, there exists an integer $i_1=i_1(D,m)$ such that for any $i\geq i_1, x\in D,$ and $\ol{y}\in S(x,i)$ we have 
$$
\ol{y}x=vl_n(a)\ol{y}_0
$$ where $v\in V_n, a\in K_m^{\GL_n}$, and $\ol{y}_0\in\ol{V}_{n,i}.$
\end{lemma}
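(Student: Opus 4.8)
The plan is to exploit compactness to reduce an open condition on a $p$-adic unipotent group to a uniform statement. First I would recall that $V_n$, $\ol{V}_n$, and $L_n \cong \GL_n$ generate the big cell $V_n L_n \ol{V}_n$, which is open in $\SO_{2n+1}$, and that the product map $V_n \times L_n \times \ol{V}_n \to V_n L_n \ol{V}_n$ is an analytic isomorphism onto its image; hence for $\ol{y}x$ lying in the big cell the factorization $\ol{y}x = v\, l_n(a)\, \ol{y}_0$ with $v \in V_n$, $a \in \GL_n$, $\ol{y}_0 \in \ol{V}_n$ exists and is unique, and the three factors depend continuously (indeed polynomially in the matrix entries, away from the vanishing locus of the relevant minors) on $\ol{y}$ and $x$. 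Since by definition $\ol{y} \in S(x,i)$ forces $\ol{y}x \in Q_n \ol{V}_{n,i} \subseteq Q_n \ol{V}_n = V_n L_n \ol{V}_n$, this factorization is available for every pair $(x,\ol{y})$ we need to consider; moreover the same uniqueness shows that the $\ol{V}_n$-component of $\ol{y}x$ equals $\ol{y}_0$, and the condition $\ol{y}x \in Q_n \ol{V}_{n,i}$ then forces $\ol{y}_0 \in \ol{V}_{n,i}$ exactly. So the only real content is to arrange $a \in K_m^{\GL_n}$.

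Next I would bound the set over which $(x,\ol{y})$ ranges. The set $D \subseteq V_n$ is compact by hypothesis. For $\ol{y}$: if $\ol{y} \in S(x,i)$ with $i \geq i_0$ for some fixed large $i_0$, then $\ol{y}x \in Q_n \ol{V}_{n,i_0} \subseteq Q_n \ol{V}_{n,i_0}$, and since $\ol{V}_{n,i_0} = w^{l,n}\ol{V}_n(w^{l,n})\inv \cap H_{i_0}$ is compact (it is the intersection of a closed subgroup with the compact group $H_{i_0}$) and $x$ ranges over the compact set $D$, the element $\ol{y} = (\ol{y}x)\,x\inv$ ranges over a compact subset $\ol{Y} \subseteq \ol{V}_n$ depending only on $D$ and $i_0$. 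Thus $(x,\ol{y})$ ranges over the compact set $D \times \ol{Y}$, on which the entries of $x$ and $\ol{y}$ are bounded and, crucially, on which $\ol{y}x$ stays inside the big cell so that the minors appearing in the denominators of the factorization map are bounded away from $0$. Therefore the map $(x,\ol{y}) \mapsto (v, a, \ol{y}_0)$ is continuous on $D \times \ol{Y}$, so its image is compact, and in particular $a$ ranges over a compact subset $A \subseteq \GL_n$.

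Then I would observe that the factorization has the normalization $a = I_n$ when $\ol{y}$ is itself close to the identity: more precisely, as $\ol{y}x \to I_n$ (equivalently, as $i \to \infty$, since $\ol{y}x \in Q_n\ol{V}_{n,i}$ and $\bigcap_i \ol{V}_{n,i} = \{I\}$ forces $\ol{y}x$ to lie in any fixed neighborhood of the identity once $i$ is large, using also that $x \in D$ forces $\ol{y} \to I_n$) the factors $v \to I$, $a \to I_n$, $\ol{y}_0 \to I$ by continuity of the (inverse of the) product map near the identity. So, given the target congruence level $m$, continuity at the identity yields a neighborhood of $I_n$ in $\SO_{2n+1}$ whose image under the factorization map lands in $V_n \times K_m^{\GL_n} \times \ol{V}_n$; pulling this back, there is $i_1 = i_1(D,m) \geq i_0$ such that $i \geq i_1$, $x \in D$, $\ol{y} \in S(x,i)$ forces $\ol{y}x$ into that neighborhood, hence $a \in K_m^{\GL_n}$, as desired.

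The main obstacle is the bookkeeping in the second step: one must check that $S(x,i)$ is genuinely controlled by a \emph{single} compact set $\ol{Y}$ independent of $i \geq i_0$ — i.e.\ that enlarging $i$ only shrinks $S(x,i)$ — and that on $D \times \ol{Y}$ the product $\ol{y}x$ never leaves the big cell, so that the rational factorization map is honestly continuous there with no blow-up; this is where the explicit shape of $\ol{V}_n$, $Q_n = L_n V_n$ inside $\SO_{2n+1}$ and the matrix identity $\ol{y}x = v\,l_n(a)\,\ol{y}_0$ must be written out, exactly as in \cite[Lemma 5.1]{Z18}. Once that uniform compactness is in place, the passage to the congruence level $m$ is a soft continuity argument.
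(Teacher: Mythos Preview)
The paper does not give its own proof; it simply cites \cite[Lemma~5.1]{Z18} and \cite[\S 4]{B95}. Your overall strategy---big-cell factorization, compactness, continuity of the factorization map---is the right one and matches those references. But two steps in your sketch do not work as written.

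First, in your second paragraph you argue that $\ol{y}$ ranges over a compact set $\ol{Y}$ because $\ol{y}=(\ol{y}x)x\inv$ with $\ol{y}x\in Q_n\ol{V}_{n,i_0}$ and $x\in D$. This fails: $Q_n$ is not compact, so $Q_n\ol{V}_{n,i_0}$ is not compact, and that membership alone gives no bound on $\ol{y}$. In the rank-one model (take $\ol{y}=\left(\begin{smallmatrix}1&0\\c&1\end{smallmatrix}\right)$, $x=\left(\begin{smallmatrix}1&b\\0&1\end{smallmatrix}\right)$), the $\ol{V}$-component of $\ol{y}x$ has entry $c/(1+bc)$, which for $|bc|>1$ has absolute value $1/|b|$---so it can be small while $|c|$ is arbitrarily large. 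What rules this out is choosing $i_0$ large \emph{in terms of} $D$, so that $1/|b|$ cannot meet the $\ol{V}_{n,i_0}$ bound for any $b$ coming from $D$; that is exactly the explicit matrix computation you defer to \cite{Z18}, and it is not a soft consequence of compactness. You are right to flag this as the main obstacle, but the argument you give before flagging it is wrong, not merely incomplete.

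Second, in your third paragraph the claim ``$\ol{y}x\to I_n$ as $i\to\infty$'' is false: for fixed $x\in D$ the product $\ol{y}x$ tends to $x$, not to the identity, as $\ol{y}\to I$. The correct statement is that the factorization map sends $(\ol{y},x)=(I,x)$ to $(v,a,\ol{y}_0)=(x,I_n,I)$, so by continuity on a neighborhood of $\{I\}\times D$ the Levi factor $a$ lands in $K_m^{\GL_n}$ whenever $\ol{y}$ is sufficiently close to $I$, uniformly in $x\in D$. Combined with the genuinely nontrivial input that $S(x,i)$ shrinks into any prescribed neighborhood of $I$ in $\ol{V}_n$ for $i$ large---essentially the content of the next lemma, $S(x,i)=\ol{V}_{n,i}$---this yields the result.
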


\begin{lemma}
There exists an integer $i_2=i_2(D)$ such that $S(x,i)=\ol{V}_{n,i}$ for any $x\in D$ and $i\geq i_2.$
\end{lemma}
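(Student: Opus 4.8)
The plan is to establish the two inclusions $\ol V_{n,i}\subseteq S(x,i)$ and $S(x,i)\subseteq\ol V_{n,i}$, uniformly in $x\in D$, once $i$ exceeds a bound $i_2(D)$, by making the big-cell decomposition inside $Q_n\ol V_n\subseteq\SO_{2n+1}$ explicit and tracking the induced change of coordinates on $\ol V_n$. Fix the natural block coordinates attached to $Q_n=L_nV_n$, $L_n\cong\GL_n$: write $\ol y=\ol y(\beta,\gamma,\delta)\in\ol V_n$ with $\gamma$ the $n\times n$ corner block and $\beta,\delta$ the adjacent $1\times n$ and $n\times 1$ blocks, and $x=x(b,b',c)\in V_n$ with $c$ the corner block. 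By Lemma \ref{height}, together with the explicit forms of $e_i$ and $w^{l,n}$ (resp.\ $e'_i$ and the embedding $\SO_{2l}\hookrightarrow\SO_{2l+1}$ when $n=l$), $\{\ol V_{n,i}\}_{i\ge 1}$ is a decreasing chain of compact open subgroups of $\ol V_n$ with $\bigcap_i\ol V_{n,i}=\{I\}$, described in these coordinates by $\beta\in\mathfrak p^{d_\beta i+e_\beta}$, $\gamma\in\mathfrak p^{d_\gamma i+e_\gamma}$, $\delta\in\mathfrak p^{d_\delta i+e_\delta}$ for constants $d_\bullet>0$ and $e_\bullet\in\Z$; the structural fact I will use is that conjugation by $e_i$ contracts the corner block strictly faster than the edge blocks, i.e.\ $d_\gamma>d_\beta$ and $d_\gamma>d_\delta$. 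Since $D$ is compact there is $N=N(D)\ge 0$ with $b,b',c\in\mathfrak p^{-N}$ for all $x\in D$, and $D=D^{-1}$.

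The heart of the proof is the claim that there is $i_2(D)\ge 1$ such that for every $i\ge i_2(D)$, $\ol y\in\ol V_{n,i}$ and $x\in D$, the product $\ol y x$ lies in the open cell $Q_n\ol V_n$ and its $\ol V_n$-component $\ol y_0$ again lies in $\ol V_{n,i}$. That $\ol y x\in Q_n\ol V_n$ for large $i$ is a compactness argument: the preimage of the open set $Q_n\ol V_n$ under multiplication contains the compact set $\{I\}\times D$ (as $Ix=x\in V_n\subseteq Q_n$), hence contains $\mathcal U\times D$ for some neighbourhood $\mathcal U\ni I$ in $\ol V_n$, and $\ol V_{n,i}\subseteq\mathcal U$ for $i$ large. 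Writing $\ol y x=v\,l_n(a)\,\ol y_0$ with $v\in V_n$, $a\in\GL_n$, $\ol y_0=\ol y_0(\beta_0,\gamma_0,\delta_0)$ and comparing the bottom block-rows on both sides yields
\begin{align*}
a^* &= I_n+\gamma c+\delta b', \qquad \gamma_0=(a^*)^{-1}\gamma, \\
\delta_0 &= (a^*)^{-1}(\gamma b+\delta), \qquad \beta_0=\beta-(\beta c+b')(a^*)^{-1}\gamma .
\end{align*}
For $i$ large $\gamma c,\delta b'\in\mathfrak p$, so $a^*\in K^{\GL_n}_1$ and $a^*,(a^*)^{-1}$ are integral with $(a^*)^{\pm 1}\equiv I_n\pmod{\mathfrak p}$; hence $\gamma_0\in\mathfrak p^{d_\gamma i+e_\gamma}$. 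In the next formula $\gamma b\in\mathfrak p^{d_\gamma i+e_\gamma-N}\subseteq\mathfrak p^{d_\delta i+e_\delta}$ once $(d_\gamma-d_\delta)i\ge N+e_\delta-e_\gamma$, so $\delta_0\in\mathfrak p^{d_\delta i+e_\delta}$; likewise $b'(a^*)^{-1}\gamma\in\mathfrak p^{d_\gamma i+e_\gamma-N}\subseteq\mathfrak p^{d_\beta i+e_\beta}$ once $(d_\gamma-d_\beta)i\ge N+e_\beta-e_\gamma$ and the remaining term is smaller still, so $\beta_0\in\mathfrak p^{d_\beta i+e_\beta}$. Thus $\ol y_0\in\ol V_{n,i}$, and $i_2(D)$ may be taken to be the maximum of the finitely many thresholds above, all depending only on $D$.

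Granting the claim, both inclusions follow. If $\ol y\in\ol V_{n,i}$ then $\ol y x=(\text{the $Q_n$-part})\,\ol y_0$ with $\ol y_0\in\ol V_{n,i}$, so $\ol y\in S(x,i)$. Conversely, if $\ol y\in S(x,i)$, write $\ol y x=q\ol y_0$ with $q\in Q_n$, $\ol y_0\in\ol V_{n,i}$; then $\ol y_0 x^{-1}=q^{-1}\ol y$ exhibits $\ol y$ as the $\ol V_n$-component of $\ol y_0 x^{-1}$, so the claim applied to $\ol y_0\in\ol V_{n,i}$ and $x^{-1}\in D$ gives $\ol y\in\ol V_{n,i}$. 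Hence $S(x,i)=\ol V_{n,i}$ for all $i\ge i_2(D)$. The case $n=l$ is identical, replacing $w^{l,n}\ol V_n(w^{l,n})\inv\cap H_i$ by $\ol V_l\cap H_i^{\SO_{2l+1}}$ and dropping the conjugation by $w^{l,n}$. The one genuinely non-formal ingredient, and the place where ``$i$ large'' is really needed, is the strict inequality $d_\gamma>d_\beta,d_\delta$ between the contraction rates of the corner and edge blocks of $\ol V_n$, together with the block-by-block description of $\ol V_{n,i}$ after the conjugation by $w^{l,n}$ that returns the embedded $\SO_{2n+1}$ to standard position; this is precisely the computation carried out in \cite[Lemma 5.1]{Z18} (see also \cite[\S 4]{B95}), which we follow.
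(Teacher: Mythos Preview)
The paper gives no proof of this lemma beyond the sentence ``the following two lemmas follow from an analogous argument as \cite[Lemma 5.1]{Z18}'', so you have written out what the paper only gestures at, and your overall architecture---explicit Bruhat factorisation of $\ol y x$, valuation-tracking the $\ol V_n$-component $\ol y_0$, then the symmetry trick with $x^{-1}\in D$ for the reverse inclusion---is exactly what those references do.

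There is, however, one genuine inaccuracy in the write-up. You describe membership in $\ol V_{n,i}$ by three block-level conditions $\beta\in\mathfrak p^{d_\beta i+e_\beta}$, $\gamma\in\mathfrak p^{d_\gamma i+e_\gamma}$, $\delta\in\mathfrak p^{d_\delta i+e_\delta}$ with \emph{scalar} rates, and then invoke the single inequality $d_\gamma>d_\beta,d_\delta$. By Lemma \ref{height} the condition is in fact entry-wise, with a separate rate for each root, and the block-level inequality can fail: already in $\SO_{2n+1}$ the short root $e_1$ (an edge entry) has height $n$, while $e_{n-1}+e_n$ (a corner entry) has height $3$, so for $n\ge 4$ some edge entry contracts strictly faster than some corner entry. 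What actually makes your estimates for $\delta_0$ and $\beta_0$ go through is the \emph{entry-wise} identity of roots: the root attached to $\gamma_{ij}$ is the sum of the roots attached to $\delta_i$ and $\beta_j$, so its height (hence its contraction rate under the relevant toral conjugation) strictly exceeds each of theirs, row by row and column by column. This additivity survives the embedding and the conjugation by $w^{l,n}$ because it is a statement about characters of the torus. With this refinement your bounds $\gamma b\subset(\text{$\delta$-condition})$ and $(\beta c+b')(a^*)^{-1}\gamma\subset(\text{$\beta$-condition})$ hold for $i$ large depending only on $N(D)$, exactly as you claim. Since you already defer the precise computation to \cite[Lemma 5.1]{Z18}, this is a repairable oversimplification rather than a wrong approach.
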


For $v\in V_\tau$ define a function $f_s^{i,v}:\SO_{2n+1}\rightarrow V_\tau$ by
$$
f_s^{i,v}(g)=\left\{\begin{array}{cc}
\frac{|\mathrm{det}a|^{s-\frac{1}{2}}\tau(a)v}{W_v(I_n)} & \, \mathrm{if} \, g=u'l_n(a)\ol{u} \, \mathrm{where} \, u'\in V_n, a\in\GL_n, \ol{u}\in \ol{V}_{n,i}, \\
0 & \mathrm{otherwise.}
\end{array}\right.
$$

\begin{lemma}\label{5.3}
For any $v\in V_\tau,$ there exists an integer $i_0(v)$ such that for any $i\geq i_0(v)$, $f_s^{i,v}\in I(\tau,s).$
\end{lemma}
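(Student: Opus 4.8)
## Proof Plan for Lemma \ref{5.3}

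The goal is to show that the function $f_s^{i,v}$, defined piecewise on Bruhat-type cells via the decomposition $g = u'l_n(a)\bar u$ with $u' \in V_n$, $a \in \GL_n$, $\bar u \in \bar V_{n,i}$, is a well-defined element of the induced representation $I(\tau, s)$ once $i$ is large enough. There are three things to verify: (a) the expression $\frac{|\det a|^{s-\frac12}\tau(a)v}{W_v(I_n)}$ does not depend on the choice of decomposition $g = u'l_n(a)\bar u$; (b) $f_s^{i,v}$ transforms correctly under left translation by $Q_n = L_n V_n$, i.e. $f_s^{i,v}(l_n(b)u'g) = |\det b|^{s-\frac12}\tau(b)\big(f_s^{i,v}(g)\big)$; and (c) $f_s^{i,v}$ is right-invariant under some open compact subgroup of $\SO_{2n+1}$ (the smoothness condition). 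The only subtle point is (a), since the big cell $Q_n \bar V_{n,i}$ is a proper subset of $Q_n \bar V_n$ and one must check that $Q_n \cap \bar V_{n,i}$ interacts trivially with the formula; (b) is then formal and (c) follows because $\bar V_{n,i}$ is itself a congruence-type subgroup.

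First I would establish (a). Suppose $u_1' l_n(a_1)\bar u_1 = u_2' l_n(a_2)\bar u_2$ with $u_j' \in V_n$, $a_j \in \GL_n$, $\bar u_j \in \bar V_{n,i}$. Then $l_n(a_2)^{-1}(u_2')^{-1}u_1' l_n(a_1) = \bar u_2 \bar u_1^{-1}$. The left-hand side lies in $Q_n$ (the parabolic), the right-hand side lies in $\bar V_n$, and $Q_n \cap \bar V_n = \{1\}$ by the Bruhat/Levi decomposition of $\SO_{2n+1}$. Hence $\bar u_1 = \bar u_2$ and $l_n(a_1) \in V_n\, l_n(a_2)\, V_n$, which by uniqueness in the Levi decomposition of $Q_n$ forces $a_1 = a_2$. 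Therefore the value $\frac{|\det a|^{s-\frac12}\tau(a)v}{W_v(I_n)}$ is unambiguous, and $f_s^{i,v}$ is well-defined on the open set $Q_n \bar V_{n,i}$ and set to $0$ on its complement (which is a union of smaller cells and is open in its own right, so there is no continuity obstruction across the boundary — these are disjoint open-closed pieces at the level of the coarse decomposition we use).

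Next, (b) is immediate from the definition: if $g = u'l_n(a)\bar u \in Q_n\bar V_{n,i}$ and $b \in \GL_n$, $u'' \in V_n$, then $l_n(b)u''g = \big(l_n(b)u''u'l_n(a)l_n(a)^{-1}\big)l_n(a)\bar u$; the parenthesized factor lies in $V_n$ since $l_n(b)u''u'l_n(b)^{-1} \in V_n$ and $l_n(b)l_n(a) = l_n(ba)$, giving a decomposition $l_n(b)u''g = \tilde u'\, l_n(ba)\, \bar u$ with $\tilde u' \in V_n$. Then $f_s^{i,v}(l_n(b)u''g) = \frac{|\det(ba)|^{s-\frac12}\tau(ba)v}{W_v(I_n)} = |\det b|^{s-\frac12}\tau(b)\big(f_s^{i,v}(g)\big)$, and if $g \notin Q_n\bar V_{n,i}$ then neither is $l_n(b)u''g$, so both sides vanish. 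For (c), I would choose the open compact subgroup $\bar V_{n,i}$ itself (or a small enough congruence subgroup refining it): for $\bar u_0 \in \bar V_{n,i}$ and $g = u'l_n(a)\bar u \in Q_n\bar V_{n,i}$ one has $g\bar u_0 = u'l_n(a)(\bar u\bar u_0)$ with $\bar u\bar u_0 \in \bar V_{n,i}$, so $f_s^{i,v}(g\bar u_0) = f_s^{i,v}(g)$; on the complement both sides are again $0$. One also needs right-invariance under a neighborhood of the identity in all directions, not just $\bar V_n$-directions, but since $f_s^{i,v}$ is supported in a single open cell and is locally given by the smooth function $g \mapsto |\det a|^{s-\frac12}\tau(a)v$ composed with the (locally analytic) coordinate $a$, together with the fact that $\tau$ is smooth, shrinking the congruence subgroup suffices.

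The main obstacle — and really the only place any care is needed — is the well-definedness argument (a), specifically the interplay between the \emph{truncated} set $\bar V_{n,i}$ appearing in the support condition and the honest unipotent radical $\bar V_n$: one must make sure the identity $Q_n \cap \bar V_n = \{1\}$ is what is used and that no element of $\bar V_{n,i}$ can be absorbed into $Q_n$ in a way that alters the $a$-component. Here the constant $i_0(v)$ enters through Lemma \ref{5.1}: for $i \geq i_0(v)$ (taking $i_0(v)$ at least the $i_1(D,m)$ and $i_2(D)$ of the preceding two lemmas, with $D$ a compact subgroup of $V_n$ containing the relevant elements and $m$ chosen so that $\tau$ is $K_m^{\GL_n}$-invariant and $W_v(I_n) \neq 0$), the preceding lemmas guarantee that products $\bar y x$ with $x \in D$ decompose cleanly as $v\,l_n(a)\,\bar y_0$ with $a \in K_m^{\GL_n}$, so that the formula is insensitive to the ambiguity $a \mapsto a k$, $k \in K_m^{\GL_n}$ — this is precisely what makes $f_s^{i,v}$ locally constant (hence in $I(\tau,s)$) rather than merely a well-defined function. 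I would therefore set $i_0(v) = \max\{i_1(D,m), i_2(D)\}$ for such a choice of $D$ and $m$ and cite Lemmas \ref{5.1} and the one following it to conclude.
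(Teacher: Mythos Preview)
Your overall structure is right and the key mathematical content is present, but you have misidentified where the difficulty lies. Part (a) is completely trivial: the decomposition $g = u' l_n(a)\bar u$ is unique for \emph{any} $i$, simply because $Q_n \cap \bar V_n = \{1\}$, and this needs no hypothesis on $i$ whatsoever. So it is misleading to call (a) ``the main obstacle.'' Likewise in (c), the subgroup $\bar V_{n,i}$ is open only in $\bar V_n$, not in $\SO_{2n+1}$, so right-invariance under $\bar V_{n,i}$ alone does not give smoothness.

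The genuine content is smoothness in the $V_n$-direction, and you do eventually land on the right tool. Concretely: take $m$ so that $K_m^{\GL_n}$ fixes $v$, choose a small compact open $D\subset V_n$, and use an Iwahori-type factorization of a small congruence subgroup $K$ of $\SO_{2n+1}$ into its $\bar V_n$-, $L_n$-, and $V_n$-parts. The $\bar V_n$-part is absorbed into $\bar V_{n,i}$; the $L_n$-part is handled by choosing $K\cap L_n$ inside $l_n(K_m^{\GL_n})$ and small enough to normalize $\bar V_{n,i}$. For the $V_n$-part one needs that for $\bar u\in\bar V_{n,i}$ and $x\in D$, the product $\bar u x$ decomposes as $u'' l_n(a')\bar u_0$ with $a'\in K_m^{\GL_n}$ and $\bar u_0\in \bar V_{n,i}$; this is exactly what Lemma~\ref{5.1} together with the subsequent lemma (which gives $S(x,i)=\bar V_{n,i}$) provide once $i\ge i_0(v):=\max\{i_1(D,m),i_2(D)\}$. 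This is where $i_0(v)$ enters, and your closing sentence says as much --- but it belongs under (c), not (a).

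The paper itself gives no details and simply defers to \cite[Lemma~5.2]{Z18}, where the symplectic analogue of this smoothness argument is written out; your approach, once the exposition is straightened out, is the same one.
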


\begin{proof}
The proof is similar to \cite[Lemma 5.2]{Z18}.
\end{proof}

Recall that in \S 2 we fixed a nonzero Whittaker functional $\Lambda_\tau \in \mathrm{Hom}_{U_{\GL_{n}}}(\tau,\psi^{-1}).$ Define a function $\xi_s^{i,v}:\SO_{2n+1}\times\GL_n\rightarrow\mathbb{C}$ by $\xi_s^{i,v}(g,a)=\Lambda_\tau(\tau(a)f^{i,v}_s(g)).$ Then, $\xi_s^{i,v}\in I(\tau,s,\psi\inv)$ and
\begin{equation}\label{xi nonintertwined}
\xi_s^{i,v}(g, I_n)=\left\{\begin{array}{cc}
\frac{|\mathrm{det}a|^{s-\frac{1}{2}}W_v(a)}{W_v(I_n)} & \, \mathrm{if} \, g=u'l_n(a)\ol{u} \,\, \mathrm{where} \, u'\in V_n, a\in\GL_n, \ol{u}\in \ol{V}_{n,i}, \\
0 & \mathrm{otherwise.}
\end{array}\right.
\end{equation}
Recall that $W_v(a)=\Lambda_\tau(\tau(a)v)$ is the Whittaker function of $\tau$ attached to $v.$

Next we consider the image of this function under the intertwining operator. Let $\tilde{\xi}_{1-s}^{i,v}=M(\tau,s,\psi\inv)\xi_s^{i,v}\in I(\tau^*,1-s,\psi\inv).$ The following lemma is our analogue of \cite[Lemma 5.3]{Z18}.

\begin{lemma}\label{5.4}
Let $D$ be an open compact subset of $V_n.$ Then there exists a positive integer $i(D,v)\geq i_0(v)$ such that for any $i\geq i(D,v)$, $\tilde{\xi}_{1-s}^{i,v}(w_n x, I_n)=\mathrm{Vol}(\ol{V}_{n,i}).$
\end{lemma}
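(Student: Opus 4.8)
The plan is to follow the argument of \cite[Lemma 5.3]{Z18}: unfold the intertwining operator, change variables by $w_n$, and then invoke the two support lemmas above together with the smoothness of the vector $v$. By the definition of $M(\tau,s,\psi^{-1})$ and of $\tilde{\xi}_{1-s}^{i,v}$, and since $I_n^*=I_n$,
$$
\tilde{\xi}_{1-s}^{i,v}(w_n x, I_n)=\int_{V_n} \xi_s^{i,v}(w_n u w_n x, d_n)\,du .
$$
The element $w_n$ satisfies $w_n^2=I_{2n+1}$ and $w_n V_n w_n=\ol{V}_n$, and conjugation by $w_n$ matches the root subgroups of $V_n$ with those of $\ol{V}_n$, hence preserves Haar measure; writing $\ol{v}=w_n u w_n$ gives
$$
\tilde{\xi}_{1-s}^{i,v}(w_n x, I_n)=\int_{\ol{V}_n} \xi_s^{i,v}(\ol{v}x, d_n)\,d\ol{v}.
$$
By \eqref{xi nonintertwined}, the integrand vanishes unless $\ol{v}x\in Q_n\ol{V}_{n,i}$, i.e.\ unless $\ol{v}\in S(x,i)$.

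Now fix an integer $m=m(v)$ large enough that $v$ is fixed by $K_m^{\GL_n}$, and set $i(D,v)=\max\{i_0(v),\,i_1(D,m),\,i_2(D)\}$, where $i_1$ and $i_2$ are as in Lemma \ref{5.1} and the lemma immediately following it. Assume $i\geq i(D,v)$. By that second lemma, $S(x,i)=\ol{V}_{n,i}$, so the integral runs only over $\ol{V}_{n,i}$. For $\ol{v}\in\ol{V}_{n,i}$, Lemma \ref{5.1} gives $\ol{v}x=v'\,l_n(a)\,\ol{y}_0$ with $v'\in V_n$, $a\in K_m^{\GL_n}$ and $\ol{y}_0\in\ol{V}_{n,i}$. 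Since $\xi_s^{i,v}$ is left $V_n$-invariant, $\tau(a)v=v$, and $|\det a|=1$, the defining formula for $f_s^{i,v}$ (hence \eqref{xi nonintertwined}) shows that $\xi_s^{i,v}(\ol{v}x,d_n)$ equals a constant independent of $\ol{v}\in\ol{V}_{n,i}$ and of $x\in D$. Tracking the normalizing factor $1/W_v(I_n)$ in $f_s^{i,v}$ against the role of $d_n$ — whose conjugation action interchanges $\psi$ and $\psi^{-1}$ on $U_{\GL_n}$ and which is inserted into $M(\tau,s,\psi^{-1})$ precisely to align the Whittaker normalizations of $I(\tau,s,\psi^{-1})$ and $I(\tau^*,1-s,\psi^{-1})$ — this constant is $1$, whence
$$
\tilde{\xi}_{1-s}^{i,v}(w_n x, I_n)=\int_{\ol{V}_{n,i}} 1\,d\ol{v}=\mathrm{Vol}(\ol{V}_{n,i}).
$$

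The main obstacle is this last bookkeeping step: one must verify that after collapsing the integral the residual constant is exactly $1$, which requires unwinding the precise Whittaker functional used to build $I(\tau^*,1-s,\psi^{-1})$ and its interaction with $d_n$ and $a^*$; one should also confirm that conjugation by $w_n$ is measure-preserving with the Haar normalizations in force and that the section $f_s^{i,v}$ is genuinely supported, after passing to $\ol{V}_{n,i}$, on a compact set so that no analytic continuation in $s$ is needed. The genuinely analytic content — the uniform control of $S(x,i)$ for $x$ ranging over the compact set $D$ — is already supplied by Lemma \ref{5.1} and the lemma following it, so beyond these normalization checks the argument is a direct transcription of \cite[Lemma 5.3]{Z18}.
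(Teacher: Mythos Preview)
Your proof is correct and follows essentially the same approach as the paper: unfold the intertwining integral, invoke the two support lemmas to reduce the domain of integration to $\ol{V}_{n,i}$, and then use $\tau(a)v=v$ together with $|\det a|=1$ to collapse the integrand to a constant. The only cosmetic difference is that the paper first rewrites $\xi_s^{i,v}(w_nuw_nx,d_n)=\xi_s^{i,v}(l_n(d_n)\,w_nuw_nx,I_n)$ (using $|\det d_n|=1$) before decomposing, whereas you keep $d_n$ in the second slot; your explicit specification $i(D,v)=\max\{i_0(v),i_1(D,m),i_2(D)\}$ is a welcome clarification that the paper leaves implicit.
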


\begin{proof}
By definition,
$$
\tilde{\xi}_{1-s}^{i,v}(w_n x, I_n)= \int_{N_n} \xi_{s}^{i,v}(w_n u w_n x, d_n)du= \int_{N_n} \xi_{s}^{i,v}(d_n w_n u w_n x, I_n)du.
$$
$\xi_{s}^{i,v}(d_n w_n u w_n x, I_n)\neq 0$ if and only if  $d_n w_n u w_n x\in Q_n \ol{V}_{n,i}$. We have $d_n\in Q_n$ and hence, $d_n w_n u w_n x\in Q_n \ol{V}_{n,i}$ if and only if $d_n w_n u w_n\in \ol{V}_{n,i}= S(x,i).$ By Lemma \ref{5.1}, for $d_n w_n u w_n\in S(x,i),$
$$
d_n w_n u w_n x=u'l_n(a)\ol{y}_0,
$$ for some $u'\in V_n, a\in K_m^{\GL_n}$ which fixes $v$, and $\ol{y}_0\in\ol{V}_{n,i}.$ Since $\tau(a)v=v$ and $|\mathrm{det}(a)|=1,$ we have $|\mathrm{det}(a)|^{s-\frac{1}{2}}W_v(a)=W_v(I_n)$ and
$$
\xi_{s}^{i,v}(d_n w_n u w_n x, I_n)=\left\{\begin{array}{cc}
1 & \, \mathrm{if} \, d_nw_n u w_n\in \ol{V}_{n,i}, \\
0 & \mathrm{otherwise.}
\end{array}\right.
$$
Therefore,
$$
\tilde{\xi}_{1-s}^{i,v}(w_n x, I_n)=\mathrm{Vol}(\ol{V}_{n,i}).
$$
\end{proof}

The above argument also describes the image of $\tilde{\xi}_{1-s}^{i,v}.$ We have
\begin{equation}\label{xi intertwined} \tilde{\xi}_{1-s}^{i,v}(u_1 l_n(a) w_n u_2, I_n)=\left\{\begin{array}{cc}
\mathrm{Vol}(\ol{V}_{n,i})|\mathrm{det}a|^{\frac{1}{2}-s}W_v^*(a) & \, \mathrm{if} \, \, u_1\in V_n, a\in\GL_n, u_2\in D, \\
0 & \mathrm{otherwise.}
\end{array}\right.
\end{equation}

\section{Proof of the local converse theorem}\label{proof of the local converse theorem}

Throughout this section, we let $\pi$ and $\pi'$ be irreducible $\psi$-generic supercuspidal representations of $\SO_{2l}$ with same central character $\omega.$ We also fix matrix coefficients $f\in\mathcal{M}(\pi)$ and $f'\in\mathcal{M}(\pi')$ such that $W^f(I_{2l})=W^{f'}(I_{2l})=1$.

The goal of this section is to prove Theorem \ref{converse thm intro}. To do this, we first follow Zhang's work in the symplectic and unitary cases \cite{Z18, Z19} for twists up to $\GL_{l-2},$ to  obtain that 
$$
B_m(g,f)-B_m(g,f')= B_m(g,f_{\tilde{w}_l}) + B_m(g,f_{\tilde{w}_l^c}).
$$
To compute the right hand side, our work begins to differ from Zhang's. It turns out that twists up to $\GL_{l-1}$  determine the right hand side on certain elements that are fixed by the outer conjugation $c$. This can be seen through explicit computation of the embedding. To obtain the rest, we must also make use of  the twists by $\GL_l$; however, the result instead implies that the right hand side plus its outer conjugation by $c$ is $0$. That is,
$$
B_m(g,f_{\tilde{w}_l}) + B_m(g,f_{\tilde{w}_l^c})+B_m^c(g,f_{\tilde{w}_l}) + B_m^c(g,f_{\tilde{w}_l^c})=0.
$$

\subsection{Twists up to $\GL_{l-2}$}

We consider the twists up to  $\GL_{l-2}.$ Recall from Equation \eqref{Bessel Difference},
$$
B_m(g,f)-B_m(g,f')=\sum_{n=1}^{l-2} B_m(g,f_{\tilde{w}_n}) + B_m(g,f_{\tilde{w}_l}) + B_m(g,f_{\tilde{w}_l^c}).
$$
In this subsection, we prove the following theorem.

\begin{thm}\label{l-2 theorem}
Suppose that $k\leq l-2.$ If $\pi$ and $\pi'$ satisfy the condition $\mathcal{C}(k)$, then
$$
B_m(g,f)-B_m(g,f')=\sum_{n=k+1}^{l-2} B_m(g,f_{\tilde{w}_n}) + B_m(g,f_{\tilde{w}_l}) + B_m(g,f_{\tilde{w}_l^c}),
$$ for any $g\in\SO_{2l}$ and $m$ large enough.
\end{thm}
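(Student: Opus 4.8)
The plan is to argue by induction on $k$, with the base case $k=0$ being exactly Equation \eqref{Bessel Difference} from Corollary \ref{C(0)}. So assume the statement holds for $k-1$ (with $1 \le k \le l-2$), i.e.\ for $m$ large enough
\[
B_m(g,f)-B_m(g,f')=\sum_{n=k}^{l-2} B_m(g,f_{\tilde{w}_n}) + B_m(g,f_{\tilde{w}_l}) + B_m(g,f_{\tilde{w}_l^c}),
\]
and assume $\mathcal{C}(k)$. The goal is to show the single term $B_m(g,f_{\tilde{w}_k})$ can be absorbed, i.e.\ that there exist functions supported on strictly smaller closures so that $B_m(g,f_{\tilde{w}_k})$ is replaced by a sum of partial Bessel functions attached to $\tilde w_n$ with $n>k$ (together with the $\tilde w_l$, $\tilde w_l^c$ terms). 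By Theorem \ref{CST}(1) applied to $w=\tilde w_k$, it suffices to prove the vanishing statement $B_m(t\tilde w_k, f_{\tilde w_k})=0$ for all $t\in T_{\tilde w_k}$; once this is known, $f_{\tilde w_k}$ can be traded for a function supported on $\Omega_{\tilde w_k}\setminus C(\tilde w_k)$, and then Theorem \ref{CST}(2) lets us push the support down onto the union of $\Omega_{\tilde w_n}$ for $n>k$ (and the $\tilde w_l$, $\tilde w_l^c$ cells), since the intermediate cells $C(w')$ with $w'\notin\mathrm{B}(\SO_{2l})$ can be discarded. Relabelling the resulting pieces as the new $f_{\tilde w_n}$ completes the inductive step.

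The heart of the matter is therefore the identity $B_m(t\tilde w_k, f_{\tilde w_k})=0$ on $T_{\tilde w_k}$, and this is where the hypothesis $\mathcal{C}(k)$ — equality of the $\GL_k$ twisted gamma factors — enters. The strategy, following the mechanism in \cite{Z18,Z19}, is to plug the partial Bessel functions $B_m(\cdot,f)$ and $B_m(\cdot,f')$ into the zeta integral $\Psi(W,f_s)$ of \S\ref{zeta integrals and gamma factor} for $n=k<l$, using the explicitly constructed section $\xi_s^{i,v}$ from \S\ref{sections}. The functional equation defining $\gamma(s,\pi\times\tau,\psi)$ together with $\mathcal{C}(k)$ forces the difference of the two zeta integrals — equivalently the zeta integral against $B_m(g,f)-B_m(g,f')$ — to be an entire function with vanishing poles; by the support analysis and the inductive hypothesis, only the term $B_m(g,f_{\tilde w_k})$ survives in the relevant Bruhat cell, and unwinding the integral over $R^{l,k}$ and $U_{\SO_{2k+1}}\backslash\SO_{2k+1}$, localized by Lemmas \ref{5.1}--\ref{5.4}, isolates $B_m(t\tilde w_k, f_{\tilde w_k})$ evaluated at $t\in T_{\tilde w_k}$. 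Choosing $\tau$ (and $v$) to extract an arbitrary value of $B_m(t\tilde w_k, f_{\tilde w_k})$ and using that the paired gamma factors agree then yields the desired vanishing. A key technical input here is Proposition \ref{JS Prop} (from \S\ref{proof of the local converse theorem}), which identifies the relevant piece of the zeta integral as essentially a $\GL_k$ Rankin--Selberg integral whose non-vanishing can be controlled.

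The main obstacle, I expect, is the bookkeeping in the last step: carefully matching the coordinates of the Bruhat cell $C(\tilde w_k)$ under the embedding $\SO_{2k+1}\hookrightarrow\SO_{2l}$ and the twisting elements $w^{l,k}$, $\hat w_k$, $\tilde w_k$ with the integration variables of $\Psi(W,f_s)$, so that the partial Bessel function's restriction to $T_{\tilde w_k}\tilde w_k$ appears cleanly. One must verify that the compact-support localization (from the $H_i$-invariance built into $\xi_s^{i,v}$ and from $f_{\tilde w_k}$ being compactly supported modulo center) collapses the outer integral to a neighborhood of $\tilde w_k$, with the contributions of higher cells $\Omega_{\tilde w_n}$, $n>k$, being killed either by the character $\psi$ (they lie outside the Bessel support after the relevant reductions) or by being pushed to $s$-dependent holomorphic terms. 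This is precisely the step carried out in \cite[\S 6]{Z18} for symplectic groups, and the claim — stated in the excerpt — is that the argument is robust enough to transfer; the even-orthogonal complications (the outer automorphism $c$ and the cells $\mathrm{B}_l^c$) do not yet interfere for $k\le l-2$ because $\tilde w_k$ and all $w\ge \tilde w_k$ with $w\notin\{\tilde w_l,\tilde w_l^c,\dots\}$ are $c$-stable in the relevant sense, so the two terms $B_m(g,f_{\tilde w_l})$ and $B_m(g,f_{\tilde w_l^c})$ simply ride along untouched. Thus the real work is verifying this transfer carefully rather than introducing a genuinely new idea; the new phenomena are deferred to Theorems \ref{l-1 thm} and \ref{l thm}.
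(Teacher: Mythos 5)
Your overall architecture is the paper's: induction with base case Corollary \ref{C(0)}, a vanishing statement for $B_m(\cdot,f_{\tilde{w}_k})$ extracted from the $\GL_k$ zeta integrals via the section $\xi_s^{i,v}$, Lemmas \ref{5.1}--\ref{5.4}, the functional equation, and Proposition \ref{JS Prop}, followed by Theorem \ref{CST} and a relabelling of the pieces. However, there is a genuine gap in your reduction step. You claim it suffices to prove $B_m(t\tilde{w}_k,f_{\tilde{w}_k})=0$ for $t\in T_{\tilde{w}_k}$, and that after trading $f_{\tilde{w}_k}$ for a function on $\Omega_{\tilde{w}_k}\setminus C(\tilde{w}_k)$, Theorem \ref{CST}(2) pushes the support onto $\bigcup_{n>k}\Omega_{\tilde{w}_n}$ because the intermediate cells lie outside the Bessel support. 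That is false: $\Omega_{\tilde{w}_k}\setminus C(\tilde{w}_k)$ still contains every cell $C(w)$ with $w=t_k(w')\tilde{w}_k\in\mathrm{B}_k(\SO_{2l})$, $w'\neq 1$ (for instance $C(w_l^{L_{k,1^{l-k}}}\tilde{w}_k)$), and these \emph{are} in the Bessel support yet are disjoint from every $\Omega_{\tilde{w}_i}$ with $i\geq k+1$ (this is exactly Lemma \ref{6.3}(2)). Theorem \ref{CST}(2) only discards cells attached to Weyl elements outside $\mathrm{B}(\SO_{2l})$, so it cannot remove them, and Theorem \ref{CST}(1) cannot be iterated upward without a further vanishing input on those cells. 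What is actually needed, and what the paper proves as Proposition \ref{l-2 prop}, is the stronger statement $B_m(t_k(a)t_k'\tilde{w}_k,f_{\tilde{w}_k})=0$ for \emph{all} $a\in\GL_k$; via the Bruhat decomposition of $\GL_k$ this covers the torus translates of every $w\in\mathrm{B}_k(\SO_{2l})$, which is precisely what lets one push the support past $w_l^{L_{k,1^{l-k}}}\tilde{w}_k$ and then split it (partition of unity) among the $\Omega_{\tilde{w}_i}$, $i\geq k+1$ (with the $i=l-1,l$ pieces absorbed into the $\tilde{w}_l$ and $\tilde{w}_l^c$ terms).

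The good news is that the zeta-integral mechanism you describe, carried out as in \cite[\S 6]{Z18} and in the paper, naturally yields this stronger vanishing: after unwinding, the difference of intertwined integrals is an integral of $B_m(t_k(a)t_k'\tilde{w}_k,f_{\tilde{w}_k})$ against $W_v^*(a)|\det a|^{\frac12-s-l}$ over all of $U_{\GL_k}\backslash\GL_k$, and Proposition \ref{JS Prop} then kills the function of $a$ identically, not merely at torus points; your phrase ``isolates $B_m(t\tilde{w}_k,f_{\tilde{w}_k})$ evaluated at $t\in T_{\tilde{w}_k}$'' understates what is both obtained and required. One further point you gloss over: the functional equation plus $\mathcal{C}(k)$ only transfers an equality of the \emph{non-intertwined} integrals to the intertwined side, and that initial equality $\Psi(W_m^f,\xi_s^{i,v})=\Psi(W_m^{f'},\xi_s^{i,v})$ is itself a computation (localization to the open cell by the choice of section, together with $B_m(t_l(a),f)=B_m(t_l(a),f')$, i.e.\ Lemma \ref{6.3}(1)); the ``entire function with vanishing poles'' phrasing does not reflect the actual argument. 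With the vanishing statement upgraded to all $a\in\GL_k$ and this first equality supplied, your outline matches the paper's proof.
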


We shall prove this by induction. The case $k=0$ is already done by Corollary \ref{C(0)}. Next, we shall assume inductive hypothesis. That is, the condition $\mathcal{C}(k-1)$ implies
$$
B_m(g,f)-B_m(g,f')=\sum_{n=k}^{l-2} B_m(g,f_{\tilde{w}_n}) + B_m(g,f_{\tilde{w}_l}) + B_m(g,f_{\tilde{w}_l^c}),
$$ for any $g\in\SO_{2l}$ and $m$ large enough. We then want to show that the condition $\mathcal{C}(k)$ allows us to remove the term $B_m(g,f_{\tilde{w}_k}).$ We begin with some preliminaries.

Let $P_k$ be the standard maximal parabolic subgroup of $\SO_{2l}$ with Levi subgroup $M_k$ isomorphic to $\GL_k\times\SO_{2(l-k)}.$ Let $N_k$ be its unipotent radical. Furthermore, we let
$$
S^+_k=\left\{\left(\begin{matrix}
u_1 & &  \\
    & u_2 & \\
    &   & u_1^*
\end{matrix}\right) \, \left| \, u_1\in U_{\GL_k}, u_2\in U_{\SO_{2(l-k)}}\right\}\right. .
$$
Note that $U_{\SO_{2l}}=N_k S_k^+.$ By \cite[p. 12]{Cas}, the product map gives an isomorphism
$$
P_k \times \{\tilde{w}_k\}\times N_k \rightarrow P_k \tilde{w}_k P_k.
$$
Alternatively, one can check that conjugation by $\tilde{w}_k$ takes $P_k$ to its opposite parabolic. Also, recall that we set $P_{n, 1^{l-n}}$ to be the standard parabolic subgroup with Levi subgroup $L_{n,1^{l-n}}\cong \GL_n\times\GL_1^{l-n}.$

\begin{lemma}\label{6.2}
For $u_0\in N_k\backslash(N_k\cap U_m)$, $u^+\in S_k^+\cap U_m$, we have that $(u^+)\inv u_0 u^+\in N_k\backslash(N_k\cap U_m).$
\end{lemma}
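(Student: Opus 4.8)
For $u_0\in N_k\backslash(N_k\cap U_m)$, $u^+\in S_k^+\cap U_m$, we have that $(u^+)\inv u_0 u^+\in N_k\backslash(N_k\cap U_m).$

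The plan is to reduce the statement to two elementary facts: that $H_m$ is a subgroup of $\SO_{2l}(F)$ containing $u^+$, and that $S_k^+$ normalizes $N_k$. The point is that, once these are in hand, no computation is needed.

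First I would observe that $S_k^+ \subseteq M_k$: indeed $S_k^+$ consists of the matrices $\mathrm{diag}(u_1,u_2,u_1^*)$ with $u_1\in U_{\GL_k}$ and $u_2\in U_{\SO_{2(l-k)}}$, which is exactly the upper unipotent part of the Levi factor $M_k\cong\GL_k\times\SO_{2(l-k)}$ of $P_k=M_kN_k$. Since a Levi subgroup normalizes the unipotent radical, conjugation by $u^+\in S_k^+$ preserves $N_k$; in particular $(u^+)\inv u_0 u^+\in N_k$ for any $u_0\in N_k$, independently of $m$. Thus the only content of the lemma is that this conjugate is not in $U_m$.

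Next, recall from \S\ref{Howe vectors} that $H_m=e_mK_me_m\inv$ is a group (being a conjugate of the group $K_m=(I_{2l}+\mathrm{Mat}_{2l\times 2l}(\mathfrak{p}^m))\cap\SO_{2l}(F)$) and that $U_m=U_{\SO_{2l}}\cap H_m$. Since $N_k\subseteq U_{\SO_{2l}}$, this gives $N_k\cap U_m=N_k\cap H_m$. Now $u^+\in S_k^+\cap U_m\subseteq U_m\subseteq H_m$, so $(u^+)\inv\in H_m$ as well; hence for any $u_0\in U_{\SO_{2l}}$ we have $u_0\in H_m$ if and only if $(u^+)\inv u_0 u^+\in H_m$ (multiply on the left by $u^+$ and on the right by $(u^+)\inv$, both of which lie in the group $H_m$). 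Combining this with the previous paragraph: for $u_0\in N_k$, the conjugate $(u^+)\inv u_0 u^+$ again lies in $N_k$, so it lies in $N_k\cap U_m=N_k\cap H_m$ exactly when it lies in $H_m$, i.e. exactly when $u_0\in H_m$, i.e. exactly when $u_0\in N_k\cap H_m=N_k\cap U_m$. Since by hypothesis $u_0\notin N_k\cap U_m$, we conclude $(u^+)\inv u_0 u^+\notin N_k\cap U_m$, and as it already lies in $N_k$ it lies in $N_k\backslash(N_k\cap U_m)$, as desired.

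There is essentially no serious obstacle here. The one subtlety worth flagging in the write-up is that the conjugate must be shown to stay in $N_k$ (not merely in $U_{\SO_{2l}}$), which is precisely why one uses $u^+\in M_k$ and not just $u^+\in U_{\SO_{2l}}$; everything else is formal, relying only on the fact that $H_m$ is a group and on the decomposition $U_{\SO_{2l}}=N_kS_k^+$ recorded just before the statement.
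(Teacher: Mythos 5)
Your proof is correct. Note that the paper does not actually write out an argument for this lemma: its proof consists of citing the symplectic analogue \cite[Lemma 6.2]{Z18} and asserting that the argument transfers to split $\SO_{2l}$. Your write-up supplies a short self-contained proof in its place, and the two ingredients you isolate are exactly the right ones: $S_k^+$ sits inside the Levi $M_k$ of $P_k=M_kN_k$, so conjugation by $u^+$ preserves $N_k$; and $H_m=e_mK_me_m\inv$ is a group containing $u^+$ (since $u^+\in U_m=U_{\SO_{2l}}\cap H_m$), so for elements of $N_k\subseteq U_{\SO_{2l}}$ membership in $N_k\cap U_m=N_k\cap H_m$ is both preserved and reflected by conjugation by $u^+$. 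This gives the claim with no matrix computation and, as a bonus, makes the paper's remark that the symplectic proof ``is essentially the same'' transparent: your argument uses only the parabolic structure and the fact that the Howe-vector subgroups $H_m$ are groups, so it applies verbatim to the other classical groups in this setting.
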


\begin{proof}
This is precisely \cite[Lemma 6.2]{Z18}. The proof there is for symplectic groups, but it is essentially the same for split even special orthogonal groups.
\end{proof}

Let $m$ be a positive integer and recall the definition of $U_m$ from $\S$\ref{Howe vectors}. For $a\in\GL_n$, we let
$$
t_n(a)=\mathrm{diag}(a,I_{2(l-n)},a^*)\in\SO_{2l}.
$$

\begin{lemma}\label{6.3}
\begin{enumerate}
    \item For $m$ large enough and depending only on $f$ and $f'$,
    $$
    B_m(t_l(a),f)-B_m(t_l(a),f')=0,
    $$ for any $a\in\GL_l.$
    \item Suppose that $k\leq l-2.$ For any $i\geq k+1$, $P_{k, 1^{l-k}}\tilde{w}_k P_{k, 1^{l-k}}\cap\Omega_{\tilde{w}_i}=\emptyset.$ Thus, 
    $$
    B_m(g,f_{\tilde{w}_i})=0,
    $$
    for any $g\in P_{k, 1^{l-k}}\tilde{w}_k P_{k, 1^{l-k}}$ and $i\geq k+1.$ Hence
    $$
     B_m(g,f)-B_m(g,f')=B_m(g,f_{\tilde{w}_k}),
    $$
    for any $g\in P_{k, 1^{l-k}}\tilde{w}_k P_{k, 1^{l-k}}$ and $m$ large enough.
    \item Let $k\leq l.$ For $m$ large enough depending only of $f_{\tilde{w}_k}$ (and hence only on $f$ and $f'$), we have
    $$
    B_m(t_k(a)\tilde{w}_k u_0, f_{\tilde{w}_k})=0
    $$
    for all $a\in\GL_k$ and $u_0\in N_k\backslash (U_m\cap N_k).$
\end{enumerate}
\end{lemma}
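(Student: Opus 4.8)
The plan is to treat the three parts of Lemma~\ref{6.3} separately, since each has a different flavor, and to keep all arguments parallel to Zhang's symplectic treatment in \cite{Z18} while tracking the two extra Weyl elements $\tilde{w}_l$ and $\tilde{w}_l^c$ that are new in the even orthogonal case.

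For part~(1), the point is that $t_l(a)$ lies in the Bruhat cell of the identity (it is in $T_{\SO_{2l}}\subseteq B_{\SO_{2l}}$, hence in $C(I_{2l})$, which meets $\Omega_{\tilde w_n}$ for no $n\ge 1$). So in the decomposition \eqref{Bessel Difference} every term $B_m(g,f_{\tilde w_n})$ with $1\le n\le l-2$ vanishes on $C(I_{2l})$ once $m$ is large, and likewise $B_m(g,f_{\tilde w_l})$ and $B_m(g,f_{\tilde w_l^c})$ vanish there since $\tilde w_l,\tilde w_l^c>I_{2l}$ and $t_l(a)\notin\Omega_{\tilde w_l}\cup\Omega_{\tilde w_l^c}$. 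Therefore $B_m(t_l(a),f)-B_m(t_l(a),f')=0$ for $m$ large, as required. I would simply invoke Proposition~\ref{Omega_w}(1) together with the support statement that $B_m(g,f_{\tilde w})$ is supported in $\Omega_{\tilde w}$ to make this precise.

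For part~(2), the key is the combinatorial fact $P_{k,1^{l-k}}\tilde w_k P_{k,1^{l-k}}\cap\Omega_{\tilde w_i}=\emptyset$ for $i\ge k+1$. By Lemma~\ref{P_{k, 1^{l-k}}}, the left double coset $P_{k,1^{l-k}}\tilde w_k P_{k,1^{l-k}}$ is the union of cells $C(w)$ for $w$ in the Bruhat interval $[\tilde w_k,\,w_l^{L_{k,1^{l-k}}}\tilde w_k]$, and every such $w$ satisfies $\theta_w\supseteq\theta_{\tilde w_k}=\Delta(\SO_{2l})\setminus\{\alpha_k\}$ only in the Bessel part; more to the point, any $w'$ with $C(w')\subseteq\overline{C(w)}$ for such $w$ cannot lie in $\Omega_{\tilde w_i}$ because $w\ge\tilde w_i$ would force (via Lemma~\ref{CPSSh}, comparing the associated Levis, using $\theta_{\tilde w_i}=\Delta\setminus\{\alpha_i\}$) a containment that is incompatible with $w$ sitting below $w_l^{L_{k,1^{l-k}}}\tilde w_k$ once $i\ge k+1$, since $\alpha_i$ is not a simple root of $L_{k,1^{l-k}}$ for $i\ge k+1$. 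Hence $B_m(g,f_{\tilde w_i})$, being supported in $\Omega_{\tilde w_i}$, vanishes on this double coset, and by part~(2)'s own inductive setup (the hypothesis $\mathcal{C}(k-1)$ already removed $f_{\tilde w_n}$ for $n\le k-1$, whose support $\Omega_{\tilde w_n}$ likewise misses the double coset for $n\le k-1$ by the same interval argument), the decomposition collapses to $B_m(g,f)-B_m(g,f')=B_m(g,f_{\tilde w_k})$ on $P_{k,1^{l-k}}\tilde w_k P_{k,1^{l-k}}$. I would spell out the interval/Levi comparison carefully since this is where the even-orthogonal root data must be checked by hand.

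For part~(3), this is the direct analogue of \cite[Lemma~6.3]{Z18}: one shows $B_m(t_k(a)\tilde w_k u_0,f_{\tilde w_k})=0$ for $u_0\in N_k\setminus(U_m\cap N_k)$ by a standard unipotent-averaging argument. Using the product decomposition $P_k\times\{\tilde w_k\}\times N_k\to P_k\tilde w_k P_k$ and $U_{\SO_{2l}}=N_kS_k^+$, together with Lemma~\ref{6.2} (which guarantees that conjugating $u_0$ by an element of $S_k^+\cap U_m$ keeps it outside $N_k\cap U_m$), one finds a one-parameter unipotent subgroup $\mathbf{x}_\alpha(F)$ inside $N_k\cap U_m$ (after enlarging $m$ using Lemma~\ref{height}) along which $B_m$ transforms by a nontrivial character of $t_k(a)\tilde w_k u_0$, forcing the value to vanish; the nontriviality of that character on the relevant piece of $u_0$ is exactly what $u_0\notin N_k\cap U_m$ provides. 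The main obstacle across the whole lemma is part~(2): verifying that the Bruhat interval $[\tilde w_k, w_l^{L_{k,1^{l-k}}}\tilde w_k]$ is disjoint from $\Omega_{\tilde w_i}$ for $i\ge k+1$ requires a clean handle on the partial order on $\mathrm{B}(\SO_{2l})$ via the $\theta_w$'s, and one must be careful that the two ``extra'' elements $\tilde w_l,\tilde w_l^c$ (with $\theta$'s missing $\alpha_l$ resp. $\alpha_{l-1}$) do not sneak into the picture — but since $k\le l-2$ we have $k+1\le l-1$, and for $i=l-1$ one uses $\theta_{\tilde w_{l-1}}=\Delta\setminus\{\alpha_{l-1},\alpha_l\}$, which again is not contained in any $\theta_w$ for $w$ in the stated interval.
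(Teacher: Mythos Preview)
Your outline has the right overall shape, but there are concrete errors in parts (1) and (3).

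For part (1), the claim that $t_l(a)\in T_{\SO_{2l}}$ is false: for $a\in\GL_l$ one has $t_l(a)=\mathrm{diag}(a,a^*)\in M_l\cong\GL_l$, which lies in the torus only when $a$ is diagonal. So $t_l(a)$ need not sit in $C(I_{2l})$; it lies in $C(w)$ for some $w\in W(M_l)$. Your approach can be repaired by proving instead that $M_l\cap\Omega_{\tilde w_n}=\emptyset$ for every $n$ (equivalently, that no $w\in W(M_l)$ satisfies $w\ge\tilde w_n$). This does hold, since $W(M_l)$ is the standard parabolic subgroup of $W(\SO_{2l})$ generated by $s_1,\dots,s_{l-1}$, so any $w'\le w$ with $w\in W(M_l)$ lies again in $W(M_l)$, while $\tilde w_n\notin W(M_l)$ for $n\ge1$. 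The paper simply defers to \cite[Proposition 4.9]{HL22}; your corrected argument is an acceptable substitute, but as written it is wrong.

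For part (2), you invoke the correct lemmas (Lemma~\ref{P_{k, 1^{l-k}}} and Lemma~\ref{CPSSh}) and the correct mechanism. Two small corrections: the containment should be $\theta_w\subseteq\theta_{\tilde w_k}$, not $\supseteq$; and your parenthetical claim that $\Omega_{\tilde w_n}$ for $n\le k-1$ ``likewise misses the double coset by the same interval argument'' is false (indeed $w_l^{L_{k,1^{l-k}}}\tilde w_k\ge\tilde w_n$ for $n\le k-1$). Fortunately you also invoke the inductive hypothesis $\mathcal{C}(k-1)$, which is what actually handles those terms, so the conclusion survives. Your remark about $i=l-1$ is misplaced: there is no $f_{\tilde w_{l-1}}$ in the decomposition.

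For part (3), the mechanism you describe is not the one that works. You sketch a character argument (``$B_m$ transforms by a nontrivial character along a one-parameter subgroup, forcing vanishing''), but this is the technique for cells outside $\mathrm{B}(\SO_{2l})$, and $\tilde w_k\in\mathrm{B}(\SO_{2l})$. The actual proof here, following \cite[Lemma 6.3(3)]{Z18}, is a compactness-of-support argument: since $P_{k,1^{l-k}}\tilde w_kP_{k,1^{l-k}}$ is closed in $\Omega_{\tilde w_k}$ (Proposition~\ref{Omega_w}) and $f_{\tilde w_k}$ has compact support, there is a compact $N'\subseteq N_k$ such that $f_{\tilde w_k}(p\tilde w_kn)\neq0$ forces $n\in N'$; choosing $m$ with $N'\subseteq U_m\cap N_k$ and using Lemma~\ref{6.2} to track the $N_k$-component through the integration over $U_m=(S_k^+\cap U_m)(N_k\cap U_m)$, one sees the integrand $f_{\tilde w_k}(u't_k(a)\tilde w_ku_0u)$ vanishes identically when $u_0\notin N_k\cap U_m$. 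No character comparison enters. You listed the right ingredients (the product decomposition, Lemma~\ref{6.2}) but attached them to the wrong mechanism.
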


\begin{proof}
\begin{enumerate}
    \item This follows from the proof of \cite[Proposition 4.9]{HL22}.
    \item For contradiction, suppose there exists $w\in \mathrm{B}(\SO_{2l})$ such that $$C(w)\subseteq P_{k, 1^{l-k}}\tilde{w}_k P_{k, 1^{l-k}}\cap\Omega_{\tilde{w}_k}.$$ By Lemma \ref{P_{k, 1^{l-k}}}, $$w\in [\tilde{w}_k, w_l^{L_{k, 1^{l-k}}}\tilde{w}_k]\cap\mathrm{B}(\SO_{2l})=\mathrm{B}_k(\SO_{2l}).$$ By \cite[Proposition 4.7]{HL22}, we have $$\{\alpha_{k+1},\dots,\alpha_l\}\subseteq\theta_w\subseteq\Delta(\SO_{2l})\backslash\{\alpha_k\}.$$ However, since $w\in\Omega_{\tilde{w}_i}$ $w\geq\tilde{w}_i$ and by Lemma \ref{CPSSh}, $$\theta_w\subseteq\theta_{\tilde{w}_i}=\Delta(\SO_{2l})\backslash\{\alpha_{i}\}.$$
    Since $i\geq k+1$ and $\{\alpha_{k+1},\dots,\alpha_l\}\subseteq\theta_w\subseteq\Delta(\SO_{2l})\backslash\{\alpha_{i}\},$ we have a contradiction.
    \item This proof is the similar to \cite[Lemma 6.3(3)]{Z18} but we include it for completeness. By Proposition \ref{Omega_w}, $P_{k, 1^{l-k}}\tilde{w}_k P_{k, 1^{l-k}}$ is closed in $\Omega_{\tilde{w}_k}.$ Since $f_{\tilde{w}_k}\in C_c^\infty(\Omega_{\tilde{w}_k},\omega)$, there exists compacts subsets $P'\subseteq P_{k, 1^{l-k}}$ and $N'\subseteq N_k$ such that if $f_{\tilde{w}_k}(p\tilde{w}_k n)\neq 0$, then $p\in P'$ and $n\in N'.$ 
    
    Choose $m$ large enough such that $N'\subseteq U_m\cap N_k.$ This choice depends only on $N'$ and hence $f_{\tilde{w}_k}.$ Let $u\in U_m$ and write $u=u^+ u^-$ where $u^+\in S_k^+\cap U_m$ and $u^-\in N_k\cap U_m.$ Let $u_0\in N_k\backslash (U_m\cap N_k).$ By Lemma \ref{6.2}, $u_0':=(u^+)\inv u_0 u^+\in N_k\backslash (U_m\cap N_k).$ Then, for $a\in\GL_k$ and $u'\in U_{\SO_{2l}}$, we have
    \begin{align*}
    f_{\tilde{w}_k}(u't_k(a)\tilde{w}_k u_0 u)
    =f_{\tilde{w}_k}(u't_k(a)\tilde{w}_k u^+ u_0' u^-) =f_{\tilde{w}_k}(u't_k(a)\tilde{w}_k u^+\tilde{w}_k\inv \tilde{w}_k u_0' u^-).
    \end{align*}
    We have $u't_k(a)\tilde{w}_k u^+\tilde{w}_k\inv\in P_{k, 1^{l-k}}.$ Thus if $ f_{\tilde{w}_k}(u't_k(a)\tilde{w}_k u_0 u)\neq 0,$ then $u't_k(a)\tilde{w}_k u^+\tilde{w}_k\inv\in P'$ and $u_0' u^-\in N'\subseteq U_m\cap N_k.$ Since $u^-\in N_k\cap U_m,$ we must have $u_0'\in N_k\cap U_m.$ But, $u_0'\in N_k\backslash (U_m\cap N_k).$ Therefore, for any $u'\in U_{\SO_{2l}}$ and $u\in U_m,$ $f_{\tilde{w}_k}(u't_k(a)\tilde{w}_k u_0 u)=0.$ Finally, by definition, we find that
    $$
    B_m(t_k(a)\tilde{w}_k u_0, f_{\tilde{w}_k})=0,
    $$
    for all $a\in\GL_k$ and $u_0\in N_k\backslash (U_m\cap N_k).$
    \end{enumerate}
\end{proof}

The next proposition is used to connect the vanishing of certain integrals to information about the partial Bessel functions.

\begin{prop}[{\cite[Lemma 2.3]{Cha19}}]\label{JS Prop}
Let $W'$ be a smooth function on $\GL_n$ such that $W'(ug)=\psi(u)W'(g)$ for any $u\in U_{\GL_n}$ and $g\in\GL_n.$ Suppose that for any irreducible generic representation $\tau$ of $\GL_n$ and for any Whittaker function $W\in\mathcal{W}(\tau,\psi\inv),$ the integral 
$$
\int_{U_{\GL_n} \backslash \GL_n} W'(g)W(g)|\mathrm{det}(g)|^{-s-k}=0,
$$
for $\mathrm{Re}(s)<<0.$ Then, $W'=0.$
\end{prop}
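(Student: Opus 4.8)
The plan is to recognize the statement as the assertion that the $\psi^{-1}$-Whittaker functions of the irreducible generic representations of $\GL_n$, together with all unramified twists, separate points among $\psi$-equivariant functions, and to prove it by reducing --- via the Kirillov model and Fourier--Mellin analysis --- to the rank-one case. Write $G=\GL_n$, let $\mathrm{P}$ be the mirabolic subgroup (matrices with last row $(0,\dots,0,1)$), $Z$ the center, and $K=\GL_n(\mathfrak{o})$.

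Since $\sigma:=\tau\otimes|\det|^{-s}$ is again irreducible generic and $W_\sigma(g):=W(g)|\det g|^{-s}$ lies in $\mathcal{W}(\sigma,\psi^{-1})$, the hypothesis is equivalent to: for every irreducible generic representation $\sigma$ of $G$ and every $W_\sigma\in\mathcal{W}(\sigma,\psi^{-1})$,
$$
\int_{U_{\GL_n}\backslash G}W'(g)\,W_\sigma(g)\,|\det g|^{-k}\,dg=0,
$$
the integral being understood as in the hypothesis (absolutely convergent for $\mathrm{Re}(s)\ll 0$). This condition is stable under replacing $W'$ by the right translate $g\mapsto W'(gk_0^{-1})$, $k_0\in K$, since that only replaces $W_\sigma$ by its right translate, again in $\mathcal{W}(\sigma,\psi^{-1})$. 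Because $G=\mathrm{P}ZK$ --- which holds since $K$ acts transitively on $\mathbb{P}^{n-1}(F)$ --- knowing that $W'$ vanishes on $\mathrm{P}Z$ together with all such right translates gives $W'\equiv0$. So it suffices to show $W'|_{\mathrm{P}Z}=0$.

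To prove $W'|_{\mathrm{P}Z}=0$ I would test against Whittaker functions of supercuspidal $\sigma$. By the Kirillov-model theorem (Bernstein--Zelevinsky, Gelfand--Kazhdan), for supercuspidal $\sigma$ with an arbitrary prescribed central character the restriction map $\mathcal{W}(\sigma,\psi^{-1})\to C_c^\infty(U_{\GL_n}\backslash\mathrm{P},\psi^{-1})$, $W_\sigma\mapsto W_\sigma|_{\mathrm{P}}$, is an isomorphism, and such $W_\sigma$ are supported modulo $Z U_{\GL_n}$ on a compact subset of $G$. Plugging such $W_\sigma$ into the displayed identity, decomposing the integral along $G=\mathrm{P}ZK$ and separating the finitely many cosets $\mathrm{P}Zk_j$ ($k_0=1$) that meet the support, one is left with an identity
$$
\sum_j\int_{U_{\GL_n}\backslash\mathrm{P}}\Phi_j(p)\,\delta(p)\left(\int_Z W'(pz k_j)\,\chi(z)\,d^\times z\right)dp=0,
$$
where $\Phi_0=W_\sigma|_{\mathrm{P}}$ is an arbitrary element of $C_c^\infty(U_{\GL_n}\backslash\mathrm{P},\psi^{-1})$, $\chi$ runs (as $\sigma$ varies) over all characters of $Z\cong F^\times$, and $\delta$ is a fixed positive character of $\mathrm{P}$. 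Isolating the $j=0$ summand --- for which one also invokes non-supercuspidal $\sigma$ and the completeness of Jacquet integrals for the generic principal series to annihilate the cross-terms --- and letting $\Phi_0$ range over all test functions forces the inner integral $\int_Z W'(pz)\chi(z)\,d^\times z$ to vanish for every $p$ and every $\chi$; Mellin inversion on $F^\times$ then yields $W'(pz)=0$, i.e., $W'|_{\mathrm{P}Z}=0$. Combined with the reduction above, $W'\equiv 0$.

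I expect the main obstacle to be exactly the step of isolating the $j=0$ summand, that is, the harmonic-analytic input that the Whittaker functions of all irreducible generic representations of $\GL_n$ --- not merely their restrictions to the mirabolic --- separate points: this is where the Kirillov-model description on $\mathrm{P}$ must be combined with genuine Fourier--Mellin analysis on the center and the torus and with the completeness of the Whittaker functionals on the full generic principal series. The remaining points are routine: convergence for $\mathrm{Re}(s)\ll 0$, which is part of the hypothesis, and the measure and modulus bookkeeping in $G=\mathrm{P}ZK$. The base case $n=1$ is pure Mellin inversion on $F^\times$: there $\mathrm{P}$ is trivial, $G=Z=F^\times$, and the vanishing of all Mellin transforms of $W'$ forces $W'=0$.
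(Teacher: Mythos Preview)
The paper does not supply its own proof; it cites \cite{Cha19}, \cite{JS}, and \cite{Chen}. The argument in those references proceeds via principal series rather than supercuspidals: for $\tau=\mathrm{Ind}_{B}^{G}(\chi_1\otimes\cdots\otimes\chi_n)$ with the $\chi_i$ of sufficiently high conductor, one constructs (through the Jacquet integral with a section of small support near the long Weyl element) Whittaker functions $W\in\mathcal{W}(\tau,\psi^{-1})$ whose support modulo $U_{\GL_n}$ is contained in an arbitrarily prescribed compact open set. Testing $W'$ against such $W$ gives $W'(g_0)=0$ at any chosen point $g_0$, with no need to decompose $G$ along mirabolic cosets.

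Your approach has a genuine gap precisely at the step you yourself identify. The Kirillov model pins down $W_\sigma$ only on $\mathrm{P}$; for $k_j\notin \mathrm{P}Z$ the restriction $\Phi_j=(\rho(k_j)W_\sigma)|_{\mathrm{P}}$ is \emph{not} an independent test function but is the image of $\Phi_0$ under the (highly nontrivial, $\sigma$-dependent) action of $K$ on the Kirillov model, and varying the central character of $\sigma$ moves the whole package together. There is no mechanism to annihilate the $j\neq 0$ terms while keeping $\Phi_0$ arbitrary. Your suggested patch --- invoking ``completeness of Jacquet integrals for the generic principal series'' to kill the cross-terms --- is circular: once made precise, that completeness statement is equivalent to the proposition you are trying to prove. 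The supercuspidal/Kirillov route does not lead anywhere without independently establishing the small-support construction for highly ramified principal series, at which point the latter already yields the result directly and the mirabolic decomposition becomes superfluous.
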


The above proposition follows from \cite[Lemma 3.2]{JS}. The proof of the above form follows \cite[Corollary 2.1]{Chen}. See the remark after \cite[Lemma 2.3]{Cha19}.

The next proposition is the main step in proving Theorem \ref{l-2 theorem}.

\begin{prop}\label{l-2 prop}
Let $1\leq k \leq l-2.$ The condition $\mathcal{C}(k)$ implies
$$
B_m(t_k(a)t_k'\tilde{w}_k,f_{\tilde{w}_k})=0,
$$
for any $a\in\GL_k$ and $m$ large enough depending only on $f$ and $f'.$
\end{prop}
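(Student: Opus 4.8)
The plan is to feed the hypothesis $\mathcal{C}(k)$ into the functional equation defining $\gamma(s,\pi\times\tau,\psi)$ for the $\GL_k$-twists and to extract the desired vanishing by applying Proposition \ref{JS Prop} to the function $a\mapsto B_m(t_k(a)t_k'\tilde{w}_k,f_{\tilde{w}_k})$ on $\GL_k$. Since $f\in\mathcal{M}(\pi)$ and $f'\in\mathcal{M}(\pi')$ are matrix coefficients, the Howe vectors $B_m(\cdot,f)=(W^f)_m$ and $B_m(\cdot,f')=(W^{f'})_m$ lie in $\mathcal{W}(\pi,\psi)$ and $\mathcal{W}(\pi',\psi)$, so for every irreducible $\psi^{-1}$-generic representation $\tau$ of $\GL_k$ and the sections $\xi_s^{i,v}$ and $\tilde{\xi}_{1-s}^{i,v}=M(\tau,s,\psi^{-1})\xi_s^{i,v}$ from \S\ref{sections} they satisfy
$$
\gamma(s,\pi\times\tau,\psi)\,\Psi\big(B_m(\cdot,f),\xi_s^{i,v}\big)=\Psi\big(B_m(\cdot,f),\tilde{\xi}_{1-s}^{i,v}\big),
$$
and likewise for $f'$ with $\gamma(s,\pi'\times\tau,\psi)$. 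Subtracting these identities and using that $\gamma(s,\pi\times\tau,\psi)=\gamma(s,\pi'\times\tau,\psi)$ is part of $\mathcal{C}(k)$, one gets, for $\mathrm{Re}(s)\ll 0$,
$$
\gamma(s,\pi\times\tau,\psi)\,\Psi\big(B_m(\cdot,f)-B_m(\cdot,f'),\xi_s^{i,v}\big)=\Psi\big(B_m(\cdot,f)-B_m(\cdot,f'),\tilde{\xi}_{1-s}^{i,v}\big).
$$

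Since $\mathcal{C}(k)$ implies $\mathcal{C}(k-1)$, the inductive hypothesis in the proof of Theorem \ref{l-2 theorem} gives $B_m(g,f)-B_m(g,f')=\sum_{n=k}^{l-2}B_m(g,f_{\tilde{w}_n})+B_m(g,f_{\tilde{w}_l})+B_m(g,f_{\tilde{w}_l^c})$. The heart of the argument is then to unfold both zeta integrals for the $\GL_k$-twist, using the explicit supports \eqref{xi nonintertwined} and \eqref{xi intertwined} together with Lemmas \ref{5.1}, \ref{5.3} and \ref{5.4}: exactly as in \cite{Z18,Z19}, for $i$ and $m$ large (depending only on $f,f'$) the non-intertwined integral localizes so that $\Psi(W,\xi_s^{i,v})$ becomes an integral over $U_{\GL_k}\backslash\GL_k$ of $W$ evaluated on the Levi cell of $t_k(a)$, while the intertwined integral localizes so that $\Psi(W,\tilde{\xi}_{1-s}^{i,v})$ becomes an integral over $U_{\GL_k}\backslash\GL_k$ of $W$ evaluated on the cell of $t_k(a)t_k'\tilde{w}_k$, the $N_k$-translates being killed by Lemma \ref{6.3}(3). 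One then invokes the supports of the summands. Writing $a=t_a w_a u_2$ modulo left $U_{\GL_k}$, the element $t_k(a)$ lies in the cell of $t_k(w_a)$, and if $t_k(w_a)\in\mathrm{B}(\SO_{2l})$ then $\{\alpha_k,\alpha_{k+1},\dots,\alpha_l\}\subseteq\theta_{t_k(w_a)}$, so by Lemma \ref{CPSSh} $t_k(w_a)\not\geq\tilde{w}_n$ for every $n\in\{k,\dots,l-2\}$ and $t_k(w_a)\not\geq\tilde{w}_l,\tilde{w}_l^c$; hence each $B_m(t_k(a),f_{\tilde{w}_n})$, $B_m(t_k(a),f_{\tilde{w}_l})$, $B_m(t_k(a),f_{\tilde{w}_l^c})$ vanishes, and therefore $\Psi(B_m(\cdot,f)-B_m(\cdot,f'),\xi_s^{i,v})=0$. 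On the intertwined side, $t_k(a)t_k'\tilde{w}_k$ lies in a cell of $\mathrm{B}_k(\SO_{2l})$ whenever it lies in $\mathrm{B}(\SO_{2l})$, so by \cite[Proposition 4.7]{HL22} its $\theta$-set contains $\{\alpha_{k+1},\dots,\alpha_l\}$; the same comparison kills every summand of index $\neq k$ (using $l-1\geq k+1$, valid since $k\leq l-2$, to dispose of the $f_{\tilde{w}_l^c}$-term), leaving only $B_m(t_k(a)t_k'\tilde{w}_k,f_{\tilde{w}_k})$.

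Combining these, the second displayed identity collapses to
$$
\int_{U_{\GL_k}\backslash\GL_k}B_m(t_k(a)t_k'\tilde{w}_k,f_{\tilde{w}_k})\,W_v^*(a)\,|\mathrm{det}(a)|^{s'}\,da=0
$$
for an exponent $s'$ with $\mathrm{Re}(s')\to-\infty$ as $\mathrm{Re}(s)\to-\infty$, valid for all $v\in V_\tau$ and all irreducible generic $\tau$ of $\GL_k$. As $\tau$ and $v$ vary, the $W_v^*$ exhaust $\bigcup_\tau\mathcal{W}(\tau^*,\psi^{-1})$, and $\tau^*$ runs over all irreducible generic representations of $\GL_k$. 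Moreover $W'(a):=B_m(t_k(a)t_k'\tilde{w}_k,f_{\tilde{w}_k})$ satisfies $W'(u_1a)=\psi(u_1)W'(a)$ for $u_1\in U_{\GL_k}$, because $t_k(U_{\GL_k})\subseteq U_{\SO_{2l}}$ and, since $k\leq l-2$, the restriction of $\psi$ to $t_k(U_{\GL_k})$ is the standard generic character of $\GL_k$. Hence Proposition \ref{JS Prop} applies (with $\tau$ there replaced by $\tau^*$) and gives $W'=0$, i.e. $B_m(t_k(a)t_k'\tilde{w}_k,f_{\tilde{w}_k})=0$ for all $a\in\GL_k$, with $m$ depending only on $f_{\tilde{w}_k}$ and hence only on $f$ and $f'$.

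The main obstacle is the explicit unfolding asserted in the second paragraph: showing that, for $i$ and $m$ large, the $\GL_k$ zeta integrals of the partial Bessel functions against $\xi_s^{i,v}$ and $\tilde{\xi}_{1-s}^{i,v}$ genuinely localize onto the single Bruhat cells $C(t_k(w_a))$ and $C(t_k(w_a)t_k'\tilde{w}_k)$. This means controlling the inner $R^{l,k}$-integral, the neighborhoods $\ol{V}_{k,i}$, and the conjugations by $w^{l,k}$ and $t_k'$, and verifying that all unipotent factors produced are absorbed by the relevant characters; it is the analogue for split $\SO_{2l}$ of the core computations in \cite{Z18} and \cite{Z19}. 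The genuinely new point is the presence of the two extra terms $f_{\tilde{w}_l}$ and $f_{\tilde{w}_l^c}$ in the decomposition \eqref{Bessel Difference}, which must be shown not to interfere with the $\GL_k$-twist when $k\leq l-2$.
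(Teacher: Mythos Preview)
Your proposal is correct and follows essentially the same strategy as the paper: feed the partial Bessel functions into the functional equation for the $\GL_k$-twist using the sections $\xi_s^{i,v}$ and $\tilde{\xi}_{1-s}^{i,v}$, show the non-intertwined side vanishes, reduce the intertwined side to a single integral against $W_v^*$ via Lemma~\ref{6.3}(2),(3), and conclude with Proposition~\ref{JS Prop}.

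One small point where your write-up differs from the paper and is slightly imprecise: on the non-intertwined side the argument of the Whittaker function is \emph{not} $t_k(a)$ but rather $r_x\,w^{l,k}j(l_k(a))(w^{l,k})^{-1}$, because the inner $R^{l,k}$-integral is still present. The paper computes this element explicitly and observes it lies in the Siegel Levi $L_l\cong\GL_l$, then invokes Lemma~\ref{6.3}(1) directly to get $B_m(\cdot,f)=B_m(\cdot,f')$ on $L_l$, bypassing the decomposition into $f_{\tilde w_n}$'s at that step. Your support/$\theta$-set argument can be made to work here too (cells in $P_l$ are exactly the $C(m_l(w'))$ with $w'\in W(\GL_l)$, and since $\overline{C(m_l(w'))}\subseteq P_l$ while $\tilde w_n\notin P_l$ for each relevant $n$, none of the $f_{\tilde w_n}$ contributes), but as written you only treat the case of $t_k(a)$ and do not address the $R^{l,k}$-translate. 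Either patch this with the $P_l$ argument just sketched, or simply cite Lemma~\ref{6.3}(1) as the paper does.
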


\begin{proof}
Let $m$ be large enough such that the induction hypothesis and Lemma \ref{6.3} hold. Let $(\tau, V_\tau)$ be an irreducible $\psi\inv$-generic representation of $\GL_k$ and $v\in V_\tau.$ Since $k\leq l-2 <l,$ we are embedding $\SO_{2k+1}$ into $\SO_{2l}$ as in $\S$\ref{gps and reps}. Let $j:\SO_{2k+1}\rightarrow\SO_{2l}$ denote the embedding. Recall that $Q_k=L_k V_k$ is the standard Siegel parabolic subgroup of $\SO_{2k+1}$ with Levi subgroup $L_k\cong\GL_k.$ Let $V_{k,m}=\{u\in V_k \, | w^{l,k} j(u) (w^{l,k})\inv\in H_m\}.$ $V_{k,m}$ is an open compact subgroup of $V_k.$ Let $i$ be a positive integer with $i\geq\mathrm{max}\{m,i_0(v),i(V_{k,m},v)\}$ (for notation, see Lemma \ref{5.3} for $i_0(v)$ and Lemma \ref{5.4} for $i(V_{k,m},v)$ where we take $D=V_{k,m}$). We consider the sections $\xi_s^{i,v}\in I(\tau,s,\psi\inv)$ and $\tilde{\xi}_{1-s}^{i,v}=M(\tau,s,\psi\inv)\xi_s^{i,v}\in I(\tau^*,1-s,\psi\inv)$ defined in $\S\ref{sections}.$ 

First, we compute the non-intertwined zeta integrals $\Psi(W_m^f,\xi_s^{i,v}).$ We have that $V_k L_k \ol{V}_k$ is dense in $\SO_{2k+1}$ where $\ol{V}_k$ denotes the opposite unipotent radical to $V_k.$ Hence, we can replace the integral over $U_{\SO_{2k+1}}\backslash\SO_{2k+1}$ with an integral over $$U_{\SO_{2k+1}}\backslash V_k L_k \ol{V}_k\cong U_{\GL_k}\backslash\GL_k \times \ol{V}_k,$$
 where $\GL_k$ is realized in $L_k\subseteq\SO_{2k+1}.$ Note for $g=u_1 m_k(a) \ol{u}_2\in U_{\GL_k}\backslash\GL_k \times \ol{V}_k$ we use the quotient Haar measure $dg=|\mathrm{det}(a)|^{-k} d\ol{u}_2 da.$

Thus, we have
\begin{align*}    
&\,\Psi(W_m^f,\xi_s^{i,v})\\
=&\,\int_{U_{\GL_k}\backslash\GL_k}\int_{\ol{V}_k}\left(\int_{r\in R^{l,k}} W_m^f(r w^{l,k} j(l_k(a)\ol{u}) (w^{l,k})\inv)dr\right)   \xi_s^{i,v}(l_k(a)\ol{u}, I_{l})
    |\mathrm{det}(a)|^{-k}d\ol{u}da.
\end{align*}
By Equation \eqref{xi nonintertwined}, we have
\begin{align*}
   &\,\Psi(W_m^f,\xi_s^{i,v})\\
=&\,\int_{U_{\GL_k}\backslash\GL_k}\int_{\ol{V}_{k,i}}\left(\int_{r\in R^{l,k}} W_m^f(r w^{l,k} j(l_k(a)\ol{u}) (w^{l,k})\inv)dr\right)
   |\mathrm{det}(a)|^{s-\frac{1}{2}-k} W_v(a)d\ol{u}da. 
\end{align*}
Furthermore, for $\ol{u}\in\ol{V}_{k,i}$, we have $w^{l,k} j(\ol{u}) (w^{l,k})\inv\in H_m.$ By Proposition \ref{partialBesselprop},
$$
W^f_m(g w^{l,k} j(\ol{u}) (w^{l,k})\inv) = W_m^f(g),
$$
for any $g\in\SO_{2l}(F).$ Thus,
\begin{align*}
  &\,\Psi(W_m^f,\xi_s^{i,v})\\
  =&\,\mathrm{Vol}(\ol{V}_{k,i})\int_{U_{\GL_k}\backslash\GL_k}\left(\int_{r\in R^{l,k}} W_m^f(r w^{l,k} j(l_k(a)) (w^{l,k})\inv)dr\right)
  |\mathrm{det}(a)|^{s-\frac{1}{2}-k} W_v(a)da.  
\end{align*}
Let $q_k(a)=\mathrm{diag}(I_{l-k-1},a,I_2,a^*,I_{l-k-1}).$ We have $j(l_k(a))=q_k(a).$ Thus, 
$$w^{l,k} q_k(a) (w^{l,k})\inv=\mathrm{diag}(a, I_{2l-2k}, a^*).$$
Let $$r_x=\left(\begin{matrix}
I_k & & & & \\
x & I_{l-k-1} & & & \\
& & I_2 & & \\
& & & I_{l-k-1} & \\
& & & x^\prime & I_k
\end{matrix}\right).$$ Then, $$
r_x w^{l,k} q_k(a) (w^{l,k})\inv
=\left(\begin{matrix}
a & & & & \\
xa & I_{l-k-1} & & & \\
& & I_2 & & \\
& & & I_{l-k-1} & \\
& & & x^\prime a^* & a^*
\end{matrix}\right)\in L_l.
$$
Thus, by Lemma \ref{6.3}(1),
$$
W_m^f(r w^{l,k} j(l_k(a)) (w^{l,k})\inv)=W_m^{f'}(r w^{l,k} j(l_k(a)) (w^{l,k})\inv),
$$
and hence 
$$
\Psi(W_m^f,\xi_s^{i,v})=\Psi(W_m^{f'},\xi_s^{i,v}).
$$

By assumption, $\gamma(s,\pi\times\tau,\psi)=\gamma(s,\pi'\times\tau,\psi).$ Since we have that $\Psi(W_m^f,\xi_s^{i,v})=\Psi(W_m^{f'},\xi_s^{i,v}),$ the local functional equation gives
$$\Psi(W_m^f,\tilde{\xi}_{1-s}^{i,v})=\Psi(W_m^{f'},\tilde{\xi}_{1-s}^{i,v}).$$
$V_k L_k w_k V_k$ is dense in $\SO_{2k+1}.$ Hence, we can replace the integral over $U_{\SO_{2k+1}}\backslash
\SO_{2k+1}$ with an integral over $U_{\SO_{2k+1}}\backslash V_k L_k w_k V_k \cong U_{\GL_k}\backslash\GL_k w_k V_k.$ We obtain $\Psi(W_m^f,\tilde{\xi}_{1-s}^{i,v})$ equals
\begin{align*}
&\,\int_{U_{\GL_k}\backslash\GL_k}\int_{V_k}\left(\int_{r\in R^{l,k}} W_m^f(r w^{l,k} j(l_k(a)w_k u) (w^{l,k})\inv)dr\right)  \\
&\,\cdot \tilde{\xi}_{1-s}^{i,v}(l_k(a)w_ku, I_{l})
  |\mathrm{det}(a)|^{-k}duda,  
\end{align*}
and we have a similar expression for $\Psi(W_m^{f'},\tilde{\xi}_{1-s}^{i,v}).$

We have $w^{l,k} j(w_k) (w^{l,k})\inv=t_k'\tilde{w}_k$ and 
$$w^{l,k} j(l_k(a)) (w^{l,k})\inv=\mathrm{diag}(a,I_{2l-2k},a^*)=t_k(a).$$ Again, let
$$r_x=\left(\begin{matrix}
I_k & & & & \\
x & I_{l-k-1} & & & \\
& & I_2 & & \\
& & & I_{l-k-1} & \\
& & & x^\prime & I_k
\end{matrix}\right).$$ Then, $
r_x t_k(a)t_k'=t_k(a)t_k'r_{ax}
$ and 
$$r_{ax}\tilde{w}_k=\tilde{w}_k\left(\begin{matrix}
I_k & & & x'a^* & \\
& I_{l-k-1} & & & xa \\
& & I_2 & & \\
& & & I_{l-k-1} & \\
& & & & I_k
\end{matrix}\right).$$ 
Also, for 
$$u=\left(\begin{matrix}
I_k & y_1   & y \\
    & 1     & y_1' \\
    &       & I_k
\end{matrix}\right)\in V_k,$$ we have 
$$
w^{l,k} j(u) (w^{l,k})\inv= \left(\begin{matrix}
I_k & 0 & * & 0 & * \\
& I_{l-k-1} & 0 & 0 & 0  \\
& & I_2 & 0 & * \\
& & & I_{l-k-1} & 0 \\
& & & & I_k
\end{matrix}\right).
$$
Recall $P_k$ is the standard maximal parabolic subgroup of $\SO_{2l}$ with Levi subgroup $M_k$ isomorphic to $\GL_k\times\SO_{2(l-k)}$ and unipotent radical $N_k.$ We have $$j(a,x,u):=\left(\begin{matrix}
I_k & & & x'a^* & \\
& I_{l-k-1} & & & xa \\
& & I_2 & & \\
& & & I_{l-k-1} & \\
& & & & I_k
\end{matrix}\right)w^{l,k} j(u) (w^{l,k})\inv\in N_k.$$ 
Thus, we have shown that
$$
r_x w^{l,k} j(l_k(a)w_k u) (w^{l,k})\inv 
=t_k(a)t_k'\tilde{w}_k j(a,x,u)\inv\in P_k \tilde{w}_k P_k.
$$
From Lemma \ref{6.3}(2), we obtain
\begin{align*}
&\,W_m^f(t_k(a)t_k'\tilde{w}_k j(a,x,u))-W_m^{f'}(t_k(a)t_k'\tilde{w}_k j(a,x,u)) \\
=&\,B_m(t_k(a)t_k'\tilde{w}_k j(a,x,u),f)-B_m(t_k(a)t_k'\tilde{w}_k j(a,x,u),f') \\
=&\,B_m(t_k(a)t_k'\tilde{w}_k j(a,x,u),f_{\tilde{w}_k}).
\end{align*}

Altogether, we  have
\begin{align}\label{l-2 twist eqn}
\begin{split}
0 =&\,\Psi(W_m^f,\tilde{\xi}_{1-s}^{i,v})-\Psi(W_m^{f'},\tilde{\xi}_{1-s}^{i,v}) \\
=&\,\int_{U_{\GL_k}\backslash\GL_k}\int_{V_k}\left(\int_{r_x\in R^{l,k}} B_m(t_k(a)t_k'\tilde{w}_k j(a,x,u),f_{\tilde{w}_k})dr\right) \\
&\,\cdot \tilde{\xi}_{1-s}^{i,v}(l_k(a)w_ku, I_{l})
|\mathrm{det}(a)|^{-k}duda \\
=&\,\int_{U_{\GL_k}\backslash\GL_k}\int_{V_k}\left(\int_{r_x\in R^{l,k}} B_m(t_k(a)t_k'\tilde{w}_k j(I_k,x,u),f_{\tilde{w}_k})dr\right) \\
&\,\cdot \tilde{\xi}_{1-s}^{i,v}(l_k(a)w_ku, I_{l})
|\mathrm{det}(a)|^{-l}duda,
\end{split}
\end{align}
where we made the change of variables $x\mapsto xa\inv$ in the last step.

Next, let $D_m=\{j(I_k,x,u)\, | \, j(I_k,x,u)\in H_m\}.$ By Lemma \ref{6.3}(3), we have, for $j(I_k,x,u)\notin D_m,$
$$
B_m(t_k(a)t_k'\tilde{w}_k j(I_k,x,u),f_{\tilde{w}_k})=0.
$$
However, by Proposition \ref{partialBesselprop}, for $j(I_k,x,u)\in D_m,$ 
$$
B_m(t_k(a)t_k'\tilde{w}_k j(I_k,x,u),f_{\tilde{w}_k})=B_m(t_k(a)t_k'\tilde{w}_k,f_{\tilde{w}_k}).
$$
Furthermore, for $j(I_k,x,u)\in D_m$, we have $u\in V_{k,m}$ and thus, by Lemma \ref{5.4}, $\tilde{\xi}_{1-s}^{i,v}(l_k(a) w_n u)=\mathrm{Vol}(\ol{V}_{k,i})|\mathrm{det}(a)|^{\frac{1}{2}-s} W_v^*(a).$

From Equation \eqref{l-2 twist eqn}, we obtain
$$
0=\int_{U_{\GL_k}\backslash\GL_k}B_m(t_k(a)t_k'\tilde{w}_k,f_{\tilde{w}_k}) |\mathrm{det}(a)|^{\frac{1}{2}-s-l}W_v^*(a)da.
$$
This holds for any irreducible $\psi\inv$-generic representations $(\tau, V_\tau)$ of $\GL_k$ and $v\in V_\tau.$ From Proposition \ref{JS Prop}, we have
$$
B_m(t_k(a)t_k'\tilde{w}_k,f_{\tilde{w}_k})=0,
$$
for any $a\in\GL_k.$
\end{proof}

As a corollary of the computations above, we obtain the following result on the equality of twisted gamma factors of $\pi$ and $c \cdot \pi$, which can also be implied by the work of \cite{Art13, JL14}. 

\begin{cor}\label{conj gamma1}
Let $\pi$ be an irreducible  $\psi$-generic supercuspidal representation of $\SO_{2l}$. Then $\gamma(s, \pi\times\tau,\psi)=\gamma(s, c\cdot\pi\times\tau,\psi)$ for all irreducible $\psi^{-1}$-generic representations $\tau$ of $\GL_{k}$ for $k\leq l-2$.
\end{cor}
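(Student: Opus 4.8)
The plan is to rerun the zeta--integral computation from the proof of Proposition~\ref{l-2 prop}, but comparing $\pi$ with $c\cdot\pi$ rather than with $\pi'$, using that for $k\le l-2$ the group elements that occur all avoid the central coordinates on which the outer conjugation acts. Recall that $\gamma(s,\pi\times\tau,\psi)$ is the proportionality factor in $\gamma(s,\pi\times\tau,\psi)\,\Psi(W,f_s)=\Psi(W,M(\tau,s,\psi\inv)f_s)$, that $c\cdot\pi$ is again an irreducible $\psi$-generic supercuspidal representation (so \S\ref{sections} applies to it with the same sections $\xi_s^{i,v},\tilde\xi_{1-s}^{i,v}$), and that the conjugate partial Bessel function $g\mapsto B_m^c(g,f)=B_m(c\tilde{t}\inv g\tilde{t}c,f)$ lies in $\mathcal{W}(c\cdot\pi,\psi)$ whenever $W^f_m=B_m(\cdot,f)$ lies in $\mathcal{W}(\pi,\psi)$. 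It therefore suffices to establish, for $1\le k\le l-2$ and suitable $\tau,v,i,m$,
$$
\Psi(B_m^c(\cdot,f),\xi_s^{i,v})=\Psi(W^f_m,\xi_s^{i,v}),\qquad
\Psi(B_m^c(\cdot,f),\tilde\xi_{1-s}^{i,v})=\Psi(W^f_m,\tilde\xi_{1-s}^{i,v});
$$
indeed, these two identities force $\gamma(s,c\cdot\pi\times\tau,\psi)$ to obey the functional equation defining $\gamma(s,\pi\times\tau,\psi)$ with respect to $(W^f_m,\xi_s^{i,v})$, and the two $\gamma$-factors then coincide because, for an appropriate choice of $\tau$ and $v$, $\Psi(W^f_m,\xi_s^{i,v})$ is not identically zero in $s$ (non-vanishing of these Rankin--Selberg integrals for generic $\pi$).

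The first identity is read off directly from the proof of Proposition~\ref{l-2 prop}: up to $\mathrm{Vol}(\ol V_{k,i})$, $\Psi(W^f_m,\xi_s^{i,v})$ is an integral over $U_{\GL_k}\backslash\GL_k\times R^{l,k}$ of $W^f_m$ evaluated at the elements $r_x\, w^{l,k}q_k(a)(w^{l,k})\inv$, which --- as displayed there --- have non-identity entries only in rows and columns $1,\dots,l-1,l+2,\dots,2l$. Since both $c$ and $\tilde{t}$ are supported on the central pair $\{l,l+1\}$, conjugation by $c\tilde{t}\inv$ fixes every such element, so $B_m^c(\cdot,f)$ and $W^f_m$ take equal values on it, proving the first identity.

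For the second identity I would again follow the proof of Proposition~\ref{l-2 prop}, where $\Psi(W^f_m,\tilde\xi_{1-s}^{i,v})$ is rewritten through $W^f_m$ on the cell $P_k\tilde w_k P_k$, at elements $t_k(a)t_k'\tilde w_k\, j(a,x,u)\inv$ ($a\in\GL_k$, $j(a,x,u)\in N_k$), the surviving contribution --- after the change of variables $x\mapsto xa\inv$ and the reductions of Lemma~\ref{6.3}(3) and Proposition~\ref{partialBesselprop} --- being $B_m(t_k(a)t_k'\tilde w_k,f)$. Conjugation by $c\tilde{t}\inv$ fixes $t_k(a)$ (as $k\le l-2$) and, by a direct check on the central $2\times2$ block --- using that all other entries of $t_k'\hat w_k$ avoid $\{l,l+1\}$ and that $w^{l,k}$ commutes with $c$ and $\tilde{t}$ --- it also fixes $t_k'\tilde w_k=w^{l,k}(t_k'\hat w_k)(w^{l,k})\inv$; hence $c\tilde{t}\inv(t_k(a)t_k'\tilde w_k)\tilde{t}c=t_k(a)t_k'\tilde w_k$. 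The subgroup $N_k$ is preserved (but not fixed pointwise) by conjugation by $c\tilde{t}\inv$, so to match the two intertwined integrals one substitutes $j(I_k,x,u)\mapsto c\tilde{t}\inv j(I_k,x,u)\tilde{t}c$; since this conjugation acts on the coordinates $\{l,l+1\}$ through a matrix of determinant $\pm1$, the substitution is a measure-preserving bijection of the integration region, it preserves the membership condition $j(I_k,x,u)\in H_m$ entering the reductions (because $cH_mc=H_m$ and $\tilde{t}$ normalises $H_m$), and it leaves $\tilde\xi_{1-s}^{i,v}(l_k(a)w_ku,I_l)$ unchanged by \eqref{xi intertwined}; thus the two intertwined integrals agree.

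The main obstacle is precisely this last step: checking that conjugation by $c\tilde{t}\inv$ carries the integration region $\{j(I_k,x,u)\}$ onto itself bijectively and unimodularly, and compatibly with the support conditions used in the reductions (Lemma~\ref{6.3}(3) and Lemma~\ref{5.4}). Everything else is a transcription of the computations already performed in the proof of Proposition~\ref{l-2 prop}, now read for a single representation instead of a difference of two; with the intertwined matching in hand, the functional-equation argument above concludes the proof.
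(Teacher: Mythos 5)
Your key observation --- that $c\tilde{t}\inv\, t_k(a)t_k'\tilde{w}_k\, \tilde{t}c=t_k(a)t_k'\tilde{w}_k$ for $k\le l-2$, so the outer conjugation cannot be detected on the elements that ultimately carry the zeta integrals --- is exactly the point the paper uses, and your treatment of the non-intertwined integral is fine. The gap is in your second identity. You propose to reduce $\Psi(W^f_m,\tilde{\xi}^{i,v}_{1-s})$ and $\Psi(B^c_m(\cdot,f),\tilde{\xi}^{i,v}_{1-s})$ \emph{individually} to integrals of $B_m(t_k(a)t_k'\tilde{w}_k,f)$ "after the reductions of Lemma \ref{6.3}(3) and Proposition \ref{partialBesselprop}". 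But Lemma \ref{6.3}(3) is a statement about the pieces $f_{\tilde{w}_k}\in C_c^\infty(\Omega_{\tilde{w}_k},\omega)$: its proof requires that $P_{k,1^{l-k}}\tilde{w}_kP_{k,1^{l-k}}$ be \emph{closed} in $\Omega_{\tilde{w}_k}$ (Proposition \ref{Omega_w}(2)) so that the support of $f_{\tilde{w}_k}$ meets the cell in a set that is compact in the $(p,n)$-coordinates. For a single matrix coefficient $f$ of $\pi$ (supported, mod center, on all of $\SO_{2l}$) the cell is not closed in the group and no such support control holds; the contributions with $u_0\in N_k\setminus(U_m\cap N_k)$, as well as the contributions of the other Bruhat cells in $\mathrm{B}_n(\SO_{2l})$, $n\ne k$, do not drop out of $\Psi(W^f_m,\tilde{\xi}^{i,v}_{1-s})$ by themselves. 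This is precisely why the whole Cogdell--Shahidi--Tsai apparatus (Corollary \ref{C(0)}, Theorem \ref{CST}) is set up for \emph{differences} of partial Bessel functions.

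The paper's proof instead reruns the difference argument of Proposition \ref{l-2 prop} with $\pi'=c\cdot\pi$ (implicitly by induction on $k$, since Theorem \ref{l-2 theorem} for the pair $(\pi,c\cdot\pi)$ at level $k-1$ needs the present corollary for smaller $k$ to supply $\mathcal{C}(k-1)$). There, only the piece $f_{\tilde{w}_k}$ survives on the cell, Lemma \ref{6.3}(2)--(3) legitimately reduce the difference of the intertwined integrals to an integral of $B_m(t_k(a)t_k'\tilde{w}_k,f_{\tilde{w}_k})=B_m(t_k(a)t_k'\tilde{w}_k,f)-B_m(t_k(a)t_k'\tilde{w}_k,f')$, and the fixed-point identity makes this vanish, so that $\bigl(\gamma(s,\pi\times\tau,\psi)-\gamma(s,c\cdot\pi\times\tau,\psi)\bigr)\Psi(W^f_m,\xi^{i,v}_s)=0$ without ever comparing the two intertwined integrals directly; in particular the change-of-variables and measure-preservation issues on the unipotent coordinates that you single out as the "main obstacle" never arise. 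If you want to salvage your route you would have to replace the invalid appeal to Lemma \ref{6.3}(3) by a genuine change of variables $g\mapsto c\tilde{t}\inv g\tilde{t}c$ on the full domain of the intertwined integral (checking that it preserves the embedded $\SO_{2k+1}$, $R^{l,k}$, the section, and the measure), which is substantially more work. Finally, note that the non-vanishing of $\Psi(W^f_m,\xi^{i,v}_s)$ that you invoke at the end is also needed in the paper's argument but is asserted here without proof; it deserves at least a sentence (e.g., via the support properties of Howe vectors and a Jacquet--Shalika type choice of $W_v$) rather than being quoted as a known fact.
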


\begin{proof}
Since $c\tilde{t}\inv t_k(a)t_k'\tilde{w}_k \tilde{t}c=t_k(a)t_k'\tilde{w}_k$ for any $a\in\GL_k$, repeating the above arguments for $\pi'=c\cdot\pi$ without the assumption that the $\gamma$-factors are equal gives the claim.
\end{proof}

We are ready to prove Theorem \ref{l-2 theorem}.
\begin{proof}[Proof of Theorem \ref{l-2 theorem}]

Let $w_{\mathrm{max}}=w_l^{L_{k,1^{l-k}}}\tilde{w}_k.$ Then $w_{long}w_{\mathrm{max}}$ is the long Weyl element of $M_{w_{\mathrm{max}}}\cong\GL_{1}^k\times\SO_{2(l-k)}.$ Let $Z(\SO_{2l})$ denote the center of $\SO_{2l}.$ We have 
$$
T_{w_{\mathrm{max}}}=\left\{t_k(a_1,\dots,a_k)t_k' \, | \, a_i\in F^\times \right\} \times Z(\SO_{2l}).
$$
For $t\in T_{w_{\mathrm{max}}},$ write $t=zt_k(a)t_k'$ where $z\in Z(\SO_{2l})$ and $a\in\GL_1^k.$ We have $t_k(a)t_k'w_l^{L_{k,1^{l-k}}}=t_k(a)w_l^{L_{k,1^{l-k}}}t_k'=t_n(b)t_k'$ for some $b\in\GL_k.$ Thus, for any $t\in T_{w_{\mathrm{max}}},$
$$
B_m(tw_{\mathrm{max}},f_{\tilde{w}_k})=\omega(z)B_m(t_k(a)w_l^{L_{k,1^{l-k}}}t_k'\tilde{w}_k,f_{\tilde{w}_k})=0,
$$
by Proposition \ref{l-2 prop}.

Next, suppose that $w\in \mathrm{B}(\SO_{2l})$ with $\tilde{w}_k\leq w \leq w_{\mathrm{max}}.$ By Lemma \ref{CPSSh}, $A_w\subseteq A_{w_{\mathrm{max}}}.$ Also $[\tilde{w}_k, w_{\mathrm{max}}]\cap\mathrm{B}(\SO_{2l})=\mathrm{B}_k(\SO_{2l})$. Again, from Proposition \ref{l-2 prop}, we find
$$
B_m(tw,f_{\tilde{w}_k})=0,
$$
for any $t\in T_w.$

Let 
$$
\Omega_{\tilde{w}_k}'=\bigcup_{\substack{w\in \mathrm{B}(\SO_{2l}), w>w_{\mathrm{max}}, \\ d_{B}(w,w_{\mathrm{max}})=1}} \Omega_w.
$$
By Theorem \ref{CST}, there exists $f_{\tilde{w}_k}'\in C_c^\infty(\Omega_{\tilde{w}_k}',\omega)$ such that
$$
B_m(g, f_{\tilde{w}_k})=B_m(g,f_{\tilde{w}_k}'),
$$
for any $g\in\SO_{2l}$ and $m$ large enough (depending only $f_{\tilde{w}_k}$ and hence $f$ and $f'$).

We describe the set $\{w\in \mathrm{B}(\SO_{2l}) \, | \, w>w_{\mathrm{max}},  d_{B}(w,w_{\mathrm{max}})=1\}.$ From the proof of Lemma \ref{6.3}(2), we have $\theta_{w_{\mathrm{max}}}=\{\alpha_{k+1},\dots,\alpha_l\}.$ Since $w>w_{\mathrm{max}}$ and  $d_{B}(w,w_{\mathrm{max}})=1,$ we must have $\theta_w=\theta_{w_{\mathrm{max}}}\backslash\{\alpha_i\}$ for some $k+1\leq i \leq l.$ Set $w_i'=w_{\theta_{w_{\mathrm{max}}}\backslash\{\alpha_i\}}.$ Then,
$$
\{w\in \mathrm{B}(\SO_{2l}) \, | \, w>w_{\mathrm{max}},  d_{B}(w,w_{\mathrm{max}})=1\}=\{w_i' \, | \, k+1\leq i \leq l \}.
$$
Thus, 
$$\Omega_{\tilde{w}_k}'=\bigcup_{i=k+1}^l \Omega_{w_i'}.$$ By a partition of unity argument (similar to the proof of Corollary \ref{C(0)}), there exists $f_{w_i'}\in C_c^\infty(\Omega_{w_i'},\omega)$ for $k+1\leq i \leq l$ such that
$$
f_{\tilde{w}_k}'=\sum_{i=k+1}^l f_{w_i'}.
$$
So,
\begin{equation}\label{inductive step eqn}
B_m(g,f_{\tilde{w}_k})=B_m(g,f_{\tilde{w}_k}')=\sum_{i=k+1}^l B_m(g,f_{w_i'}),
\end{equation}
for any $g\in\SO_{2l}$ and $m$ large enough.

Finally, fix $i$ such that $k+1\leq i \leq l.$ We have $\theta_{\tilde{w}_i}=\Delta(\SO_{2l})\backslash\{\alpha_i\}$. Since $\theta_{w_i'}\subseteq\theta_{w_{\mathrm{max}}}\backslash\{\alpha_i\}\subseteq\theta_{\tilde{w}_i},$ it follows that $w_i'>\tilde{w}_i$ and hence $\Omega_{w_i'}\subseteq\Omega_{\tilde{w}_i}.$ By Proposition \ref{Omega_w}, $\Omega_{w_i'}$ is open in $\Omega_{\tilde{w}_i}.$ Hence, $C_c^\infty(\Omega_{w_i'},\omega)\subseteq C_c^\infty(\Omega_{\tilde{w}_i},\omega).$ Thus $f_{w_i}'\in C_c^\infty(\Omega_{w_i'},\omega)$. Set
$$
f_{\tilde{w}_i}'= f_{w_i}' + f_{\tilde{w}_i}\in C_c^\infty(\Omega_{w_i'},\omega).
$$
Then, by the inductive hypothesis and Equation \eqref{inductive step eqn}, we have
$$
B_m(g,f)-B_m(g,f')=\sum_{i=k+1}^l B_m(g,f_{\tilde{w}_i}').
$$
for any $g\in\SO_{2l}$ and $m$ large enough depending only on $f$ and $f'.$
This completes the proof of Theorem \ref{l-2 theorem}.
\end{proof}

\subsection{Twists up to $\GL_{l-1}$}

From Theorem \ref{l-2 theorem}, we have that $\mathcal{C}(l-2)$ implies
\begin{equation}\label{Bessel difference at l-2}
    B_m(g,f)-B_m(g,f')= B_m(g,f_{\tilde{w}_l}) + B_m(g,f_{\tilde{w}_l^c}),
\end{equation}
for any $g\in\SO_{2l}$ and $m$ large enough. However, from Lemma \ref{obstruction}, $B_m(g,f_{\tilde{w}_l})$ and $B_m(g,f_{\tilde{w}_l^c})$ have an overlap in their support. Consequently, the condition $\mathcal{C}(l-1)$ is unable to remove one of them. Instead, we follow roughly the same argument as given for Theorem \ref{l-2 theorem}. The zeta integrals attached to $\mathcal{C}(l-1)$ are reduced to partial Bessel functions with arguments $t_{l-1}(a)t_{l-1}'\tilde{w}_{l-1}$ where $a\in\GL_{l-1}.$ This is because conjugation by $\tilde{t}c$ fixes $t_{l-1}(a)t_{l-1}'\tilde{w}_{l-1}$. The rest of the Bruhat cell is determined by the condition $\mathcal{C}(l)$ as in the case of finite fields \cite{HL22}. First, we address $\mathcal{C}(l-1).$

\begin{thm}\label{l-1 thm}
The condition $\mathcal{C}(l-1)$ implies
$$
B_m(t_{l-1}(a)t_{l-1}'\tilde{w}_{l-1},f_{\tilde{w}_l})+B_m(t_{l-1}(a)t_{l-1}'\tilde{w}_{l-1},f_{\tilde{w}_l^c})=0,
$$
for any $a\in\GL_{l-1}$ and $m$ large enough depending only on $f$ and $f'.$
\end{thm}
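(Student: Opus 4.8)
The plan is to run the argument of Proposition~\ref{l-2 prop} and Theorem~\ref{l-2 theorem} with $k=l-1$: feed the sections constructed in \S\ref{sections} (for $n=l-1$) into the $\GL_{l-1}$-twisted zeta integrals and compare, for $f$ and $f'$, first the non-intertwined and then the intertwined integrals. When $n=l-1$ the zeta integral simplifies considerably: $w^{l,l-1}=I_{2l}$, the group $R^{l,l-1}$ is trivial, and the character on $N^{1}$ is trivial, so $\Psi(W,f_s)=\int_{U_{\SO_{2l-1}}\backslash\SO_{2l-1}}W(g)f_s(g,I_l)\,dg$ with $\SO_{2l-1}=\SO_{2(l-1)+1}$ embedded in $\SO_{2l}$ as in \S\ref{gps and reps}. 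Under this embedding $j$ one has $l_{l-1}(a)\mapsto t_{l-1}(a)$, the image of $w_{l-1}$ is $t_{l-1}'\tilde{w}_{l-1}$, and $V_{l-1}$ maps into $N_{l-1}$. Fix an irreducible $\psi^{-1}$-generic $(\tau,V_\tau)$ of $\GL_{l-1}$ and $v\in V_\tau$, let $V_{l-1,m}=\{u\in V_{l-1}\mid j(u)\in H_m\}$, and take $\xi_s^{i,v}$ and $\tilde{\xi}_{1-s}^{i,v}=M(\tau,s,\psi^{-1})\xi_s^{i,v}$ with $D=V_{l-1,m}$ and $i$ large (bigger than $m$ and the constants of \S\ref{sections}).

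For the non-intertwined integral I would use the dense cell $V_{l-1}L_{l-1}\overline{V}_{l-1}$ and \eqref{xi nonintertwined}, absorbing $\overline{V}_{l-1,i}\subseteq H_m$ via Lemma~\ref{partialBesselprop}(2), to get
\[ \Psi(W_m^f,\xi_s^{i,v})=\mathrm{Vol}(\overline{V}_{l-1,i})\int_{U_{\GL_{l-1}}\backslash\GL_{l-1}}W_m^f\big(t_{l-1}(a)\big)\,|\det(a)|^{s-\frac{1}{2}-(l-1)}W_v(a)\,da, \]
up to a nonzero normalization constant. The key point is that $t_{l-1}(a)$ lies in a Bruhat cell $C(w)$ with $w$ in the Weyl group of the Levi $\GL_{l-1}\times\SO_2$ of $P_{l-1}$, and no such $w$ satisfies $w\geq\tilde{w}_l$ or $w\geq\tilde{w}_l^c$ (by the subword property of the Bruhat order, since $\tilde{w}_l$ and $\tilde{w}_l^c$ are not in that Weyl group), so $t_{l-1}(a)\notin\Omega_{\tilde{w}_l}\cup\Omega_{\tilde{w}_l^c}$. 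By \eqref{Bessel difference at l-2} this forces $W_m^f(t_{l-1}(a))=W_m^{f'}(t_{l-1}(a))$ for $m$ large, hence $\Psi(W_m^f,\xi_s^{i,v})=\Psi(W_m^{f'},\xi_s^{i,v})$; since $\mathcal{C}(l-1)$ gives $\gamma(s,\pi\times\tau,\psi)=\gamma(s,\pi'\times\tau,\psi)$ for this $\tau$, the local functional equation then yields $\Psi(W_m^f,\tilde{\xi}_{1-s}^{i,v})=\Psi(W_m^{f'},\tilde{\xi}_{1-s}^{i,v})$.

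Next I would compute the intertwined integral using the dense cell $V_{l-1}L_{l-1}w_{l-1}V_{l-1}$ and \eqref{xi intertwined}. Since $\tilde{\xi}_{1-s}^{i,v}$ is supported on $u\in D=V_{l-1,m}$ and, for such $u$, $j(u)\in N_{l-1}\cap H_m$ with $\psi_m(j(u))=1$ (a direct check from the compatibility of $\psi$ with the embedding), Lemma~\ref{partialBesselprop}(2) collapses the inner integral, giving $W_m^f(t_{l-1}(a)t_{l-1}'\tilde{w}_{l-1}j(u))=W_m^f(t_{l-1}(a)t_{l-1}'\tilde{w}_{l-1})$; this is exactly where the $n=l-1$ case is cleaner than $n\leq l-2$, as the triviality of $R^{l,l-1}$ removes the noncompact $x$-integral that in Proposition~\ref{l-2 prop} needed Lemma~\ref{6.3}(3). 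Subtracting the $f'$-version, applying \eqref{Bessel difference at l-2} at $g=t_{l-1}(a)t_{l-1}'\tilde{w}_{l-1}\in P_{l-1}\tilde{w}_{l-1}P_{l-1}$, and evaluating $\tilde{\xi}_{1-s}^{i,v}$ by \eqref{xi intertwined}, I obtain
\[ 0=\mathrm{Vol}(V_{l-1,m})\mathrm{Vol}(\overline{V}_{l-1,i})\int_{U_{\GL_{l-1}}\backslash\GL_{l-1}}\big(B_m(t_{l-1}(a)t_{l-1}'\tilde{w}_{l-1},f_{\tilde{w}_l})+B_m(t_{l-1}(a)t_{l-1}'\tilde{w}_{l-1},f_{\tilde{w}_l^c})\big)|\det(a)|^{\frac{1}{2}-s-(l-1)}W_v^*(a)\,da \]
for every irreducible $\psi^{-1}$-generic $\tau$ of $\GL_{l-1}$, every $v$, and $\mathrm{Re}(s)\ll 0$. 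The function $a\mapsto B_m(t_{l-1}(a)t_{l-1}'\tilde{w}_{l-1},f_{\tilde{w}_l})+B_m(t_{l-1}(a)t_{l-1}'\tilde{w}_{l-1},f_{\tilde{w}_l^c})$ is smooth and transforms by $\psi$ under left translation by $U_{\GL_{l-1}}$ (by \eqref{Besseleqn}), so Proposition~\ref{JS Prop} forces it to vanish identically, for $m$ large depending only on $f$ and $f'$, which is Theorem~\ref{l-1 thm}.

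The main obstacle is conceptual rather than computational, and it is already visible in the statement. Because $\Omega_{\tilde{w}_l}$ and $\Omega_{\tilde{w}_l^c}$ genuinely overlap --- by Lemma~\ref{obstruction} their intersection is $\Omega_{\tilde{w}_{l-1}}$, which contains the big cell --- one cannot separate $B_m(\cdot,f_{\tilde{w}_l})$ from $B_m(\cdot,f_{\tilde{w}_l^c})$ using only $\mathcal{C}(l-1)$, so only their sum is accessible; and since $\SO_{2l-1}$ embeds in $\SO_{2l}$ without involving the two coordinates interchanged by $c$, the $\GL_{l-1}$-twists only detect this sum on the $c$-invariant slice $\{t_{l-1}(a)t_{l-1}'\tilde{w}_{l-1}\}$ of $P_{l-1}\tilde{w}_{l-1}P_{l-1}$. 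Thus the crux is to pin down exactly which part of the cell $\mathcal{C}(l-1)$ controls --- in particular to verify that the non-intertwined integral is supported away from $\Omega_{\tilde{w}_l}\cup\Omega_{\tilde{w}_l^c}$, so that $\mathcal{C}(l-1)$ can be brought to bear at all --- while the remainder of the cell, and hence the individual vanishing beyond this slice, is recovered only from $\mathcal{C}(l)$ in Theorem~\ref{l thm} (compare the finite-field argument in \cite{HL22}).
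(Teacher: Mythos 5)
Your proposal is correct and follows essentially the same route as the paper's proof: the same sections $\xi_s^{i,v}$ and $\tilde{\xi}_{1-s}^{i,v}$, the same collapse of the non-intertwined and intertwined zeta integrals through the $n=l-1$ embedding, the same reduction via Theorem \ref{l-2 theorem} to the sum $B_m(\cdot,f_{\tilde{w}_l})+B_m(\cdot,f_{\tilde{w}_l^c})$ on the slice $t_{l-1}(a)t_{l-1}'\tilde{w}_{l-1}$, and the same appeal to Proposition \ref{JS Prop}. The only deviations are harmless bookkeeping: where you use the support statement \eqref{xi intertwined} to restrict the $u$-integral to $V_{l-1,m}$ and a Bruhat-order/support argument to equate the non-intertwined integrals, the paper instead invokes Lemma \ref{6.3}(3) and Lemma \ref{6.3}(1), respectively.
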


\begin{proof}
Let $m$ be large enough such that Theorem \ref{l-2 theorem} and Lemma \ref{6.3} hold. Let $(\tau, V_\tau)$ be an irreducible $\psi\inv$-generic representation of $\GL_{l-1}$ and $v\in V_\tau.$ We consider the embedding of $\SO_{2l-1}$ into $\SO_{2l}$ as in $\S$\ref{gps and reps}. Let $j:\SO_{2l-1}\rightarrow\SO_{2l}$ denote the embedding. Recall that $Q_{l-1}=L_{l-1} V_{l-1}$ is the standard Siegel parabolic subgroup of $\SO_{2l-1}$ with Levi subgroup $L_{l-1}\cong\GL_{l-1}.$ Note that $w^{l,l-1}$ is trivial. Let $V_{{l-1},m}=\{u\in V_{l-1} \, |  j(u) \in H_m\}.$ $V_{{l-1},m}$ is an open compact subgroup of $V_{l-1}.$ Let $i$ be a positive integer with $i\geq\mathrm{max}\{m,i_0(v),i(V_{{l-1},m},v)\}$ (for notation, see Lemma \ref{5.3} for $i_0(v)$ and Lemma \ref{5.4} for $i(V_{{l-1},m},v)$ where we take $D=V_{{l-1},m}$). We fix the section $\xi_s^{i,v}\in I(\tau,s,\psi\inv)$ defined in $\S\ref{sections}.$ We also consider the section $\tilde{\xi}_{1-s}^{i,v}=M(\tau,s,\psi\inv)\xi_s^{i,v}\in I(\tau^*,1-s,\psi\inv)$. 

First, we compute the non-intertwined zeta integrals $\Psi(W_m^f,\xi_s^{i,v}).$ We have that $V_{l-1} L_{l-1} \ol{V}_{l-1}$ is dense in $\SO_{2l-1}$ where $\ol{V}_{l-1}$ denotes the opposite unipotent radical to $V_{l-1}.$ Hence, we can replace the integral over $U_{\SO_{2l-1}}\backslash\SO_{2l-1}$ with an integral over $U_{\SO_{2l-1}}\backslash V_{l-1} L_{l-1} \ol{V}_{l-1}\cong U_{\GL_{l-1}}\backslash\GL_{l-1} \times \ol{V}_{l-1}$ where $\GL_{l-1}$ is realized in $L_{l-1}\subseteq\SO_{2l-1}.$ For $g=u_1 l_{l-1}(a) \ol{u}_2\in U_{\GL_k}\backslash\GL_{l-1} \times \ol{V}_{l-1}$ we use the quotient Haar measure $dg=|\mathrm{det}(a)|^{-({l-1})} d\ol{u}_2 da.$ 

Note that $R^{l,l-1}$ is trivial. We have
\begin{align*}
    \Psi(W_m^f,\xi_s^{i,v})=\int_{U_{\GL_{l-1}}\backslash\GL_{l-1}}\int_{\ol{V}_{l-1}} W_m^f(  j(l_{l-1}(a)\ol{u})) \xi_s^{i,v}(l_{l-1}(a)\ol{u}, I_{l})|\mathrm{det}(a)|^{-({l-1})}d\ol{u}da.
\end{align*}
By Equation \eqref{xi nonintertwined}, we have
$$
\Psi(W_m^f,\xi_s^{i,v})=\int_{U_{\GL_{l-1}}\backslash\GL_{l-1}}\int_{\ol{V}_{{l-1},i}}W_m^f( j(l_{l-1}(a)\ol{u})) |\mathrm{det}(a)|^{s-\frac{1}{2}-({l-1})} W_v(a)d\ol{u}da.
$$
Furthermore, for $\ol{u}\in\ol{V}_{{l-1},i}$, we have $j(\ol{u}) \in H_m.$ By Proposition \ref{partialBesselprop}, for any $g\in\SO_{2l},$
$$
W^f_m(g j(\ol{u}))= W_m^f(g).
$$
Thus,
$$
\Psi(W_m^f,\xi_s^{i,v})=\mathrm{Vol}(\ol{V}_{{l-1},i})\int_{U_{\GL_{l-1}}\backslash\GL_{l-1}}W_m^f( j(l_{l-1}(a))  |\mathrm{det}(a)|^{s-\frac{1}{2}-({l-1})} W_v(a)da.
$$
Let $q_{l-1}(a)=\mathrm{diag}(a,I_2,a^*).$ We have $j(l_{l-1}(a))=q_{l-1}(a).$ 
Thus, by Lemma \ref{6.3}(1),
$$
W_m^f( j(l_{l-1}(a)) )=W_m^{f'}( j(l_{l-1}(a)) ),
$$
and hence 
$$
\Psi(W_m^f,\xi_s^{i,v})=\Psi(W_m^{f'},\xi_s^{i,v}).
$$

By assumption, $\gamma(s,\pi\times\tau,\psi)=\gamma(s,\pi'\times\tau,\psi).$ Since $\Psi(W_m^f,\xi_s^{i,v})=\Psi(W_m^{f'},\xi_s^{i,v}),$ the local functional equation gives
$$\Psi(W_m^f,\tilde{\xi}_{1-s}^{i,v})=\Psi(W_m^{f'},\tilde{\xi}_{1-s}^{i,v}).$$

We have that $V_{l-1} L_{l-1} w_{l-1} V_{l-1}$ is dense in $\SO_{2l-1}.$ Hence, we can replace the integral over $U_{\SO_{2l-1}}\backslash\SO_{2l-1}$ with an integral over $U_{\SO_{2l-1}}\backslash V_{l-1} L_{l-1} w_{l-1} V_{l-1}$ which is isomorphic to $U_{\GL_{l-1}}\backslash\GL_{l-1} w_{l-1} V_{l-1}.$ We obtain
\begin{align*}
   &\,\Psi(W_m^f,\tilde{\xi}_{1-s}^{i,v})\\
   =&\, \int_{U_{\GL_{l-1}}\backslash\GL_{l-1}}\int_{V_{l-1}} W_m^f(  j(l_{l-1}(a)w_{l-1} u))    \tilde{\xi}_{1-s}^{i,v}(l_{l-1}(a)w_{l-1} u, I_{l})|\mathrm{det}(a)|^{-({l-1})}duda. 
\end{align*}
We have a similar expression for $\Psi(W_m^{f'},\tilde{\xi}_{1-s}^{i,v}).$

Note that $j(w_{l-1})=t_{l-1}'\tilde{w}_{l-1}$ and $ j(l_{l-1}(a)))=\mathrm{diag}(a,I_{2},a^*)=t_{l-1}(a).$ For 
$$u=\left(\begin{matrix}
I_{l-1} & y_1   & y \\
    & 1     & y_1' \\
    &       & I_{l-1}
\end{matrix}\right)\in V_{l-1},$$ we have 
$$
 j(u) = \left(\begin{matrix}
I_{l-1} & *  & * \\
&  I_2  & * \\
& &  I_{l-1}
\end{matrix}\right).
$$
Recall $P_{l-1}$ is the standard maximal parabolic subgroup of $\SO_{2l}$ with Levi subgroup $M_{l-1}$ isomorphic to $\GL_{l-1}\times\SO_{2}$ and unipotent radical $N_{l-1}.$
We have
$$
 j(l_{l-1}(a)w_{l-1} u)  
=t_{l-1}(a)t_{l-1}'\tilde{w}_{l-1} j(u)\inv\in P_{l-1} \tilde{w}_{l-1} P_{l-1}.
$$
At this point, we would like to use Lemma \ref{6.3}(2). However, this is not true in our case (see Lemma \ref{obstruction}). Instead, we use Theorem \ref{l-2 theorem} to obtain
\begin{align*}
&\,W_m^f(t_{l-1}(a)t_{l-1}'\tilde{w}_{l-1} j(u))-W_m^{f'}(t_{l-1}(a)t_{l-1}'\tilde{w}_{l-1} j(u))
\\
=&\,B_m(t_{l-1}(a)t_{l-1}'\tilde{w}_{l-1} j(u),f)-B_m(t_{l-1}(a)t_{l-1}'\tilde{w}_{l-1} j(u),f')
\\
=&\,B_m(t_{l-1}(a)t_{l-1}'\tilde{w}_{l-1} j(u),f_{\tilde{w}_{l}})+B_m(t_{l-1}(a)t_{l-1}'\tilde{w}_{l-1} j(u),f_{\tilde{w}_{l}^c}).
\end{align*}
For brevity, let $B(g)=B_m(g,f_{\tilde{w}_{l}})+B_m(g,f_{\tilde{w}_{l}^c})$ for $g\in\SO_{2l}(F).$

Altogether, we  have
\begin{align}\label{l-1 twist eqn}
\begin{split}
0&=\Psi(W_m^f,\tilde{\xi}_{1-s}^{i,v})-\Psi(W_m^{f'},\tilde{\xi}_{1-s}^{i,v}) \\
&=\int_{U_{\GL_{l-1}}\backslash\GL_{l-1}}\int_{V_{l-1}} B(t_{l-1}(a)t_{l-1}'\tilde{w}_{l-1} j(u))   \tilde{\xi}_{1-s}^{i,v}(l_{l-1}(a)w_{l-1} u, I_{l})|\mathrm{det}(a)|^{-({l-1})}duda. 
\end{split}
\end{align}

By Lemma \ref{6.3}(3), for $u\notin V_{l-1,m},$
$$
B(t_{l-1}(a)t_{l-1}'\tilde{w}_{l-1} j(u))=0.
$$
However, by Proposition \ref{partialBesselprop}, for $u\in V_{l-1,m},$ 
$$
B(t_{l-1}(a)t_{l-1}'\tilde{w}_{l-1} j(u))=B(t_{l-1}(a)t_{l-1}'\tilde{w}_{l-1}).
$$
By Equation \eqref{xi intertwined}, for $u\in V_{l-1,m}$, $$\tilde{\xi}_{1-s}^{i,v}(l_{l-1}(a) w_{l-1} u, I_l)=\mathrm{Vol}(\ol{V}_{{l-1},i})|\mathrm{det}(a)|^{\frac{1}{2}-s} W_v^*(a).$$

From Equation \eqref{l-1 twist eqn}, we obtain
$$
0=\int_{U_{\GL_{l-1}}\backslash\GL_{l-1}}B(t_{l-1}(a)t_{l-1}'\tilde{w}_{l-1}) |\mathrm{det}(a)|^{\frac{1}{2}-s-(l-1)}W_v^*(a)da.
$$
This holds for any irreducible $\psi\inv$-generic representations $(\tau, V_\tau)$ of $\GL_{l-1}$ and $v\in V_\tau.$ By Proposition \ref{JS Prop}, we have
$$
B(t_{l-1}(a)t_{l-1}'\tilde{w}_{l-1},f_{\tilde{w}_{l-1}})=B_m(t_{l-1}(a)t_{l-1}'\tilde{w}_{l-1},f_{\tilde{w}_{l}})+B_m(t_{l-1}(a)t_{l-1}'\tilde{w}_{l-1},f_{\tilde{w}_{l}^c})=0,
$$
for any $a\in\GL_{l-1}.$
This completes the proof of Theorem \ref{l-1 thm}.
\end{proof}

Similarly, we have the following corollary. 

\begin{cor}\label{conj gamma2}
Let $\pi$ be an irreducible  $\psi$-generic supercuspidal representation of $\SO_{2l}$. Then $\gamma(s, \pi\times\tau,\psi)=\gamma(s, c\cdot\pi\times\tau,\psi)$ for all irreducible $\psi^{-1}$-generic representations $\tau$ of $\GL_{l-1}.$
\end{cor}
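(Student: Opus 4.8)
The plan is to mimic the proof of Corollary \ref{conj gamma1}. Take $\pi'=c\cdot\pi$ together with a matrix coefficient $f'\in\mathcal{M}(c\cdot\pi)$ chosen so that $W^{f'}=W^{f,c}$, hence $W_m^{f'}=B_m^c(\,\cdot\,;f)$. Since $c\cdot\pi$ has central character $\omega$, Corollary \ref{conj gamma1} shows that $(\pi,c\cdot\pi)$ satisfies the condition $\mathcal{C}(l-2)$, so Theorem \ref{l-2 theorem} at $k=l-2$ applies and Equation \eqref{Bessel difference at l-2} reads
\[
B_m(g,f)-B_m^c(g,f)=B_m(g,f_{\tilde{w}_l})+B_m(g,f_{\tilde{w}_l^c})=:B(g),
\]
for $m$ large and all $g\in\SO_{2l}(F)$. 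Now $\iota(g):=c\tilde{t}\inv g\tilde{t}c$ is exactly conjugation by $\tilde{t}c$, and it is an involution because $c\tilde{t}c=\tilde{t}\inv$ forces $(c\tilde{t}\inv)^2=I_{2l}$. Since $B_m^c(g;f)=(W_m^f)^c(g)=W_m^f(\iota(g))=B_m(\iota(g);f)$, the map $\iota$ interchanges $B_m(\,\cdot\,;f)$ and $B_m^c(\,\cdot\,;f)$, so $B(\iota(g))=-B(g)$ for all $g$. As recorded in the discussion preceding Theorem \ref{l-1 thm}, $\iota$ fixes $t_{l-1}(a)t_{l-1}'\tilde{w}_{l-1}$ for every $a\in\GL_{l-1}$; hence
\[
B(t_{l-1}(a)t_{l-1}'\tilde{w}_{l-1})=0\qquad(a\in\GL_{l-1}).
\]
In other words, for the pair $(\pi,c\cdot\pi)$ the conclusion of Theorem \ref{l-1 thm} is available \emph{unconditionally}, with no appeal to $\mathcal{C}(l-1)$.

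Next I would re-run the zeta integral computation from the proof of Theorem \ref{l-1 thm} for $(\pi,c\cdot\pi)$, using the sections $\xi_s^{i,v}\in I(\tau,s,\psi\inv)$ and $\tilde{\xi}_{1-s}^{i,v}=M(\tau,s,\psi\inv)\xi_s^{i,v}$ of \S\ref{sections} attached to an irreducible $\psi\inv$-generic $\tau$ of $\GL_{l-1}$ and $v\in V_\tau$. The non-intertwined step gives $\Psi(W_m^f,\xi_s^{i,v})=\Psi(W_m^{f'},\xi_s^{i,v})$ purely from Lemma \ref{6.3}(1), which holds for any matrix coefficients with a common central character and needs no hypothesis on gamma factors. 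The derivation of Equation \eqref{l-1 twist eqn} rests only on the density of $V_{l-1}L_{l-1}w_{l-1}V_{l-1}$ in $\SO_{2l-1}$, on Lemma \ref{partialBesselprop}, Lemma \ref{6.3}(3), Lemma \ref{5.4}, and on Equation \eqref{Bessel difference at l-2} (already in hand from $\mathcal{C}(l-2)$); it does not use $\mathcal{C}(l-1)$. Read with the difference of zeta integrals on the left rather than $0$, that computation expresses $\Psi(W_m^f,\tilde{\xi}_{1-s}^{i,v})-\Psi(W_m^{f'},\tilde{\xi}_{1-s}^{i,v})$ as an integral of $B(t_{l-1}(a)t_{l-1}'\tilde{w}_{l-1})$ against a Whittaker function of $\tau^*$, which vanishes by the previous paragraph; therefore $\Psi(W_m^f,\tilde{\xi}_{1-s}^{i,v})=\Psi(W_m^{f'},\tilde{\xi}_{1-s}^{i,v})$.

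Finally, the local functional equation $\gamma(s,\sigma\times\tau,\psi)\Psi(W,\xi_s)=\Psi(W,M(\tau,s,\psi\inv)\xi_s)$, applied to $\sigma=\pi$ with $W=W_m^f$ and to $\sigma=c\cdot\pi$ with $W=W_m^{f'}$, combines with the two equalities of zeta integrals above to give
\[
\gamma(s,\pi\times\tau,\psi)\,\Psi(W_m^f,\xi_s^{i,v})=\gamma(s,c\cdot\pi\times\tau,\psi)\,\Psi(W_m^f,\xi_s^{i,v}).
\]
Since $W_m^f(I_{2l})=1$, one can choose $v$ (concentrating the Whittaker function $W_v$ of $\tau$ near the identity of $\GL_{l-1}$) so that $\Psi(W_m^f,\xi_s^{i,v})$ is not identically zero as a function of $s$; dividing then yields $\gamma(s,\pi\times\tau,\psi)=\gamma(s,c\cdot\pi\times\tau,\psi)$ for all irreducible $\psi\inv$-generic $\tau$ of $\GL_{l-1}$, which is the claim.

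I expect the only genuine issues to be bookkeeping: confirming that the $c$-antisymmetry $B(\iota(g))=-B(g)$, together with the $c$-fixedness of $t_{l-1}(a)t_{l-1}'\tilde{w}_{l-1}$, really stands in for every use of $\mathcal{C}(l-1)$ in the proof of Theorem \ref{l-1 thm}, and the routine but necessary non-vanishing of $\Psi(W_m^f,\xi_s^{i,v})$ that legitimizes the last division. No new geometric or representation-theoretic input appears to be required beyond what is already used for Theorems \ref{l-1 thm} and \ref{converse thm intro} and for Corollary \ref{conj gamma1}.
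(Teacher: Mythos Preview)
Your proposal is correct and follows essentially the same route as the paper: invoke Corollary \ref{conj gamma1} to secure $\mathcal{C}(l-2)$ for the pair $(\pi,c\cdot\pi)$, then rerun the zeta-integral computation from the proof of Theorem \ref{l-1 thm} with the $\iota$-fixedness of $t_{l-1}(a)t_{l-1}'\tilde{w}_{l-1}$ replacing the appeal to $\mathcal{C}(l-1)$, and finally read off the equality of gamma factors from the functional equation. Your formulation of this via the antisymmetry $B(\iota(g))=-B(g)$ is a clean way to package the vanishing at fixed points, and your explicit flagging of the nonvanishing of $\Psi(W_m^f,\xi_s^{i,v})$ needed to divide is a point the paper leaves implicit; both are welcome clarifications rather than departures from the paper's argument.
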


\begin{proof}
By Corollary \ref{conj gamma1}, $\gamma(s, \pi\times\tau,\psi)=\gamma(s, c\cdot\pi\times\tau,\psi)$ for all irreducible $\psi^{-1}$-generic representations $\tau$ of $\GL_{k}$ for $k\leq l-2.$ The rest of the proof is similar to that of Corollary \ref{conj gamma1}.
\end{proof}

\subsection{Twists up to $\GL_l$}

We consider the twists up to $\GL_l$.
Recall from Equation \eqref{Bessel difference at l-2} that
$$
B_m(g,f)-B_m(g,f')= B_m(g,f_{\tilde{w}_l}) + B_m(g,f_{\tilde{w}_l^c}).
$$
Thus, we have a similar result for the conjugation of the partial Bessel functions 
$$
B_m^c(g,f)-B_m^c(g,f')= B_m^c(g,f_{\tilde{w}_l}) + B_m^c(g,f_{\tilde{w}_l^c}).
$$
The next theorem shows that $B_m(g,f)-B_m(g,f')+B_m^c(g,f)-B_m^c(g,f')=0$ for $m$ large enough enough.

\begin{thm}\label{l thm}
The condition $\mathcal{C}(l)$ implies
$$
B_m(g,f_{\tilde{w}_l})+B_m(g,f_{\tilde{w}_l^c})+B_m^c(g,f_{\tilde{w}_l})+B_m^c(g,f_{\tilde{w}_l^c})=0,
$$
for any $g\in\SO_{2l}$ and $m$ large enough depending only on $f$ and $f'.$
\end{thm}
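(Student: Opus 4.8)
The plan is to mimic the structure of the proofs of Theorems \ref{l-2 theorem} and \ref{l-1 thm}, but now using the $\GL_l$ zeta integrals attached to $\mathcal{C}(l)$ together with the conjugation symmetry $W \mapsto W^c$ in order to capture the full Bruhat cells of $\tilde{w}_l$ and $\tilde{w}_l^c$, not just the $c$-fixed piece recovered in Theorem \ref{l-1 thm}. Concretely, let $(\tau, V_\tau)$ be an irreducible $\psi^{-1}$-generic representation of $\GL_l$ and $v \in V_\tau$. Since now $n = l$, I would use the $n=l$ embedding $\SO_{2l} \hookrightarrow \SO_{2l+1}$ from \S\ref{gps and reps} and the $n=l$ form of the zeta integral from \S\ref{zeta integrals and gamma factor}, with the section $\xi_s^{i,v}$ and its intertwined image $\tilde\xi_{1-s}^{i,v}$ built in \S\ref{sections}. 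First I would compute the non-intertwined integral $\Psi(W_m^f, \xi_s^{i,v})$: using the density of the big cell and the explicit formula \eqref{xi nonintertwined} for $\xi_s^{i,v}$, together with Proposition \ref{partialBesselprop} to absorb $\overline{V}_{l,i}$, this integral collapses to an integral of $W_m^f(t_l(a))$ against $|\det a|^{s-\frac12 - l} W_v(a)$ over $U_{\GL_l}\backslash\GL_l$. By Lemma \ref{6.3}(1), $W_m^f(t_l(a)) = W_m^{f'}(t_l(a))$, so $\Psi(W_m^f,\xi_s^{i,v}) = \Psi(W_m^{f'},\xi_s^{i,v})$, and then $\mathcal{C}(l)$ plus the local functional equation gives $\Psi(W_m^f,\tilde\xi_{1-s}^{i,v}) = \Psi(W_m^{f'},\tilde\xi_{1-s}^{i,v})$.

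Next I would expand the intertwined integral over the other dense cell $V_l L_l w_l V_l$. Tracking the embedding as in the proof of Theorem \ref{l-1 thm}, the argument of the Whittaker function becomes $t_l(a) t_l' \tilde w_l \, j(u)^{-1} \in P_l \tilde w_l P_l$ (where here $t_l' = I_{2l}$ or $\tilde t$ according to parity), and by \eqref{Bessel difference at l-2} the difference $W_m^f - W_m^{f'}$ along this cell equals $B_m(\cdot, f_{\tilde w_l}) + B_m(\cdot, f_{\tilde w_l^c})$. Using Lemma \ref{6.3}(3) to kill the $u \notin V_{l,m}$ part, Proposition \ref{partialBesselprop} to strip off $j(u) \in V_{l,m}$, and \eqref{xi intertwined} to evaluate $\tilde\xi_{1-s}^{i,v}$, the vanishing statement reduces via Proposition \ref{JS Prop} to
$$
B_m(t_l(a) t_l' \tilde w_l, f_{\tilde w_l}) + B_m(t_l(a) t_l' \tilde w_l, f_{\tilde w_l^c}) = 0 \quad \text{for all } a \in \GL_l.
$$
The crucial new input is that I must run the \emph{same} computation with $\pi, \pi'$ replaced by their $c$-conjugates --- equivalently, replacing the partial Bessel functions $B_m$ by $B_m^c$ everywhere --- using that $c\tilde t^{-1} t_l(a) t_l' \tilde w_l \tilde t c$ again lies in the torus times $\tilde w_l^c$ (or $\tilde w_l$), and that conjugation by $\tilde t c$ swaps the roles of $\tilde w_l$ and $\tilde w_l^c$. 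Combining the untwisted and $c$-twisted identities, I obtain
$$
B_m(t_l(a) t_l' \tilde w_l, f_{\tilde w_l}) + B_m(t_l(a) t_l' \tilde w_l, f_{\tilde w_l^c}) + B_m^c(t_l(a) t_l' \tilde w_l, f_{\tilde w_l}) + B_m^c(t_l(a) t_l' \tilde w_l, f_{\tilde w_l^c}) = 0
$$
for all $a$, and the same with $\tilde w_l^c$ in place of $\tilde w_l$.

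Finally I would upgrade this vanishing on the torus translates of $\tilde w_l$ and $\tilde w_l^c$ to vanishing on all of $\SO_{2l}$, following the endgame of the proof of Theorem \ref{l-2 theorem}: the function $F(g) := B_m(g, f_{\tilde w_l}) + B_m(g, f_{\tilde w_l^c}) + B_m^c(g, f_{\tilde w_l}) + B_m^c(g, f_{\tilde w_l^c})$ is built from partial Bessel functions supported on $\Omega_{\tilde w_l} \cup \Omega_{\tilde w_l^c}$, and we have just shown it vanishes at $t \tilde w_l$ and $t \tilde w_l^c$ for the relevant torus elements, and also at $t_{l-1}(a) t_{l-1}' \tilde w_{l-1}$ by Theorem \ref{l-1 thm} (which handles $\Omega_{\tilde w_{l-1}} = \Omega_{\tilde w_l} \cap \Omega_{\tilde w_l^c}$). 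Applying Theorem \ref{CST}(1) repeatedly --- first at $w = \tilde w_l$ (after checking the hypothesis $B_m(tw, \cdot) = 0$ on $T_w$ using the relation $t_l(a) w_l^{L_l} t_l' = t_l(b) t_l'$ and the identity above), then at $\tilde w_l^c$, then inducting up the Bruhat order --- pushes the support of $F$ off every cell, forcing $F \equiv 0$. The main obstacle I anticipate is exactly this last bookkeeping step: because $\Omega_{\tilde w_l}$ and $\Omega_{\tilde w_l^c}$ overlap along $\Omega_{\tilde w_{l-1}}$ (Lemma \ref{obstruction}), one cannot isolate the two partial Bessel functions individually, so the induction must be carried out on the \emph{sum} $F$ and its $c$-conjugate simultaneously, carefully using that the $\GL_l$-twist only sees the $c$-fixed slice $t_l(a) t_l' \tilde w_l$ of each cell and that conjugation by $\tilde t c$ is what connects the missing complementary pieces $\Omega_{\tilde w_l}\backslash\Omega_{\tilde w_{l-1}}$ and $\Omega_{\tilde w_l^c}\backslash\Omega_{\tilde w_{l-1}}$. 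Once $F \equiv 0$, combining with \eqref{Bessel difference at l-2} and its $c$-conjugate yields $B_m(g,f) - B_m(g,f') + B_m^c(g,f) - B_m^c(g,f') = 0$, which is the assertion.
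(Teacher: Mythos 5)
Your setup of the non-intertwined side is fine, but the reduction of the intertwined $\GL_l$ integral is where the argument breaks. For $n=l$ the zeta integral runs over $U_{\SO_{2l}}\backslash\SO_{2l}$ itself, with the section evaluated at $w_{l,l}j(g)$; you cannot substitute the dense cell $V_lL_lw_lV_l$ of $\SO_{2l+1}$ as in the $n<l$ cases and thereby confine the Whittaker argument to the single double coset $P_l\tilde{w}_l P_l$. By \cite[Propositions 8.2--8.4]{HL22}, the support condition $j(g)\in Q_lw_lV_l$ is met by torus translates of \emph{every} cell in $\mathrm{B}_l(\SO_{2l})$ and $\mathrm{B}_l^c(\SO_{2l})$, and by those in $\mathrm{B}_{l-1}(\SO_{2l})$ with $t_l$ away from finitely many values, so the intertwined integral is a sum over all these cells simultaneously. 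Consequently Proposition \ref{JS Prop} cannot be applied cell-by-cell to extract your intermediate identity
$B_m(t_l(a)t_l'\tilde{w}_l,f_{\tilde{w}_l})+B_m(t_l(a)t_l'\tilde{w}_l,f_{\tilde{w}_l^c})=0$ for all $a\in\GL_l$. In fact that identity (together with its $\tilde{w}_l^c$ analogue, which you also claim) is not merely unproven but false in general: since $t_l(a)$ ranges over the full torus times every Weyl element of $\mathrm{B}_l(\SO_{2l})\cup\mathrm{B}_{l-1}(\SO_{2l})$, it would force $B_m(g,f)-B_m(g,f')\equiv 0$ and hence $\pi\cong\pi'$ by uniqueness of Whittaker models; taking $\pi'=c\cdot\pi$ with $\pi\not\cong c\cdot\pi$ (which satisfies $\mathcal{C}(l)$ by Corollary \ref{conj gamma3}) gives a contradiction. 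The whole point of the theorem is that only the symmetrized four-term combination is accessible from $\mathcal{C}(l)$, and the $c$-conjugation must enter \emph{inside} the manipulation of the single zeta integral, not as a separate second computation added afterwards.

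Concretely, the missing ideas are the ones the paper uses to make Proposition \ref{JS Prop} applicable at all: split the intertwined integral over $\Omega_{\tilde{w}_l}\backslash\Omega_{\tilde{w}_l^c}$, $\Omega_{\tilde{w}_l^c}\backslash\Omega_{\tilde{w}_l}$ and the intersection; convert the second region into a $B_m^c$-integral over the first by conjugation with $c\tilde{t}\inv$ together with an explicit torus change of variables checked to fix the $\GL_l$-argument $a$ of $\tilde{\xi}_{1-s}^{i,v}$; on the intersection, where $cwc=w$ and the torus folds two-to-one under $t_l\mapsto t_l\inv$ (resp. $t_l\mapsto \tfrac{t_l\inv}{4}$), restrict to a half $A_l$ so the parametrization becomes injective, at the cost of producing the integrand $B_m+B_m^c$; and only then assemble a single well-defined function on $X_l\subseteq\GL_l$ (via $W_1(\GL_l)$, $W_2(\GL_l)$, $A_{\GL_l}$ and the correction tori $t_{w'}$, $\xi$) to which Proposition \ref{JS Prop} applies, yielding $(B_m+B_m^c)(tw)=0$ on all cells of $\mathrm{B}_l(\SO_{2l})$, $\mathrm{B}_{l-1}(\SO_{2l})$ and, by conjugation, $\mathrm{B}_l^c(\SO_{2l})$ at once. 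With that conclusion the paper needs no induction up the Bruhat order via Theorem \ref{CST} in this step, so your proposed endgame is also not how the argument closes; but the decisive gap is the invalid localization to $P_l\tilde{w}_l P_l$ and the resulting false two-term vanishing claim.
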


\begin{proof}
Let $m$ be large enough such that Theorem \ref{l-1 thm} and Lemma \ref{6.3} hold. Let $(\tau, V_\tau)$ be an irreducible $\psi\inv$-generic representation of $\GL_{l}$ and $v\in V_\tau.$ We consider the embedding of $\SO_{2l}$ into $\SO_{2l+1}$ as in $\S$\ref{gps and reps}. Let $j:\SO_{2l}\rightarrow\SO_{2l+1}$ denote the embedding. Recall that $Q_{l}=L_{l} V_{l}$ is the standard Siegel parabolic subgroup of $\SO_{2l+1}$ with Levi subgroup $L_{l}\cong\GL_{l}.$ Also, $P_{l}=M_{l} N_{l}$ is the non-conjugated Siegel parabolic subgroup of $\SO_{2l}$ with Levi subgroup $M_{l}\cong\GL_{l}.$ Let $\ol{V}_{{l},m}=\ol{V}_{l} \cap H_m^{\SO_{2l+1}}$ and $\ol{N}_{l,m}=\{u\in \ol{N}_{l} \, |  j(u) \in \ol{V}_{l,m}\}.$ $j(\ol{N}_{{l},m})$ is an open compact subgroup of $\ol{V}_{l}.$ Let $i$ be a positive integer with $i\geq\mathrm{max}\{m,i_0(v),i(j(N_{{l},m}),v)\}$ (for notation, see Lemma \ref{5.3} for $i_0(v)$ and Lemma \ref{5.4} for $i(j(N_{{l},m}),v)$ where we take $D=j(N_{{l},m})$). We consider the sections $\xi_s^{i,v}\in I(\tau,s,\psi\inv)$ and $\tilde{\xi}_{1-s}^{i,v}=M(\tau,s,\psi\inv)\xi_s^{i,v}\in I(\tau^*,1-s,\psi\inv)$ defined in $\S\ref{sections}.$

First, we compute the non-intertwined zeta integrals $\Psi(W_m^f,\xi_s^{i,v}).$ We have that $N_{l} M_{l} \ol{N}_{l}$ is dense in $\SO_{2l}$ where $\ol{N}_{l}$ denotes the opposite unipotent radical to $N_{l}.$ Hence, we can replace the integral over $U_{\SO_{2l}}\backslash\SO_{2l}$ with an integral over $U_{\SO_{2l}}\backslash N_{l} M_{l} \ol{N}_{l}\cong U_{\GL_{l}}\backslash\GL_{l} \times \ol{N}_{l}$ where $\GL_{l}$ is realized in $M_{l}\subseteq\SO_{2l}.$ Note for $g=u_1 m_{l}(a) \ol{u}_2\in U_{\GL_l}\backslash\GL_{l} \times \ol{N}_{l}$ we use the quotient Haar measure $dg=|\mathrm{det}(a)|^{-{l}} d\ol{u}_2 da.$

We have
$$
\Psi(W_m^f,\xi_s^{i,v})=\int_{U_{\GL_{l}}\backslash\GL_{l}}\int_{\ol{N}_{l}} W_m^f(  l_{l}(a)\ol{u}) \xi_s^{i,v}(w_{l,l}j(l_{l}(a)\ol{u}), I_{l})|\mathrm{det}(a)|^{-{l}}d\ol{u}da.
$$
By Equation \eqref{xi nonintertwined}, we have
$$
\Psi(W_m^f,\xi_s^{i,v})=\int_{U_{\GL_{l}}\backslash\GL_{l}}\int_{\ol{N}_{l,i}} W_m^f(  l_{l}(a)\ol{u}) \xi_s^{i,v}(w_{l,l}j(l_{l}(a)\ol{u}), I_{l})|\mathrm{det}(a)|^{-{l}}d\ol{u}da.
$$
Furthermore, for $\ol{u}\in\ol{N}_{{l},i}$, we have $\ol{u} \in H_m$. By Proposition \ref{partialBesselprop},
$$
W^f_m(g j(\ol{u}))= W_m^f(g),
$$
for any $g\in\SO_{2l}(F).$ From this and Equation \eqref{xi nonintertwined}, we have
$$
\Psi(W_m^f,\xi_s^{i,v})=\mathrm{Vol}(\ol{N}_{l,i})\int_{U_{\GL_{l}}\backslash\GL_{l}} W_m^f(  l_{l}(a)) \xi_s^{i,v}(w_{l,l}j(l_{l}(a)), I_{l})|\mathrm{det}(a)|^{-{l}}da.
$$
By Lemma \ref{6.3}(1),
$$
W_m^f( m_l(a)) )=W_m^{f'}( m_l(a)),
$$
and hence 
$$
\Psi(W_m^f,\xi_s^{i,v})=\Psi(W_m^{f'},\xi_s^{i,v}).
$$

By assumption, $\gamma(s,\pi\times\tau,\psi)=\gamma(s,\pi'\times\tau,\psi).$ Since $\Psi(W_m^f,\xi_s^{i,v})=\Psi(W_m^{f'},\xi_s^{i,v}),$ the local functional equation gives
$$\Psi(W_m^f,\tilde{\xi}_{1-s}^{i,v})=\Psi(W_m^{f'},\tilde{\xi}_{1-s}^{i,v}).$$
By Theorem \ref{l-2 theorem}, $$
B_m(g,f)-B_m(g,f')= B_m(g,f_{\tilde{w}_l}) + B_m(g,f_{\tilde{w}_l^c}),
$$
for any $g\in\SO_{2l}(F).$ Let $B_m(g):=B_m(g,f_{\tilde{w}_l}) + B_m(g,f_{\tilde{w}_l^c}).$ $B_m$ is supported on $\Omega_{\tilde{w}_l}\cup\Omega_{\tilde{w}_l^c}$ and hence
\begin{align}\label{l twist integrals}
\begin{split}
0&=\Psi(W_m^f,\tilde{\xi}_{1-s}^{i,v})-\Psi(W_m^{f'},\tilde{\xi}_{1-s}^{i,v})\\
&=\int_{U_{\SO_{2l}}\backslash \Omega_{\tilde{w}_l}\cup\Omega_{\tilde{w}_l^c}} B_m(g) \tilde{\xi}_{1-s}^{i,v}(w_{l,l}j(g), I_{l})dg \\
&=\int_{U_{\SO_{2l}}\backslash (\Omega_{\tilde{w}_l}\backslash\Omega_{\tilde{w}_l^c})} B_m(g) \tilde{\xi}_{1-s}^{i,v}(w_{l,l}j(g), I_{l})dg \\ &\quad+\int_{U_{\SO_{2l}}\backslash(\Omega_{\tilde{w}_l^c}\backslash\Omega_{\tilde{w}_l})} B_m(g) \tilde{\xi}_{1-s}^{i,v}(w_{l,l}j(g), I_{l})dg \\
&\quad
+\int_{U_{\SO_{2l}}\backslash (\Omega_{\tilde{w}_l}\cap\Omega_{\tilde{w}_l^c})} B_m(g) \tilde{\xi}_{1-s}^{i,v}(w_{l,l}j(g), I_{l})dg.
\end{split}
\end{align}

We  aim to combine the second and third integrals. Let us outline the next several steps. First, we conjugate the second integral by $c\tilde{t}\inv.$ After a change of variables, we find that 
\begin{align}
\begin{split}
&\,\int_{U_{\SO_{2l}}\backslash (\Omega_{\tilde{w}_l}\backslash\Omega_{\tilde{w}_l^c})} B_m(g) \tilde{\xi}_{1-s}^{i,v}(w_{l,l}j(g), I_{l})dg \\ +&\,\int_{U_{\SO_{2l}}\backslash(\Omega_{\tilde{w}_l^c}\backslash\Omega_{\tilde{w}_l})} B_m(g) \tilde{\xi}_{1-s}^{i,v}(w_{l,l}j(g), I_{l})dg \\
=&\,\int_{U_{\SO_{2l}}\backslash (\Omega_{\tilde{w}_l}\backslash\Omega_{\tilde{w}_l^c})} (B_m+B_m^c)(g) \tilde{\xi}_{1-s}^{i,v}(w_{l,l}j(g), I_{l})dg. 
\end{split}
\end{align}
$B_m^c$ is the difference of the partial Bessel functions for the conjugate representations.

The last integral in Equation \eqref{l twist integrals} must be dealt with delicately as it folds to itself. However, we find that the intersection embeds into the support of $\tilde{\xi}_{1-s}^{i,v}$ only when we are outside the set determined by Theorem \ref{l-1 thm}. The rest of this set embeds into the support; however, the argument of $\tilde{\xi}_{1-s}^{i,v}$ (when reduced) is a two-to-one function. This would cause issues when applying Proposition \ref{JS Prop}. To circumvent this, we decompose the integral into two integrals and recombine them such that the argument can be mapped one-to-one with the integrand. We can then apply Proposition \ref{JS Prop}. The main difference between this and the finite field case in \cite{HL22} is that we need to handle a unipotent element (due to the local case using only partial Bessel functions). We see later that it lies in an open compact subgroup and then factors out via a volume term. However, this simplifies the calculation.

Let us begin the details. First, consider the integral
$$
\int_{U_{\SO_{2l}}\backslash(\Omega_{\tilde{w}_l^c}\backslash\Omega_{\tilde{w}_l})} B_m(g) \tilde{\xi}_{1-s}^{i,v}(w_{l,l}j(g), I_{l})dg.
$$
By Equation \eqref{xi intertwined}, the support of $\tilde{\xi}_{1-s}^{i,v}(w_{l,l}j(g), I_{l})$ is contained in $Q_lw_l V_l$ where $Q_l=L_lV_l$ is the standard Siegel parabolic of $\SO_{2l+1}.$ From Section \S\ref{Bruhat order}, we have that 
$$
\{w\in \mathrm{B}(\SO_{2l}) \, | \, C(w)\subseteq \Omega_{\tilde{w}_l^c}\backslash\Omega_{\tilde{w}_l} \}=B_l^c(\SO_{2l}),
$$
and
$$
\{w\in \mathrm{B}(\SO_{2l}) \, | \, C(w)\subseteq \Omega_{\tilde{w}_l}\backslash\Omega_{\tilde{w}_l^c} \}=B_l(\SO_{2l}).
$$
By \cite[Propositions 8.3 and 8.4]{HL22}, we see that for any $g\in\Omega_{\tilde{w}_l^c}\backslash\Omega_{\tilde{w}_l}\cup\Omega_{\tilde{w}_l}\backslash\Omega_{\tilde{w}_l^c}$, $j(g)\in Q_l w_l V_l.$ Let $g=tt_l^c(w)\tilde{w}_l^c u$ where $t\in T_{\SO_{2l}}$ $t_l^c(w)\tilde{w}_l^c\in B_l^c(\SO_{2l})$ and $u\in U_{\SO_{2l}}$. Then, $j(g)=j(tt_l^c(w))j(\tilde{w}_l^c)j(u)$. Recall that we set $D=j(N_{l,m})$. Thus, from Equation \eqref{xi intertwined}, we may restrict to $U_{\SO_{2l}}$ to the subgroup $N_{l,m}\subseteq N_l\cap U_m.$

We relate Bruhat cells for $\mathrm{B}_l^c(\SO_{2l})$ with Bruhat cells for $\mathrm{B}_l(\SO_{2l})$ as follows. First, by Equation \eqref{Besseleqn}, $B_m(twu)=B_m(tw)\psi(u)$ for any $t\in T_{\SO_{2l}},$ $u\in N_{l,m},$ and $w\in W(\SO_{2l}).$ We have
$$B_m(tw)
=B_m(c\tilde{t}\inv \tilde{t} ctw c\tilde{t}\inv \tilde{t} c)
=B_m^c( \tilde{t} ctw c\tilde{t}\inv)
=B_m^c( t' cwc),
$$
where $t'=\tilde{t}ctc (cwc\tilde{t}\inv (cwc)\inv).$ For $w\in\mathrm{B}_l^c(\SO_{2l})$, we have $cwc\in \mathrm{B}_l(\SO_{2l}).$ By \cite[Proposition 4.5]{HL22}, there exists $w^\prime\in W(\GL_l)$ such that $cwc=t_l(w^\prime)\tilde{w}_l$ where $w'=(w_{i,j}')_{i,j=1}^l\in W(\GL_l)$ with $w_{l,1}'=0.$ We also let $(w')^*=(w_{i,j}'^*)_{i,j=1}^l$ and $r$ be such that $w_{r,1}'=1.$ 

For $l$ even, $(cwc\tilde{t}\inv (cwc)\inv)=t_l(\mathrm{diag}(1,\dots, 1, -2, 1,\dots, 1))$ where $-2$ is the $r$-th entry. Thus, $t'=t_l(\mathrm{diag}(t_1,\dots, t_{r-1}, -2t_r, t_{r+1},\dots, t_{l-1}, \frac{-t_l\inv}{2})).$ Let $a_1,a_2\in\GL_l$ and $n_1,n_2,n_3,n_4\in V_l$ be such that 
$tw=l_l(a_1)n_1 w_l n_2$ and $tcwc=l_l(a_2)n_3 w_l n_4.$ By the proofs of \cite[Propositions 8.3 and 8.4]{HL22}, we have
$$
a_1^*=\left(\begin{matrix}
\frac{-t_l(w_{1,j}'^*)_{j=1}^{l-1}}{2} & \frac{1}{4} \\
\mathrm{diag}(t_{l-1}\inv,\dots, t_1\inv) (w_{i,j}'^*)_{i=2,j=1}^{i=l,j=l-1} & \mathrm{diag}(t_{l-1}\inv,\dots, t_1\inv)\frac{-(w_{i,l}'^*)_{i=2}^l}{2} \\
\end{matrix}\right),$$ and
$$
a_2^*=\left(\begin{matrix}
\frac{t_l\inv(w_{1,j}'^*)_{j=1}^{l-1}}{4} & \frac{1}{4} \\
\mathrm{diag}(t_{l-1}\inv,\dots, t_1\inv) (w_{i,j}'^*)_{i=2,j=1}^{i=l,j=l-1} & \mathrm{diag}(t_{l-1}\inv,\dots, t_1\inv)\frac{(w_{i,l}'^*)_{i=2}^l}{4} \\
\end{matrix}\right).
$$
Performing the change of variables $t_r\mapsto \frac{-t_r}{2}$ and $t_l\mapsto \frac{-t_l\inv}{2}$ takes $t'\mapsto t$ and $a_1^*\mapsto a_2^*.$ Note that $w'^*_{l+1-r,l}=1$ and this is the coordinate for $t_r\inv.$ Thus we have
\begin{align*}
&\,\sum_{w\in \mathrm{B}_l^c(\SO_{2l})}\int_{ T_{\SO_{2l}}\times U_{\SO_{2l}}}
B_m(twu)\tilde{\xi}_{1-s}^{i,v}(w_{l,l} j(twu), I_l)dtdu \\
=
&\,\sum_{w\in \mathrm{B}_l^c(\SO_{2l})}\int_{ T_{\SO_{2l}}\times U_{\SO_{2l}}}
B_m^c(t'cwcu)\tilde{\xi}_{1-s}^{i,v}(w_{l,l} j(twu), I_l)dtdu \\
=
&\,\sum_{w\in \mathrm{B}_l(\SO_{2l})}\int_{ T_{\SO_{2l}}\times U_{\SO_{2l}}}
B_m^c(twu)\tilde{\xi}_{1-s}^{i,v}(w_{l,l} j(twu), I_l)dtdu.
\end{align*}
Since 
\begin{align*}
    &\,\int_{U_{\SO_{2l}}\backslash(\Omega_{\tilde{w}_l^c}\backslash\Omega_{\tilde{w}_l})} B_m(g) \tilde{\xi}_{1-s}^{i,v}(w_{l,l}j(g), I_{l})dg \\
    =&\,\sum_{w\in \mathrm{B}_l^c(\SO_{2l})}\int_{ T_{\SO_{2l}}\times U_{\SO_{2l}}}
B_m(twu)\tilde{\xi}_{1-s}^{i,v}(w_{l,l} j(twu), I_l)dtdu,
\end{align*}
and
\begin{align*}
&\,\int_{U_{\SO_{2l}}\backslash(\Omega_{\tilde{w}_l}\backslash\Omega_{\tilde{w}_l^c})} B_m(g) \tilde{\xi}_{1-s}^{i,v}(w_{l,l}j(g), I_{l})dg \\
=&\,\sum_{w\in \mathrm{B}_l(\SO_{2l})}\int_{ T_{\SO_{2l}}\times U_{\SO_{2l}}}
B_m(twu)\tilde{\xi}_{1-s}^{i,v}(w_{l,l} j(twu), I_l)dtdu,
\end{align*}
it follows that 
\begin{align*}
&\,\int_{U_{\SO_{2l}}\backslash(\Omega_{\tilde{w}_l}\backslash\Omega_{\tilde{w}_l^c})} B_m(g) \tilde{\xi}_{1-s}^{i,v}(w_{l,l}j(g), I_{l})dg \\ +&\,\int_{U_{\SO_{2l}}\backslash(\Omega_{\tilde{w}_l^c}\backslash\Omega_{\tilde{w}_l})} B_m(g) \tilde{\xi}_{1-s}^{i,v}(w_{l,l}j(g), I_{l})dg \\
=&\,\int_{U_{\SO_{2l}}\backslash(\Omega_{\tilde{w}_l}\backslash\Omega_{\tilde{w}_l^c})} (B_m+B_m^c)(g) \tilde{\xi}_{1-s}^{i,v}(w_{l,l}j(g), I_{l})dg.
\end{align*}
The case for $l$ odd is similar. We omit the details.

Next, we must handle the integral over the intersection. That is, we consider
$$
\int_{U_{\SO_{2l}}\backslash (\Omega_{\tilde{w}_l}\cap\Omega_{\tilde{w}_l^c})} B_m(g) \tilde{\xi}_{1-s}^{i,v}(w_{l,l}j(g), I_{l})dg.
$$
We have $U_{\SO_{2l}}\backslash (\Omega_{\tilde{w}_l}\cap\Omega_{\tilde{w}_l^c})=\sqcup_{w\in\mathrm{B}_{l-1}(\SO_{2l})} T_{\SO_{2l}} w U_{\SO_{2l}}.$ If we try to repeat the above arguments in this case, we run into two issues. First, by \cite[Proposition 8.2]{HL22}, $twu\in Q_l w_l V_l$ if and only if $t=\mathrm{diag}(t_1,\dots,t_l,t_l\inv,\dots,t_1\inv)$ where $t_l\neq 1$ if $l$ is odd and $t_l\neq \frac{-1}{2}$ if $l$ is even. Thus, we must restrict our torus to the subset
$$T_l=\{t=\mathrm{diag}(t_1,\dots,t_l,t_l\inv, \dots, t_1\inv)\in T_{\SO_{2l}} \, | \, t_l\neq \pm c_l\},$$
where $c_l= 1$ if $l$ is odd or $\frac{1}{2}$ if $l$ is even. Second, $cwc=w$ for any $w\in \mathrm{B}_{l-1}(\SO_{2l})$ and this would send
$B_m$ to $B_m^c$; however, to apply Proposition \ref{JS Prop}, we need to obtain $B_m+B_m^c$. To remedy this, we partition $T_l$ into two halves and then recombine them.

We proceed with the details. Again,
$$B_m(tw)
=B_m(c\tilde{t}\inv \tilde{t} ctw c\tilde{t}\inv \tilde{t} c)
=B_m^c( \tilde{t} ctw c\tilde{t}\inv)
=B_m^c( t' cwc),
$$
where $t'=\tilde{t}ctc (cwc\tilde{t}\inv (cwc)\inv).$ For $w\in \mathrm{B}_{l-1}(\SO_{2l}),$ we have $cwc=w$ and hence $t'=\tilde{t}ctcw\tilde{t}\inv w\inv$ and 
$
B_m^c( t' cwc)=B_m^c( t' w).$

If $l$ is even, then $w\tilde{t}\inv w\inv=\tilde{t}$ and hence $$t'=\mathrm{diag}(t_1,\dots,t_{l-1},\frac{t_l\inv}{4}, 4t_l,t_{l-1}\inv,\dots,t_1\inv).$$
The map $t_l\mapsto \frac{t_l\inv}{4}$ sends $t^\prime$ to $t$ and $\frac{1}{4}+\frac{1}{4}t_l+\frac{1}{16}t_l\inv$ to itself. The fixed points of the map $t_l\mapsto \frac{t_l\inv}{4}$ are $t_l=\pm\frac{1}{2}.$ Furthermore, the mapping is an involution. Therefore, we can partition $F^\times\backslash\{\pm\frac{1}{2}\}$ into two disjoint sets $A$ and $B$ such that if $t_l\in A$ then $\frac{t_l\inv}{4}\in B.$ Hence, we partition $T_l$ into two sets $A_l$ and $B_l$ such that $t\in A_l$ if and only if $t_l\in A.$ Note that in this case $tw=l_l(a)n_1 w_l n_2$ for $a\in \GL_l$ and $n_1,n_2\in V_l$ with
$$
a^*=\left(\begin{matrix}
0& \frac{1}{4}+\frac{1}{4}t_l+\frac{1}{16}t_l\inv \\
\mathrm{diag}(t_{l-1}\inv,\dots,t_{1}\inv)(w'')^* &0
\end{matrix}\right).
$$
The map $t_l\mapsto \frac{t_l\inv}{4}$ takes $a^*\mapsto a^*.$ Therefore,
\begin{align*}
&\,\sum_{w\in \mathrm{B}_{l-1}(\SO_{2l})}\int_{ T_l\times U_{\SO_{2l}}}
B_m(twu)\tilde{\xi}_{1-s}^{i,v}(w_{l,l} j(twu), I_l)dtdu \\
=&\,\sum_{w\in \mathrm{B}_{l-1}(\SO_{2l})}\int_{ A_l\times U_{\SO_{2l}}}
B_m(twu)\tilde{\xi}_{1-s}^{i,v}(w_{l,l} j(twu), I_l)dtdu\\
&\,
+\sum_{w\in \mathrm{B}_{l-1}(\SO_{2l})}\int_{ B_l\times U_{\SO_{2l}}}
B_m(twu)\tilde{\xi}_{1-s}^{i,v}(w_{l,l} j(twu), I_l)dtdu\\
=&\,\sum_{w\in \mathrm{B}_{l-1}(\SO_{2l})}\int_{ A_l\times U_{\SO_{2l}}}
B_m(twu)\tilde{\xi}_{1-s}^{i,v}(w_{l,l} j(twu), I_l)dtdu\\
&\,+\sum_{w\in \mathrm{B}_{l-1}(\SO_{2l})}\int_{ B_l\times U_{\SO_{2l}}} B_m^c(t'cwcu)\tilde{\xi}_{1-s}^{i,v}(w_{l,l} j(twu), I_l)dtdu \\
=&\,\sum_{w\in \mathrm{B}_{l-1}(\SO_{2l})}\int_{ A_l\times U_{\SO_{2l}}}
(B_m+B_m^c)(twu)\tilde{\xi}_{1-s}^{i,v}(w_{l,l} j(twu), I_l)dtdu.
\end{align*}

The case for $l$ odd is similar. We have $w\tilde{t}\inv w\inv=\tilde{t}\inv$ and hence $t'=ctc.$ The map $t_l\mapsto t_l\inv$ sends $t^\prime$ to $t$ and $\frac{1}{2}(\frac{1}{2}-\frac{1}{4}(t_l+t_l\inv))$ to itself. The fixed points of the map $t_l\mapsto t_l\inv$ are $t_l=\pm 1.$ Furthermore, the mapping is an involution. Therefore, we can partition $F^\times\backslash\{\pm 1\}$ into two disjoint sets $A$ and $B$ such that if $t_l\in A$ then $t_l\inv\in B.$ Furthermore we partition $T_l$ into two sets $A_l$ and $B_l$ such that $t\in A_l$ if and only if $t_l\in A.$ Note that in this case $tw=l_l(a)n_1 w_l n_2$ for $a\in \GL_l$ and $n_1,n_2\in V_l$ with
$$
a^*=\left(\begin{matrix}
0& \frac{1}{2}(\frac{1}{2}-\frac{1}{4}(t_l+t_l\inv)) \\
\mathrm{diag}(t_{l-1}\inv,\dots,t_{1}\inv)(w'')^* &0
\end{matrix}\right).
$$
The map $t_l\mapsto t_l\inv$ takes $a^*\mapsto a^*.$ Therefore,
\begin{align*}
&\,\sum_{w\in \mathrm{B}_{l-1}(\SO_{2l})}\int_{ T_l\times U_{\SO_{2l}}}
B_m(twu)\tilde{\xi}_{1-s}^{i,v}(w_{l,l} j(twu), I_l)dtdu \\
=&\,\sum_{w\in \mathrm{B}_{l-1}(\SO_{2l})}\int_{ A_l\times U_{\SO_{2l}}}
(B_m+B_m^c)(twu)\tilde{\xi}_{1-s}^{i,v}(w_{l,l} j(twu), I_l)dtdu.
\end{align*}
Thus, we have 
\begin{align*}
0&=\sum_{w\in \mathrm{B}_l(\SO_{2l})}\int_{ T_{\SO_{2l}}\times U_{\SO_{2l}}}
(B_m+B_m^c)(twu)\tilde{\xi}_{1-s}^{i,v}(w_{l,l} j(twu), I_l)dtdu\\
&\quad
+\sum_{w\in \mathrm{B}_{l-1}(\SO_{2l})}\int_{ A_l\times U_{\SO_{2l}}}
(B_m+B_m^c)(twu)\tilde{\xi}_{1-s}^{i,v}(w_{l,l} j(twu), I_l)dtdu.
\end{align*}

Next, we define a function $f$ on a subset of $\GL_l$ so that we may apply Proposition \ref{JS Prop}. The first step in this is to describe the $a$'s which appear in the $W_v^*$'s. We recall the setup. For $w\in\mathrm{B}_l^c(\SO_{2l})$, by \cite[Proposition 4.5]{HL22}, there exists $w^\prime\in W(\GL_l)$ such that $w=t_l(w^\prime)\tilde{w}_l$ where $w'=(w_{i,j}')_{i,j=1}^l\in W(\GL_l)$ with $w_{l,1}'=0.$ We also let $(w')^*=(w_{i,j}'^*)_{i,j=1}^l$ and $r$ be such that $w_{r,1}'=1.$ 

First, suppose that $l$ is even. Let 
$$
a_2^*=\left(\begin{matrix}
\frac{t_l\inv(w_{1,j}'^*)_{j=1}^{l-1}}{4} & \frac{1}{4} \\
\mathrm{diag}(t_{l-1}\inv,\dots, t_1\inv) (w_{i,j}'^*)_{i=2,j=1}^{i=l,j=l-1} & \mathrm{diag}(t_{l-1}\inv,\dots, t_1\inv)\frac{(w_{i,l}'^*)_{i=2}^l}{4} \\
\end{matrix}\right).
$$
Then,
$$
a_2^*=\left(\begin{matrix}
1 & & t_r & & \\
  & \ddots & & & \\
  & & 1 & & \\
  & & & \ddots & \\
  & & & & 1
\end{matrix}\right)
 \mathrm{diag}\left(\frac{t_l\inv}{4},t_{l-1}\inv,\dots, t_{r+1}\inv, \frac{t_r\inv}{4},t_{r-1}\inv,\dots,t_1\inv\right)
w'^*.
$$
where $t_r$ is the $(1,l-r+1)$ entry of the unipotent matrix. Thus, we obtain
$$
a_2=\left(\begin{matrix}
1 & &  & & \\
  & \ddots & & & \\
  & & 1 & & -t_r \\
  & & & \ddots & \\
  & & & & 1
\end{matrix}\right) \mathrm{diag}\left(t_1,\cdots,t_{r-1}, 4t_r,t_{r+1},\cdots,t_{l-1},4t_l \right) w'.
$$
where $-t_r$ is the $(r,l)$ entry of the unipotent matrix. This determines $a$ on the $\mathrm{B}_{l}(\SO_{2l})$ sum. Let $t_{w'}\in T_{\GL_l}$ be such that $$t_{w'}\mathrm{diag}\left(t_1,\cdots,t_{r-1}, 4t_r,t_{r+1},\cdots,t_{l-1},4t_l \right)=\mathrm{diag}\left(t_1,\cdots,t_{r-1}, t_r,t_{r+1},\cdots,t_{l-1},t_l \right).$$ That is, $t_{w'}$ is a diagonal matrix consisting of $1$'s on the diagonal, except in the $(r.r)$ and $(l,l)$ coordinates where it is $\frac{1}{4}$.

Next, we consider the $a$ in the $\mathrm{B}_{l-1}(\SO_{2l})$ sum. Let $w^\prime=\left(\begin{matrix}
0& w'' \\
1 & 0
\end{matrix}\right)$ and
$$
a_3^*=\left(\begin{matrix}
0& \frac{1}{4}+\frac{1}{4}t_l+\frac{1}{16}t_l\inv \\
\mathrm{diag}(t_{l-1}\inv,\dots,t_{1}\inv)(w'')^* &0
\end{matrix}\right).
$$
Then,
$$
a_3^*=\left(\begin{matrix}
\frac{1}{4}+\frac{1}{4}t_l+\frac{1}{16}t_l\inv & & & \\
& t_{l-1}\inv & & \\
& & \ddots & \\
& & & t_{1}\inv 
\end{matrix}\right)
(w')^*.
$$
Hence
$$
a_3=\left(\begin{matrix}
t_1 & & & \\
& \ddots & & \\
& & t_{l-1} & \\
& & & (\frac{1}{4}+\frac{1}{4}t_l+\frac{1}{16}t_l\inv)\inv
\end{matrix}\right)
w'.
$$
Recall we partitioned $F^\times\backslash\{\pm\frac{1}{2}\}$ into two disjoint sets $A$ and $B$ such that if $t_l\in A$ then $\frac{t_l\inv}{4}\in B.$ Suppose $t_l, s_l\in A$ with $\frac{1}{4}+\frac{1}{4}t_l+\frac{1}{16}t_l\inv=\frac{1}{4}+\frac{1}{4}s_l+\frac{1}{16}s_l\inv$. This gives a quadratic equation in $s_l$ whose roots are $s_l=t_l$ and $s_l=\frac{t_l\inv}{4}.$ Since $s,t \in A$ it follows that we must have $t_l=s_l.$ Let 
$$A_{GL_l}=\left\{\left(\begin{matrix}
t_1 & & & \\
& \ddots & & \\
& & t_{l-1} & \\
& & & (\frac{1}{4}+\frac{1}{4}t_l+\frac{1}{16}t_l\inv)\inv
\end{matrix}\right) \, | \, t_1,\dots,t_{l-1}\in F^\times, t_l\in A \right\}.$$
The following map is well defined on $A_{GL_l}$: 

$$
\xi\left(\begin{matrix}
t_1 & & & \\
& \ddots & & \\
& & t_{l-1} & \\
& & & (\frac{1}{4}+\frac{1}{4}t_l+\frac{1}{16}t_l\inv)\inv
\end{matrix}\right):=\mathrm{diag}(t_1,\dots,t_l).
$$

Let $u=(u_{i,j})_{i,j=1}^l.$ Note that $u_{l,l+1}=0.$ Then, the embedding of $u$ in $\SO_{2l+1}$ is

$$
j(u)=\left(\begin{matrix}
(u_{i,j})_{i,j=1}^{i,j=l-1} & \left(\begin{matrix}
\frac{(u_{i,l})_{i=1}^{l-1}}{4}-\frac{(u_{i,l+1})_{i=1}^{l-1}}{2} & * & * \end{matrix}\right) & * \\
 & I_3 & * \\
 & & *
\end{matrix}\right).
$$
Let $\tilde{u}=\left(\begin{matrix}
(u_{i,j})_{i,j=1}^{i,j=l-1} &
\frac{(u_{i,l})_{i=1}^{l-1}}{4}-\frac{(u_{i,l+1})_{i=1}^{l-1}}{2} \\
0 & 1
\end{matrix}\right).$ Then $j(u)=l_l(\tilde{u})n_3.$ where $n_3\in V_l.$ The embedding takes $twu$ to $ l_l(a) n_1 w_l n_2 l_l(\tilde{u})n_3= n_4 l_l(a \tilde{u}^*) w_l n_5$ where $n_4, n_5\in V_l.$ Thus, by Equation \eqref{5.4}, $$\tilde{\xi}_{1-s}^{i,v}(w_{l,l} j(twu), I_l)=\mathrm{Vol}(\ol{V}_{l,i})|\mathrm{det}(a)|^{\frac{1}{2}-s}W_v^*(\mathrm{diag}(\frac{1}{2},\dots,\frac{1}{2})a \tilde{u}^*).$$

Next, we define a function on a subset of $\GL_l$ using its Bruhat decomposition. Specifically, we partition the Weyl group of $W(\GL_l)$ into $W_1(\GL_l)$ and $W_2(\GL_l)$ where $w'\in W_1(\GL_l)$ if $w^\prime\neq\left(\begin{matrix}
& w'' \\
1 & 
\end{matrix}\right)$ for any $w''\in W(\GL_{l-1}).$  and $w'\in W_2(\GL_l)$ if $w^\prime=\left(\begin{matrix}
& w'' \\
1 & 
\end{matrix}\right)$ for some $w''\in W(\GL_{l-1}).$ By \cite[Proposition 4.5]{HL22}, we have $w=t_l(w')\tilde{w}_l\in \mathrm{B}_{l}(\SO_{2l})$ if $w'\in W_1(\GL_l)$ and $w=t_l(w')\tilde{w}_l\in \mathrm{B}_{l-1}(\SO_{2l})$ if $w'\in W_2(\GL_l).$ Let
$$
X_l=\left(\bigsqcup_{w'\in W_1(\GL_l)} U_{\GL_l}T_{\GL_l} w' U_{\GL_l}\right)
\bigsqcup
\left(\bigsqcup_{w'\in W_2(\GL_l)} U_{\GL_l}A_{\GL_l} w' U_{\GL_l}\right).
$$

Recall the definition of $t_{w'}$ above. For $g=u_1 t w' u_2\in U_{\GL_l}T_{\GL_l} w' U_{\GL_l}$ with
$w'\in W_1(\GL_l)$ we define 
$$f(g)=
(B_m+B_m^c)(m_l(\mathrm{diag}(2,\dots,2) m_l(u_1 t_{w'} t w' u_2)\tilde{w}_l).
$$ 
For $g=u_1 t w' u_2 \in U_{\GL_l}A_{\GL_l} w' U_{\GL_l}$ with
$w'\in W_2(\GL_l)$ we define $$f(g)=
(B_m+B_m^c)(m_l(\mathrm{diag}(2,\dots,2))m_l(u_1 \xi(t) w' u_2)\tilde{w}_l).
$$
For $g\notin X_l$, set $f(g)=0.$
Then, $f(ug)=\psi(u)f(g)$ for any $u\in U_{\GL_l}$ and $g\in \GL_l.$ Also, $m_l(g)\tilde{w}_l=\tilde{w}_l m_l(g^*)$ for any $g\in\GL_l$ (we are still assuming that $l$ is even). Therefore, 
\begin{align*}
0&=\sum_{w\in W_1(\GL_l)}\int_{ T_{\GL_{l}}\times U_{\SO_{2l}}}
f(\mathrm{diag}(\frac{1}{2},\dots,\frac{1}{2})tw\tilde{u}^*) W_v^*(\mathrm{diag}(\frac{1}{2},\ldots,\frac{1}{2})tw\tilde{u}^*)\\
&\quad \quad \cdot |\mathrm{det}(t)|^{\frac{1}{2}-s-l}dtdu\\
&\quad
+\sum_{w\in W_2(\GL_l)}\int_{ A_{\GL_l}\times U_{\SO_{2l}}}
f(\mathrm{diag}(\frac{1}{2},\dots,\frac{1}{2})tw\tilde{u}^*) W_v^*(\mathrm{diag}(\frac{1}{2},\ldots,\frac{1}{2})tw\tilde{u}^*)\\
&\quad\quad \cdot |\mathrm{det}(t)|^{\frac{1}{2}-s-l}dtdu\\
&=\int_{U_{\GL_l}\backslash\GL_l} f(g)W_v^*(g)|\mathrm{det}(g)|^{\frac{1}{2}-s-l} |2|^{l(\frac{1}{2}-s-l)}dg.
\end{align*}
Thus, by Proposition \ref{JS Prop}, $f$ must identically vanish on $\GL_l$ and hence $X_l$. Therefore, for $l$ even, we have $(B_m+B_m^c)(tw)=0$  for any $t\in\SO_{2l}$ and $w\in \mathrm{B}_l(\SO_{2l}).$ Also, $(B_m+B_m^c)(tw)=0$ for any $w\in\mathrm{B}_{l-1}(\SO_{2l})$ and $t\in A_{l}$. Conjugation by $\tilde{t}c$ gives $(B_m+B_m^c)(tw)=0$ for any $w\in\mathrm{B}_{l-1}(\SO_{2l})$ and $t\in B_{l}.$

Suppose that $l$ is odd. Let 
$$
a_2^*=\left(\begin{matrix}
\frac{t_l\inv(w_{1,j}'^*)_{j=1}^{l-1}}{4} & \frac{1}{4} \\
\mathrm{diag}(t_{l-1}\inv,\dots, t_1\inv) (w_{i,j}'^*)_{i=2,j=1}^{i=l,j=l-1} & \mathrm{diag}(t_{l-1}\inv,\dots, t_1\inv)\frac{-(w_{i,l}'^*)_{i=2}^l}{2} \\
\end{matrix}\right).
$$
Then,
$$
a_2^*=\left(\begin{matrix}
1 & & \frac{-t_r}{2} & & \\
  & \ddots & & & \\
  & & 1 & & \\
  & & & \ddots & \\
  & & & & 1
\end{matrix}\right)
 \mathrm{diag}\left(\frac{t_l\inv}{4},t_{l-1}\inv,\dots, t_{r+1}\inv, \frac{-t_r\inv}{2},t_{r-1}\inv,\dots,t_1\inv\right)
w'^*.
$$
where $t_r$ is the $(1,l-r+1)$ entry of the unipotent matrix. Thus, we obtain
$$
a_2=\left(\begin{matrix}
1 & &  & & \\
  & \ddots & & & \\
  & & 1 & & \frac{t_r}{2} \\
  & & & \ddots & \\
  & & & & 1
\end{matrix}\right) \mathrm{diag}\left(t_1,\cdots,t_{r-1}, -2t_r.t_{r+1}.\cdots.t_{l-1},4t_l \right) w'.
$$
where $-t_r$ is the $(r,l)$ entry of the unipotent matrix. This determines $a$ on the $\mathrm{B}_{l}(\SO_{2l})$ sum. Let $t_{w'}\in T_{\GL_l}$ be such that $$t_{w'}\mathrm{diag}\left(t_1,\cdots,t_{r-1}, -2t_r,t_{r+1},\cdots,t_{l-1},4t_l \right)=\mathrm{diag}\left(t_1,\cdots,t_{r-1}, t_r,t_{r+1},\cdots,t_{l-1},t_l \right).$$ That is, $t_{w'}$ is a diagonal matrix consisting of $1$'s on the diagonal, except in the $(r.r)$ and $(l,l)$ coordinates where it is $\frac{-1}{2}$ and $\frac{1}{4}$ respectively.

Next, we consider the $a$ in the $\mathrm{B}_{l-1}(\SO_{2l})$ sum. Let $w^\prime=\left(\begin{matrix}
& w'' \\
1 & 
\end{matrix}\right)$ and
$$
a_3^*=\left(\begin{matrix}
& \frac{1}{2}(\frac{1}{2}-\frac{1}{4}(t_l+t_l\inv)) \\
\mathrm{diag}(t_{l-1}\inv,\dots,t_{1}\inv)(w'')^* &
\end{matrix}\right).
$$
Then,
$$
a_3^*=\left(\begin{matrix}
\frac{1}{2}(\frac{1}{2}-\frac{1}{4}(t_l+t_l\inv)) & & & \\
& t_{l-1}\inv & & \\
& & \ddots & \\
& & & t_{1}\inv 
\end{matrix}\right)
(w')^*.
$$
Hence
$$
a_3=\left(\begin{matrix}
t_1 & & & \\
& \ddots & & \\
& & t_{l-1} & \\
& & & (\frac{1}{2}(\frac{1}{2}-\frac{1}{4}(t_l+t_l\inv)))\inv
\end{matrix}\right)
w'.
$$
Recall we partitioned $F^\times\backslash\{\pm\frac{1}{2}\}$ into two disjoint sets $A$ and $B$ such that if $t_l\in A$ then $t_l\inv\in B.$ Suppose $t_l, s_l\in A$ with $\frac{1}{2}(\frac{1}{2}-\frac{1}{4}(t_l+t_l\inv))=\frac{1}{2}(\frac{1}{2}-\frac{1}{4}(s_l+s_l\inv))$. This gives a quadratic equation in $s_l$ whose roots are $s_l=t_l$ and $s_l=t_l\inv.$ Since $s,t \in A$ it follows that we must have $t_l=s_l.$ Let 
$$A_{GL_l}=\left\{\left(\begin{matrix}
t_1 & & & \\
& \ddots & & \\
& & t_{l-1} & \\
& & & (\frac{1}{2}(\frac{1}{2}-\frac{1}{4}(t_l+t_l\inv)))\inv
\end{matrix}\right) \, | \, t_1,\dots,t_{l-1}\in F^\times, t_l\in A \right\}.$$
The following map is well defined on $A_{GL_l}$: 

$$
\xi\left(\begin{matrix}
t_1 & & & \\
& \ddots & & \\
& & t_{l-1} & \\
& & & (\frac{1}{2}(\frac{1}{2}-\frac{1}{4}(t_l+t_l\inv)))\inv
\end{matrix}\right):=\mathrm{diag}(t_1,\dots,t_l).
$$

Let $u=(u_{i,j})_{i,j=1}^l.$ Note that $u_{l,l+1}=0.$ Then, the embedding of $u$ in $\SO_{2l+1}$ is

$$
j(u)=\left(\begin{matrix}
(u_{i,j})_{i,j=1}^{i,j=l-1} & \left(\begin{matrix}
\frac{(u_{i,l})_{i=1}^{l-1}}{4}-\frac{(u_{i,l+1})_{i=1}^{l-1}}{2} & * & * \end{matrix}\right) & * \\
 & I_3 & * \\
 & & *
\end{matrix}\right).
$$
Let $\tilde{u}=\left(\begin{matrix}
(u_{i,j})_{i,j=1}^{i,j=l-1} &
\frac{(u_{i,l})_{i=1}^{l-1}}{4}-\frac{(u_{i,l+1})_{i=1}^{l-1}}{2} \\
0 & 1
\end{matrix}\right).$ Then $j(u)=l_l(\tilde{u})n_3.$ where $n_3\in V_l.$ The embedding takes $twu$ to $ l_l(a) n_1 w_l n_2 l_l(\tilde{u})n_3= n_4 l_l(a \tilde{u}^*) w_l n_5$ where $n_4, n_5\in V_l.$ Thus, by Equation \eqref{xi intertwined}, $$\tilde{\xi}_{1-s}^{i,v}(w_{l,l} j(twu), I_l)=\mathrm{Vol}(\ol{V}_{l,i})|\mathrm{det}(a)|^{\frac{1}{2}-s}W_v^*(\mathrm{diag}(\frac{1}{2},\dots,\frac{1}{2})a \tilde{u}^*).$$

Next, we define a function on of $\GL_l$ using its Bruhat decomposition. Specifically, we partition the Weyl group of $W(\GL_l)$ into two sets, $W_1(\GL_l)$ and $W_2(\GL_l)$, where $w'\in W_1(\GL_l)$ if $w^\prime\neq\left(\begin{matrix}
& w'' \\
1 & 
\end{matrix}\right)$ for any $w''\in W(\GL_{l-1})$ and $w'\in W_2(\GL_l)$ if $w^\prime=\left(\begin{matrix}
& w'' \\
1 & 
\end{matrix}\right)$ for some $w''\in W(\GL_{l-1}).$ By \cite[Proposition 4.5]{HL22}, we have $w=t_l(w')\tilde{w}_l\in \mathrm{B}_{l}(\SO_{2l})$ if $w'\in W_1(\GL_l)$ and $w=t_l(w')\tilde{w}_l\in \mathrm{B}_{l-1}(\SO_{2l})$ if $w'\in W_2(\GL_l).$ Let
$$
X_l=\left(\bigsqcup_{w'\in W_1(\GL_l)} U_{\GL_l}T_{\GL_l} w' U_{\GL_l}\right)
\bigsqcup
\left(\bigsqcup_{w'\in W_2(\GL_l)} U_{\GL_l}A_{\GL_l} w' U_{\GL_l}\right).
$$

Recall the definition of $t_{w'}$ above. For $g=u_1 t w' u_2\in U_{\GL_l}T_{\GL_l} w' U_{\GL_l}$ with
$w'\in W_1(\GL_l)$ we define 
$$f(g)=
(B_m+B_m^c)(m_l(\mathrm{diag}(2,\dots,2) m_l(u_1 t_{w'} t w' u_2)\tilde{w}_l).
$$ 
For $g=u_1 t w' u_2 \in U_{\GL_l}A_{\GL_l} w' U_{\GL_l}$ with
$w'\in W_2(\GL_l)$ we define $$f(g)=
(B_m+B_m^c)(m_l(\mathrm{diag}(2,\dots,2))m_l(u_1 \xi(t) w' u_2)\tilde{w}_l).
$$
For $g\notin X_l$, we set $f(g)=0.$
Then, $f(ug)=\psi(u)f(g)$ for any $u\in U_{\GL_l}$ and $g\in \GL_l.$ Let $g=(g_{i,j})_{i,j=1}^l$ and $g^*=(g^*_{i,j})_{i,j=1}^l$. Then $$
m_l(g)\tilde{w}_l=\tilde{w}_l
\left(\begin{matrix}
(g_{i,j})_{i,j=1}^{l-1} & 0 & (g_{i,l})_{i=1}^{l-1} & 0 \\
0 & g_{1,1}^* & 0 & (g_{1,j}^*)_{j=2}^{l} \\
(g_{l,j})_{j=1}^{l-1} & 0 & g_{l,l} & 0 \\
0 & (g_{i,1}^*)_{i=2}^{l} & 0 & (g_{i,j}^*)_{i,j=2}^{l} \\
\end{matrix}
\right).
$$
In particular, if $u\in U_{\GL_l}$, then 
$$
m_l(u)\tilde{w}_l=\tilde{w}_l
\left(\begin{matrix}
(g_{i,j})_{i,j=1}^{l-1} & 0 & (g_{i,l})_{i=1}^{l-1} & 0 \\
 & 1 & 0 & (g_{1,j}^*)_{j=2}^{l} \\
 &  & 1 & 0 \\
 & &  & (g_{i,j}^*)_{i,j=2}^{l} \\
\end{matrix}
\right).
$$
and the last matrix is upper triangular.
Therefore, 
\begin{align*}
0&=\sum_{w\in W_1(\GL_l)}\int_{ T_{\GL_{l}}\times U_{\SO_{2l}}}
f(\mathrm{diag}(\frac{1}{2},\dots,\frac{1}{2})tw\tilde{u}^*) W_v^*(\mathrm{diag}(\frac{1}{2},\ldots,\frac{1}{2})tw\tilde{u}^*)\\
&\quad \quad \cdot |\mathrm{det}(t)|^{\frac{1}{2}-s-l}dtdu\\
&\quad
+\sum_{w\in W_2(\GL_l)}\int_{ A_{\GL_l}\times U_{\SO_{2l}}}
f(\mathrm{diag}(\frac{1}{2},\dots,\frac{1}{2})tw\tilde{u}^*) W_v^*(\mathrm{diag}(\frac{1}{2},\ldots,\frac{1}{2})tw\tilde{u}^*)\\
&\quad \quad \cdot |\mathrm{det}(t)|^{\frac{1}{2}-s-l}dtdu\\
&=\int_{U_{\GL_l}\backslash\GL_l} f(g)W_v^*(g)|\mathrm{det}(g)|^{\frac{1}{2}-s-l}|2|^{l(\frac{1}{2}-s-l)}dg.
\end{align*}
Thus, by Proposition \ref{JS Prop}, $f$ must identically vanish on $\GL_l$ and hence $X_l$. Therefore, for $l$ odd, we have $(B_m+B_m^c)(tw)=0$  for any $t\in\SO_{2l}$ and $w\in \mathrm{B}_l(\SO_{2l}).$ Also, $(B_m+B_m^c)(tw)=0$ for any $w\in\mathrm{B}_{l-1}(\SO_{2l})$ and $t\in A_{l}$. Conjugation by $\tilde{t}c$ gives $(B_m+B_m^c)(tw)=0$ for any $w\in\mathrm{B}_{l-1}(\SO_{2l})$ and $t\in B_{l}$ for $l$ odd.

Finally, for any $l$, we have 
$(B_m+B_m^c)(tw)=0$  for any $t\in\SO_{2l}$ and $w\in \mathrm{B}_l(\SO_{2l}).$  Also, 
\begin{align*}
    B_m(tw)
&=B_m(c\tilde{t}\inv \tilde{t} ctw c\tilde{t}\inv \tilde{t} c)
=B_m^c( \tilde{t} ctw c\tilde{t}\inv)=B_m^c( t' cwc),
\end{align*}
where $t'=\tilde{t}ctc (cwc\tilde{t}\inv (cwc)\inv).$ It follows that $(B_m+B_m^c)(tw)=0$  for any $t\in\SO_{2l}$ and $w\in \mathrm{B}_l^c(\SO_{2l}).$ From Proposition \ref{l-2 prop}, Theorem \ref{l-1 thm}, and Corollaries \ref{conj gamma1} and \ref{conj gamma2} and Lemma \ref{6.3}(3), it follows that we have, for any $g\in\SO_{2l},$ $(B_m+B_m^c)(g)=0.$
This completes the proof of Theorem \ref{l thm}.
\end{proof}

Similarly, we have a corollary on the equality of twisted gamma factors of $\pi$ and $c \cdot \pi$ as follows, which again can be implied by the work of \cite{Art13, JL14}. 

\begin{cor}\label{conj gamma3}
Let $\pi$ be an irreducible  $\psi$-generic supercuspidal representation of $\SO_{2l}$. Then $\gamma(s, \pi\times\tau,\psi)=\gamma(s, c\cdot\pi\times\tau,\psi)$ for all irreducible $\psi^{-1}$-generic representations $\tau$ of $\GL_{n}$ for any $n\leq l.$ 
\end{cor}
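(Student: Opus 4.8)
The plan is to run the argument of Theorem \ref{l thm} for the pair $(\pi,c\cdot\pi)$, reading its concluding functional-equation step in the reverse direction, exactly as Corollaries \ref{conj gamma1} and \ref{conj gamma2} did for Proposition \ref{l-2 prop} and Theorem \ref{l-1 thm}. By Corollary \ref{conj gamma2} we already have $\gamma(s,\pi\times\tau,\psi)=\gamma(s,c\cdot\pi\times\tau,\psi)$ for every irreducible $\psi^{-1}$-generic representation $\tau$ of $\GL_n$ with $n\le l-1$, so only the case $n=l$ remains. Fix a matrix coefficient $f'\in\mathcal{M}(c\cdot\pi)$ with $W^{f'}=(W^f)^c$; this is possible since $f\mapsto W^f$ surjects onto the Whittaker model and $W\mapsto W^c$ carries $\mathcal{W}(\pi,\psi)$ onto $\mathcal{W}(c\cdot\pi,\psi)$. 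Then $W^{f'}(I_{2l})=1$, $B_m(g,f')=B_m^c(g,f)$, and, since $(\,\cdot\,)^c$ is an involution (which follows from $c^2=I_{2l}$ and $c\tilde{t}c=\tilde{t}\inv$), also $B_m^c(g,f')=B_m(g,f)$.

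Because the conditions $\mathcal{C}(k)$ hold for $(\pi,c\cdot\pi)$ for all $k\le l-1$, Theorems \ref{l-2 theorem} and \ref{l-1 thm} apply to this pair and yield, with $f_{\tilde{w}_l}\in C_c^\infty(\Omega_{\tilde{w}_l},\omega)$ and $f_{\tilde{w}_l^c}\in C_c^\infty(\Omega_{\tilde{w}_l^c},\omega)$ as in Corollary \ref{C(0)}, that $B_m(g,f)-B_m(g,f')=B_m(g,f_{\tilde{w}_l})+B_m(g,f_{\tilde{w}_l^c})$ together with the vanishing of this sum on the cells $t_{l-1}(a)t_{l-1}'\tilde{w}_{l-1}$. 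Setting $B_m(g):=B_m(g,f_{\tilde{w}_l})+B_m(g,f_{\tilde{w}_l^c})$ and letting $B_m^c$ be the corresponding difference for the conjugate representations, the relations of the previous paragraph give $B_m(g)=B_m(g,f)-B_m^c(g,f)$ and $B_m^c(g)=B_m^c(g,f)-B_m(g,f)$, so that $B_m+B_m^c\equiv 0$ on all of $\SO_{2l}$ — now with no hypothesis on the $\GL_l$ twist.

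Next I would reproduce the computation in the proof of Theorem \ref{l thm}. Its non-intertwined step still gives $\Psi(W_m^f,\xi_s^{i,v})=\Psi(W_m^{f'},\xi_s^{i,v})$ by Lemma \ref{6.3}(1), equal to the nonzero function of $s$ exhibited there. Its intertwined step — the conjugation by $\tilde{t}c$, the recombination of the integrals over $\Omega_{\tilde{w}_l}\backslash\Omega_{\tilde{w}_l^c}$, $\Omega_{\tilde{w}_l^c}\backslash\Omega_{\tilde{w}_l}$ and $\Omega_{\tilde{w}_l}\cap\Omega_{\tilde{w}_l^c}$, the partition of $T_l$, and the passage to a function on $X_l\subseteq\GL_l$ — uses only the Bruhat combinatorics and the results of \cite{HL22}, not any $\GL_l$ gamma-factor hypothesis, and expresses $\Psi(W_m^f,\tilde{\xi}_{1-s}^{i,v})-\Psi(W_m^{f'},\tilde{\xi}_{1-s}^{i,v})$ as an integral over $U_{\GL_l}\backslash\GL_l$ whose integrand is built from $B_m+B_m^c$. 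Since $B_m+B_m^c\equiv 0$, this integral vanishes, so $\Psi(W_m^f,\tilde{\xi}_{1-s}^{i,v})=\Psi(W_m^{f'},\tilde{\xi}_{1-s}^{i,v})$ for all $i$, $v$ and all $\tau$. As $\tilde{\xi}_{1-s}^{i,v}=M(\tau,s,\psi^{-1})\xi_s^{i,v}$, the local functional equation then gives $\gamma(s,\pi\times\tau,\psi)\Psi(W_m^f,\xi_s^{i,v})=\gamma(s,c\cdot\pi\times\tau,\psi)\Psi(W_m^{f'},\xi_s^{i,v})$, and dividing by the common nonzero value yields $\gamma(s,\pi\times\tau,\psi)=\gamma(s,c\cdot\pi\times\tau,\psi)$ for $\tau$ of $\GL_l$, finishing the case $n=l$.

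The one point requiring care is verifying that the delicate bookkeeping in the proof of Theorem \ref{l thm} — the two-to-one folding of the integral over $\Omega_{\tilde{w}_l}\cap\Omega_{\tilde{w}_l^c}$ and the $\GL_l$-Bruhat decomposition defining the auxiliary function on $X_l$ — is genuinely independent of the condition $\mathcal{C}(l)$, so that the identity relating $\Psi(W_m^f,\tilde{\xi}_{1-s}^{i,v})-\Psi(W_m^{f'},\tilde{\xi}_{1-s}^{i,v})$ to an integral $\int_{U_{\GL_l}\backslash\GL_l}f(g)W_v^*(g)|\mathrm{det}(g)|^{\frac{1}{2}-s-l}|2|^{l(\frac{1}{2}-s-l)}\,dg$ holds unconditionally; this is routine because those steps are purely geometric, and in fact Proposition \ref{JS Prop} is not even needed here since $f$ vanishes identically outright. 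The remaining ingredients — existence of $f'$, involutivity of $(\,\cdot\,)^c$, and nonvanishing of the non-intertwined zeta integral — are already in place from \S\ref{Howe vectors}, \S\ref{sections} and the proof of Theorem \ref{l thm}.
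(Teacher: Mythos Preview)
Your approach is essentially the same as the paper's: reduce to $n=l$ via Corollaries \ref{conj gamma1}--\ref{conj gamma2}, then rerun the computation of Theorem \ref{l thm} for the pair $(\pi,c\cdot\pi)$ so that the changes of variables show both sides of the functional equation coincide. Your choice $W^{f'}=(W^f)^c$, which forces $B_m+B_m^c\equiv 0$ tautologically, is a clean way to make this explicit.

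One small point: you assert that the non-intertwined integral $\Psi(W_m^f,\xi_s^{i,v})$ is nonzero and attribute this to the proof of Theorem \ref{l thm}, but that proof never establishes nonvanishing --- it only shows the two integrals agree. You do need nonvanishing for \emph{some} choice of data to divide and extract $\gamma=\gamma'$. This is easily arranged (the bilinear form $\Psi$ is nontrivial, $W_m^f$ generates $\mathcal{W}(\pi,\psi)$, and the quasi-invariance \eqref{zetan=l} lets you transfer nonvanishing to the given Whittaker vector; alternatively, one can argue the change of variables gives $\Psi(W,\xi)=\Psi(W^c,\xi)$ for \emph{all} $W,\xi$, which is closer to the paper's phrasing ``the zeta integrals for $\pi$ and $c\cdot\pi$ are equal''), but it is not already ``in place'' where you say it is.
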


\begin{proof}
By Corollaries \ref{conj gamma1} and \ref{conj gamma2}, $\gamma(s, \pi\times\tau,\psi)=\gamma(s, c\cdot\pi\times\tau,\psi)$ for all irreducible $\psi^{-1}$-generic representations $\tau$ of $\GL_{k}$ for $k\leq l-1.$ Repeating the steps and the change of variables performed in the previous proof  show that the zeta integrals for $\pi$ and $c\cdot\pi$ are equal and hence so are the $\gamma$-factors for $\GL_l.$
\end{proof}

Finally, we can complete the proof of the converse theorem. From Theorem \ref{l thm}, for any $g\in\SO_{2l},$ we have
$$
B_m(g,f)-B_m(g,f')+B_m^c(g,f)-B_m^c(g,f')=0.$$ 
By the uniqueness of Whittaker models, we must have $\pi\cong\pi'$ or $\pi\cong c\cdot\pi'$. This completes the proof of Theorem \ref{converse thm intro}.

\end{document}